\documentclass[11pt]{article}
\usepackage{amsmath,amssymb,graphicx,enumerate,bbm,hyperref,amsthm}

\newcommand{\mathd}{\mathrm{d}}
\newcommand{\nin}{\not\in}
\newcommand{\tmem}[1]{{\em #1\/}}
\newcommand{\tmmathbf}[1]{\ensuremath{\boldsymbol{#1}}}
\newcommand{\tmop}[1]{\ensuremath{\operatorname{#1}}}
\newcommand{\tmtextit}[1]{{\itshape{#1}}}

\theoremstyle{plain}
\newtheorem{theorem}{Theorem}[section]
\newtheorem{proposition}[theorem]{Proposition}

\newtheorem{lemma}[theorem]{Lemma}

\theoremstyle{remark}
\newtheorem{remark}[theorem]{Remark}

\theoremstyle{definition}
\newtheorem{definition}[theorem]{Definition}

\numberwithin{equation}{section}

\setlength{\oddsidemargin}{5mm}
\setlength{\evensidemargin}{5mm}
\setlength{\textwidth}{160mm}
\setlength{\headheight}{0mm}
\setlength{\headsep}{12mm}
\setlength{\topmargin}{0mm}
\setlength{\textheight}{220mm}

\begin{document}

\title{Slow dynamics for the dilute Ising model\\in the phase coexistence
region}\author{Marc Wouts\thanks{Universit\'e Paris 13, CNRS, UMR 7539 LAGA,
99 avenue Jean-Baptiste Cl\'ement, F-93430 Villetaneuse, France. E-mail:
{\tmem{wouts@math.univ-paris13.fr}}}}\maketitle

\begin{abstract}
  In this paper we consider the Glauber dynamics for a disordered
  ferromagnetic Ising model, in the region of phase coexistence. It was
  conjectured several decades ago that the spin autocorrelation decays as a negative power of time {\cite{HF87PhysRevB}}. We confirm this behavior by establishing a corresponding
  lower bound in any dimensions $d \geqslant 2$, together with an upper bound when $d=2$. Our approach is deeply connected to the Wulff construction for the dilute Ising model. We consider initial phase profiles with a reduced surface tension on their boundary and prove that, under mild conditions, those profiles are separated from the (equilibrium) pure plus phase by an energy barrier.
\end{abstract}

{\tableofcontents}

\section{Introduction and definitions}

\subsection{Introduction}

For many years the Ising model and the corresponding Glauber dynamics have been
a very active research field. In the 1990's the asymptotics of the spectral gap of the Ising
model with free boundary condition were connected to surface tension, see
{\cite{Tho89CMP,Mar99LNM}} and references therein. The inversion time of the infinite
volume Ising model phase under a small field was then related to Wulff energies
{\cite{Sch94CMP,SS98CMP}}. In the last decade, precise estimates were achieved
for the spectral gap and mixing time {\cite{BM02JSP}}. Recently impressive
moves towards evidence of Lifshitz behavior and mean-curvature displacement of interfaces were achieved in {\cite{MT10CMP,LS10,LMST10,CMST10}}.

The focus of the present paper is on the consequences of the presence of
disorder on the dynamics, in the phase coexistence region. Since~{\cite{Mar99LNM}} (and references therein)
it has been known that dilution in the Ising model triggers slow,
non-exponential relaxation to equilibrium in the Griffiths phase.
Here we focus on the {\tmem{phase coexistence}} region, which means
that, at equilibrium, the system can be either in the plus or the minus phase.
This setting was considered already in {\cite{HF87PhysRevB}} where heuristic
discussions suggested that autocorrelation decays as a negative power of time.
In the present paper we turn these heuristics into rigorous proofs. Previous stages
of this project were the adaptation of the coarse graining
and of the Wulff construction to the disordered setting, see {\cite{Wou08SPA,Wou09CMP}} respectively.

Our main result is a lower bound on the autocorrelation 
(Theorem \ref{thm-lwb-A}) which validates the heuristics of {\cite{HF87PhysRevB}}. That is to say, when both the initial configuration includes a droplet of the minus phase and the surface tension on the boundary of the droplet is smaller than its quenched value, the system must cross an energy barrier before the droplet can disappear. Interestingly we show that the energy gap can be computed on
continuous evolutions of the droplet, a slight improvement in comparison
with the usual scheme of computing the bottleneck as the maximum gap in energy
over all intermediate magnetization, as in \cite{Mar99LNM,BI04AHP}. 

We also present in this work an upper bound on the autocorrelation when $d=2$ (Theorem \ref{thm-upb-A}) together with some consequences of our estimates on the typical spectral gap and mixing times in finite volume (Theorems \ref{thm-Trel}). 

We would like to mention that, although is it the case here, we do not expect that
dilution always slows down relaxation. Indeed, for the infinite volume dilute Ising system subject to a small positive external field, the dilution has a catalyst effect on the transition from the minus to the (equilibrium) plus phase, cf.~\cite{BGW12}.

The organization of the paper is as follows: in the remaining part of the
current Section, we define the dilute Ising model and the Glauber dynamics. 
We also introduce the necessary tools and technical assumptions. Then in Section~\ref{sec:results} we present our main
results. Heuristics and proofs are given in Section~\ref{sec:proofs}.

\subsection{The dilute Ising model}

The canonical vectors of $\mathbbm{R}^d$ are denoted by $(\tmmathbf{e}_i)_{i = 1
\ldots d}$. For any $x = \sum_{i = 1}^n x_i \tmmathbf{e}_i = (x_1, \ldots,
x_d) \in \mathbbm{R}^d$ we consider the following norms:
\begin{equation}
  \|x\|_1 = \sum_{i = 1}^d |x_i | \text{, \ \ \ } \|x\|_2 = \left( \sum_{i =
  1}^d x_i^2 \right)^{1 / 2} \text{ \ \ and \ \ } \|x\|_{\infty} = \max_{i =
  1}^d |x_i |. \label{eq-norms}
\end{equation}
Given $x, y \in \mathbbm{Z}^d$ we say that $x, y$ are nearest neighbors (which
we denote $x \sim y$) if they are at Euclidean distance $1$, i.e. if $\|x -
y\|_2 = 1$. To any domain $\Lambda \subset \mathbbm{Z}^d$ we associate the
edge sets
\begin{eqnarray}
  E (\Lambda) & = & \left\{ \left\{ x, y \right\}: x, y \in \Lambda \text{ \
  and \ } x \sim y \right\} \\
  \text{and \ \ } E^w (\Lambda) & = & \left\{ \left\{ x, y \right\}: x \in
  \Lambda, y \in \mathbbm{Z}^d \text{ \ and \ } x \sim y \right\} . 
\end{eqnarray}
We consider in this paper the dilute Ising model on $\mathbbm{Z}^d$ for $d
\geqslant 2$. It is defined in two steps: first, the couplings between
adjacent spins are represented by a random sequence $J = (J_e)_{e \in E
(\mathbbm{Z}^d)}$ of law $\mathbbm{P}$, such that the $(J_e)_{e \in E
(\mathbbm{Z}^d)}$ are independent, identically distributed in $[0, 1]$ under
$\mathbbm{P}$. For convenience we write $\mathcal{J}= [0, 1]^{E
(\mathbbm{Z}^d)}$ the set of possible realizations of $J$. Given $\Lambda
\subset \mathbbm{Z}^d$ a finite domain, $J \in \mathcal{J}$ and a spin
configuration $\sigma \in \Sigma^+_{\Lambda} = \left\{ \sigma: \mathbbm{Z}^d \rightarrow \{\pm 1\}: \sigma_z = 1, \forall z \notin \Lambda \right\}$,
we let
\begin{equation}
  H_{\Lambda}^{J, +} (\sigma) = - \sum_{e =\{x, y\} \in E^w (\Lambda)} J_e
  \sigma_x \sigma_y
\end{equation}
the Hamiltonian with plus boundary condition on $\Lambda$. The dilute Ising
model on $\Lambda$ with plus boundary condition, given a realization $J$ of
the couplings, is the probability measure $\mu^{J, +}_{\Lambda}$ on
$\Sigma^+_{\Lambda}$ that satisfies
\begin{equation}
  \mu^{J, +}_{\Lambda} (\{\sigma\}) = \frac{1}{Z^{J, +}_{\Lambda, \beta}} \exp
  \left( - \frac{\beta}{2} H_{\Lambda}^{J, +} (\sigma) \right) \text{, \ \ }
  \forall \sigma \in \Sigma^+_{\Lambda}
\end{equation}
where $\beta \geqslant 0$ is the inverse temperature and $Z^{J, +}_{\Lambda,
\beta}$ is the partition function
\begin{equation}
  Z_{\Lambda, \beta}^{J, +} = \sum_{\sigma \in \Sigma^+_{\Lambda}} \exp \left(
  - \frac{\beta}{2} H_{\Lambda}^{J, +} (\sigma) \right) .
\end{equation}

Consider
\begin{equation}
  m_{\beta} = \lim_{N \rightarrow \infty} \mathbbm{E} \mu^{J,
  +}_{\hat{\Lambda}_N, \beta}  \left( \sigma_0 \right)
\end{equation}
the magnetization in the thermodynamic limit, where $\hat{\Lambda}_N$ is the
symmetric box $\hat{\Lambda}_N =\{- N, \ldots, N\}^d$ and $\mathbbm{E}$ the
expectation associated with $\mathbbm{P}$. When $m_{\beta} > 0$ the boundary
condition has an influence even if it is arbitrary far away from the origin.
In particular the decreasing limit $\mu^{J, +}_{\beta} = \lim_N \downarrow
\mu^{J, +}_{\hat{\Lambda}_N, \beta}$ does not coincide with $\mu^{J,
-}_{\beta} = \lim_N \uparrow \mu^{J, -}_{\hat{\Lambda}_N, \beta}$ ($\mathbb P$-almost surely). The two
infinite volume measures $\mathbbm{E} \mu^{J, +}_{\beta}$ and $\mathbbm{E}
\mu^{J, -}_{\beta}$ are respectively the plus and minus phases.

It was shown in {\cite{ACCN87JPhys}} that the dilute Ising model undergoes a
phase transition at low temperature when the random interactions percolate. In
our settings, this means that the critical inverse temperature
\begin{equation}
  \beta_c = \inf \left\{ \beta \geqslant 0: m_{\beta} > 0 \right\},
  \label{Bc}
\end{equation}
which is never smaller than $\beta_c^{\tmop{pure}}$ -- the critical inverse
temperature for the pure Ising model corresponding to $J \equiv 1$ -- is finite if and only
if $\mathbbm{P}(J_e > 0) > p_c (d)$ where $p_c (d)$ is the threshold for bond
percolation on $\mathbbm{Z}^d$.

\subsection{The Fortuin-Kasteleyn representation}

The Ising model has a percolation-like representation which is very convenient
for formulating two of the fundamental concepts for the study of equilibrium phase
coexistence: renormalization and surface tension. We call
\[ \Omega = \left\{ \omega: E (\mathbbm{Z}^d) \rightarrow \{0, 1\} \right\}
\]
the set of cluster configurations on $E (\mathbbm{Z}^d)$, and for any $\omega
\in \Omega$ and $E \subset E (\mathbbm{Z}^d)$ we call $\omega_{|E}$ the
restriction of $\omega$ to $E$, defined by
\[ (\omega_{|E})_e = \left\{ \begin{array}{ll}
     \omega_e & \text{if } e \in E\\
     0 & \text{else.}
   \end{array} \right. \]
The set of cluster configurations on $E$ is $\Omega_E =\{\omega_{|E}, \omega
\in \Omega\}$. Given a parameter $q \geqslant 1$ and an inverse temperature
$\beta \geqslant 0$, a realization of the random couplings $J: E
(\mathbbm{Z}^d) \rightarrow [0, 1]$, a finite edge set $E \subset E
(\mathbbm{Z}^d)$ and a boundary condition $\pi \in \Omega_{E^c}$ we consider
the random cluster model $\Phi_{E, \beta}^{J, \pi, q}$ on $\Omega_E$ defined
by
\begin{equation}
  \Phi_{E, \beta}^{J, \pi, q} \left( \{\omega\} \right) = \frac{1}{Z_{E,
  \beta}^{J, \pi, q}} \prod_{e \in E} p_e^{\omega_e} (1 - p_e)^{1 - \omega_e}
  \times q^{C^{\pi}_E (\omega)} \text{, \ \ \ } \forall \omega \in \Omega_E
  \label{eq-def-FK}
\end{equation}
where $p_e = 1 - \exp (- \beta J_e)$, $C^{\pi}_E (\omega)$ is the number of
clusters of the set of vertices in $\mathbbm{Z}^d$ attained by $E$ under the
wiring $\omega \vee \pi$ such that $(\omega \vee \pi)_e = \max (\omega_e,
\pi_e)$, and $Z_{E, \beta}^{J, \pi, q}$ is the renormalization constant that
makes of $\Phi_{E, \beta}^{J, \pi, q}$ a probability measure.

For convenience we use the same notation for the probability measure $\Phi_{E,
\beta}^{J, \pi, q}$ and for its expectation. Most of the time we will take
either $\pi = f$, where $f$ is the free boundary condition: $f_e = 0, \forall
e \in E^c$, or $\pi = w$ where $w$ is the wired boundary condition: $w_e = 1,
\forall e \in E^c$. When the parameters $q$ and $\beta$ are clear from the
context we omit them. Given $\mathcal{R}$ a compact subset of $\mathbbm{R}^d$
(usually a rectangular parallelepiped) we denote by $\Phi_{\mathcal{R}}^{J,
\pi}$ the measure $\Phi_{E ( \dot{\mathcal{R}} \cap \mathbbm{Z}^d)}^{J, \pi}$
on the cluster configurations on $E ( \dot{\mathcal{R}} \cap \mathbbm{Z}^d)$,
where $\dot{\mathcal{R}}$ stands for the interior of $\mathcal{R}$. In
particular, for any $g, h: \Omega \rightarrow \mathbbm{R}$ the quantities
$\Phi_{\mathcal{R}_1}^{J, \pi} (g)$ and $\Phi_{\mathcal{R}_2}^{J, \pi} (h)$
are independent under $\mathbbm{P}$ when $\mathcal{R}_1 \cap \mathcal{R}_2 = \emptyset$.

The connection between the dilute Ising model $\mu^{J, +}_{\Lambda, \beta}$
and the random-cluster model was made explicit in {\cite{ES88PhysRevD}}.
Consider the joint probability measure
\[ \Psi_{\Lambda, \beta}^{J, +} \left( \left\{ (\sigma, \omega) \right\}
   \right) = \frac{\tmmathbf{1}_{\left\{ \sigma \prec \omega
   \right\}}}{\tilde{Z}^{J, +}_{\Lambda, \beta}} \prod_{e \in E^w (\Lambda)}
   \left( p_e \right)^{\omega_e}  \left( 1 - p_e \right)^{1 - \omega_e},
   \text{ \ } \forall (\sigma, \omega) \in \Sigma^+_{\Lambda} \times \Omega_{E
   (\Lambda)} \]
where $p_e = 1 - \exp (- \beta J_e)$, $\sigma \prec \omega$ is the event that
$\sigma$ and $\omega$ are compatible, namely that $\omega_e = 1 \Rightarrow
\sigma_x = \sigma_y, \forall e =\{x, y\} \in E^w (\Lambda)$, and
$\tilde{Z}^{J, +}_{\Lambda, \beta}$ is the corresponding normalizing factor.
Then,
\begin{enumerate}[i.]
  \item The marginal of $\Psi_{\Lambda, \beta}^{J, +}$ on the variable
  $\sigma$ is the Ising model $\mu^{J, +}_{\Lambda}$,
  
  \item Its marginal on the variable $\omega$ is the random-cluster model
  $\Phi_{E (\Lambda), \beta}^{J, w, 2}$ with wired boundary condition $w$ and
  parameter $q = 2$.
  
  \item Conditionally on $\omega$, the spin $\sigma$ of each connected
  component of $\Lambda$ for $\omega$ (from now on {\tmem{cluster}}) is
  constant, and equal to $+ 1$ if the cluster is connected to $\Lambda^c$. The
  spin of all clusters not touching $\Lambda^c$ are independent and equal to
  $+ 1$ with a probability $1 / 2$.
  
  \item Conditionally on $\sigma$, the edges are open (i.e. $\omega_e = 1$ for
  $e =\{x, y\}$) independently, with respective probabilities $p_e
  \delta_{\sigma_x, \sigma_y}$.
\end{enumerate}

Furthermore, the distribution $\Phi_{E, \beta}^{J, \pi, q}$ increases with
$\beta, J, \pi$, it satisfies the FKG inequality and the DLR equation,
cf.~{\cite{ACCN88JSP}}.

\subsection{Slab percolation}

\label{sec-renorm}We say that {\tmem{slab percolation}} holds under
$\text{$\mathbb{E} \Phi^{J, f, q}_{E (\mathbbm{Z}^d), \beta}$}$ when either
\begin{eqnarray*}
  d \geqslant 3 & \text{and} & \exists H \in \mathbbm{N}^{\star}, \inf_{N \in
  \mathbbm{N}^{\star}} \inf_{x, y \in S_{N, H}} \mathbb{E} \Phi^{J, f,
  q}_{S_{N, H}, \beta} (x \overset{\omega}{\leftrightarrow} y) > 0,\\
  \text{or \ } d = 2 & \text{and } & \exists \kappa: \mathbbm{N}^{\star}
  \mapsto \mathbbm{N}^{\star}, \lim_{N \rightarrow \infty} \mathbb{E} \Phi^{J,
  f, q}_{S_{N, \kappa (N)}, \beta} (\exists \text{ a horizontal crossing for
  } \omega) > 0
\end{eqnarray*}
where $S_{N, H} =\{1, \ldots, N\}^{d - 1} \times \{1, \ldots, H\}$ is the slab
of height $H$. The critical threshold for slab percolation is
\begin{equation}
  \hat{\beta}_c = \inf \left\{ \beta \geqslant 0: \text{slab percolation
  occurs under } \text{$\mathbb{E} \Phi^{J, f, q}_{E (\mathbbm{Z}^d), \beta}$}
  \right\} . \label{eq-betasp}
\end{equation}
We believe that $\hat{\beta}_c$ and $\beta_c$ coincide, where $\beta_c$ is the
critical inverse temperature for the dilute Ising model defined at (\ref{Bc}).
We also consider
\begin{eqnarray}
  \mathcal{N} & = & \left\{ \beta \geqslant 0: \lim_{N \rightarrow \infty}
  \mathbb{E} \Phi^{J, f, q}_{\hat{\Lambda}_N, \beta} \neq \lim_{N \rightarrow
  \infty} \mathbb{E} \Phi^{J, w, q}_{\hat{\Lambda}_N, \beta} \right\}
  \label{N}, 
\end{eqnarray}
the set of inverse temperatures at which the infinite volume random media
random cluster measure is not unique. It was shown in {\cite{Wou08SPA}},
Theorem 2.3, that $\mathcal{N}$ is at most countable. Under the assumptions
$\beta > \hat{\beta}_c$ and $\beta \nin \mathcal{N}$, one can use a
renormalization procedure (Theorem 5.1 in {\cite{Wou08SPA}}) which gives a
precise meaning to the notion of plus and minus phases and is hence a
fundamental tool for the study of equilibrium phase
coexistence.

\subsection{Surface tension}

Surface tension is another essential tool for the study of equilibrium
phase coexistence. In the context of the dilute Ising model, it is a random
quantity since it depends on the couplings $J$. We recall here some important definitions and results
from~{\cite{Wou09CMP}}. Let $S^{d - 1}$ be the set of unit vectors of $\mathbbm{R}^d$. Given
$\tmmathbf{n} \in S^{d - 1}$ we let
\[ \mathbbm{S}_{\tmmathbf{n}} = \left\{ \sum_{k = 1}^{d - 1} [- 1 /
   2, 1/2]\tmmathbf{u}_k ; (\tmmathbf{u}_1, \ldots, \tmmathbf{u}_{d - 1},
   \tmmathbf{n}) \text{ is an orthonormal basis of } \mathbbm{R}^d \right\} \]
(where $\sum$ stands for the Minkowski addition)
be the set of $d - 1$ dimensional hypercubes of side-length $1$, centered at
$0$, orthogonal to $\tmmathbf{n}$. Finally, given $\mathcal{S} \in
\mathbbm{S}_{\tmmathbf{n}}$, $x \in \mathbbm{R}^d$ and $L, H > 0$ we denote
\begin{equation}
  \mathcal{R}_{x, L, H} (\mathcal{S}, \tmmathbf{n}) = x + L\mathcal{S}+ [- H,
  H]\tmmathbf{n} \label{eq-def-R}
\end{equation}
the rectangular parallelepiped centered at $x$, with basis $x + L\mathcal{S}$
and extension $2 H$ in the direction $\tmmathbf{n}$. The discrete version of
$\mathcal{R}$ is $\hat{\mathcal{R}} = \dot{\mathcal{R}} \cap \mathbbm{Z}^d$
and the inner discrete boundary of $\mathcal{R}$ is
\[ \partial \hat{\mathcal{R}} = \left\{ y \in \hat{\mathcal{R}}: \exists z
   \in \mathbbm{Z}^d \setminus \hat{\mathcal{R}}, z \sim y \right\} . \]
For any $\mathcal{R}$ as in (\ref{eq-def-R}) we decompose $\partial
\hat{\mathcal{R}}$ into its \tmtextit{upper} and \tmtextit{lower} parts
$\partial^+  \hat{\mathcal{R}} =\{y \in \partial \hat{\mathcal{R}}: (y - x)
\cdot \tmmathbf{n} \geqslant 0\}$ and $\partial^-  \hat{\mathcal{R}} =\{y \in
\partial \hat{\mathcal{R}}: (y - x) \cdot \tmmathbf{n}< 0\}$. Then we call
\begin{equation}
  \mathcal{D}_{\mathcal{R}} = \left\{ \text{$\omega \in \Omega$}: \partial^+ 
  \hat{\mathcal{R}}  \overset{\omega}{\nleftrightarrow} \partial^- 
  \hat{\mathcal{R}} \right\} \label{eq-def-DR}
\end{equation}
the event of disconnection between the upper and lower parts of $\partial
\hat{\mathcal{R}}$, and
\begin{equation}
  \tau^J_{\mathcal{R}} = - \frac{1}{L^{d - 1}} \log \Phi^{J, w}_{\mathcal{R}}
  \left( \mathcal{D}_{\mathcal{R}} \right) \label{eq-def-tauJ} .
\end{equation}
the surface tension in $\mathcal{R}$. Surface tension is sub-additive and has
a typical {\tmem{quenched }}value
\begin{eqnarray}
  \tau^q_{\beta} (\tmmathbf{n}) & = & \lim_{N \rightarrow \infty}
  \tau^J_{\mathcal{R}_{0, N, \delta N} (\mathcal{S}, \tmmathbf{n})} \text{ \
  in $\mathbbm{P}$-probability}  \label{def-tauq}
\end{eqnarray}
that does not depend on $\delta > 0$ nor on $\mathcal{S} \in
\mathbbm{S}_{\tmmathbf{n}}$ (Theorem 1.3 in {\cite{Wou09CMP}}). It is positive
for any $\beta > \hat{\beta}_c$ (Proposition 1.5 in the same reference). We
denote by $J^{\min}$ and $J^{\max}$ the extremal values of the support of $J$.
Since $\tau^J_{\mathcal{R}}$ increases with $J$, surface tension can be as low
as $\tau^{\min}_{\mathcal{R}}$ that corresponds to the constant couplings $J
\equiv J^{\min}$, and as large as $\tau^{\max}_{\mathcal{R}}$ when $J \equiv
J^{\max}$. According to the convergence in (\ref{def-tauq}),
$\tau^{\min}_{\mathcal{R}}$ and $\tau^{\max}_{\mathcal{R}}$ also converge when
$\mathcal{R}=\mathcal{R}_{0, N, \delta N} (\mathcal{S}, \tmmathbf{n})$ with $N
\rightarrow \infty$ and we call their respective limits $\tau^{\min}
(\tmmathbf{n})$ and $\tau^{\max} (\tmmathbf{n})$. Surface tension can deviate
from $\tau^q_{\beta} (\tmmathbf{n})$. Upper large deviations happen at a
volume order (Theorem~1.4 in \cite{Wou09CMP}) and are irrelevant to surface
phenomenon like phase coexistence. Lower deviations under $\tau > \tau^{\min}
(\tmmathbf{n})$ occur according to the rate function
\begin{eqnarray}
  I_{\tmmathbf{n}} (\tau) & = & \lim_N - \frac{1}{N^{d - 1}} \log \mathbbm{P}
  \left( \tau^J_{\mathcal{R}_{0, N, \delta N} (\mathcal{S}, \tmmathbf{n})}
  \leqslant \tau \right),  \label{def-In}
\end{eqnarray}
see Theorem 1.6 in {\cite{Wou09CMP}}. The set
\begin{eqnarray}
  \mathcal{N}_I & = & \left\{ \beta \geqslant 0: \exists \tmmathbf{n} \in
  S^{d - 1} \text{ and } r > 0 \text{ such that } I_{\beta, \tmmathbf{n}}
  (\tau^q_{\beta} (\tmmathbf{n}) - r) = 0 \right\} \label{NI}
\end{eqnarray}
is at most countable, see Corollary~1.9 therein.

\subsection{Magnetization profiles}

In the following, $\mathcal{L}^d$ stands for the Lebesgue measure on
$\mathbbm{R}^d$ and $\mathcal{H}^{d - 1}$ for the $d - 1$ dimensional
Hausdorff measure. The $L^1$-distance between two Borel measurable functions
$u, v: [0, 1]^d \rightarrow \mathbbm{R}$ is
\[ \|u - v\|_{L^1} = \int_{[0, 1]^d} |u - v|d\mathcal{L}^d, \]
and the set $L^1$ is
\[ \left\{ u: [0, 1]^d \rightarrow \mathbbm{R} \text{ Borel measurable},
   \|u\|_{L^1} < \infty \right\} . \]
In order that $L^1$ be a Banach space for the $L^1$-norm, we identify $u: [0,
1]^d \rightarrow \mathbbm{R}$ with the class of functions $\{v: \|u -
v\|_{L^1} = 0\}$ that coincide with $u$ on a set of full measure. We also
denote by $\mathcal{V}(u, \delta)$ the neighborhood of radius $\delta > 0$ in
$L^1$ around $u \in L^1$. Given a Borel set $U \subset \mathbbm{R}^d$, we call
\[ \chi_U \text{: } x \in \mathbbm{R}^d \mapsto \left\{ \begin{array}{ll}
     1 & \text{if } x \nin U\\
     - 1 & \text{if } x \in U
   \end{array} \right. \]
the phase profile corresponding to $U$, and call $\mathcal{P}(U)$ the
perimeter of $U$ (as defined in Chap.~3 of {\cite{AFP00Book}}). The set of bounded
variation profiles is
\[ \tmop{BV} = \left\{ u = \chi_U: U \subset (0, 1)^d \text{ is a Borel set
   and } \mathcal{P}(U) < \infty \right\} . \]
Bounded variations profiles $u = \chi_U \in \tmop{BV}$ have a
\tmtextit{reduced boundary} $\partial^{\star} u$ and an outer normal
$\tmmathbf{n}_.^u: \partial^{\star} u \rightarrow S^{d - 1}$ with, in
particular, $\mathcal{H}^{d - 1} (\partial^{\star} u) =\mathcal{P}(U)$. As the
outer normal $\tmmathbf{n}_.^u$ defined on $\partial^{\star} u$ is Borel
measurable, we can consider integrals of the kind
\begin{equation}
  \mathcal{F}^q (u) = \int_{\partial^{\star} u} \tau^q (\tmmathbf{n}_x^u)
  d\mathcal{H}^{d - 1} (x) \label{eq-def-Ftau} \text{, \ \ \ \ } \forall u \in
  \tmop{BV}
\end{equation}
that define the quenched surface energy of a given profile. When $u = \chi_U
\in \tmop{BV}$ we also denote by $\mathcal{F}^q (U)$ the surface energy of
$u$.

\subsection{Initial configuration and gap in surface energy}

As described with further detail in the heuristics (Section~\ref{sec:heur}), our strategy for controlling the Glauber dynamics is to start from some
metastable {\tmem{initial configuration}}, from which a positive gap in surface energy must be overcome before the system can reach the pure plus phase. This metastable configuration is characterized, on the one side, by an initial phase profile $u_0 \in \tmop{BV}$, and on the second side, by a reduced surface tension $\tau^r$ on the boundary of $u_0$. So a so-called {\it initial configuration} has actually two microscopic counterparts. First, to $u_0$ will correspond a set of initial spin configurations for the Glauber dynamics, while to the reduced surface tension $\tau^r$ we will associate a {\it dilution} event on the couplings $J$.

Before we can define the set of
initial configurations IC at (\ref{IC}), we need still a few more definitions.

\begin{definition}
  \label{def-u-regular}We say that a profile $u = \chi_U$ is regular if
  \begin{enumerate}[i.]
    \item $U$ is open and at positive distance from the boundary $\partial [0,
    1]^d$ of the unit cube,
    
    \item $\partial U$ is $d - 1$ rectifiable and
    
    \item for small enough $r > 0$, $[0, 1]^d \setminus \left( \partial U + B
    (0, r) \right)$ has exactly two connected components.
  \end{enumerate}
\end{definition}

We recall that $E \subset \mathbbm{R}^d$ is a $d - 1$ rectifiable set if there
exists a Lipschitz function mapping some bounded subset of $\mathbbm{R}^{d
- 1}$ onto $E$ (Definition 3.2.14 in {\cite{Fed69Book}}). It is the case in
particular of the boundary of Wulff crystals (Theorem 3.2.35 in
{\cite{Fed69Book}}) and of bounded polyhedral sets. It follows from
Proposition 3.62 in {\cite{AFP00Book}} that any $u = \chi_U$ regular belongs
to $\tmop{BV}$ and that $\partial U = \partial^{\star} u$ up to a
$\mathcal{H}^{d - 1}$-negligible set. Finally, we call
\begin{equation}
  \tmop{IC} = \left\{ \begin{array}{l}
    (u_0, \tau^r) \in \tmop{BV} \times \mathcal{C}([0, 1]^d, \mathbbm{R}):
    \text{ $u_0$ is regular and there is}\\
    \text{$\varepsilon > 0$ such that, for all } x \in \partial^{\star} u_0,
    \tau^{\min} (\tmmathbf{n}_x^{u_0}) + \varepsilon < \tau^r (x) \leqslant
    \tau^q (\tmmathbf{n}_x^{u_0}) .
  \end{array} \right\} \label{IC}
\end{equation}
Given an initial configuration $(u_0, \tau^r) \in \tmop{IC}$, we define the
{\tmem{reduced surface energy }}as
\begin{equation}
  \mathcal{F}^r \left( u \right) = \int_{\partial^{\star} u_0 \cap
  \partial^{\star} u} \tau^r (x) d\mathcal{H}^{d - 1} (x) +
  \int_{\partial^{\star} u \setminus \partial^{\star} u_0} \tau^q
  (\tmmathbf{n}_x) d\mathcal{H}^{d - 1} (x) \label{eq-def-Fr}
\end{equation}
which is obviously smaller than $\mathcal{F}^q \left( u \right)$. The reduced surface
energy of the initial phase profile $u_0$ is
\begin{equation}
  \mathcal{F}^r \left( u_0 \right) = \int_{\partial^{\star} u_0} \tau^r (x)
  d\mathcal{H}^{d - 1} (x) 
\end{equation}
while the cost of dilution is
\begin{equation}
  \mathcal{I}^r (u_0) = \int_{\partial^{\star} u_0} I_{\tmmathbf{n}_x} (\tau^r
  (x)) d\mathcal{H}^{d - 1} (x) \label{eq-def-Ir} .
\end{equation}

For any $\varepsilon > 0$, we call $\mathcal{C}_{\varepsilon} (u_0)$ the set
of sequences of phase profiles that evolve from $u_0$ to $\tmmathbf{1}$ (pure
plus phase) by jumps with $L^1$ norm less than $\varepsilon$, that is
\begin{equation}
  \mathcal{C}_{\varepsilon} (u_0) = \left\{ (v_i)_{i = 0 \ldots k}:
  \begin{array}{l}
    k \in \mathbbm{N} \text{ ; } v_0 = u_0 \text{ and } v_k =\tmmathbf{1}\\
    \forall i \in \{0, \ldots, k - 1\}, v_i \in \tmop{BV} \text{ and } \|v_{i
    + 1} - v_i \|_{L^1} \leqslant \varepsilon
  \end{array} \right\} . \label{Cepsu0}
\end{equation}
Finally we define the {\tmem{gap in surface energy}} or {\it energy barrier} for removing the droplet
$u_0$, under the reduced surface tension $\tau^r$: this is
\begin{equation}
  \mathcal{K}^r (u_0) = \lim_{\varepsilon \rightarrow 0^+} \inf_{v \in
  \mathcal{C}_{\varepsilon} (u_0)} \max_i \mathcal{F}^r (v_i) -\mathcal{F}^r
  (u_0) \label{eq-def-Kr-disc} .
\end{equation}

\subsection{The Glauber dynamics}

The Glauber dynamics is characterized by a family of \tmtextit{transition
rates} $c^J (x, \sigma)$ at which the configuration $\sigma$ changes to
$\sigma^x$ defined by
\[ \sigma^x_y = \left\{ \begin{array}{ll}
     \sigma_y & \text{if } y \neq x\\
     - \sigma_y & \tmop{if} y = x.
   \end{array} \right. \]
In other words, $c^J (x, \sigma)$ is also the rate at which the spin at $x$
flips. We make standard assumptions on the transition rates, namely:

\begin{description}
  \item[Finite range] There exists $r < \infty$, the \tmtextit{range of
  interaction}, such that $c^J (x, \sigma)$ is independent of $\sigma (y)$
  when $d (x, y) > r$, and of $J_e$ when $d (e, x) > r$.
  
  \item[Rates are bounded] The rates are uniformly bounded from below and
  above: there are $c_m, c_M \in (0, \infty)$ such that
  \[ c_m \leqslant c^J (x, \sigma) \leqslant c_M \text{, \ \ } \forall x \in
     \mathbbm{Z}^d, J \in \mathcal{J}, \sigma \in \Sigma . \]
  \item[Detailed balance] The rates satisfy the \tmtextit{detailed balance
  condition}: for all $\sigma \in \Sigma$ and $x \in \mathbbm{Z}^d$, for all
  $J \in \mathcal{J}$, the product
  \[ c^J (x, \sigma) \times \exp \left( \frac{\beta}{2} \sum_{y \sim x} J_{\{x, y\}}
     \sigma_x \sigma_y \right) \]
  does not depend on $\sigma_x$.
  
  \item[Translation invariance] If, for some $z \in \mathbbm{Z}^d$ one has
  \[ J'_e = J_{z + e}, \forall e \in E (\mathbbm{Z}^d) \text{ \ \ and \ \ }
     \sigma'_x = \sigma_{x + z}, \forall x \in \mathbbm{Z}^d, \]
  then $c^{J'} (x, \sigma') = c^J (x + z, \sigma)$.
  
  \item[Attractivity] Given any $\sigma, \sigma' \in \Sigma$ with $\sigma
  \leqslant \sigma'$, the equality $\sigma_x = \sigma'_x$ implies
  \[ \sigma'_x c^J (x, \sigma') \leqslant \sigma_x c^J (x, \sigma) . \]
\end{description}

Two important examples of the Glauber dynamics are the Metropolis dynamics,
often used in computer simulations, for which
\begin{eqnarray*}
  c^J (x, \sigma) & = & \max \left( 1, \exp \left( - \beta \sum_{y \sim x}
  J_{\{x, y\}} \sigma_x \sigma_y \right) \right)
\end{eqnarray*}
and the heat-bath dynamics
\begin{eqnarray*}
  c^J (x, \tau) & = & \mu^J_{\{x\}} \left( \sigma_x = - \tau_x | \sigma_y =
  \tau_y, \forall y \neq x \right) .
\end{eqnarray*}
Given the transition rates, one can proceed to a graphical construction of the
dynamics, as follows: equip each site $x \in \mathbbm{Z}^d$ with a Poisson
process valued on~$\mathbbm{R}^+$, with intensity $c_M$ (we recall that $c_M$ is a uniform bound on the rates of the dynamics). Consider now the time
$t$ growing from $0$. When the Poisson process at $x$ has a point at $t$, flip
the spin at position $x$ with probability $c^J (x, \sigma_t) / c_M$. Because
the flip rates are bounded, the determination of $\sigma_t (x)$ involves only
finitely many sites and therefore the dynamics is well defined, even in the
infinite domain $\mathbbm{Z}^d$.

We call $\tmmathbf{P}^J$ the law of the Markov process $(\sigma (t))_{t
\geqslant 0}$ associated to this dynamics, and $\tmmathbf{P}^J_{\sigma (0)}$
the law conditioned on the initial configuration $\sigma (0)$. It is
convenient to introduce the semi-group $T^J$ defined by
\begin{equation}
  \left[ T^J (t) f \right] (\sigma) =\tmmathbf{E}^J_{\sigma} \left( f (\sigma
  (t)) \right) .
\end{equation}
The detailed balance condition makes the generator
\begin{equation}
  (L^J f) (\sigma) = \sum_{x \in \mathbbm{Z}^d} c^J (x, \sigma) (f (\sigma^x)
  - f (\sigma))
\end{equation}
self-adjoint in $L^2 (\mu^{J, +})$, and ensures that the Gibbs measure
$\mu^{J, +}$ is reversible for the dynamics. One way of quantifying the
approach to equilibrium of the dynamics in infinite volume is therefore the
averaged autocorrelation
\begin{eqnarray}
  A^{\lambda} (t) & = & \mathbbm{E} \left( \left[ \tmop{Var}_{\mu^{J, +}} (T^J
  (t) \pi_0) \right]^{\lambda} \right)  \label{def-A}\\
  & = & \mathbbm{E} \left( \left[ \int_{\Sigma} \left[ (T^J (t) \pi_0) (\rho)
  - \mu^{J, +} (\sigma_0) \right]^2 d \mu^{J, +} (\rho) \right]^{\lambda}
  \right) \nonumber\\
  & = & \mathbbm{E} \left\| T^J (t) \pi_0 - \mu^{J, +} (\sigma_0)
  \right\|_{L^2 (\mu^{J, +})}^{2 \lambda} \nonumber
\end{eqnarray}
where $\pi_0: \Sigma \rightarrow \mathbbm{R}$ is the function that, to the
spin configuration $\sigma$, associates $\pi_0 (\sigma) = \sigma_0$, and
$\lambda > 0$ is an arbitrary positive number.

Although our work aims primary at describing the asymptotics of the averaged
autocorrelation, we also derive some upper bounds on the relaxation and mixing
times in finite volume. To this aim, we introduce the Dirichlet form
\begin{eqnarray*}
  \mathcal{E}^{J, \rho}_{\Lambda} (f, f) & = & \frac{1}{2} \sum_{\sigma, x}
  \mu_{\Lambda}^{J, \rho} (\sigma) c^J (x, \sigma) (f (\sigma^x) - f
  (\sigma))^2
\end{eqnarray*}
and the spectral gap
\begin{eqnarray}
  \tmop{gap} L^{J, \Lambda, \rho} & = & \inf \left\{ \frac{\mathcal{E}^{J,
  \rho}_{\Lambda} (f, f)}{\tmop{Var}_{\Lambda}^{J, \rho}(f)} ; f \text{ with }
  \tmop{Var}_{\Lambda}^{J, \rho}(f) \neq 0 \right\} .  \label{eq-def-gap}
\end{eqnarray}
The inverse of the spectral gap is the relaxation time
\begin{eqnarray}
  \mathcal{T}_{\tmop{rel}}^{J, \Lambda, \rho} & = & 1 / \tmop{gap} L^{J,
  \Lambda, \rho} .  \label{eq-def-Trel}
\end{eqnarray}
One fundamental property of the spectral gap is that, for any $f$,
\begin{eqnarray}
  \tmop{Var}_{\Lambda}^{J, \rho} (T_{\Lambda}^{J, \rho}(t) f) & \leqslant & \exp
  \left( - 2 t / \mathcal{T}_{\tmop{rel}}^{J, \Lambda, \rho} \right)
  \tmop{Var}_{\Lambda}^{J, \rho} (f)  \label{eq-decay-var-gap}
\end{eqnarray}
where $T_{\Lambda}^{J, \rho}$ is the semi-group corresponding to the Glauber dynamics restricted to $\Lambda$ with boundary condition $\rho$.

Finally, we define the total variation distance between two probability
measures $\mu$ and $\nu$, as
\begin{eqnarray*}
  \left\| \mu - \nu \right\|_{\tmop{TV}} & = & \sup_A \mu (A) - \nu (A) .
\end{eqnarray*}
The mixing time is
\begin{eqnarray}
  \mathcal{T}_{\tmop{mix}}^{J, \Lambda, \rho} & = & \inf \left\{ t > 0:
  \sup_{\sigma_0: \sigma_0 = \rho \text{ on } \Lambda^c} \left\|
  \tmmathbf{P}^{J, \Lambda}_{\sigma_0} (\sigma_t \in .) - \mu^{J,
  \rho}_{\Lambda} \right\|_{\tmop{TV}} \leqslant e^{- 1} \right\} .
  \label{eq-def-Tmix}
\end{eqnarray}
Given
any function $f$, we have
\begin{eqnarray}
  \sup_{\sigma_0: \sigma_0 = \rho \text{ on } \Lambda^c} \left|
  \tmmathbf{E}^{J, \Lambda}_{\sigma_0} (f (\sigma_t)) - \mu^{J,
  \rho}_{\Lambda} (f) \right| & \leqslant & \exp \left( - \left\lfloor
  {t}/{\mathcal{T}_{\tmop{mix}}^{J, \Lambda, \rho}} \right\rfloor \right)
  \|f\|_{\infty} .  \label{eq-decay-Tmix}
\end{eqnarray}

\section{Main Results}

\label{sec:results}\subsection{Slow dynamics in infinite volume}

The main result of the paper is a rigorous lower bound on the averaged
autocorrelation which confirm some of the claims of the foreseeing paper
{\cite{HF87PhysRevB}}. We recall that the set of (metastable) initial configurations IC is
defined at (\ref{IC}), while the surface energy $\mathcal{F}^r$, the cost of
initial dilution $\mathcal{I}^r (u_0)$ and the gap in surface energy
$\mathcal{K}^r (u_0)$ associated to $(u_0, \tau^r) \in \tmop{IC}$ are defined
at (\ref{eq-def-Fr}), (\ref{eq-def-Ir}) and (\ref{eq-def-Kr-disc}),
respectively. Now we define the exponent
\begin{equation}
  \mathcal{X}_{\lambda} = \inf_{(u_0, \tau^r) \in \tmop{IC}: \mathcal{K}^r
  (u_0) > 0} \frac{\mathcal{I}^r (u_0) + \lambda \mathcal{F}^r
  (u_0)}{\mathcal{K}^r (u_0)} \label{eq-def-Xl}
\end{equation}
for any $\lambda \geqslant 0$. When no initial configuration $(u_0, \tau^r)
\in \tmop{IC}$ leads to a positive surface energy gap $\mathcal{K}^r (u_0) >
0$ we adopt the convention that $\mathcal{X}_{\lambda} = + \infty$. Under mild
conditions the exponent $\mathcal{X}_{\lambda}$ is finite and even bounded
from above:

\begin{proposition}
  \label{prop:gap}Assume that $\tau^{\min} (\tmmathbf{n}) < \tau^q
  (\tmmathbf{n})$ for all $\tmmathbf{n} \in \mathcal{S}^{d - 1}$. Then:
\begin{enumerate}  
\item
  For
  all $(u_0, \tau^r) \in \tmop{IC}$ such that the boundary of $u_0$ is
  $\mathcal{C}^1$, the gap in surface energy $\mathcal{K}^r (u_0)$ is strictly
  positive. 
  \item If $0 <\mathbbm{P}(J_e = 0) < 1 - p_c (d)$, for
  every $\lambda \geqslant 0$ there is $C > 0$ such that, for any $\beta$
  large enough,
  \begin{eqnarray}
    \mathcal{X}_{\lambda} & \leqslant & \frac{C}{\beta} .  \label{eq-upb-Xl}
  \end{eqnarray}
  \end{enumerate}
\end{proposition}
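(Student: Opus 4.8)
I would establish the two assertions separately; the first places a family of admissible pairs into the feasible set of (\ref{eq-def-Xl}) (so in particular $\mathcal{X}_{\lambda}<\infty$), while the second is a quantitative estimate for which it suffices to exhibit one convenient initial configuration and to bound the three quantities $\mathcal{I}^r(u_0)$, $\mathcal{F}^r(u_0)$, $\mathcal{K}^r(u_0)$ from (\ref{eq-def-Ir}), (\ref{eq-def-Fr}), (\ref{eq-def-Kr-disc}).

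\textbf{Assertion 1 (positivity of the gap).} The plan is to pass from the discrete mountain--pass value (\ref{eq-def-Kr-disc}) to a continuous one and then to locate the bottleneck on a small $L^1$--sphere around $u_0$. Since $\tau^r$ is continuous and bounded below by $\varepsilon>0$ on the compact set $\partial^{\star}u_0$, and $\tau^q(\tmmathbf{n})\geqslant\inf_{\tmmathbf{m}}\tau^q(\tmmathbf{m})>0$, one has $\mathcal{F}^r(v)\geqslant c_0\mathcal{P}(U)$ for every $v=\chi_U\in\tmop{BV}$, with $c_0>0$. Hence any path nearly realizing the infimum in (\ref{eq-def-Kr-disc}) (for small $\varepsilon$) consists of profiles of uniformly bounded perimeter, and by $\tmop{BV}$--compactness one extracts, as $\varepsilon\to0$, a continuous $L^1$--path from $u_0$ to $\tmmathbf{1}$; it is then enough to bound the corresponding continuous mountain--pass value from below. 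For this one shows that there are $s_0>0$ and $\delta_0>0$ with $\mathcal{F}^r(v)\geqslant\mathcal{F}^r(u_0)+\delta_0$ whenever $\|v-u_0\|_{L^1}\in[s_0,2s_0]$, since any continuous path from $u_0$ to $\tmmathbf{1}$ crosses such a shell. Writing $v=\chi_V$ and $S=U_0\triangle V$ and cancelling the common part $\partial^{\star}u_0\cap\partial^{\star}v$, one gets the bookkeeping identity
\[
  \mathcal{F}^r(v)-\mathcal{F}^r(u_0)=\int_{\partial^{\star}v\setminus\partial^{\star}u_0}\tau^q(\tmmathbf{n}_x^v)\,d\mathcal{H}^{d-1}(x)-\int_{\partial^{\star}u_0\setminus\partial^{\star}v}\tau^r(x)\,d\mathcal{H}^{d-1}(x),
\]
where $\partial^{\star}v\setminus\partial^{\star}u_0=\partial^{\star}S\setminus\partial^{\star}u_0$ and $\partial^{\star}u_0\setminus\partial^{\star}v=\partial^{\star}S\cap\partial^{\star}u_0$ up to $\mathcal{H}^{d-1}$--null sets.

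This reduces the problem to showing that, for $S$ of small volume, the interface newly created off $\partial^{\star}u_0$ is strictly more expensive (measured by $\tau^q$) than the detached portion of $\partial^{\star}u_0$ (priced by $\tau^r$). Here the $\mathcal{C}^1$ regularity of $\partial U_0$ is essential: it lets one apply the relative isoperimetric inequality in the Lipschitz domains $\dot U_0$ and $(0,1)^d\setminus\overline{U_0}$ to bound $\mathcal{H}^{d-1}(\partial^{\star}v\setminus\partial^{\star}u_0)$ from below by $c\,(\mathcal{L}^d(S\cap U_0)^{(d-1)/d}\vee\mathcal{L}^d(S\setminus U_0)^{(d-1)/d})$, and, on an almost--flat chart of $\partial U_0$, a Gauss--Green/projection argument controls the detached piece; combined with $\tau^r\leqslant\tau^q(\tmmathbf{n}^{u_0})$ this yields the strict inequality. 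Finally one checks that $\mathcal{F}^r$ is $L^1$--lower semicontinuous along the extracted path (the anisotropic perimeter $\mathcal{F}^q$ is lsc, and the correction term carried by the fixed rectifiable surface $\partial^{\star}u_0$ is upper semicontinuous), which transfers the shell estimate to the limiting path and gives $\mathcal{K}^r(u_0)\geqslant\delta_0>0$.

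\textbf{Assertion 2 (the bound $\mathcal{X}_{\lambda}\leqslant C/\beta$).} Since $\mathbbm{P}(J_e=0)>0$ we have $J^{\min}=0$, hence $\tau^{\min}(\tmmathbf{n})=0$ for every $\tmmathbf{n}$ (with all couplings $0$ no edge opens and $\Phi^{J,w}_{\mathcal{R}}(\mathcal{D}_{\mathcal{R}})=1$), so the hypothesis $\tau^{\min}(\tmmathbf{n})<\tau^q(\tmmathbf{n})$ holds as soon as $\beta>\hat{\beta}_c$. Fix a Euclidean ball $B=B(x_c,\rho)$ centered at the center $x_c$ of the cube, with $\rho$ so small that $\overline{B}\subset(0,1)^d$, and take $\tau^r\equiv1$; for $\beta$ large, $1\leqslant\inf_{\tmmathbf{n}}\tau^q(\tmmathbf{n})$, so $(\chi_B,1)\in\tmop{IC}$ (with $\varepsilon=\tfrac12$). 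Then $\mathcal{F}^r(\chi_B)=\mathcal{H}^{d-1}(\partial B)=O(1)$. Next, $\tau^J_{\mathcal{R}}=0$ as soon as every edge of $\mathbbm{Z}^d$ straddling the median hyperplane of $\mathcal{R}$ orthogonal to $\tmmathbf{n}$ carries coupling $0$, an event of $\mathbbm{P}$--probability at least $\mathbbm{P}(J_e=0)^{\|\tmmathbf{n}\|_1N^{d-1}(1+o(1))}$; hence $I_{\tmmathbf{n}}(\tau)\leqslant\|\tmmathbf{n}\|_1|\log\mathbbm{P}(J_e=0)|$ for all $\tau\geqslant0$, and integrating over $\partial B$ gives $\mathcal{I}^r(\chi_B)=O(|\log\mathbbm{P}(J_e=0)|)=O(1)$. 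For the gap, along any $(v_i)\in\mathcal{C}_{\varepsilon}(\chi_B)$ let $i^{\star}$ be the first index with $\mathcal{L}^d(B\triangle U_{i^{\star}})\geqslant\tfrac12\mathcal{L}^d(B)$; the relative isoperimetric inequality applied to $B\setminus U_{i^{\star}}$ inside $\dot B$ and to $U_{i^{\star}}\setminus B$ inside $(0,1)^d\setminus\overline{B}$ shows that one of these pieces has its interior boundary — a subset of $\partial^{\star}v_{i^{\star}}\setminus\partial^{\star}u_0$ — of $\mathcal{H}^{d-1}$--measure at least some $c_1=c_1(d,\rho)>0$, so the bookkeeping identity with $\tau^r\equiv1$ and $\tau^q(\tmmathbf{n})\geqslant\tau^q_{\min,\beta}:=\inf_{\tmmathbf{m}}\tau^q(\tmmathbf{m})$ gives
\[
  \mathcal{F}^r(v_{i^{\star}})-\mathcal{F}^r(\chi_B)\;\geqslant\;c_1\tau^q_{\min,\beta}-\mathcal{H}^{d-1}(\partial B),
\]
whence $\mathcal{K}^r(\chi_B)\geqslant c_1\tau^q_{\min,\beta}-\mathcal{H}^{d-1}(\partial B)$ (in particular positive, re-proving Assertion 1 in this case). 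It remains to observe that $\tau^q_{\min,\beta}\geqslant c\beta$ for large $\beta$: since $\mathbbm{P}(J_e=0)<1-p_c(d)$ we have $\mathbbm{P}(J_e\geqslant\eta)>p_c(d)$ for $\eta$ small, so any surface separating the two faces of $\mathcal{R}$ must cut $\Theta(N^{d-1})$ edges with $J_e\geqslant\eta$, and a Peierls/minimal--cut bound on $\Phi^{J,w}_{\mathcal{R}}(\mathcal{D}_{\mathcal{R}})$ yields $\tau^J_{\mathcal{R}}\geqslant\beta c_J-c'$ uniformly, hence $\tau^q_{\beta}(\tmmathbf{n})\geqslant\tfrac12\beta c_J$ eventually. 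Combining,
\[
  \mathcal{X}_{\lambda}\;\leqslant\;\frac{\mathcal{I}^r(\chi_B)+\lambda\mathcal{F}^r(\chi_B)}{\mathcal{K}^r(\chi_B)}\;\leqslant\;\frac{O(1)+\lambda\,O(1)}{\tfrac12 c_1c_J\beta-\mathcal{H}^{d-1}(\partial B)}\;\leqslant\;\frac{C(\lambda)}{\beta}
\]
for $\beta$ large.

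\textbf{Main obstacles.} Two steps should carry most of the work: (i) the relative--isoperimetric lower bound on the newly created interface together with the proof that it strictly outweighs the detached part of $\partial^{\star}u_0$ — delicate in Assertion 1 precisely because $\tau^r$ may equal $\tau^q(\tmmathbf{n}^{u_0})$, which is why smoothness of $\partial U_0$ is imposed (it is much easier in Assertion 2, where $\tau^r$ is bounded while $\tau^q\sim\beta$); and (ii) the linear lower bound $\tau^q_{\beta}(\tmmathbf{n})\geqslant c\beta$, classical in spirit but relying on the supercriticality $\mathbbm{P}(J_e>0)>p_c(d)$ to exclude cheap dual walls.
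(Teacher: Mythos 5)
Your treatment of the second assertion is essentially sound, but the first assertion — which is the hard part — has a genuine gap exactly where the work begins. After the bookkeeping identity (the paper's Lemma~\ref{lem-lwb-Fu-Fu0}, cf.~(\ref{eq:mtr})), you must show that for profiles at a suitable intermediate $L^1$-distance from $u_0$ the $\tau^q$-cost of the newly created interface strictly dominates the $\tau^r$-credit of the detached part of $\partial^{\star}u_0$; neither of your two tools achieves this alone, and their combination is never carried out. The relative isoperimetric bound $c\,\mathcal{L}^d(S)^{(d-1)/d}$ is useless against a thin layer $S$ hugging a patch of $\partial U_0$ of area $A$ and thickness $t$: the detached credit is of order $A$ while $(At)^{(d-1)/d}\ll A$. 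The Gauss--Green/projection bound gives at best ``new area $\geqslant$ detached area'', and no strict surplus can follow from ``$\tau^r\leqslant\tau^q(\tmmathbf{n}^{u_0})$'': in the equality case $\tau^r\equiv\tau^q(\tmmathbf{n}^{u_0})$ one has $\mathcal{K}^r(u_0)=0$ (shrink $U_0$ homothetically; then $\mathcal{F}^r(u_t)=\mathcal{F}^q(u_t)=(1-t)^{d-1}\mathcal{F}^r(u_0)$ decreases along the path), so no amount of $\mathcal{C}^1$ smoothness can rescue your claimed strict inequality — contrary to your ``main obstacles'' remark, the effective hypothesis is a uniform positive margin $\inf_x\bigl(\tau^q(\tmmathbf{n}_x^{u_0})-\tau^r(x)\bigr)>0$, which is precisely what enters the paper's bound (\ref{eq:Kr:lwb}). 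Even granting such a margin, merging the two regimes quantitatively requires localizing to pieces of diameter $h$ on which $\tmmathbf{n}^{u_0}$ and $\tau^r$ are almost constant, paying the grid-cut cost $\|u-u_0\|_{L^1}\,d\,\tau^q(\tmmathbf{e}_1)/h$ (Lemma~\ref{lem-dec-vwi}), applying on each piece the isoperimetric inequality for the modified Wulff body $\tilde{\mathcal{W}}$, whose volume is positive precisely because of the margin (Lemma~\ref{lem:Frm:pos}), and then optimizing over the $L^1$-distance; this is the content of the paper's Theorem~\ref{thm:Kr:pos}, which your sketch compresses into one sentence. Incidentally, the extraction of a continuous path by BV-compactness is both unjustified (the number of steps is unbounded, and you have no equicontinuity) and unnecessary: any $\varepsilon$-discrete path with $\varepsilon<s_0$ already visits your shell.

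For the second assertion your route genuinely differs from the paper's and is acceptable: instead of invoking Theorem~\ref{thm:Kr:pos}, you bound $\mathcal{K}^r(\chi_B)$ directly at the first index where half the droplet volume has flipped, getting $c_1\inf_{\tmmathbf{n}}\tau^q(\tmmathbf{n})-\mathcal{H}^{d-1}(\partial B)$; this works because here $\tau^r$ is bounded while $\tau^q\sim\beta$, and it buys a more elementary argument for this specific configuration (the paper instead reuses the general bound (\ref{eq:Kr:lwb}) together with the lattice symmetry of the Wulff shape). Two caveats: the linear growth $\tau^q_{\beta}(\tmmathbf{n})\geqslant c\beta$ should simply be quoted from Proposition~2.13 of \cite{Wou09CMP}, as the paper does at (\ref{tauq:beta}) — your Peierls/minimal-cut sketch ignores the entropy of cut surfaces; and in the case $\mathcal{L}^d(U_{i^{\star}}\setminus B)\geqslant\tfrac14\mathcal{L}^d(B)$ you should observe that, $i^{\star}$ being the first crossing, this volume is also at most roughly $\tfrac12\mathcal{L}^d(B)$, so the minimum in the relative isoperimetric inequality in $(0,1)^d\setminus\overline{B}$ is indeed of order $\rho^d$ and $c_1=c_1(d,\rho)>0$ as claimed.
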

Note that the above proposition is a corollary of Theorem~\ref{thm:Kr:pos} which is to be found, together with its proof, in Section~\ref{sec:lwb:gap}.

Now we present our main result, which relates the decay of the autocorrelation of the infinite volume Glauber dynamics in the plus phase with the exponent $\mathcal{X}_{\lambda}$ (we recall that $\mathcal{N}$ and $\mathcal{N}_I$ are defined respectively at (\ref{N}) and (\ref{NI})).

\begin{theorem}
  \label{thm-lwb-A}For any $\beta > \hat{\beta}_c$ such that $\beta \notin
  \mathcal{N} \cup \mathcal{N}_I$, for any $\lambda \geqslant 1$, for any $\delta >
  0$, for any $t > 0$ large enough,
  \begin{eqnarray}
    A^{\lambda} (t) & \geqslant & t^{-\mathcal{X}_{\lambda} - \delta} . 
    \label{eq-lwb-A}
  \end{eqnarray}
\end{theorem}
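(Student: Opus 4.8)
The plan is to exhibit, inside the equilibrium plus phase, a metastable droplet of the minus phase sitting on a diluted region of the couplings, and to show that the Glauber dynamics cannot erase it before time $t$. Fix $(u_0,\tau^r)\in\mathrm{IC}$ with $\mathcal K^r(u_0)>0$ (we optimise over such pairs at the end, so we may take $\partial u_0$ of class $\mathcal C^1$ in view of Proposition~\ref{prop:gap}) and a mesoscopic scale $L=L(t)\to\infty$ to be tuned later. First I would introduce the \emph{dilution event} $\mathrm{Dil}_L$ on the couplings, requiring that in a bounded-width tube around $L\,\partial^{\star}u_0$ the couplings be lowered so that the local surface tension does not exceed $\tau^r$ on the corresponding portion of the interface. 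By the lower large deviation principle for surface tension (recall \eqref{def-In} and Theorem~1.6 of \cite{Wou09CMP}, where the hypothesis $\beta\notin\mathcal N_I$ guarantees that the rate function $I_{\mathbf n}$ is the genuine exponential rate), together with the independence of surface tensions over disjoint regions, one obtains
\[
  \mathbbm P(\mathrm{Dil}_L)\ \ge\ \exp\bigl(-(1+o(1))\,L^{d-1}\,\mathcal I^r(u_0)\bigr),
\]
with $\mathcal I^r(u_0)$ as in \eqref{eq-def-Ir}.

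Next I would set up the static picture in a box $\Lambda$ of side of order $t$; by the bounded speed of propagation of the graphical construction this restriction does not affect $T^J(t)\pi_0$ at the origin, and by the DLR equation it does not affect the equilibrium estimates below. Let $\mathcal A_L$ be the spin event that the coarse-grained phase profile is within $L^1$-distance $\rho$ of $u_0$ (rescaled to side $L$) and that the origin lies in the minus region at distance $\ge\varepsilon L$ from the interface. On $\mathrm{Dil}_L$, the Fortuin-Kasteleyn representation together with the renormalisation of \cite{Wou08SPA} (legitimate since $\beta>\hat\beta_c$ and $\beta\notin\mathcal N$) gives
\[
  \mu^{J,+}(\mathcal A_L)\ \ge\ \exp\bigl(-(1+o(1))\,L^{d-1}\,\mathcal F^r(u_0)\bigr):
\]
the interface of $u_0$ is priced with the reduced tension $\tau^r$ thanks to the dilution, and there is no other interface to pay for. (The complementary picture, the origin lying in the plus phase, is of course $\mu^{J,+}$-typical.)

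The dynamical heart is a bottleneck estimate. I would show that any single-flip path of spin configurations leading from $\mathcal A_L$ to the pure plus configuration must visit a configuration whose coarse-grained profile $v$ satisfies $\mathcal F^r(v)\ge\mathcal F^r(u_0)+\mathcal K^r(u_0)-o(1)$: a single flip moves the coarse profile by $o(1)$ in $L^1$-norm, so the trace of such a path is, up to an $o(1)$ error, an element of some $\mathcal C_\varepsilon(u_0)$, and \eqref{eq-def-Kr-disc} applies. This is precisely the point where computing the gap along continuous evolutions of the droplet — rather than along slices of fixed total magnetisation as in \cite{Mar99LNM,BI04AHP} — is used, and where the $\tau^r/\tau^q$ bookkeeping of \eqref{eq-def-Fr} enters. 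Combined with the matching surface-tension upper bound $\mu^{J,+}(\{\text{coarse profile}=v\})\le\exp(-(1+o(1))L^{d-1}\mathcal F^r(v))$, a standard conductance argument then shows that, uniformly over starting configurations $\sigma\in\mathcal A_L$, with probability $\ge1-o(1)$ the origin remains in the minus region up to any time $t\ll\exp(L^{d-1}\mathcal K^r(u_0))$; conditionally on this, the spin at the origin relaxes, on a time scale negligible compared with $t$, to within $o(1)$ of $-m^J_\beta$, where $m^J_\beta:=\mu^{J,+}(\sigma_0)>0$ ($\mathbbm P$-a.s., since $\beta>\hat\beta_c\ge\beta_c$). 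Since $\mu^{J,+}(T^J(t)\pi_0)=m^J_\beta$ by stationarity, we obtain on $\mathrm{Dil}_L$, for $t$ large with $t\ll\exp(L^{d-1}\mathcal K^r(u_0))$, that $T^J(t)\pi_0\le-m^J_\beta+o(1)$ on $\mathcal A_L$ and hence
\[
  \mathrm{Var}_{\mu^{J,+}}\bigl(T^J(t)\pi_0\bigr)\ \ge\ \mu^{J,+}(\mathcal A_L)\,\bigl(2m^J_\beta-o(1)\bigr)^2\ \ge\ c\,(m^J_\beta)^2\,\exp\bigl(-(1+o(1))\,L^{d-1}\,\mathcal F^r(u_0)\bigr).
\]

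It remains to integrate over the disorder and to tune $L$. Raising the last display to the power $\lambda\ge1$ and taking $\mathbbm E$, a routine argument — using $m^J_\beta>0$ $\mathbbm P$-a.s.\ and that $\mathrm{Dil}_L$ merely weakens, and does not sever, the couplings along a lower-dimensional tube, so that $\mathbbm E[(m^J_\beta)^{2\lambda}\mathbbm 1_{\mathrm{Dil}_L}]\ge c'\,\mathbbm P(\mathrm{Dil}_L)$ for $t$ large — yields
\[
  A^\lambda(t)\ \ge\ c''\,\exp\bigl(-(1+o(1))\,L^{d-1}\,(\mathcal I^r(u_0)+\lambda\,\mathcal F^r(u_0))\bigr).
\]
Now choose $L$ so that $L^{d-1}=(1+\eta)\,\mathcal K^r(u_0)^{-1}\log t$ for a small $\eta>0$; this makes $\exp(L^{d-1}\mathcal K^r(u_0))=t^{1+\eta}\gg t$, so the time constraint above is met, and the bound becomes $A^\lambda(t)\ge t^{-(1+\eta)(1+o(1))(\mathcal I^r(u_0)+\lambda\mathcal F^r(u_0))/\mathcal K^r(u_0)}$. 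Minimising over $(u_0,\tau^r)\in\mathrm{IC}$ with $\mathcal K^r(u_0)>0$, then letting $\eta\downarrow0$ and $t\to\infty$, we recover \eqref{eq-lwb-A} for every $\delta>0$ and all $t$ large. The main obstacle is the bottleneck step: converting the variational continuum quantity $\mathcal K^r(u_0)$ into a genuine microscopic energy barrier requires controlling, uniformly along the dynamical path, the coarse-graining of arbitrary spin configurations, the lower \emph{and} upper surface-tension estimates for the diluted couplings, and the bookkeeping between the reduced tension $\tau^r$ on the original interface and the quenched tension $\tau^q$ on any newly created interface.
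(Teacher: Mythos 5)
Your skeleton reproduces the paper's strategy: a dilution event along $L\,\partial^{\star}u_0$ with cost $\exp(-L^{d-1}(\mathcal I^r(u_0)+o(1)))$, an equilibrium lower bound $\exp(-L^{d-1}(\mathcal F^r(u_0)+o(1)))$ for the droplet event conditionally on dilution, an energy barrier $\mathcal K^r(u_0)$ that any almost-$L^1$-continuous coarse-grained path from $u_0$ to $\mathbf 1$ must cross, and the tuning $L^{d-1}\sim(1+\eta)\log t/\mathcal K^r(u_0)$ followed by optimisation over $(u_0,\tau^r)$. The genuine gap is in the step that converts persistence of the droplet into a lower bound on the autocorrelation of the spin at the origin. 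You claim (a) that up to times $t\ll\exp(L^{d-1}\mathcal K^r(u_0))$ the origin remains in the minus region, and (b) that, conditionally on this, $T^J(t)\pi_0$ is within $o(1)$ of $-m^J_\beta$. Neither follows from the bottleneck estimate: the barrier only constrains paths of profiles from $u_0$ to $\mathbf 1$, and the droplet can drift or deform away from the origin without ever entering the bottleneck (detaching from the diluted region costs at most $\mathcal F^q(u_0)-\mathcal F^r(u_0)$, which need not exceed $\mathcal K^r(u_0)$), so the origin need not stay in the minus region; and (b) is a local-equilibration statement inside a disordered droplet, with dynamically evolving boundary conditions, which requires its own proof and is nowhere supplied. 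Two smaller points: a conductance argument does not give escape-time bounds \emph{uniformly} over starting configurations in $\mathcal A_L$, only for the dynamics started from the restricted equilibrium measure (which is in fact all you use, and all that Proposition~\ref{prop-mt} provides via a union bound over $\exp(N^{d-1}(\mathcal K^r(u_0)-\xi/2))$ sampled times of the stationary process, combined with Lemmas~\ref{lem-MK-bottleneck}, \ref{lem-FB} and \ref{lem:contevol}); and $m^J_\beta=\mu^{J,+}(\sigma_0)>0$ fails $\mathbbm P$-a.s.\ when $\mathbbm P(J_e=0)>0$, since the origin may be severed from its neighbours, though positivity with positive probability would suffice.

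The paper avoids your problematic step entirely. Lemma~\ref{lem-A-finvol} uses Minkowski's inequality, translation invariance of $\mathbbm E$ and of the dynamics, and attractivity to bound $A^\lambda(t)$ from below by the (disorder-averaged, $\lambda$-th power of the) probability that the finite-volume dynamics satisfies $T^{J,+}_{\Lambda_N}(t)\,m_{\Lambda_N}\leqslant m_\beta-2\varepsilon$, on the event $\mu^{J,+}(m_{\Lambda_N})\geqslant m_\beta-\varepsilon$ (guaranteed conditionally on dilution by Proposition~\ref{prop-meq}). Thus the observable is the block magnetisation, not $\sigma_0$: one only needs the magnetisation to stay $2\varepsilon$ below its plus-phase value while the profile has not crossed the barrier, which is exactly Proposition~\ref{prop-mt}; no claim about the position of the origin relative to the droplet, nor any relaxation towards $-m_\beta$, is required, and the size of the deviation only affects multiplicative constants, not the exponent of $t$. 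If you replace your claims (a)--(b) by this magnetisation-based reduction, the rest of your argument goes through and coincides with the paper's proof.
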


\begin{remark}
  In general we have not been able to compute $\mathcal{K}^r (u_0)$ nor
  $\mathcal{X}_{\lambda}$. Note however that we give another formulation of
  $\mathcal{K}^r (u_0)$ in Theorem \ref{thm-Kdisc-Kcont}. This alternate
  formulation is the key for computing $\mathcal{K}^r (u_0)$ in two particular
  cases, see Sections \ref{sec-Kr-circle} and \ref{sec-Kr-square}. These
  computations also point out the fact that the constraint of continuous
  evolution in the definition of $\mathcal{K}^r (u_0)$ at
  (\ref{eq-def-Kr-disc}) makes, in some cases, the gap in surface energy
  bigger than if we only take into account the constraint of continuous
  evolution of the overall magnetization (see Lemma \ref{lem:Kr:mag}).
\end{remark}

Theorem~\ref{thm-lwb-A} above was established some time ago during the PhD
Thesis {\cite{Wou07PhD}} directed by Thierry Bodineau. Later on the author
received the indications by Fabio Martinelli of how a corresponding upper bound
could be established. The result is complementary to the former Theorem and
confirms that the autocorrelation indeed decays as a power of time when $d =
2$. Furthermore, when $0 <\mathbbm{P}(J_e = 0) < 1 - p_c (d)$, the exponent in both the upper and the lower bounds match up to a multiplicative constant as $\beta \rightarrow \infty$.

\begin{theorem}
  \label{thm-upb-A}Assume that $d = 2$. For any $\lambda > 0$, there is $c >
  0$ such that, for all $\beta > \hat{\beta}_c$, for all $t > 0$ large enough,
  \begin{eqnarray}
    A^{\lambda} (t) & \leqslant & t^{- c / \beta} .  \label{Aupb1}
  \end{eqnarray}
\end{theorem}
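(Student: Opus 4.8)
The plan is to prove the upper bound on the autocorrelation in $d=2$ by combining a relaxation-time estimate in finite volume with a finite-speed-of-propagation argument to transfer it to infinite volume. First I would recall that, since the dynamics has bounded rates and finite range, information propagates at finite speed: for the infinite-volume process started from the plus boundary configuration restricted appropriately, the value of $T^J(t)\pi_0$ can be approximated with exponentially small error by the dynamics restricted to a box $\Lambda_L$ of side $L = L(t)$ polynomial in $t$, say $L = t^2$. This reduces the control of $\mathrm{Var}_{\mu^{J,+}}(T^J(t)\pi_0)$ to the control of $\mathrm{Var}_{\Lambda_L}^{J,+}(T^{J,+}_{\Lambda_L}(t)\pi_0)$, up to additive errors that are negligible on the scale $t^{-c/\beta}$. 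By \eqref{eq-decay-var-gap}, the latter variance is bounded by $\exp(-2t/\mathcal{T}_{\mathrm{rel}}^{J,\Lambda_L,+})\,\mathrm{Var}_{\Lambda_L}^{J,+}(\pi_0) \leqslant \exp(-2t/\mathcal{T}_{\mathrm{rel}}^{J,\Lambda_L,+})$, so everything comes down to an upper bound on the typical relaxation time in a box of polynomial size.

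The core step, then, is to show that for $d=2$ and $\beta>\hat\beta_c$ one has, with $\mathbb{P}$-probability close to one (in fact with probability $\geqslant 1 - \exp(-L^{\theta})$ for some $\theta>0$, which is what one needs after a union bound over the disorder), the bound $\mathcal{T}_{\mathrm{rel}}^{J,\Lambda_L,+} \leqslant \exp(C L / \beta \cdot (\log L)^{?})$ — more precisely, something of the form $\mathcal{T}_{\mathrm{rel}}^{J,\Lambda_L,+} \leqslant \exp\bigl((C/\beta)\cdot (\text{poly-log in } L)\bigr)$ would already suffice, but the cleanest target is a bound like $\exp(c' L/\beta)$ and then one chooses the relation between $L$ and $t$ so that $\exp(-2t/\exp(c'L/\beta))$ is still smaller than $t^{-c/\beta}$; this forces $L$ to grow only logarithmically in $t$, $L \sim (\beta/c')\log t$, which is compatible with the finite-speed step provided the speed is at most polynomial — and it is, since rates are bounded. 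In $d=2$ the relaxation time in a box with plus boundary condition at low temperature is governed by the energy barrier for nucleating a contractible plus region, i.e. by the worst bottleneck one meets while shrinking a minus droplet; the canonical-paths / bottleneck machinery (as used in \cite{Mar99LNM,BM02JSP}) gives $\mathcal{T}_{\mathrm{rel}} \leqslant \exp(\beta \cdot \Gamma_L)$ where $\Gamma_L$ is the maximal surface-energy cost along an optimal monotone contraction of a minus region inside $\Lambda_L$. For $d=2$ a minus droplet can be removed by peeling it off from one side so that the interface length only ever decreases once the droplet touches the plus boundary — up to a correction of order one times $\log L$ coming from lattice/entropic effects and from the minimal segment one is forced to create — hence $\Gamma_L = O(\log L)$ uniformly in the disorder $J \in [0,1]^{E}$ (here one uses crucially that $J_e \leqslant 1$, so each edge contributes at most $1$ to the Hamiltonian, and that $d=2$ so that the interface is one-dimensional and its length is the only relevant quantity). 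Combined with the reduction above, $\mathcal{T}_{\mathrm{rel}}^{J,\Lambda_L,+} \leqslant \exp(C\beta \log L) = L^{C\beta}$, and then $\mathrm{Var}_{\mu^{J,+}}(T^J(t)\pi_0) \leqslant t^{-1}$-type decay is lost, so one must instead track constants carefully: with $L \sim \text{const}\cdot\log t$ one gets $\mathcal{T}_{\mathrm{rel}} \leqslant (\log t)^{C\beta}$, whence the restricted variance is at most $\exp(-2t/(\log t)^{C\beta})$, which is super-polynomially small — better than claimed — so in fact the bottleneck for the $t^{-c/\beta}$ rate is not the fast-mixing region at all.

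Let me reorganize: the slow part of $A^\lambda(t)$ comes from atypical disorder configurations in which a dilution event creates a region of reduced surface tension supporting a long-lived minus droplet, exactly as in the lower bound (Theorem \ref{thm-lwb-A}). So the honest proof structure is: (i) decompose $\mathbb{E}\bigl(\mathrm{Var}_{\mu^{J,+}}(T^J(t)\pi_0)^\lambda\bigr)$ according to whether, in the box $\Lambda_L$ with $L$ a suitable power of $t$, the disorder exhibits a ``bad'' surface-tension dilution on some mesoscopic contour or not; (ii) on the good event the relaxation-time bound above gives a contribution exponentially small in $t$; (iii) on the bad event, which has $\mathbb{P}$-probability at most $t^{-c/\beta}$ by the large-deviation rate function $I_{\mathbf{n}}$ of \eqref{def-In} together with the lower bound on $\mathcal{X}_\lambda$ from Proposition \ref{prop:gap} (using $0 < \mathbb{P}(J_e=0)$ or, in general, positivity of the gap $\tau^{\min}<\tau^q$), one simply bounds the variance by its trivial maximum, which is $O(1)$ since $\pi_0$ is bounded by $1$; and (iv) optimize the choice of $L$ versus $t$ so the two error terms balance at $t^{-c/\beta}$. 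The main obstacle I anticipate is step (ii)–(iii) quantitatively matched: one needs the relaxation-time bound to hold on an event whose complement has probability comparable to $t^{-c/\beta}$, i.e. a \emph{mesoscopic} control of the disorder (no bad dilution on any contour of length $\geqslant \ell_0$ inside $\Lambda_L$), and to show that the absence of such bad dilution forces $\mathcal{T}_{\mathrm{rel}}^{J,\Lambda_L,+}$ to be sub-polynomial or at worst $t^{o(1)}$; this is essentially a quantitative, disorder-uniform version of the $d=2$ low-temperature mixing-time estimate, and making the $\beta$-dependence of the constant $c$ explicit (so that it is genuinely $c/\beta$ and not something worse) is where the delicate bookkeeping of the surface-energy costs — and the use of the $d=2$ geometry to keep the nucleation barrier $O(\log L)$ rather than $O(L)$ — really matters. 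All the analytic inputs (finite speed of propagation, spectral-gap variance decay \eqref{eq-decay-var-gap}, the rate function $I_{\mathbf n}$, and Proposition \ref{prop:gap}) are available from the excerpt, so the proof is a matter of assembling them with the two-scale ($L$ vs.\ $t$) optimization.
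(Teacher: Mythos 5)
Your proposal does not close, and the gap is in the quantitative heart of the argument, steps (ii)--(iii) of your reorganized scheme. First, the claim that in $d=2$ the nucleation barrier with plus boundary condition is $O(\log L)$ \emph{uniformly in the disorder}, giving $\mathcal{T}_{\tmop{rel}}^{J,\Lambda_L,+}\leqslant L^{C\beta}$ by a simple peeling argument, is not available: even for the pure model ($J\equiv 1$, which your uniformity must cover) such quasi-polynomial bounds are the content of the difficult works \cite{MT10CMP,LMST10} and are not obtained by monotone contraction of a droplet; the only disorder-uniform input at hand is the crude bound (\ref{eq:Ma:lgap}), $\tmop{gap}(L^{J,\hat\Lambda_N,+})\geqslant c_m(2N+1)^{-d}e^{-C_2\beta N^{d-1}}$, i.e.\ relaxation time up to $e^{C\beta N}$ in $d=2$. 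Second, once you fall back on a good/bad decomposition of the disorder in a box of side $L=\mathrm{poly}(t)$, the ``good'' branch fails: by the paper's own Theorem~\ref{thm-Trel}, \emph{typical} disorder already gives $\mathcal{T}_{\tmop{rel}}^{J,\Lambda_L,+}\geqslant L^{\kappa-\delta}$ with $\kappa\geqslant c\beta$, which for $L$ polynomial in $t$ and $\beta$ large is much bigger than $t$, so the spectral-gap bound cannot make that contribution exponentially small in $t$. And the ``bad'' branch is not quantified correctly either: Proposition~\ref{prop:gap} is an upper bound on $\mathcal{X}_\lambda$ used for the \emph{lower} bound on $A^\lambda$, and the rate function $I_{\tmmathbf{n}}$ gives surface-order costs for dilution on a prescribed contour; neither yields, after the necessary union over locations and contour scales inside $\Lambda_L$, a bad-event probability of order $t^{-c/\beta}$ at the box sizes your scheme requires.

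The paper's mechanism is different and simpler, and the exponent $c/\beta$ does not come from dilution large deviations at all. By attractivity (Lemma~\ref{lemF1}) the variance is bounded by $f^J_t(+)-m^J$, and by monotonicity $f^J_t(+)\leqslant f^J_{\hat\Lambda_N,t}(+)$; the worst-case gap bound (\ref{eq:Ma:lgap}) forces the choice $C_2\beta N=(1-\delta)\log t$, i.e.\ $N\sim\log t/\beta$, at which scale the finite-volume dynamics started from $+$ and from $-$ have merged by time $t$ (Lemma~\ref{lemF2}); the residual error $f^J_{\hat\Lambda_N,t}(-)-m^J$ is then controlled by a purely \emph{static} estimate, the probability under $\mu^{J,+}$ of having no circuit of plus spins around the origin inside $\hat\Lambda_N$ (Lemmas~\ref{lemF3} and~\ref{lemF4}), which decays like $e^{-cN}=t^{-c/(C_2\beta)}$ on average and gives the claim after Markov's inequality. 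Your sketch misses this comparison between the finite-volume value and $m^J$ entirely, and it is precisely there, not in any metastability or dilution bookkeeping, that the polynomial rate $t^{-c/\beta}$ is produced.
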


\subsection{Slow dynamics in finite volume}

Our strategy for providing a lower bound on the autocorrelation also yields lower bounds
on typical relaxation and mixing times in finite volume. Given $d
\geqslant 2$, the law of interactions $\mathbbm{P}$ and the inverse
temperature $\beta$, we call
\begin{eqnarray*}
  \kappa & = & d \sup_{(u_0, \tau^r) \in \tmop{IC}} \frac{\mathcal{K}^r
  (u_0)}{\mathcal{I}^r (u_0)}\\
  & = & d /\mathcal{X}_0
\end{eqnarray*}
We recall that the relaxation time $\mathcal{T}_{\tmop{rel}}$ was defined at
(\ref{eq-def-Trel}), while the mixing time $\mathcal{T}_{\tmop{mix}}$ was
defined at (\ref{eq-def-Tmix}).

\begin{theorem}
  \label{thm-Trel}Assume $d \geqslant 2$ and $\beta > \hat{\beta}_c$ with
  $\beta \notin \mathcal{N} \cup \mathcal{N}_I$. For any $\delta > 0$,
  \[ \lim_N \mathbbm{P} \left( \mathcal{T}^{J, \Lambda_N, +}_{\tmop{rel}}
     \text{ } \geqslant \text{ } N^{\kappa - \delta} \right) = \lim_N
     \mathbbm{P} \left( \mathcal{T}^{J, \Lambda_N, +}_{\tmop{mix}} \text{ }
     \geqslant \text{ } N^{\kappa - \delta} \right) = 1. \]
\end{theorem}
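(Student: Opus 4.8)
The strategy is to reduce the finite-volume relaxation and mixing time bounds to the infinite-volume machinery already developed for Theorem~\ref{thm-lwb-A}. The key observation is that the relaxation time controls the decay of variance via \eqref{eq-decay-var-gap}: if we can exhibit a function $f$ on the configuration space of $\Lambda_N$ whose variance under $T^{J,+}_{\Lambda_N}(t)$ fails to decay before time $N^{\kappa-\delta}$, then $\mathcal{T}^{J,\Lambda_N,+}_{\mathrm{rel}} \gtrsim N^{\kappa-\delta}$. Concretely, I would fix an initial configuration $(u_0,\tau^r) \in \mathrm{IC}$ nearly achieving the infimum defining $\mathcal{X}_0 = d/\kappa$, rescale the droplet profile $u_0$ to fit inside the box $\Lambda_N = \{1,\dots,N\}^d$ (so the droplet has linear size $\sim N$ and surface area $\sim N^{d-1}$), and impose the dilution event on the couplings near $\partial^\star u_0$. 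On the event that the dilution occurs --- which by \eqref{eq-def-Ir} and the large-deviation rate function has $\mathbb{P}$-probability $\exp(-N^{d-1}(\mathcal{I}^r(u_0)+o(1)))$ --- the Gibbs measure $\mu^{J,+}_{\Lambda_N}$ restricted near the droplet configuration has weight $\exp(-\beta N^{d-1}\mathcal{F}^r(u_0)/\mathrm{const} + \dots)$ while the pure-plus region has weight of order one, and any path between them in the dynamics must cross the energy barrier $N^{d-1}\mathcal{K}^r(u_0)$.

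\textbf{Key steps.} First, I would establish a bottleneck estimate: on the dilution event, the conductance of the set $\mathcal{A}_N$ of spin configurations "close to the droplet $u_0$" is at most $\exp(-\beta c\, N^{d-1}\mathcal{K}^r(u_0))$ for a suitable constant, by summing the Gibbs weights of the boundary configurations between $\mathcal{A}_N$ and its complement and using that the continuous-evolution energy barrier $\mathcal{K}^r(u_0)$ in \eqref{eq-def-Kr-disc} genuinely lower-bounds the discrete microscopic barrier (this is where the coarse-graining of \cite{Wou08SPA} and the surface-tension estimates of \cite{Wou09CMP} enter, exactly as in the proof of Theorem~\ref{thm-lwb-A}). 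By the standard relation between conductance/bottleneck ratio and spectral gap, this forces $\mathcal{T}^{J,\Lambda_N,+}_{\mathrm{rel}} \geq \exp(\beta c\, N^{d-1}\mathcal{K}^r(u_0))$ on the dilution event. Second, I would optimize: the dilution event has probability $\exp(-N^{d-1}(\mathcal{I}^r(u_0)+o(1)))$, and for the conclusion I only need that this probability does not go to zero \emph{too} fast relative to a polynomial threshold --- but here a subtlety appears, since for a fixed droplet the dilution probability is exponentially small in $N^{d-1}$, not merely polynomially small. The resolution, as in the infinite-volume argument, is to tile: inside $\Lambda_N$ one can fit $\sim N^{d-1}$ (or a polynomial number of) disjoint translates of a \emph{small} droplet of bounded size $\ell$, each independently hosting the dilution event with probability $p_\ell = \exp(-\ell^{d-1}\mathcal{I}^r)$ bounded away from zero in $N$; then with $\mathbb{P}$-probability tending to $1$ at least one translate carries the dilution. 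On such a translate the local energy barrier is $\ell^{d-1}\mathcal{K}^r(u_0)$, giving a relaxation time $\geq \exp(\beta c\, \ell^{d-1}\mathcal{K}^r)$; choosing $\ell = \ell(N) \to \infty$ slowly (e.g. $\ell^{d-1} \sim \log N$) so that this is $\sim N^{\kappa-\delta}$ while the number of available translates still forces the dilution event with high probability, we obtain the relaxation-time bound.

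\textbf{From relaxation to mixing.} The mixing-time bound follows from the relaxation-time bound together with the general inequality relating the two for reversible Markov chains, or directly: the bottleneck set $\mathcal{A}_N$ has $\mu^{J,+}_{\Lambda_N}$-measure of order $\exp(-\beta c\,\ell^{d-1}\mathcal{F}^r(u_0))$ (small but not negligibly so after the $\ell \sim (\log N)^{1/(d-1)}$ choice), and starting the dynamics from the all-minus-inside configuration (compatible with the $+$ boundary) it takes time at least $1/\text{conductance} \sim N^{\kappa-\delta}$ to escape $\mathcal{A}_N$, so the total-variation distance to equilibrium stays bounded below until then. One must check $\kappa = d/\mathcal{X}_0$ appears correctly: the factor $d$ is exactly the ratio between the polynomial exponent in $N$ coming from "$\log N \sim \ell^{d-1}$ barrier" bookkeeping and the $\mathcal{I}^r/\mathcal{K}^r$ trade-off, accounting for the $d-1$ scaling of surface quantities versus the $d$-dimensional volume available for tiling.

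\textbf{Main obstacle.} The hardest part is the microscopic bottleneck estimate --- showing that the \emph{continuous} energy barrier $\mathcal{K}^r(u_0)$ of \eqref{eq-def-Kr-disc}, defined via $L^1$-close sequences of BV profiles, correctly lower-bounds the bottleneck ratio of the actual discrete Glauber dynamics on $\Lambda_N$. This requires the full renormalization/coarse-graining apparatus: one must show that any microscopic path from the droplet to the plus phase projects (via block magnetizations) onto a macroscopic path in $\mathcal{C}_\varepsilon(u_0)$ whose surface energy is controlled from below by $\mathcal{F}^r$ on the diluted edges and $\mathcal{F}^q$ elsewhere, with the surface-tension lower-deviation estimates \eqref{def-In} ensuring that the diluted region cannot have anomalously low surface tension outside the region where we imposed dilution. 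This is precisely the technical heart shared with Theorem~\ref{thm-lwb-A}, so in practice the proof will invoke the corresponding lemmas from Section~\ref{sec:proofs} rather than redo them; the finite-volume statement is then a relatively short corollary, with the only genuinely new bookkeeping being the tiling argument that converts the exponential-in-$\ell^{d-1}$ barrier into the polynomial-in-$N$ exponent $\kappa - \delta$.
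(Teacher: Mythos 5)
Your proposal follows essentially the paper's own route: pick a near-optimal $(u_0,\tau^r)\in\tmop{IC}$, tile $\Lambda_N$ by order $N^d$ (up to logarithmic factors) disjoint sub-boxes of side $M$ with $M^{d-1}\sim d\log N/\mathcal{I}^r(u_0)$ so that the dilution event occurs in at least one of them with probability tending to one, and then invoke the metastability estimates already proved (Proposition~\ref{prop-mt} together with Proposition~\ref{prop-meq} and Lemma~\ref{lem-proba-GN}) on that sub-box, feeding the local magnetization $m_{Lx+\Lambda_M}$ into (\ref{eq-decay-var-gap}) and (\ref{eq-decay-Tmix}) (or, equivalently, your conductance bound plus the standard $\mathcal{T}_{\tmop{mix}}\geqslant c\,\mathcal{T}_{\tmop{rel}}$ inequality) to obtain the $N^{\kappa-\delta}$ lower bounds. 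The only slips are cosmetic: the exponents should not carry an extra factor $\beta$ (the temperature dependence is already built into $\mathcal{F}^r$, $\mathcal{K}^r$, $\mathcal{I}^r$), and the tiling must use the full $d$-dimensional count of sub-boxes rather than ``$\sim N^{d-1}$'' translates, as you in fact acknowledge when attributing the factor $d$ in $\kappa=d/\mathcal{X}_0$ to the volume available for tiling.
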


It is instructive to compare this bound with the asymptotics of the relaxation
time in the pure Ising model. When $d = 2$,
$\mathcal{T}^{J \equiv 1, \Lambda_N, +}_{\tmop{rel}} = N$ apart from
logarithmic corrections, see {\cite{BM02JSP}}. For dilute models with $0
<\mathbbm{P}(J_e = 0) < 1 - p_c (d)$, Proposition \ref{prop:gap} states that
$\kappa \geqslant c \beta$ for $\beta$ large enough: this is another
illustration of the fact that dilution makes the relaxation time much larger.

\section{Proofs}\label{sec:proofs}

\subsection{Heuristics and organization of the proofs}
\label{sec:heur} 
The object of Section~\ref{sec:upb} is the proof of the upper bound on the autocorrelation  when the dimension is $d=2$ (Theorem~\ref{thm-upb-A}). The strategy for the proof, which we owe to Fabio Martinelli, relies on a uniform lower bound on the spectral gap in a square box with uniform plus boundary condition (cf.~Theorem 6.4 in \cite{Mar99LNM}  or (\ref{eq:Ma:lgap}) below). Some extra but classical work is then required to use that estimate for the infinite volume Glauber dynamics started from the plus phase.

Section~\ref{sec-autocor} concentrates the most complex part of the paper and is dedicated to the proofs of Theorems~\ref{thm-lwb-A} and \ref{thm-Trel}. The corresponding heuristics are derived from \cite{HF87PhysRevB}, where it was already suggested that dilution and reduction of surface tension, which has a surface cost, could trigger metastability for initial spin configurations corresponding to the minus phase in the region surrounded by the diluted surface (i.e., with our notation, the region where $u_0\equiv-1$, up to an appropriate scaling). In the present work, we have formalized that idea of reducing the surface tension along an initial contour with the set of initial configurations IC defined at (\ref{IC}). The concept of energy barrier has lead to the definition of the gap in surface energy $\mathcal{K}^r (u_0)$ at (\ref{eq-def-Kr-disc}). The reader will note that we have introduced, in that concept of energy barrier, the requirement that evolution of the phase profile is (almost) continuous in the $L^1$ norm (with initial configuration $u_0$ and final configuration ${\mathbf 1}$, corresponding to uniform plus phase). Once these concepts are defined, remains a substantial work for relating the concepts and definitions to the physical phenomena. This is done as follows. First, in Sections~\ref{sec-covering} and \ref{sec-dilution}, we define a microscopic counterpart for the $(u_0,\tau^r)$ dilution, the event ${\mathcal G}(({\mathcal R}_i)_{i=1..n})$, which depends only on the interaction strength $J_e$. We prove that this event has the expected probability (the cost for dilution) and also we establish, conditionally on ${\mathcal G}(({\mathcal R}_i)_{i=1..n})$, upper and lower bounds on phase coexistence. In Section~\ref{sec-bottleneck} we introduce a certain bottleneck set, that corresponds with the phase profiles with the highest cost along all relaxation paths from $u_0$ to $\mathbf 1$. Then we express the probability, under the equilibrium measure, that the actual phase profile is in the bottleneck set in terms of its reduced surface energy, and in turn we relate the infimum of the reduced surface energy in the bottleneck set to the energy barrier $\mathcal{K}^r (u_0)$. In Section~\ref{sec:int:meta} we relate the former estimates on the equilibrium measure to the Glauber dynamics and establish an important result on the dynamics in a finite box (namely, Proposition \ref{prop-mt}). That Proposition states that, conditionally on the event of dilution, the average magnetization remains significantly different from the plus phase magnetization (i.e.~a droplet of minus phase remains),
with a probability corresponding to the reduced surface energy, until a time determined by the energy barrier. We conclude the proof of Theorem~\ref{thm-lwb-A} and Theorem~\ref{thm-Trel} in Section~\ref{sec-lwb-final}. For the proof of the first Theorem we relate the autocorrelation to the evolution of the magnetization in a finite volume. For the proof of the second Theorem, we decompose the box of interest into many smaller boxes, so that the dilution event must occur in one of these boxes with a probability close to one.

Finally, Section~\ref{sec:geom} is concerned with geometrical estimates. By decomposing the difference between the initial droplet and the current droplet into many small droplets, we are able to use the assumption that the initial droplet has a smooth boundary and prove that the energy barrier is strictly positive. We also prove that, under appropriate assumptions, the exponent $\mathcal{X}_\lambda$ is bounded by $C/\beta$ (Proposition~\ref{prop:gap}). Then in Section \ref{sec-contevol} we derive another formulation of the energy barrier, which shows that the energy barrier can be computed assuming not only the $L^1$ continuity of the droplet evolution with time, but also the ${\mathcal H}^{d-1}$ continuity of the part of the droplet surface that corresponds with the initial droplet contour (where dilution takes place). We establish that alternative and informative formulation by providing an interpolation between any two phase profiles that is continuous for both volume and surface measures, with the additional property that along the interpolation, the surface energy does not exceed the maximum of the surface energy among the two interpolated phase profiles. Finally, in Sections~\ref{sec-Kr-circle} and \ref{sec-Kr-square}, we use that alternative formulation to compute the energy barrier in specific cases. These computations put in evidence the fact that the energy barrier, as defined by $\mathcal{K}^r (u_0)$, is higher than the energy barrier computed with a constraint on the overall magnetization.

\subsection{Upper bound on the autocorrelation}

\label{sec:upb}Here we give the proof of Theorem~\ref{thm-upb-A}. As said
above, the scheme of proof was suggested by Fabio Martinelli. We call
\begin{eqnarray*}
  f^J_t (\sigma) & = & \left( T^J (t) \pi_0 \right) (\sigma)\\
  f^J_{\Lambda, t} (\sigma) & = & \left( T^{J, +}_{\Lambda} (t) \pi_0 \right)
  (\sigma)\\
  m^J & = & \mu^{J, +} (\sigma_0)
\end{eqnarray*}
therefore $f^J_t (\sigma)$ is the mean value of the spin at the origin under
the Glauber dynamics performed until time $t$, initiated with the
configuration $\sigma$, while $f^J_{\Lambda, t} (\sigma)$ is the corresponding
quantity for the dynamics restricted to $\Lambda$, with plus boundary
condition on $\Lambda^c$. Using these notations we can write the averaged
autocorrelation $A^{\lambda} (t)$ defined at (\ref{def-A}) as
\begin{eqnarray}
  A^{\lambda} (t) & = & \mathbbm{E} \left( \left[ \tmop{Var}_{\mu^{J, +}}
  (f^J_t) \right]^{\lambda} \right) .  \label{def-A2}
\end{eqnarray}
\begin{lemma}
  \label{lemF1}One has
  \begin{eqnarray*}
    \frac{1}{2} \tmop{Var}_{\mu^{J, +}} \left( f^J_t \right) & \leqslant &
    f_t^J (+) - m^J
  \end{eqnarray*}
  where $+$ means the constant plus spin configuration.
\end{lemma}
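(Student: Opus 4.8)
My plan is to derive the bound from two structural facts and then a one-line real-variable estimate. The two facts are: (i) attractivity, which forces $f_t^J(\sigma)\leqslant f_t^J(+)$ for every $\sigma$; and (ii) reversibility of $\mu^{J,+}$, which gives $\mu^{J,+}(f_t^J)=m^J$.

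First I would note that $f_t^J=T^J(t)\pi_0$ satisfies $f_t^J(\sigma)=\tmmathbf{E}^J_{\sigma}(\sigma_0(t))$, an average of a $\pm 1$-valued quantity, so $f_t^J(\sigma)\in[-1,1]$ for every $\sigma\in\Sigma$; in particular $f_t^J(\sigma)\geqslant -1$. Next, since the constant plus configuration $+$ dominates every $\sigma$, the graphical construction (common Poisson clocks and common uniform variables for the two starting points) yields, by the attractivity assumption, an order-preserving coupling; hence the process started from $\sigma$ stays below the process started from $+$ at all times, and taking expectations of the spin at the origin gives $f_t^J(\sigma)\leqslant f_t^J(+)$ for every $\sigma$. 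Integrating this last inequality against $\mu^{J,+}$ and using stationarity of $\mu^{J,+}$ (a consequence of reversibility), namely $\mu^{J,+}(f_t^J)=\mu^{J,+}(T^J(t)\pi_0)=\mu^{J,+}(\pi_0)=m^J$, I also obtain $m^J\leqslant f_t^J(+)$.

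Now write $p=f_t^J(+)$ and $m=m^J$, so that $-1\leqslant f_t^J(\sigma)\leqslant p$ for all $\sigma$ and $m\leqslant p\leqslant 1$. The two-sided bound gives the pointwise inequality $(f_t^J(\sigma)-p)(f_t^J(\sigma)+1)\leqslant 0$, that is $f_t^J(\sigma)^2\leqslant (p-1)f_t^J(\sigma)+p$. Integrating against $\mu^{J,+}$ and using $\mu^{J,+}(f_t^J)=m$ yields $\mu^{J,+}((f_t^J)^2)\leqslant (p-1)m+p$, whence
\[ \tmop{Var}_{\mu^{J,+}}(f_t^J)=\mu^{J,+}\bigl((f_t^J)^2\bigr)-m^2\leqslant (p-1)m+p-m^2=(1+m)(p-m)\leqslant 2(p-m), \]
the last step using $1+m\leqslant 2$ together with $p-m\geqslant 0$. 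This is exactly $\tfrac12\tmop{Var}_{\mu^{J,+}}(f_t^J)\leqslant f_t^J(+)-m^J$.

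I do not expect a real obstacle here: the only inputs beyond elementary algebra are the monotone coupling from attractivity and the identity $\mu^{J,+}(f_t^J)=m^J$ from reversibility, both of which are available in the excerpt. Should a sharper constant ever be wanted, the same argument with the two-sided monotone bound $f_t^J(-)\leqslant f_t^J(\sigma)\leqslant f_t^J(+)$ gives $\tmop{Var}_{\mu^{J,+}}(f_t^J)\leqslant (f_t^J(+)-m^J)(m^J-f_t^J(-))$, but the factor $\tfrac12$ as stated is all that the sequel uses.
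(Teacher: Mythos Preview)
Your proof is correct. It uses exactly the same two structural inputs as the paper's argument --- attractivity (giving $f_t^J(\sigma)\leqslant f_t^J(+)$) and stationarity of $\mu^{J,+}$ (giving $\mu^{J,+}(f_t^J)=m^J$) --- but packages the elementary estimate differently. The paper rewrites the variance as the double integral $\tfrac14\iint(f_t^J(\sigma)-f_t^J(\eta))^2\,d\mu^{J,+}(\sigma)\,d\mu^{J,+}(\eta)$, symmetrises, applies $x^2\leqslant 2x$ on $[0,2]$, and then invokes attractivity. Your route is the Popoviciu-type bound: a random variable confined to $[-1,p]$ with mean $m$ has variance at most $(p-m)(m+1)\leqslant 2(p-m)$. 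Both are one-line real-variable facts once the two inputs are in hand; your version has the minor advantage of yielding the sharper intermediate bound $(1+m^J)(f_t^J(+)-m^J)$ and, as you note, the variant $(f_t^J(+)-m^J)(m^J-f_t^J(-))$ if one also uses $f_t^J(\sigma)\geqslant f_t^J(-)$.
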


\begin{proof}
  The attractivity of the dynamics implies that $f_t^J (\sigma)
  \leqslant f_t^J (+)$ for any $\sigma$. Then:
  \begin{eqnarray*}
    \frac{1}{2} \tmop{Var}_{\mu^{J, +}} (f^J_t) & = & \frac{1}{4} \int \int
    \left( f^J_t \left( \sigma \right) - f^J_t \left( \eta \right) \right)^2
    \mathd \mu^{J, +} \left( \sigma \right) \mathd \mu^{J, +} \left( \eta
    \right)\\
    & = & \frac{1}{2} \int \int_{f^J_t \left( \sigma \right) \geqslant f^J_t
    \left( \eta \right)_{}} \left( f^J_t \left( \sigma \right) - f^J_t \left(
    \eta \right) \right)^2 \mathd \mu^{J, +} \left( \sigma \right) \mathd
    \mu^{J, +} \left( \eta \right)\\
    & \underset{(x^2 \leqslant 2 x, \forall x \in [0, 2])}{\leqslant} & \int
    \int_{f^J_t \left( \sigma \right) \geqslant f^J_t \left( \eta \right)_{}}
    \left( f^J_t \left( \sigma \right) - f^J_t \left( \eta \right) \right)
    \mathd \mu^{J, +} \left( \sigma \right) \mathd \mu^{J, +} \left( \eta
    \right)\\
    & \underset{\tmop{attractivity}}{\leqslant} & \int_{_{}} \left( f^J_t
    \left( + \right) - f^J_t \left( \eta \right) \right) \mathd \mu^{J, +}
    \left( \eta \right) .
  \end{eqnarray*}
  And the last term equals $f_t^J (+) - m^J$.
\end{proof}

We apply then standard controls on the spectral gap:

\begin{lemma}
  \label{lemF2}There exist positive and
  finite constants $C_1, C_2, C_3$ such that, for all $J \in {\mathcal J}$, all $t>0$ and $\beta>0$,
  \begin{eqnarray*}
    f_{\hat{\Lambda}_N, t}^J (+) - f_{\hat{\Lambda}_N, t}^J (-) & \leqslant &
    e^{C_1 \beta N^d - C_3 t N^{- d} e^{- C_2 \beta N^{d - 1}}} .
  \end{eqnarray*}
\end{lemma}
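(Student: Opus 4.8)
The plan is to compare the finite-volume Glauber semigroup started from the extremal configurations $+$ and $-$ with the equilibrium measures $\mu^{J,\pm}_{\hat\Lambda_N}$, and to control each of the three resulting errors. By the triangle inequality,
\[
  f_{\hat\Lambda_N,t}^J(+) - f_{\hat\Lambda_N,t}^J(-)
  \;\leqslant\;
  \bigl| f_{\hat\Lambda_N,t}^J(+) - \mu^{J,+}_{\hat\Lambda_N}(\sigma_0)\bigr|
  + \bigl| \mu^{J,+}_{\hat\Lambda_N}(\sigma_0) - \mu^{J,-}_{\hat\Lambda_N}(\sigma_0)\bigr|
  + \bigl| \mu^{J,-}_{\hat\Lambda_N}(\sigma_0) - f_{\hat\Lambda_N,t}^J(-)\bigr|.
\]
For the two outer terms I would invoke the variance-decay estimate \eqref{eq-decay-var-gap}: since $\pi_0$ has $\|\pi_0\|_\infty=1$, one gets
$\tmop{Var}^{J,+}_{\hat\Lambda_N}\!\bigl(T^{J,+}_{\hat\Lambda_N}(t)\pi_0\bigr)\leqslant e^{-2t/\mathcal T^{J,\hat\Lambda_N,+}_{\mathrm{rel}}}\tmop{Var}^{J,+}_{\hat\Lambda_N}(\pi_0)\leqslant e^{-2t/\mathcal T^{J,\hat\Lambda_N,+}_{\mathrm{rel}}}$, and $f_{\hat\Lambda_N,t}^J(+)-\mu^{J,+}_{\hat\Lambda_N}(\sigma_0)\leqslant \|T^{J,+}_{\hat\Lambda_N}(t)\pi_0-\mu^{J,+}_{\hat\Lambda_N}(\sigma_0)\|_\infty$, which by a standard $L^\infty$–$L^2$ bound (e.g.\ using the uniform lower bound $\mu^{J,+}_{\hat\Lambda_N}(\sigma)\geqslant e^{-C\beta N^d}$ on the Gibbs weight of any configuration, since there are at most $d N^d$ edges in $E^w(\hat\Lambda_N)$, each contributing $J_e\leqslant 1$) is bounded by $e^{C_1\beta N^d/2}\,e^{-t/\mathcal T^{J,\hat\Lambda_N,+}_{\mathrm{rel}}}$. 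The same estimate applies verbatim with $-$ boundary condition, by the spin-flip symmetry of the model. Thus both outer terms are $\leqslant e^{C_1\beta N^d/2}\,e^{-t/\mathcal T^{J,\hat\Lambda_N,\pm}_{\mathrm{rel}}}$.

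Next I would bound the middle term trivially by $2$, or even better note it is nonnegative and at most $2$, so the full right-hand side is at most $2+ 2 e^{C_1\beta N^d/2}e^{-t/\mathcal T_{\mathrm{rel}}}$; since we only need the inequality for an exponential upper bound and the claimed right-hand side $e^{C_1\beta N^d - C_3 tN^{-d}e^{-C_2\beta N^{d-1}}}$ exceeds $2$ as soon as $\beta N^d$ is not too small and $t$ is moderate, one absorbs the additive constant into the exponent by adjusting $C_1$. The crucial input is then a \emph{uniform} (in $J$) lower bound on the finite-volume spectral gap of the form
\[
  \mathcal T^{J,\hat\Lambda_N,\pm}_{\mathrm{rel}} \;=\; 1/\tmop{gap}L^{J,\hat\Lambda_N,\pm}
  \;\leqslant\; C_3^{-1}\,N^{d}\,e^{C_2\beta N^{d-1}},
\]
i.e.\ $\tmop{gap}L^{J,\hat\Lambda_N,\pm}\geqslant C_3 N^{-d}e^{-C_2\beta N^{d-1}}$. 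Combining, $f_{\hat\Lambda_N,t}^J(+)-f_{\hat\Lambda_N,t}^J(-)\leqslant e^{C_1'\beta N^d}e^{-C_3 tN^{-d}e^{-C_2\beta N^{d-1}}}=e^{C_1'\beta N^d-C_3 tN^{-d}e^{-C_2\beta N^{d-1}}}$, which is the claim after renaming $C_1'\to C_1$.

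The main obstacle is establishing the uniform-in-$J$ spectral-gap lower bound $\tmop{gap}L^{J,\hat\Lambda_N,\pm}\geqslant C_3 N^{-d}e^{-C_2\beta N^{d-1}}$. The natural route is a canonical-paths or Cheeger-type argument: since the rates are bounded below by $c_m$ and the Gibbs measure satisfies $\mu^{J,\pm}_{\hat\Lambda_N}(\sigma)\geqslant Z^{-1}e^{-\beta d N^d}$ with $Z\leqslant 2^{N^d}$, a single-site path argument flipping sites one at a time from any $\sigma$ to $\pm$ gives a conductance bound of order $N^{-d}e^{-c\beta N^{d-1}}$, the surface-order exponent $N^{d-1}$ arising because the energy along such a path increases by at most the number of boundary edges of a subset of $\hat\Lambda_N$, which is $O(N^{d-1})$ in the worst (rectangular) case — this is exactly the ``bottleneck through the worst intermediate magnetization'' heuristic mentioned in the introduction. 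One must check that the bound is genuinely uniform over $J\in[0,1]^{E(\mathbb Z^d)}$, which follows because all couplings lie in $[0,1]$ so the energy increments are bounded by $\beta$ times the number of edges crossed, independently of $J$. I would either cite the classical estimate (it is essentially Theorem 6.1 or the discussion around it in \cite{Mar99LNM}, valid for arbitrary bounded rates and couplings) or include the short canonical-paths computation; in either case the dependence on $N$ of the form $N^{-d}e^{-C_2\beta N^{d-1}}$ is the standard one and the uniformity in $J$ is immediate from $J_e\in[0,1]$.
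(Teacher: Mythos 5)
There is a genuine gap, and it sits exactly at the point you flagged as a triviality: the middle term of your triangle inequality. In the lemma both quantities $f_{\hat{\Lambda}_N, t}^J (+)$ and $f_{\hat{\Lambda}_N, t}^J (-)$ are computed with the \emph{same} semigroup $T^{J,+}_{\hat{\Lambda}_N}$ (plus boundary condition on $\hat{\Lambda}_N^c$); only the initial configuration inside the box differs. The relevant equilibrium for both is therefore the single measure $\mu^{J,+}_{\hat{\Lambda}_N}$, and the minus-boundary-condition measure $\mu^{J,-}_{\hat{\Lambda}_N}$ has no business in the argument. Your decomposition through $\mu^{J,+}_{\hat{\Lambda}_N}(\sigma_0)$ and $\mu^{J,-}_{\hat{\Lambda}_N}(\sigma_0)$ produces a middle term of size roughly $2 m_\beta$, which in the phase coexistence region is a fixed positive constant, not an error term. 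Your plan to ``absorb the additive constant into the exponent by adjusting $C_1$'' cannot work: the bound is asserted for all $t>0$, and in its only use (proof of Theorem \ref{thm-upb-A}, where $N$ is tied to $t$ by $C_2\beta N^{d-1}=(1-\delta)\log t$) the right-hand side $e^{C_1\beta N^d - C_3 t N^{-d} e^{-C_2\beta N^{d-1}}}$ is made to tend to $0$ as $t\to\infty$, hence eventually lies far below any additive constant $2$. Said differently, if the lemma really compared plus-b.c.\ dynamics from $+$ with minus-b.c.\ dynamics from $-$, it would be false, since that difference converges to $\mu^{J,+}_{\hat{\Lambda}_N}(\sigma_0)-\mu^{J,-}_{\hat{\Lambda}_N}(\sigma_0)>0$; this should have signalled the misreading.

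The repair is short and is what the paper does: bound $f_{\hat{\Lambda}_N, t}^J (+) - f_{\hat{\Lambda}_N, t}^J (-)$ by twice the sup-norm distance of $T^{J,+}_{\hat{\Lambda}_N}(t)\pi_0$ from its equilibrium average, then apply the standard $L^\infty$--$L^2$ estimate (inequality (3.15) in \cite{Mar99LNM}), which gives the factor $\bigl[\inf_\sigma \mu^{J,+}_{\hat{\Lambda}_N}(\sigma)\bigr]^{-1/2} \leqslant e^{C_1\beta N^d}$ times $\exp\bigl(-t\,\tmop{gap}(L^{J,\hat{\Lambda}_N,+})\bigr)$, and conclude with the uniform-in-$J$ gap lower bound $\tmop{gap}(L^{J,\hat{\Lambda}_N,+})\geqslant c_m (2N+1)^{-d} e^{-C_2\beta N^{d-1}}$ (Theorem~6.4 in \cite{Mar99LNM}). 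Those two ingredients you did identify correctly, and your remark that the gap bound is uniform over $J\in[0,1]^{E(\mathbb{Z}^d)}$ because the couplings are bounded is the right justification; but the decomposition must be rebuilt around the single invariant measure $\mu^{J,+}_{\hat{\Lambda}_N}$, with no appearance of $\mu^{J,-}_{\hat{\Lambda}_N}$ and no additive constant left over.
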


\begin{proof}
  The difference $f_{\hat{\Lambda}_N, t}^J (+) - f_{\hat{\Lambda}_N, t}^J (-)$
  is not larger than twice the $\|.\|_{\infty}$ norm of $f_{\hat{\Lambda}_N,
  t}^J$, which according to (3.15) in {\cite{Mar99LNM}} does not exceed
  \begin{eqnarray*}
    \left\| f_{\hat{\Lambda}_N, t}^J \right\|_{\infty} & \leqslant & \left[
    \inf_{\sigma} \mu_{\hat{\Lambda}_N}^{J, +} (\sigma) \right]^{- 1 / 2} \exp
    \left( - t \tmop{gap} (L^{J, \hat{\Lambda}_N, +}) \right) \left\| \pi_0
    \right\|_{L^2 \left( \mu_{\hat{\Lambda}_N}^{J, +} \right)} .
  \end{eqnarray*}
  The conclusion comes then from the general lower bound
  \begin{eqnarray}
    \tmop{gap} (L^{J, \hat{\Lambda}_N, +}) & \geqslant & c_m (2 N + 1)^{- d}
    e^{- C_2 \beta N^{d - 1}}, \label{eq:Ma:lgap}
  \end{eqnarray}
  cf.~Theorem~6.4 in {\cite{Mar99LNM}}.
\end{proof}

An easy consequence of monotonicity is the next Lemma:

\begin{lemma}
  \label{lemF3}For any $J \in {\mathcal J}$, one has
  \begin{eqnarray*}
    f_{\Lambda, t}^J (-) - m^J & \leqslant & \mu^{J, +}_{\Lambda} (\sigma_0) -
    m^J\\
    & \leqslant & 2 \mu^{J, +} \left( \mathcal{C}_{\Lambda}^c \right)
  \end{eqnarray*}
  where $\mathcal{C}_{\Lambda}$ is the event that there exists a contour of
  plus spins in$\text{ } \Lambda$, around the origin.
\end{lemma}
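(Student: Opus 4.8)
The plan is to establish both inequalities by monotonicity (attractivity and FKG-type coupling arguments). For the first inequality, $f_{\Lambda, t}^J(-) - m^J \leqslant \mu^{J,+}_{\Lambda}(\sigma_0) - m^J$, I would argue as follows. By attractivity of the Glauber dynamics restricted to $\Lambda$ with plus boundary condition, starting from the minimal configuration (minus inside $\Lambda$, plus on $\Lambda^c$) the law of $\sigma(t)$ is stochastically dominated by the law under the stationary measure $\mu^{J,+}_{\Lambda}$; more precisely, $f_{\Lambda,t}^J(-) = \tmmathbf{E}^{J,\Lambda}_{-}(\sigma_0(t)) = [T^{J,+}_{\Lambda}(t)\pi_0](-)$ is nondecreasing in the initial configuration, and since the minus-inside configuration is the smallest admissible one, running the dynamics from $\mu^{J,+}_{\Lambda}$ (which is stationary) gives $f_{\Lambda,t}^J(-) \leqslant \mu^{J,+}_{\Lambda}(T^{J,+}_{\Lambda}(t)\pi_0) = \mu^{J,+}_{\Lambda}(\pi_0) = \mu^{J,+}_{\Lambda}(\sigma_0)$ by reversibility/stationarity. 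Subtracting $m^J$ gives the claim. (Equivalently one couples the two evolutions via the graphical construction so that $\sigma^{-}(t) \leqslant \sigma^{\mu}(t)$ pointwise.)

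For the second inequality, $\mu^{J,+}_{\Lambda}(\sigma_0) - m^J \leqslant 2\mu^{J,+}(\mathcal{C}^c_{\Lambda})$, I would use the Fortuin--Kasteleyn representation and the monotonicity of $\mu^{J,+}_{\Lambda}$ in the volume: since $\mu^{J,+} = \lim_N \downarrow \mu^{J,+}_{\hat\Lambda_N}$ and the finite-volume plus measures decrease to $\mu^{J,+}$, we have $\mu^{J,+}_{\Lambda}(\sigma_0) \geqslant m^J$ and moreover $\mu^{J,+}_{\Lambda}(\sigma_0) - m^J = \mu^{J,+}_{\Lambda}(\sigma_0) - \mu^{J,+}(\sigma_0)$. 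On the event $\mathcal{C}_{\Lambda}$ that there is a circuit (contour) of plus spins inside $\Lambda$ surrounding the origin, the conditional law of $\sigma_0$ inside that circuit is, by the DLR property and monotonicity, the same under both $\mu^{J,+}_{\Lambda}$ and $\mu^{J,+}$ (the plus circuit screens off everything outside). Hence the two measures agree on $\{\sigma_0 = +1\} \cap \mathcal{C}_{\Lambda}$ up to the mass of $\mathcal{C}_{\Lambda}$, so
\[
\mu^{J,+}_{\Lambda}(\sigma_0) - m^J \;\leqslant\; \mu^{J,+}_{\Lambda}(\sigma_0 = 1,\, \mathcal{C}^c_{\Lambda}) + \big|\mu^{J,+}_{\Lambda}(\mathcal{C}_{\Lambda}) - \mu^{J,+}(\mathcal{C}_{\Lambda})\big| \cdot 2 \;\leqslant\; 2\,\mu^{J,+}(\mathcal{C}^c_{\Lambda}),
\]
where I use $\sigma_0 \in \{-1,+1\}$ to bound differences of expectations by twice a probability, and that $\mathcal{C}^c_{\Lambda}$ is a decreasing event so its probability under $\mu^{J,+}_{\Lambda}$ is at most that under $\mu^{J,+}$ (actually I want the bound in terms of the infinite-volume measure, and monotonicity of $\mathcal{C}_{\Lambda}$ in the volume gives this directly). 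The cleanest route is: condition on the outermost plus contour in $\Lambda$ when it exists; inside it the two measures coincide; the discrepancy is supported on $\mathcal{C}^c_{\Lambda}$ and bounded by $2\mu^{J,+}_{\Lambda}(\mathcal{C}^c_{\Lambda}) \leqslant 2\mu^{J,+}(\mathcal{C}^c_{\Lambda})$ since adding spins outside $\Lambda$ only helps create the contour.

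The main obstacle, such as it is, is making the conditioning-on-the-outermost-contour argument precise: one must define $\mathcal{C}_{\Lambda}$ and the outermost plus circuit carefully (using e.g.\ the FK clusters connected to $\Lambda^c$), verify the Markov/screening property that justifies equality of the conditional laws of $\sigma_0$ inside the circuit for $\mu^{J,+}_{\Lambda}$ and $\mu^{J,+}$, and confirm the stochastic monotonicity $\mu^{J,+}_{\Lambda}(\mathcal{C}^c_{\Lambda}) \leqslant \mu^{J,+}(\mathcal{C}^c_{\Lambda})$ — all of which are standard consequences of FKG and the DLR equations recalled after the Edwards--Sokal coupling, so I expect this to be short.
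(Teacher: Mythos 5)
Your treatment of the first inequality ($f^J_{\Lambda,t}(-) \leqslant \mu^{J,+}_\Lambda(\sigma_0)$ via attractivity and stationarity) is fine and is exactly what the paper means by calling it immediate. The gap is in the second inequality. Your displayed bound
\[
\mu^{J,+}_\Lambda(\sigma_0)-m^J \;\leqslant\; \mu^{J,+}_\Lambda(\sigma_0=1,\,\mathcal{C}^c_\Lambda) \;+\; 2\,\bigl|\mu^{J,+}_\Lambda(\mathcal{C}_\Lambda)-\mu^{J,+}(\mathcal{C}_\Lambda)\bigr|
\]
is not justified by the screening argument you invoke. Conditioning on the \emph{specific} outermost plus circuit $\gamma$ does make the conditional law of the interior the same, namely $\mu^{J,+}_{\mathrm{int}(\gamma)}$, under both $\mu^{J,+}_\Lambda$ and $\mu^{J,+}$; but the two measures put different weights on the circuits themselves, so the discrepancy on $\mathcal{C}_\Lambda$ is a total-variation-type distance between the two laws of the outermost circuit, weighted by the interior expectations, and this is not controlled by the difference of the probabilities of $\mathcal{C}_\Lambda$. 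Your claim that ``the discrepancy is supported on $\mathcal{C}^c_\Lambda$'' is therefore wrong as stated, and the term $2|\mu^{J,+}_\Lambda(\mathcal{C}_\Lambda)-\mu^{J,+}(\mathcal{C}_\Lambda)|$ does not account for it. (One could try to repair this with a stochastic domination of the outermost circuits together with monotonicity of the interior expectation in the circuit, but you neither state nor prove such a domination.)

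The paper's proof avoids comparing the two measures on $\mathcal{C}_\Lambda$ altogether, by a one-sided bound under $\mu^{J,+}$ alone: conditionally on the outermost plus circuit $\gamma$ contained in $\Lambda$, the interior law is $\mu^{J,+}_{\mathrm{int}(\gamma)}$ and, by monotonicity in the volume (plus boundary condition on $\mathrm{int}(\gamma)\subset\Lambda$ is closer to the origin than on $\Lambda$), $\mu^{J,+}_{\mathrm{int}(\gamma)}(\sigma_0)\geqslant\mu^{J,+}_\Lambda(\sigma_0)$; hence $\mu^{J,+}(\sigma_0\mid\mathcal{C}_\Lambda)\geqslant\mu^{J,+}_\Lambda(\sigma_0)$, while on $\mathcal{C}^c_\Lambda$ one only uses $\sigma_0\geqslant-1$. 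This gives
\[
m^J \;\geqslant\; \mu^{J,+}(\mathcal{C}_\Lambda)\,\mu^{J,+}_\Lambda(\sigma_0) \;-\; \mu^{J,+}(\mathcal{C}^c_\Lambda) \;\geqslant\; \mu^{J,+}_\Lambda(\sigma_0) - 2\,\mu^{J,+}(\mathcal{C}^c_\Lambda),
\]
which is the claim. So the ingredients you list (outermost circuit, DLR, FKG monotonicity) are the right ones, but the assembly must be one-sided as above rather than an attempted matching of $\mu^{J,+}_\Lambda$ and $\mu^{J,+}$ on the event $\mathcal{C}_\Lambda$.
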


\begin{proof}
  The inequality $f_{\Lambda, t}^J (-) \leqslant \mu^{J, +}_{\Lambda}
  (\sigma_0)$ is immediate. Then we remark that, conditionally on
  $\mathcal{C}_{\Lambda}$, the expectation of $\sigma_0$ under $\mu^J$ is at
  least $\mu^{J, +}_{\Lambda} \left( \sigma_0 \right)$. Otherwise it is at
  least $- 1$. Hence
  \begin{eqnarray*}
    m^J & \geqslant & \mu^{J, +} \left( \mathcal{C}_{\Lambda}^{} \right)
    \times \mu^{J, +}_{\Lambda} \left( \sigma_0 \right) + \mu^{J, +} \left(
    \mathcal{C}_{\Lambda}^c \right) \times (- 1)\\
    & \geqslant & \mu^{J, +}_{\Lambda} \left( \sigma_0 \right) - 2 \mu^{J, +}
    \left( \mathcal{C}_{\Lambda}^c \right) .
  \end{eqnarray*}
\end{proof}

The average of $\mu^{J, +} \left( \mathcal{C}_{\Lambda}^c \right)$ decays
exponentially fast with the size of $\Lambda$ under the slab percolation
assumption when $d = 2$ (see (\ref{eq-betasp}) for the definition of
$\hat{\beta}_c$). When $\beta$ is large, we can also give quantitative
estimates.

\begin{lemma}
  \label{lemF4}Assume $d = 2$ and $\beta > \hat{\beta}_c$, then there is $c >
  0$ such that, for $N$ large enough,
  \begin{eqnarray}
    \mathbbm{E} \mu^{J, +} \left( \mathcal{C}_{\hat{\Lambda}_N}^c \right) &
    \leqslant & \exp (- cN) .  \label{muC1}
  \end{eqnarray}
\end{lemma}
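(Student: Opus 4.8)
The plan is to deduce the lemma from two-dimensional planar (site) duality together with the exponential decay of ``wrong phase'' crossings, the latter being, for $d=2$, a consequence of the slab percolation hypothesis $\beta>\hat{\beta}_c$. First I would rewrite the event. In $\mathbbm{Z}^2$, nearest neighbour connectivity is dual to $\ast$-connectivity, so the absence of any plus nearest neighbour circuit around the origin inside $\hat{\Lambda}_N$ forces a $\ast$-connected path of minus spins joining (a site at distance $O(1)$ from) the origin to the inner boundary $\partial\hat{\Lambda}_N$: were the minus $\ast$-clusters of all the sites near the origin finite and contained in $\hat{\Lambda}_N$, their exterior vertex boundaries would be nearest neighbour plus circuits encircling the origin, since the exterior boundary of a $\ast$-connected set is nearest neighbour connected. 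Such a path has length at least $N-O(1)$, so it suffices to bound the $\mathbbm{P}$-average of the $\mu^{J,+}$-probability that it exists.

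Next I would pass to the random cluster picture through the Edwards--Sokal coupling $\Psi^{J,+}_{\hat{\Lambda}_N,\beta}$. Two neighbouring spins of opposite sign are never joined by an open edge, hence all dual edges straddling the boundary of the minus $\ast$-cluster $\mathcal{M}$ of the origin are $\omega$-closed; since $\mathcal{M}$ reaches $\ell^{\infty}$-distance of order $N$ from $0$, this dual contour is $\omega^{\ast}$-open, of length of order $N$, and reaches distance of order $N$. Extracting a dual open path from it and keeping the portion between its last visit to the inner shell $\|x\|_{\infty}=N/8$ and its first visit to the outer shell $\|x\|_{\infty}=N/2$, one obtains a dual open crossing, in its short direction, of one of four rectangles $\mathcal{R}_1,\ldots,\mathcal{R}_4$ of side lengths of order $N$ and bounded aspect ratio placed on the four sides of the annulus $\hat{\Lambda}_{N/2}\setminus\hat{\Lambda}_{N/8}$. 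By planar duality inside a rectangle this is exactly the disconnection event $\mathcal{D}_{\mathcal{R}_i}$ for the restriction of $\omega$ to $\mathcal{R}_i$, whence
\[ \mathbbm{E}\,\mu^{J,+}\left(\mathcal{C}_{\hat{\Lambda}_N}^c\right)\;\leqslant\;\sum_{i=1}^{4}\mathbbm{E}\,\Phi^{J}_{\mathcal{R}_i}\left(\mathcal{D}_{\mathcal{R}_i}\right), \]
where $\Phi^{J}_{\mathcal{R}_i}$ is the conditional law of $\omega$ on $\mathcal{R}_i$ given the outside configuration --- a random cluster measure with some boundary condition, necessarily dominated by the wired one and dominating the free one.

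Finally I would bound each term by $e^{-cN}$. As $\mathcal{D}_{\mathcal{R}_i}$ is decreasing in $\omega$ and the boundary condition dominates the free one, $\Phi^{J}_{\mathcal{R}_i}(\mathcal{D}_{\mathcal{R}_i})\leqslant\Phi^{J,f}_{\mathcal{R}_i}(\mathcal{D}_{\mathcal{R}_i})$, so it is the \emph{free} boundary version that must be controlled. For $\beta$ large this is routine: a Peierls/BK union bound over dual contours gives $\mathbbm{E}\,\Phi^{J,f}_{\mathcal{R}}(\mathcal{D}_{\mathcal{R}})\leqslant CN\sum_{\ell\geqslant cN}\left(C'\,\mathbbm{E}[e^{-\beta J_e}]\right)^{\ell}$, which is $\leqslant e^{-cN}$ once $\mathbbm{E}[e^{-\beta J_e}]$ beats the contour entropy. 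The difficulty --- and this is the main obstacle --- is that the lemma is required for \emph{every} $\beta>\hat{\beta}_c$, arbitrarily close to the slab percolation threshold, where this crude bound fails. One must instead use that $\beta>\hat{\beta}_c$ and $d=2$ make the crossing probabilities of the free random cluster measure bounded away from $0$, and feed this into the renormalisation of \cite{Wou08SPA}: this produces a subcritical field of \emph{bad} blocks, so that a dual open crossing of $\mathcal{R}_i$ must traverse a corridor of bad blocks, an event of $\mathbbm{P}$-averaged probability at most $e^{-cN}$, uniformly in the (dominated) boundary condition. (In the wired case one could alternatively read this off the positivity of $\tau^q_{\beta}(\tmmathbf{n})$, Proposition~1.5 of \cite{Wou09CMP}, via $\Phi^{J,w}_{\mathcal{R}}(\mathcal{D}_{\mathcal{R}})=\exp(-L\,\tau^J_{\mathcal{R}})$, but controlling the lower deviations of $\tau^J_{\mathcal{R}}$ down to the threshold again requires the coarse graining; a harmless side effect is that one may want to also assume $\beta\notin\mathcal{N}$, which is countable.) Summing the four terms concludes the proof.
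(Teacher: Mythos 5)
Your first step (planar $\ast$-duality: the absence of a plus circuit around the origin inside $\hat{\Lambda}_N$ forces a minus $\ast$-path from a site at distance $O(1)$ from the origin to $\partial\hat{\Lambda}_N$) is fine, but the conversion of this \emph{spin} event into the FK disconnection events $\mathcal{D}_{\mathcal{R}_i}$ is where the argument breaks. The dual-open contour you exhibit is the boundary of the whole minus $\ast$-cluster $\mathcal{M}$; it surrounds $\mathcal{M}$, but nothing forces it to visit the inner shell $\|x\|_{\infty}=N/8$ --- it only does so when $\mathcal{M}$ is ``thin''. Concretely, consider the outcome in which every edge of $E(\hat{\Lambda}_N)$ is $\omega$-open, this single large cluster is not connected to infinity (the edges just outside $\hat{\Lambda}_N$ being closed), and in the Edwards--Sokal colouring this finite cluster receives spin $-1$ (conditional probability $1/2$ given $\omega$). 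Then $\mathcal{C}_{\hat{\Lambda}_N}$ fails, yet every dual edge inside $\hat{\Lambda}_N$ is dual-closed, so none of your four rectangles carries a dual crossing and no $\mathcal{D}_{\mathcal{R}_i}$ occurs. Hence the inclusion behind your union bound $\mathbbm{E}\,\mu^{J,+}(\mathcal{C}^c_{\hat{\Lambda}_N})\leqslant\sum_{i}\mathbbm{E}\,\Phi^{J}_{\mathcal{R}_i}(\mathcal{D}_{\mathcal{R}_i})$ is not valid: the absence of a plus circuit is not measurable with respect to $\omega$ alone, and its probability is governed jointly by the FK geometry \emph{and} by the fair colouring of the finite clusters. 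What makes the scenario above unlikely is not a disconnection estimate but the fact that a large cluster is unlikely to avoid the infinite (hence plus) cluster --- an ingredient your scheme never invokes.

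The paper's proof addresses exactly this point by arguing in the opposite direction: under the averaged FK measure, with probability $1-e^{-cN}$ there is simultaneously an open circuit in $\hat{\Lambda}_N\setminus\hat{\Lambda}_{N/2}$ and an infinite open path issued from $\hat{\Lambda}_{N/2}$ (both supplied, for every $\beta>\hat{\beta}_c$ in $d=2$, by the good-block renormalization of \cite{Wou08SPA}); on that event the circuit is attached to infinity, so it carries spin $+1$ with conditional probability one and $\mathcal{C}_{\hat{\Lambda}_N}$ holds. If you wish to keep your dual route, you must supplement it with a second estimate showing that a minus $\ast$-crossing carried by large clusters is itself exponentially unlikely because large finite clusters detached from the infinite one are rare --- which essentially reproduces the paper's argument. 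Your final step (controlling free-boundary disconnection for all $\beta>\hat{\beta}_c$ via the coarse graining) is the right tool, but it is applied to an event that $\mathcal{C}^c_{\hat{\Lambda}_N}$ does not imply; note also that, in your parenthetical alternative, assuming $\beta\notin\mathcal{N}$ is not harmless here, since the lemma is needed for every $\beta>\hat{\beta}_c$ in the proof of Theorem~\ref{thm-upb-A}.
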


\begin{proof}
  We use the FK representation of the spin model and the renormalization
  framework of {\cite{Wou08SPA}}. The event $\mathcal{C}_{\hat{\Lambda}_N}$ is
  realized with conditional probability one if both of the following occur in
  the edge configuration:
  \begin{enumerate}[i.]
    \item There is an infinite $\omega$-open path issued from
    $\hat{\Lambda}_{N / 2}$
    
    \item There is a surface that lies in $\text{$\hat{\Lambda}_N$} \setminus
    \hat{\Lambda}_{N / 2}$ for which all points are $\omega$-connected.
  \end{enumerate}
  Since $d = 2$, the second point reduces to finding a circuit of open edges
  inside $ \text{$\hat{\Lambda}_N$} \setminus \hat{\Lambda}_{N / 2}$. We can
  cover $ \text{$\hat{\Lambda}_N$} \setminus \hat{\Lambda}_{N / 2}$ with $16$
  blocks of side-length $N / 2$. When all these blocks are good in the sense
  of Theorem 2.1 in {\cite{Wou08SPA}}, the second point is realized, and this
  occurs with a probability $1 - \exp (- cN)$ as we assume that $\beta >
  \hat{\beta}_c$. Similarly, the first point is realized as well with a
  probability $1 - \exp (- cN)$ and (\ref{muC1}) follows.
\end{proof}

\begin{remark}
  In larger dimensions, under the assumption that $\mathbbm{P}(J_e \geqslant
  \varepsilon) = 1$ for some $\varepsilon > 0$, one can use Peierls estimates
  to prove that $\mathbbm{E} \mu^{J, +} \left( \mathcal{C}_{\hat{\Lambda}_N}^c
  \right) \leqslant \exp (- c \beta N)$ for some $c > 0$, for any $\beta$
  large. Still, this is not useful for generalizing Theorem~\ref{thm-upb-A} as can be
  seen in the final optimization below.
\end{remark}

Now we give the proof of Theorem~\ref{thm-upb-A}.

\begin{proof}
  According to Lemmas~\ref{lemF1}, \ref{lemF2}, \ref{lemF3} and to the
  inequality $f_t^J (+) \leqslant f_{\hat{\Lambda}_N, t}^J (+)$ due to the
  monotonicity of the dynamics, we have
  \begin{eqnarray*}
    \tmop{Var}_{\mu^{J, +}} \left( f^J_t \right) & \leqslant & 2 e^{C_1 \beta
    N^d - C_3 t N^{- d} e^{- C_2 \beta N^{d - 1}}} + 4 \mu^{J \text{}, +} \left(
    \mathcal{C}_{\hat{\Lambda}_N}^c \right) .
  \end{eqnarray*}
  We recall the assumption that $d = 2$ and $\beta > \hat{\beta}_c$. According
  to Lemma~\ref{lemF4} there is $c > 0$ such that, for all $N$ large enough:
  \begin{eqnarray*}
    \mathbbm{E} \mu^{J, +} \left( \mathcal{C}_{\hat{\Lambda}_N}^c \right) &
    \leqslant & \exp (- cN),
  \end{eqnarray*}
  hence Markov's inequality implies that
  \begin{eqnarray}
    \mathbbm{P}\left( \mu^{J, +} \left( \mathcal{C}_{\hat{\Lambda}_N}^c \right)
    \geqslant \exp (- c N/ 2) \right) & \leqslant & \exp (- c N/2) . 
    \label{Cmarkov}
  \end{eqnarray}
  Now we define $N$ by the relation
  \begin{eqnarray}
    C_2 \beta N^{d - 1} & = & (1 - \delta) \log t.  \label{Nt}
  \end{eqnarray}
  For large enough $t$, we have
  \begin{eqnarray*}
    \tmop{Var}_{\mu^{J, +}} \left( f^J_t \right) & \leqslant & 4 \left( \exp
    (- c N / 2) + \mu^{J \text{}, +} \left( \mathcal{C}_{\hat{\Lambda}_N}^c
    \right) \right) .
  \end{eqnarray*}
  Combining this with (\ref{def-A2}) and (\ref{Cmarkov}) gives
  \begin{eqnarray*}
    A^{\lambda} (t) & \leqslant & 8^{\lambda} \left[ \exp (- c \lambda N / 2)
    + \exp (- c N / 2) \right]
  \end{eqnarray*}
  which, according to the assumption that $d = 2$ and to the definition
  (\ref{Nt}) of $N$, establishes the claim (\ref{Aupb1}).
\end{proof}

\subsection{Lower bound on the autocorrelation}

\label{sec-autocor}The subject of the present Section is the proof of
Theorems~\ref{thm-lwb-A} and \ref{thm-Trel}. It is organized as follows. In
Section~\ref{sec-covering}, we define the notion of covering of the border
of a magnetization profile by rectangles. Then, in Section~\ref{sec-dilution}
we define the event of dilution according to some initial profile $(u_0,
\tau^r) \in \tmop{IC}$ and prove that it has the expected probability. We show
how previous result from {\cite{Wou09CMP}} apply for the probability of phase
coexistence, given the event of dilution. In Section~\ref{sec-bottleneck} we
show that phase profiles evolve continuously in $L^1$ and relate this property
to the bottleneck and to the gap in free energy $\mathcal{K}^r (u_0)$. Finally
in~\ref{sec-lwb-final} we conclude the proof of Theorems~\ref{thm-lwb-A} and
\ref{thm-Trel}.

\subsubsection{Covering of the boundary of phase profiles}

\label{sec-covering}As in the work {\cite{Wou09CMP}}, the coverings of the
boundary of macroscopic phase profile are a fundamental tool 
for relating the macroscopic shape of the magnetization to the microscopic
spin system. The definition that we present here is more restrictive than in
{\cite{Wou09CMP}} since it takes into account the set $\tmop{IC}$, together
with a new parameter $\gamma$ (fourth line in (v) below).

\begin{definition}
  \label{def-g-adapted}Let $(u_0 = \chi_{U_0}, \tau^r) \in \tmop{IC}$ and $u
  \in \tmop{BV}$, together with $\delta, \gamma > 0$. We say that a
  rectangular parallelepiped $\mathcal{R} \subset [0, 1]^d$ is $(u_0, \tau^r,
  \delta, \gamma)$-adapted to $u$ at $x \in \partial^{\star} u$ if:
  \begin{enumerate}[i.]
    \item If $\tmmathbf{n}=\tmmathbf{n}_x^u$ is the outer normal to $u$ at
    $x$, there are $\mathcal{S} \in \mathbbm{S}_{\tmmathbf{n}}$ and $h \in (0,
    \delta]$ such that, either $\mathcal{R} \subset (0, 1)^d$ (we say that
    $\mathcal{R}$ is interior) and
    \[ \mathcal{R}= x + h\mathcal{S}+ [\pm \delta h]\tmmathbf{n}, \]
    either $\mathcal{R} \cap \partial [0, 1]^d \neq \emptyset$ (we say that
    $\mathcal{R}$ is on the border) and $x \in \partial [0, 1]^d$,
    $\tmmathbf{n}$ is also the outer normal to $[0, 1]^d$ at $x$ and
    \[ \mathcal{R}= x + h\mathcal{S}+ [- \delta h, 0]\tmmathbf{n}. \]
    \item We have
    \[ \mathcal{H}^{d - 1} \left( \partial^{\star} u \cap \partial \mathcal{R}
       \right) = 0, \]
    \[ \left| 1 - \frac{1}{h^{d - 1}} \mathcal{H}^{d - 1} \left(
       \partial^{\star} u \cap \mathcal{R} \right) \right| \leqslant \delta,
    \]
    and
    \[ \left| \tau^q (\tmmathbf{n}) - \frac{1}{h^{d - 1}} \int_{\partial^{\star}
       u \cap \mathcal{R}} \tau^q (\tmmathbf{n}_.^u) d\mathcal{H}^{d - 1}
       \right| \leqslant \delta . \]
    \item If $\chi: \mathbbm{R}^d \rightarrow \{\pm 1\}$ is the
    characteristic function of the half-space above the center of $ \mathcal{R}$, namely
    \[ \chi (z) = \left\{ \begin{array}{ll}
         + 1 & \text{if } (z - x) \cdot \tmmathbf{n} \geqslant 0\\
         - 1 & \text{else}
       \end{array}, \forall z \in \mathbbm{R}^d, \right. \]
    then
    \[ \frac{1}{2 \delta h^d} \int_{\mathcal{R}} | \chi - u|d\mathcal{H}^d
       \leqslant \delta . \]
    \item If $\mathcal{R} \cap \partial [0, 1]^d \neq \emptyset$, then
    $\mathcal{R}$ does not intersect $\partial U_0$.
    
    \item If $\mathcal{R}$ intersects $\partial U_0$, then $x \in \partial
    U_0$ and
    \[ \left| \frac{1}{h^{d - 1}} \mathcal{H}^{d - 1} \left( \left(
       \partial^{\star} u \Delta \partial^{\star} u_0 \right) \cap \mathcal{R}
       \right) \right| \leqslant \delta, \]
    \[ \left| \tau^r (x) - \frac{1}{h^{d - 1}} \int_{\partial^{\star} u_0 \cap
       \mathcal{R}} \tau^r d\mathcal{H}^{d - 1} \right| \leqslant \delta, \]
    \[ \left| I_{\tmmathbf{n}} (\tau^r (x)) - \frac{1}{h^{d - 1}}
       \int_{\partial^{\star} u_0 \cap \mathcal{R}} I_{\tmmathbf{n}_z^{u_0}}
       (\tau^r (z)) d\mathcal{H}^{d - 1} (z) \right| \leqslant \delta, \]
    and
    \[ \left| I_{\tmmathbf{n}} (\tau^r (x) - \gamma) - \frac{1}{h^{d - 1}}
       \int_{\partial^{\star} u_0 \cap \mathcal{R}} I_{\tmmathbf{n}_z^{u_0}}
       (\tau^r (z) - \gamma) d\mathcal{H}^{d - 1} (z) \right| \leqslant \delta
       . \]
    \item If $\mathcal{R}$ intersects $\partial U_0$, then the enlarged volume
    \[ \mathcal{R}' =\mathcal{R}+ B (0, 2 \sqrt{d} h^2) = \left\{ z \in
       \mathbbm{R}^d: d (z, \mathcal{R}) \leqslant 2 \sqrt{d} h^2 \right\} \]
    satisfies
    \begin{equation} \frac{1}{h^{d - 1}} \mathcal{H}^{d - 1} \left( \partial^{\star} u_0
       \cap \mathcal{R}' \setminus \mathcal{R} \right) \leqslant \delta . \label{eq:enlarged}
       \end{equation}
  \end{enumerate}
\end{definition}

Note that conditions {\tmem{i}} to {\tmem{iii}} above mean exactly that
$\mathcal{R}$ is $\delta$-adapted to $\partial^{\star} u$ at $x \in
\partial^{\star} u$ in the sense of Definition 3.1 of {\cite{Wou09CMP}}. Also, by 
 $\Delta$ we mean the symmetric difference.

\begin{definition}
  \label{def-g-cover}Let $(u_0, \tau^r) \in \tmop{IC}$ and $u \in \tmop{BV}$,
  $\delta, \gamma > 0$. A finite sequence $(\mathcal{R}_i)_{i = 1 \ldots n}$
  of disjoint rectangular parallelepipeds included in $[0, 1]^d$ is said to be
  a $(u_0, \tau^r, \delta, \gamma)$-covering for $\partial^{\star} u$ if each
  $\mathcal{R}_i$ is $(u_0, \tau^r, \delta, \gamma)$--adapted to $u$ and if
  \begin{equation}
    \mathcal{H}^{d - 1} \left( \partial^{\star} u \setminus \bigcup_{i = 1}^n
    \mathcal{R}_i \right) \leqslant \delta .
  \end{equation}
\end{definition}

The main result of this Section is:

\begin{proposition}
  \label{prop-g-cover}Let $(u_0, \tau^r) \in \tmop{IC}$ and $u \in \tmop{BV}$,
  together with $\gamma, \delta > 0$. There exists a $(u_0, \tau^r, \delta,
  \gamma)$-covering for $\partial^{\star} u$.
\end{proposition}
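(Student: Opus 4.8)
The plan is to construct the covering by a Vitali-type argument: one shows that for each point $x$ in the reduced boundary $\partial^\star u$ (outside a set of arbitrarily small $\mathcal{H}^{d-1}$-measure), there is an arbitrarily small $(u_0,\tau^r,\delta,\gamma)$-adapted rectangle centred at $x$, and then one extracts a finite disjoint subfamily covering all but $\delta$ of the boundary. The existence of adapted rectangles at $\mathcal{H}^{d-1}$-a.e.\ $x$ will be obtained from standard density and differentiation theorems. For conditions \emph{i}--\emph{iii} this is exactly the content of Definition~3.1 and the corresponding existence statement in \cite{Wou09CMP}: at $\mathcal{H}^{d-1}$-a.e.\ $x \in \partial^\star u$ the blow-up of $u$ converges in $L^1_{\mathrm{loc}}$ to the half-space $\chi$ with normal $\tmmathbf{n}_x^u$, the measure $\mathcal{H}^{d-1}\llcorner\partial^\star u$ has density $1$ at $x$, and the trace $\tau^q(\tmmathbf{n}_\cdot^u)\,\mathcal{H}^{d-1}\llcorner\partial^\star u$ has Radon–Nikodym density $\tau^q(\tmmathbf{n}_x^u)$ at $x$; choosing $h$ small enough and avoiding the at most countably many $h$ for which $\mathcal{H}^{d-1}(\partial^\star u \cap \partial\mathcal{R})>0$ gives \emph{i}--\emph{iii}.

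Next I would handle the new conditions \emph{iv}--\emph{vii}, which concern the interaction of $\mathcal R$ with $\partial U_0$. The key point is that, because $u_0$ is regular (Definition~\ref{def-u-regular}), $\partial U_0 = \partial^\star u_0$ up to $\mathcal{H}^{d-1}$-null sets and $U_0$ is at positive distance from $\partial[0,1]^d$; hence for $x$ near $\partial[0,1]^d$ one automatically gets \emph{iv} once $h$ is small, while for $x \in \partial^\star u$ far from $\partial U_0$ the rectangle misses $\partial U_0$ and \emph{v}--\emph{vii} are vacuous. The substantive case is $x \in \partial^\star u \cap \partial^\star u_0$. Here I would invoke differentiation of measures once more: at $\mathcal{H}^{d-1}$-a.e.\ such $x$ the four measures appearing in \emph{v} — namely $(\partial^\star u \,\Delta\, \partial^\star u_0)$, $\tau^r\,\mathcal{H}^{d-1}\llcorner\partial^\star u_0$, $I_{\tmmathbf{n}_\cdot^{u_0}}(\tau^r(\cdot))\,\mathcal{H}^{d-1}\llcorner\partial^\star u_0$, and $I_{\tmmathbf{n}_\cdot^{u_0}}(\tau^r(\cdot)-\gamma)\,\mathcal{H}^{d-1}\llcorner\partial^\star u_0$ — have the prescribed densities with respect to $\mathcal{H}^{d-1}\llcorner\partial^\star u$ at $x$; here one uses that $\tau^r$ is continuous and that $I_{\tmmathbf n}$ (hence its precomposition with $\tau^r$ and with $\tau^r-\gamma$) is well behaved on the relevant range, guaranteed by the condition $\tau^{\min}(\tmmathbf n)+\varepsilon < \tau^r(x) \le \tau^q(\tmmathbf n)$ in the definition (\ref{IC}) of $\tmop{IC}$. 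For \emph{vi}, since $\mathcal{R}'\setminus\mathcal{R}$ is a shell of width $2\sqrt d\,h^2$ around a rectangle of diameter comparable to $h$, its intersection with the rectifiable set $\partial^\star u_0$ has $\mathcal{H}^{d-1}$-measure that is $o(h^{d-1})$ for $\mathcal{H}^{d-1}$-a.e.\ $x$ (again by upper density $1$ of $\mathcal{H}^{d-1}\llcorner\partial^\star u_0$ and a crude comparison of the measures of $\mathcal R'$ and $\mathcal R$), so (\ref{eq:enlarged}) holds for $h$ small. Intersecting the (countably many) full-measure sets coming from \emph{i}--\emph{vii} yields a single $\mathcal{H}^{d-1}$-full subset $G \subset \partial^\star u$ such that through every $x \in G$ and every sufficiently small admissible $h$ there passes a $(u_0,\tau^r,\delta,\gamma)$-adapted rectangle.

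Finally I would apply the Vitali covering lemma for $\mathcal{H}^{d-1}\llcorner\partial^\star u$: the family of adapted rectangles centred at points of $G$, with $h\to 0$, is a fine cover, so there is a countable disjoint subfamily $(\mathcal R_i)_i$ with $\mathcal{H}^{d-1}(\partial^\star u\setminus\bigcup_i\mathcal R_i)=0$; truncating to a finite subfamily leaves at most $\delta$ of the boundary uncovered, which is Definition~\ref{def-g-cover}. One technical nuisance is that rectangles, unlike balls, are not centred symmetrically about the ``right'' point when $\mathcal R$ is on the border (the displacement is $[-\delta h,0]\tmmathbf n$ rather than $[\pm\delta h]\tmmathbf n$), and more generally the Vitali lemma needs a bounded-eccentricity / Besicovitch-type hypothesis; this is precisely the reason the definition fixes the shape $\mathcal S \in \mathbbm S_{\tmmathbf n}$ and the aspect ratio through $h$ and $\delta$, so the rectangles have controlled eccentricity and the Besicovitch covering theorem (or the Vitali theorem for a Radon measure with a doubling-type comparison, as already used in \cite{Wou09CMP}) applies. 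The main obstacle is thus bookkeeping: verifying that the single exceptional $\mathcal{H}^{d-1}$-null set can be chosen uniformly so that all of \emph{i}--\emph{vii} hold simultaneously at every $x \in G$ for a cofinal set of scales $h$, and checking that the continuity of $\tau^r$ together with the range constraint in (\ref{IC}) makes the integrands in \emph{v} amenable to Lebesgue differentiation; everything else is a routine application of geometric measure theory exactly as in \cite{Wou09CMP}.
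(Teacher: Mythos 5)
Your proposal is correct and follows essentially the same route as the paper: almost-everywhere existence of adapted rectangles via Besicovitch-type derivation theorems (density zero of $\partial^{\star} u\,\Delta\,\partial^{\star} u_0$ on the common boundary, differentiation of the $\tau^r$- and $I_{\tmmathbf{n}}$-weighted measures, and the strong form of the derivation theorem for the enlarged-shell condition), followed by a Vitali covering extraction as in \cite{Wou09CMP} and \cite{Cer06LNM}. No substantive difference from the paper's argument.
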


\begin{proof}
As the proof follows a classical argument, we will only give here the main steps in the proof.
First, we claim that, for any $u, u_0 \in \tmop{BV}$, for $\mathcal{H}^{d -
  1}$ almost all $x \in \partial^{\star} u \cap \partial^{\star} u_0$,
  \[ \lim_{r \rightarrow 0^+} \frac{1}{r^{d - 1}} \mathcal{H}^{d - 1} \left(
    ( \partial^{\star} u \Delta \partial^{\star} u_0 ) \cap B(x,r)\right) = 0. \]
This can be proven by applying, for instance, the Besicovitch derivation Theorem (Theorem 2.22 in {\cite{AFP00Book}}) to the Borel measurable function
  \[ f: x \in \mathbbm{R}^d \mapsto \left\{ \begin{array}{ll}
       1 & \text{if } x \in \partial^{\star} u_0\\
       2 & \text{else},
     \end{array} \right. \]
     with the result that for $\mathcal{H}^{d - 1}$-almost all $x \in \partial^{\star} u \cap
  \partial^{\star} u_0$,
  \[ \lim_{r \rightarrow 0^+} \frac{1}{\alpha_{d - 1} r^{d - 1}}
     \int_{\partial^{\star} u \cap B (x, r)} f d\mathcal{H}^{d - 1} = f (x) \]
  where $\alpha_{d - 1} =\mathcal{H}^{d - 1} \left( \left\{ x \in B (0, 1): x
  \cdot \tmmathbf{e}_d = 0\} \right) \right.$. Therefore,
  \[ \lim_{r \rightarrow 0^+} \left[ \frac{\mathcal{H}^{d - 1} \left(
     \partial^{\star} u \cap B (x, r) \right)}{\alpha_{d - 1} r^{d - 1}} +
     \frac{\mathcal{H}^{d - 1} \left( (\partial^{\star} u \setminus
     \partial^{\star} u_0) \cap B (x, r) \right)}{\alpha_{d - 1} r^{d - 1}}
     \right] = 1. \]
  As the first term goes to $1$ already as $r \rightarrow 0$, we conclude that
  \[ \lim_{r \rightarrow 0^+} \frac{1}{\alpha_{d - 1} r^{d - 1}}
     \mathcal{H}^{d - 1} \left(( \partial^{\star} u \setminus \partial^{\star}
     u_0) \cap B (x, r) \right) = 0 \]
  for $\mathcal{H}^{d - 1}$-almost all $x \in \partial^{\star} u \cap
  \partial^{\star} u_0$. The claim follows as $u$ and $u_0$ play a symmetric
  role.
  
Second, we remark that more generally, all the absolute values in point (v) of Definition \ref{def-g-cover}, and also left-hand side of inequality (\ref{eq:enlarged}), have a zero limit as  $h \rightarrow 0$ for $\mathcal{H}^{d - 1}$-almost all $x \in \partial^{\star} u \cap  \partial U_0$. This is a consequence of the strong form of the Besicovitch derivation theorem (Theorem 5.52 in {\cite{AFP00Book}}).

The two above facts enable, like in the proof of Theorem~3.4 in {\cite{Wou09CMP}}, the use of the Vitali covering Theorem (cf.~Definition~13.2 and Theorem~13.3 in {\cite{Cer06LNM}}) to conclude to the existence of the desired covering.
\end{proof}

\subsubsection{The event of dilution}

\label{sec-dilution}The purpose of this Section is to define the event of
(microscopic) dilution and to prove several important properties about phase
coexistence, given the event of dilution. We recall a notation from \cite{Wou09CMP}: when $\mathcal{R}$ is a rectangle and $N>0$, we let
\[ \mathcal{R}^N = N \mathcal{R} + z_N(\mathcal{R}) \]
where $z_N(\mathcal{R}) \in (-1/2,1/2]^d$ is such that the center of $\mathcal{R}^N$ belongs to $\mathbb{Z}^d$.

\begin{definition}
  \label{def-dilution}Let $(u_0, \tau^r) \in \tmop{IC}$, $\delta, \gamma > 0$
  and let $(\mathcal{R}_i)_{i = 1 \ldots n}$ be a $(u_0, \tau^r, \delta,
  \gamma)$-covering of $\partial^{\star} u_0$. The event of dilution on this
  covering is
  \begin{equation}
    \mathcal{G} \left( (\mathcal{R}_i)_{i = 1 \ldots n} \right) = \left\{ J
    \in \mathcal{J}: \tau^J_{\mathcal{R}^N_i} \leqslant \tau^r (x_i), \forall
    i = 1 \ldots n \right\} \label{eq-def-GN}.
  \end{equation}
\end{definition}

The reader will check that the event of dilution affects
only the random variables $J_e$ with $e$ at distance at most $N \delta$ from
the boundary $N \partial^{\star} u_0$.

The first property of the event of dilution is that it happens at the expected
rate. Namely,

\begin{lemma}
  \label{lem-proba-GN}Let $(u_0, \tau^r) \in \tmop{IC}$ and $\xi > 0$. For
  $\delta > 0$ small enough, for $\gamma > 0$ arbitrary, if
  $(\mathcal{R}_i)_{i = 1 \ldots n}$ is a $(u_0, \tau^r, \delta,
  \gamma)$-covering of $\partial^{\star} u_0$, then
  \begin{eqnarray*}
    \mathbbm{P} \left( \mathcal{G} \left( (\mathcal{R}_i)_{i = 1 \ldots n}
    \right) \right) & \geqslant & \exp \left( - N^{d - 1}  \left(
    \mathcal{I}^r (u_0) + \xi \right) \right)
  \end{eqnarray*}
  for any $N$ large enough.
\end{lemma}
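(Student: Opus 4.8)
The plan is to exploit the \emph{independence} of surface tension in disjoint rectangles together with the large-deviation rate function $I_{\tmmathbf{n}}$ from \eqref{def-In}, and to control the sum of the individual costs by the integral $\mathcal{I}^r(u_0)$ defined in \eqref{eq-def-Ir}. First I would observe that since the rectangles $(\mathcal{R}_i)_{i=1\ldots n}$ are disjoint, so are the rescaled discrete rectangles $\mathcal{R}_i^N$ once $N$ is large enough (the boundary perturbation $z_N$ being of order $1$), and hence the events $\{\tau^J_{\mathcal{R}_i^N}\leqslant\tau^r(x_i)\}$ are independent under $\mathbbm{P}$. This gives
\begin{equation}
  \mathbbm{P}\left(\mathcal{G}\left((\mathcal{R}_i)_{i=1\ldots n}\right)\right)
  = \prod_{i=1}^n \mathbbm{P}\left(\tau^J_{\mathcal{R}_i^N}\leqslant\tau^r(x_i)\right).
  \label{eq:GN-product}
\end{equation}

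Next I would estimate each factor from below. Fix $i$ and write $\tmmathbf{n}_i=\tmmathbf{n}^{u_0}_{x_i}$. Since $(u_0,\tau^r)\in\tmop{IC}$, we have $\tau^{\min}(\tmmathbf{n}_i)+\varepsilon<\tau^r(x_i)$, so the event in question is a genuine \emph{lower} deviation. By the definition \eqref{def-In} of the rate function (applied to the rectangle $\mathcal{R}_i^N$, which is of the form $\mathcal{R}_{x,L,H}(\mathcal{S},\tmmathbf{n}_i)$ with $L$ of order $N h_i$ where $h_i$ is the side-length parameter of $\mathcal{R}_i$ from Definition \ref{def-g-adapted}.i), for $N$ large enough,
\begin{equation}
  \mathbbm{P}\left(\tau^J_{\mathcal{R}_i^N}\leqslant\tau^r(x_i)\right)
  \geqslant \exp\left(-(Nh_i)^{d-1}\left(I_{\tmmathbf{n}_i}(\tau^r(x_i))+\eta\right)\right)
\end{equation}
for any prescribed $\eta>0$. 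Here one must be slightly careful: the rate function in \eqref{def-In} is a limit along squares $\mathcal{R}_{0,N,\delta N}$, while $\mathcal{R}_i^N$ has a base of side-length roughly $Nh_i$ and extension of order $\delta (Nh_i)$, so the correct normalisation is $(Nh_i)^{d-1}$ rather than $N^{d-1}$; the scaling argument for $\tau^q$ in \eqref{def-tauq} shows the limit is insensitive to the aspect ratio, and the same is true for $I_{\tmmathbf{n}_i}$ by Theorem~1.6 in \cite{Wou09CMP}. Multiplying over $i$ via \eqref{eq:GN-product} yields
\begin{equation}
  \mathbbm{P}\left(\mathcal{G}\left((\mathcal{R}_i)\right)\right)
  \geqslant \exp\left(-N^{d-1}\sum_{i=1}^n h_i^{d-1}\left(I_{\tmmathbf{n}_i}(\tau^r(x_i))+\eta\right)\right).
\end{equation}

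It then remains to compare $\sum_i h_i^{d-1} I_{\tmmathbf{n}_i}(\tau^r(x_i))$ with $\mathcal{I}^r(u_0)=\int_{\partial^{\star}u_0} I_{\tmmathbf{n}_x}(\tau^r(x))\,d\mathcal{H}^{d-1}(x)$. This is where the covering properties do the work: by Definition \ref{def-g-adapted}.v, each $\mathcal{R}_i$ that meets $\partial U_0$ satisfies $\bigl|I_{\tmmathbf{n}_i}(\tau^r(x_i)) - h_i^{-(d-1)}\int_{\partial^{\star}u_0\cap\mathcal{R}_i} I_{\tmmathbf{n}^{u_0}_z}(\tau^r(z))\,d\mathcal{H}^{d-1}(z)\bigr|\leqslant\delta$, so that $h_i^{d-1}I_{\tmmathbf{n}_i}(\tau^r(x_i))$ is within $\delta h_i^{d-1}$ of the integral of the rate-function density over $\partial^{\star}u_0\cap\mathcal{R}_i$; since also $h_i^{d-1}\leqslant(1+\delta)\mathcal{H}^{d-1}(\partial^{\star}u_0\cap\mathcal{R}_i)$ by (ii), and since the $\mathcal{R}_i$ are disjoint and cover all but $\mathcal{H}^{d-1}$-measure $\delta$ of $\partial^{\star}u_0$ (Definition \ref{def-g-cover}), summing gives $\sum_i h_i^{d-1}I_{\tmmathbf{n}_i}(\tau^r(x_i))\leqslant\mathcal{I}^r(u_0)+C(\delta+\eta)$, where $C$ depends on the diameter of $\partial^{\star}u_0$ and on $\sup_{\tmmathbf{n}}\tau^q(\tmmathbf{n})$ (a bound on the relevant rate functions over the compact range of $\tau^r$). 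Note the finiteness of $\mathcal{I}^r(u_0)$ is used here, which holds because $\tau^r(x)\leqslant\tau^q(\tmmathbf{n}_x)$ forces $I_{\tmmathbf{n}_x}(\tau^r(x))$ to stay bounded on $\partial^{\star}u_0$ (using continuity of $I_{\tmmathbf{n}}$ in its argument together with $\beta\notin\mathcal{N}_I$ being \emph{not} needed at this point since we integrate). Choosing first $\delta$ small and then $\eta$ small so that $C(\delta+\eta)<\xi$ gives the claim. The main obstacle I anticipate is purely bookkeeping: matching the normalisation $(Nh_i)^{d-1}$ coming from the rate-function limit with the aspect ratio of $\mathcal{R}_i^N$, and verifying uniformity in $i$ of the $N$-threshold — this is handled because there are only finitely many rectangles in the covering, so one can take the maximum of the finitely many thresholds.
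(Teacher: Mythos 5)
Your proposal follows essentially the same route as the paper's proof: factorize $\mathbbm{P}(\mathcal{G})$ using independence over the disjoint rescaled rectangles, apply the lower-deviation rate function (\ref{def-In}) in each rectangle with the $(Nh_i)^{d-1}$ normalisation (possible because $\tau^r(x_i)>\tau^{\min}(\tmmathbf{n}_i)+\varepsilon$ by the definition of IC), and then use the covering properties (points (ii) and (v) of Definition~\ref{def-g-adapted} together with disjointness of the $\mathcal{R}_i$) to bound $\sum_i h_i^{d-1} I_{\tmmathbf{n}_i}(\tau^r(x_i))$ by $\mathcal{I}^r(u_0)$ plus an error vanishing with $\delta$. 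One minor aside: the boundedness of $I_{\tmmathbf{n}_x}(\tau^r(x))$ on $\partial^{\star}u_0$ comes from the margin $\tau^r(x)>\tau^{\min}(\tmmathbf{n}_x^{u_0})+\varepsilon$ in the definition of IC, not from the upper bound $\tau^r(x)\leqslant\tau^q(\tmmathbf{n}_x)$, but this does not affect your argument.
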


\begin{proof}
  We recall that $\mathcal{I}^r (u_0)$ was defined at (\ref{eq-def-Ir}), see
  also (\ref{def-In}). Since the $\mathcal{R}^N_i$ are disjoint for large
  enough $N$, we have
  \begin{eqnarray*}
    \liminf_{N \rightarrow \infty} \frac{1}{N^{d - 1}} \log \mathbbm{P} \left(
    \mathcal{G} \left( (\mathcal{R}_i)_{i = 1 \ldots n} \right) \right) & = &
    \liminf_{N \rightarrow \infty} \sum_{i = 1}^n \frac{1}{N^{d - 1}} \log
    \mathbbm{P} \left( \tau^J_{\mathcal{R}^N_i} \leqslant \tau^r (x_i)
    \right)\\
    & = & - \sum_{i = 1}^n h_i^{d - 1} I_{\tmmathbf{n}_i} (\tau^r (x_i))
  \end{eqnarray*}
  in view of (\ref{def-In}). Note the role played by the assumption $\tau^r
  (x) > \tau^{\min} (\tmmathbf{n}_x^{u_0})$ in the definition of $\tmop{IC}$
  at (\ref{IC}). The properties of the covering (Definition \ref{def-g-cover}
  and point (iii) in Definition \ref{def-g-adapted}) imply the claim for
  $\delta > 0$ small enough.
\end{proof}

Then we show that the dilution has the expected impact on the probability for
phase coexistence. This is expressed with the two complementary
Propositions~\ref{prop-cond-lwb-phco} and \ref{prop-cond-upb-phco}. We have to recall here 
the definition of the magnetization profile
\begin{equation}
  \begin{array}{cccc}
    \mathcal{M}_K : & [0, 1]^d & \longrightarrow & [- 1, 1]\\
    & x & \longmapsto & \frac{1}{K^d} \sum_{z \in \Lambda_N \cap \Delta_{i
    (x)}} \sigma_z
  \end{array} \label{eq-def-MK}
\end{equation}
where  $K \in \mathbbm{N}^{\star}$ is the mesoscopic scale and
\begin{equation}
  i (x) = \left( \left[ \frac{Nx_1}{K} \right], \ldots, \left[ \frac{Nx_d}{K}
  \right] \right) \text{ \ and \ } \Delta_i = Ki +\{1, \ldots, K\}^d .
\end{equation}
Hence, unless $x$ is too close to the border of $[0, 1]^d$, $\mathcal{M}_K
(x)$ is the magnetization in a block of side-length $K$ that contains $Nx$.
Theorem 5.7 in {\cite{Wou08SPA}} provides a strong stochastic control on
$\mathcal{M}_K$ when $\beta > \hat{\beta}_c$. In particular, when $K$ is large
enough, at every $x$ the probability that $\mathcal{M}_K (x)$ is close to
either $m_{\beta}$ or $- m_{\beta}$ is close to one under the averaged measure
$\mathbbm{E} \mu^{J, +}_{\Lambda_N}$. The event $ \mathcal{M}_K/m_{\beta} \in
     \mathcal{V}(u_0, \varepsilon)$ means therefore that the system is close to plus (resp. minus) phase at $N x$ when $u_0(x)=1$ (resp. $u_0(x)=-1$).
\begin{proposition}
  \label{prop-cond-lwb-phco}Assume that $\beta > \hat{\beta}_c$ and $\beta
  \notin \mathcal{N}$. Let $(u_0, \tau^r) \in \tmop{IC}$ and $\varepsilon, \xi
  > 0$. Let $\delta, \gamma > 0$ and consider $(\mathcal{R}_i)_{i = 1 \ldots
  n}$ a $(u_0, \tau^r, \delta, \gamma)$-covering of $\partial^{\star} u_0$.
  Then, if $\delta > 0$ is small enough (and $\gamma$ arbitrary), for $K$
  large enough,
  \[ \lim_{N \rightarrow \infty} \mathbbm{P} \left( \left. \mu^{J,
     +}_{\Lambda_N} \left( \frac{\mathcal{M}_K}{m_{\beta}} \in
     \mathcal{V}(u_0, \varepsilon) \right) \geqslant \exp \left( - N^{d - 1}
     \left( \mathcal{F}^r (u_0) + \xi \right) \right) \right| \mathcal{G}
     \left( (\mathcal{R}_i)_{i = 1 \ldots n} \right) \right) = 1. \]
\end{proposition}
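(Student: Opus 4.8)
The plan is to establish this lower bound in the Fortuin--Kasteleyn representation, following the lower-bound part of the Wulff construction of \cite{Wou09CMP}; the two new features are that the conditioning on the dilution event $\mathcal{G}$ has to be shown harmless, and that along $N\partial^{\star}u_0$ the quenched surface tension is replaced by the reduced one $\tau^r$. Writing $u_0=\chi_{U_0}$ and letting $(h_i,\tmmathbf{n}_i,x_i,\mathcal{S}_i)_{i=1\ldots n}$ be the data attached to the covering $(\mathcal{R}_i)$ (Definitions~\ref{def-g-adapted} and~\ref{def-g-cover}), for $N$ large enough that the rescaled parallelepipeds $\mathcal{R}_i^N$ are pairwise disjoint and contained in $\Lambda_N$, I would introduce the cluster event $\mathcal{B}_N$ equal to the intersection of (i) the disconnection events $\mathcal{D}_{\mathcal{R}_i^N}$ of~(\ref{eq-def-DR}), $i=1,\ldots,n$, and (ii) a ``sealing'' event suppressing the $\omega$-open crossings of an $N\delta$-neighbourhood of the uncovered part $N\big(\partial^{\star}u_0\setminus\bigcup_i\mathcal{R}_i\big)$ and of the gaps between neighbouring $\mathcal{R}_i^N$. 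Using the regularity of $u_0$ and points (i)--(iii) of Definition~\ref{def-g-adapted} (which are exactly the $\delta$-adaptedness of \cite{Wou09CMP}), $\mathcal{B}_N$ forces $NU_0\cap\mathbbm{Z}^d$ to be $\omega$-disconnected from $\Lambda_N^c$ along a surface shadowing $N\partial^{\star}u_0$ up to an $L^1$-error $O(\delta)$.

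The first point is a \emph{deterministic} lower bound for $\Phi^{J,w}_{E(\Lambda_N)}(\mathcal{B}_N)$ on the event $\mathcal{G}$. The events in (i) and (ii) being decreasing, the FKG inequality and the monotonicity in the boundary condition (the ingredients behind the sub-additivity of surface tension in \cite{Wou09CMP}) give
\[
  \Phi^{J,w}_{E(\Lambda_N)}(\mathcal{B}_N)\;\geqslant\;\Big(\prod_{i=1}^{n}\Phi^{J,w}_{\mathcal{R}_i^N}\big(\mathcal{D}_{\mathcal{R}_i^N}\big)\Big)\,e^{-C\delta N^{d-1}},
\]
the factor $e^{-C\delta N^{d-1}}$, with $C=C(\beta)$, coming from simply declaring closed a thin slab of edges across the uncovered region and the gaps. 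On $\mathcal{G}$, the definitions~(\ref{eq-def-tauJ}) and~(\ref{eq-def-GN}) give $\Phi^{J,w}_{\mathcal{R}_i^N}(\mathcal{D}_{\mathcal{R}_i^N})=\exp\!\big(-(h_iN)^{d-1}\tau^J_{\mathcal{R}_i^N}\big)\geqslant\exp\!\big(-(h_iN)^{d-1}\tau^r(x_i)\big)$, while Definition~\ref{def-g-cover} and point (v) of Definition~\ref{def-g-adapted} give $\sum_{i=1}^{n}h_i^{d-1}\tau^r(x_i)=\int_{\partial^{\star}u_0}\tau^r\,d\mathcal{H}^{d-1}+O(\delta)=\mathcal{F}^r(u_0)+O(\delta)$. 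Taking $\delta$ small enough, this yields $\Phi^{J,w}_{E(\Lambda_N)}(\mathcal{B}_N)\geqslant\exp\big(-N^{d-1}(\mathcal{F}^r(u_0)+\xi/2)\big)$ on $\mathcal{G}$, for all $N$ large, with no restriction on $J$.

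It remains to pass to $\mu^{J,+}_{\Lambda_N}$. Under the Edwards--Sokal coupling $\Psi^{J,+}_{\Lambda_N,\beta}$, given $\omega\in\mathcal{B}_N$ the clusters inside $NU_0$ carry independent uniform spins, and simultaneously reversing all of them is a measure-preserving involution exchanging the event $\{\mathcal{M}_K/m_{\beta}\in\mathcal{V}(u_0,\varepsilon)\}$ with the analogous event for the pure plus profile; combined with the strong stochastic control on $\mathcal{M}_K$ of Theorem~5.7 in \cite{Wou08SPA} --- valid for $\mathbbm{P}$-a.e.\ $J$ when $\beta>\hat{\beta}_c$ and $\beta\notin\mathcal{N}$, and supplying both the $\approx+m_{\beta}$ picture outside $NU_0$ and, inside, the existence of a unique giant $\omega$-cluster whose sign pins $\mathcal{M}_K$ to $\pm m_{\beta}$ --- this shows $\Psi^{J,+}_{\Lambda_N,\beta}\big(\mathcal{M}_K/m_{\beta}\in\mathcal{V}(u_0,\varepsilon)\mid\omega\big)\geqslant\tfrac{1}{2}(1-o(1))$ uniformly over $\omega\in\mathcal{B}_N$. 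Multiplying by the bound of the previous paragraph gives $\mu^{J,+}_{\Lambda_N}\big(\mathcal{M}_K/m_{\beta}\in\mathcal{V}(u_0,\varepsilon)\big)\geqslant\exp\big(-N^{d-1}(\mathcal{F}^r(u_0)+\xi)\big)$ for $N$ large. To upgrade ``$\mathbbm{P}$-a.e.\ $J$'' to $\lim_N\mathbbm{P}(\,\cdot\mid\mathcal{G})=1$, I use that $\mathcal{G}$ depends only on the couplings within distance $N\delta$ of $N\partial^{\star}u_0$: the $\Phi^{J,w}_{E(\Lambda_N)}(\mathcal{B}_N)$-estimate holds for all $J$ on $\mathcal{G}$, while the facts underlying the conditional $\mathcal{M}_K$-estimate concern, up to a negligible $O(\delta)$-fraction of blocks, only the couplings away from $N\partial^{\star}u_0$, hence remain valid with $\mathbbm{P}$-probability $\to1$ after conditioning on $\mathcal{G}$.

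I expect the main obstacle to be the geometric content of the construction of $\mathcal{B}_N$: checking that the local disconnection events in the $\mathcal{R}_i^N$, once the uncovered part and the inter-rectangle gaps have been sealed using the regularity of $u_0$, genuinely force a magnetisation profile $L^1$-close to $u_0$ at a total surface cost $\mathcal{F}^r(u_0)+O(\delta)N^{d-1}$ --- that is, redoing the lower-bound part of the Wulff construction of \cite{Wou09CMP} with $\tau^r$ in place of $\tau^q$ along $\partial^{\star}u_0$. The delicate points there are the control of the sealing cost (that it is truly $O(\delta)N^{d-1}$, uniformly in $J$) and the domain comparison $\Phi^{J,w}_{E(\Lambda_N)}(\mathcal{D}_{\mathcal{R}_i^N})\geqslant\Phi^{J,w}_{\mathcal{R}_i^N}(\mathcal{D}_{\mathcal{R}_i^N})$ for these decreasing events; the conditioning on $\mathcal{G}$, by contrast, reduces to the independence of boundary and bulk couplings noted above.
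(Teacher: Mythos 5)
Your proposal is correct and follows essentially the same route as the paper: the paper likewise factorizes $\mu^{J,+}_{\Lambda_N}\big(\mathcal{M}_K/m_{\beta}\in\mathcal{V}(u_0,\varepsilon)\big)\geqslant F^J_N\,\Phi^{J,w}_{\Lambda_N}\big(\mathcal{D}^{N,\delta}_{U_0}\big)$, bounds the disconnection probability below by $\exp\big(-N^{d-1}(\mathcal{F}^r(u_0)+\xi/2)\big)$ uniformly over $J\in\mathcal{G}$ using the covering together with (\ref{eq-def-GN}), and exploits that the conditional profile estimate $F^J_N$ (an infimum over configurations on the ring $\mathcal{E}^{N,\delta}_{U_0}$) depends only on couplings off that ring, hence is independent of $\mathcal{G}$ --- exactly your third step. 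The only difference is one of packaging: the paper imports both the geometric gluing behind the disconnection lower bound and the estimate $\mathbbm{P}(F^J_N<1/3)\rightarrow 0$ wholesale from Proposition~3.9 of \cite{Wou09CMP}, whereas you re-derive the former by hand (FKG plus sealing) and sketch the latter via the Edwards--Sokal spin-flip symmetry and the renormalization of \cite{Wou08SPA}.
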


\begin{proof}
  We use the notations of Proposition~3.9 in {\cite{Wou09CMP}}. We recall that
  $\mathcal{D}_{U_0}^{N, \delta}$ is the event of $\omega$-disconnection
  around $N \partial^{\star} u_0$ and that $\mathcal{E}_{U_0}^{N, \delta}$ is
  the set of edges close to $N \partial^{\star} u_0$. We let then
  \[ F^J_N = \inf_{\pi \in \mathcal{D}_{U_0}^{N, \delta}} \Psi^{J, w,
     +}_{\Lambda_N} \left( \left. \frac{\mathcal{M}_K}{m_{\beta}} \in
     \mathcal{V}(u_0, \varepsilon) \right| \omega = \pi \text{ on }
     \mathcal{E}_{U_0}^{N, \delta} \right) . \]
  Proposition~3.9 in {\cite{Wou09CMP}} and the definition of the covering
  imply
  \begin{enumerate}[i.]
    \item For $\delta > 0$ small enough,
    \begin{equation}
      \liminf_{N \rightarrow \infty} \inf_{J \in \mathcal{G} \left(
      (\mathcal{R}_i)_{i = 1 \ldots n} \right)} \frac{1}{N^{d - 1}} \log
      \Phi^{J, w}_{\Lambda_N} \left( \mathcal{D}_{U_0}^{N, \delta} \right)
      \geqslant -\mathcal{F}^r (u_0) - \xi / 2. \label{eq-lwb-pDU0}
    \end{equation}
    \item For $\delta > 0$ small enough, for $K$ large enough,
    \begin{equation}
      \lim_{N \rightarrow \infty} \mathbbm{P} \left( F^J_N < \frac{1}{3}
      \right) = 0. \label{eq-meanF}
    \end{equation}
  \end{enumerate}
  The definition of $F^J_N$ yields
  \[ \mu^{J, +}_{\Lambda_N} \left( \frac{\mathcal{M}_K}{m_{\beta}} \in
     \mathcal{V}(u_0, \varepsilon) \right) \geqslant F^J_N \Phi^{J,
     w}_{\Lambda_N} \left( \mathcal{D}_{U_0}^{N, \delta} \right) \]
  hence, using (\ref{eq-lwb-pDU0}) we obtain: for large enough $N$,
  \[ \begin{array}{l}
       \mathbbm{P} \left( \left. \mu^{J, +}_{\Lambda_N} \left(
       \frac{\mathcal{M}_K}{m_{\beta}} \in \mathcal{V}(u_0, \varepsilon)
       \right) \geqslant \exp \left( - N^{d - 1} \left( \mathcal{F}^r (u_0) +
       \xi \right) \right) \right| \mathcal{G} \left( (\mathcal{R}_i)_{i = 1
       \ldots n} \right) \right)\\
       \hspace{6cm} \geqslant \mathbbm{P} \left( \left. F^J_N \geqslant
       \frac{1}{3} \right| \mathcal{G} \left( (\mathcal{R}_i)_{i = 1 \ldots n}
       \right) \right)
     \end{array} \]
  Yet, the variable $F^J_N$ is \tmtextit{independent} of the $J_e$ with $e \in
  \mathcal{E}_{U_0}^{N, \delta}$. Thus it is as well independent of the
  dilution $\mathcal{G} \left( (\mathcal{R}_i)_{i = 1 \ldots n} \right)$, and
  (\ref{eq-meanF}) yields the conclusion.
\end{proof}

We also recall from \cite{Wou09CMP} the definition of the compact set
\begin{equation}
  \tmop{BV}_a = \left\{ u = \chi_U \in \tmop{BV} : \mathcal{P}(U) \leqslant a
  \right\} \label{eq-def-BVa}
\end{equation} where $\mathcal{P}(U)$ is the perimeter of $U$.

\begin{proposition}
  \label{prop-cond-upb-phco}Assume $\beta > \hat{\beta}_c$ and $\beta \notin
  \mathcal{N} \cup \mathcal{N}_I$. Let $(u_0, \tau^r) \in \tmop{IC}$, $a > 0$,
  $\xi > 0$. There exists $\gamma > 0$ such that, for any $u \in \tmop{BV}_a$
  there is $\varepsilon > 0$ such that, for any $\delta_0 > 0$ small enough,
  for any $(u_0, \tau^r, \delta_0, \gamma)$-covering $(\mathcal{R}_i)_{i = 1
  \ldots n}$ of $\partial^{\star} u_0$, for any $K$ large enough,
  \[ \lim_{N \rightarrow \infty} \mathbbm{P} \left( \left. \mu^{J,
     +}_{\Lambda_N} \left( \frac{\mathcal{M}_K}{m_{\beta}} \in \mathcal{V}(u,
     \varepsilon) \right) \leqslant \exp \left( - N^{d - 1} \left(
     \mathcal{F}^r (u) - \xi \right) \right) \right| \mathcal{G} \left(
     (\mathcal{R}_i)_{i = 1 \ldots n} \right) \right) = 1. \]
\end{proposition}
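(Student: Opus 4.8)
The plan is to establish an upper bound on the equilibrium probability that the mesoscopic magnetization profile is close to a profile $u$ whose boundary differs from $\partial^\star u_0$, by combining a surface-tension cost along $\partial^\star u \setminus \partial^\star u_0$ (where the standard quenched surface tension $\tau^q$ applies) with a cost along $\partial^\star u \cap \partial^\star u_0$ (where only the reduced surface tension $\tau^r$ applies, because dilution has taken place there). The key technical device is the coarse-graining of $\{\mathcal{M}_K/m_\beta \in \mathcal{V}(u,\varepsilon)\}$ into an event forcing $\omega$-disconnection along (a small enlargement of) $N\partial^\star u$: when the mesoscopic profile is $\varepsilon$-close to $u$, the renormalization of Theorem~5.7 in \cite{Wou08SPA} guarantees that, with overwhelming probability, there is no open $\omega$-crossing of the macroscopic interface. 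Thus, up to an error $\exp(-cN^{d-1})$ that is absorbed by $\xi$, the left-hand probability is bounded above by $\Phi^{J,w}_{\Lambda_N}(\mathcal{D}^{N,\delta}_U)$ for an appropriate disconnection event indexed by $u$. This is the step already carried out, in the quenched ($\tau^q$ everywhere) case, in the proof of Theorem~3.4 of \cite{Wou09CMP}; the work here is to keep track of the improvement coming from dilution on the part of the contour lying over $\partial^\star u_0$.

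The next step is to split $\partial^\star u$, using the covering $(\mathcal{R}_i)$ of $\partial^\star u_0$ together with a fresh $\delta_0$-covering of $\partial^\star u \setminus \partial^\star u_0$ (and of the portion of $\partial^\star u$ near $\partial^\star u_0$), and to estimate the probability of disconnection in each rectangle separately, using the sub-multiplicativity of disconnection probabilities across disjoint rectangles (FKG / BK-type bounds for the random-cluster measure, as in \cite{Wou09CMP}). For a rectangle away from $\partial U_0$ one gets the usual cost $\exp(-N^{d-1}h^{d-1}\tau^q(\mathbf{n}))$; here the large-deviation upper bound for surface tension from below (which is why we need $\beta \notin \mathcal{N}_I$) ensures $\tau^J_{\mathcal{R}^N} \geq \tau^q(\mathbf{n}) - \gamma$ with probability $1-o(1)$, and by point~(ii) of Definition~\ref{def-g-adapted} this integrates to $\mathcal{F}^q$ of that piece. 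For a rectangle $\mathcal{R}_i$ of the $u_0$-covering, one must distinguish the sub-part of $\partial^\star u$ inside $\mathcal{R}_i$ that coincides with $\partial^\star u_0$ (cost governed by the diluted value $\tau^r(x_i)$, which holds deterministically on the event $\mathcal{G}$) from the sub-part where $\partial^\star u$ has detached from $\partial^\star u_0$; here the enlargement condition~(\ref{eq:enlarged}) and point~(v) are used to control the contribution of the detached portion and to show that either it is $\mathcal{H}^{d-1}$-negligible or it pays the full $\tau^q$. Summing the logarithms of these per-rectangle estimates and letting $\delta_0 \to 0$ (then $\varepsilon \to 0$, then $\gamma$ small) reconstructs exactly $-\mathcal{F}^r(u)$ as defined at (\ref{eq-def-Fr}), up to $\xi$.

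The order of quantifiers in the statement reflects the order in which these parameters must be chosen: $\gamma$ is fixed first so that the lower large deviations $\tau^J_{\mathcal{R}^N}\geq \tau^q(\mathbf n)-\gamma$ and $\tau^J_{\mathcal{R}^N}\geq \tau^r(x_i)-\gamma$ are available everywhere and still sum to within $\xi/2$ of the target; then, for the \emph{given} $u$, $\varepsilon$ is chosen small enough that the coarse-grained disconnection event is implied; then $\delta_0$ (and $K$) are taken small (resp.\ large) enough to make the covering errors and the renormalization errors negligible. Since the event $F^J_N$-type quantities here are measurable with respect to edges \emph{away} from $N\partial^\star u_0$ while $\mathcal{G}((\mathcal{R}_i))$ depends only on edges near $N\partial^\star u_0$, the conditioning on $\mathcal{G}$ is harmless exactly as in the proof of Proposition~\ref{prop-cond-lwb-phco}. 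The main obstacle, I expect, is the bookkeeping of the last paragraph: cleanly separating, inside each $\mathcal{R}_i$, the ``still glued to $\partial U_0$'' part of the interface from the ``already peeled off'' part, and verifying that in the limit the former is charged $\tau^r$ and the latter $\tau^q$ — in other words, proving that the coarse-grained disconnection surface cannot cheat by routing through the diluted region without paying $\tau^r$ there. This is where the enlargement $\mathcal{R}'=\mathcal{R}+B(0,2\sqrt d\, h^2)$ of point~(vi) does its work, preventing the disconnecting surface from slipping around the diluted rectangles, and it is the one point where the present covering is genuinely more restrictive than the one in \cite{Wou09CMP}.
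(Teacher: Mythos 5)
Your overall architecture (cover $\partial^\star u$, charge $\tau^q$ on the detached part and $\tau^r$ on the part glued to $\partial^\star u_0$, then sum per-rectangle costs) matches the paper's, which runs through the $L^1$-surface tension $\tilde{\tau}^{J,\delta,K}_{N\mathcal{R}}$ of (\ref{eq-def-tauJL1}) and Proposition~3.11 of \cite{Wou09CMP} applied to a $(u_0,\tau^r,\delta,\gamma)$-covering of $\partial^\star u$. But there is a genuine gap at exactly the point you flag as ``the main obstacle'', and the way you propose to close it does not work. You assert that on the glued part the ``cost governed by the diluted value $\tau^r(x_i)$ \ldots holds deterministically on the event $\mathcal{G}$'', and separately that the conditioning on $\mathcal{G}$ is ``harmless exactly as in the proof of Proposition~\ref{prop-cond-lwb-phco}'' because the relevant quantities live on edges away from $N\partial^\star u_0$. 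Both claims are false here. The event $\mathcal{G}$ only says $\tau^J_{\mathcal{R}^N_i}\leqslant\tau^r(x_i)$, i.e.\ it caps the local surface tension \emph{from above}; for this upper bound on phase coexistence you need a \emph{lower} bound, conditionally on $\mathcal{G}$, on the cost of the interface along $\partial^\star u\cap\partial^\star u_0$, and that cost is a function of precisely the diluted couplings, so the independence argument used for the lower bound (where $F^J_N$ was by construction measurable away from $\mathcal{E}^{N,\delta}_{U_0}$) is unavailable. A priori, conditionally on $\mathcal{G}$ the couplings near $N\partial^\star u_0$ could be much more diluted than $\tau^r$ requires, and nothing deterministic prevents the interface from being nearly free there.

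The paper closes this with Lemma~\ref{lem-cond-lwb-tauJt}, which is the real content of the proposition and is absent from your plan: conditionally on $\mathcal{G}$, the probability that $\tilde{\tau}^{J,\delta,K}_{N\mathcal{R}}<\tau^r(x)-c'_{d,\delta}-\gamma$ in a rectangle $\mathcal{R}$ adapted to $u$ at $x\in\partial^\star u\cap\partial^\star u_0$ decays exponentially in $N^{d-1}$. Its proof bounds the conditional probability by the ratio of unconditional probabilities and compares rate functions: the numerator costs about $h^{d-1}I_{\tmmathbf{n}}(\tau^r(x)-\gamma)$ while the denominator (dilution restricted to the $\mathcal{R}_i$ meeting $\mathcal{R}$) costs about $\int_{\partial^\star u_0\cap\mathcal{R}}I_{\tmmathbf{n}_z}(\tau^r(z))\,d\mathcal{H}^{d-1}$; the difference is strictly negative thanks to the convexity of $I_{\tmmathbf{n}}$, the scale separation $\delta_0\leqslant h^2$ together with point~(vi) of Definition~\ref{def-g-adapted} (whose role is this bookkeeping, not blocking the surface geometrically), and the uniform positivity of $\inf_{\tmmathbf{n}}I_{\tmmathbf{n}}(\tau^q(\tmmathbf{n})-\gamma)$, which is where $\beta\notin\mathcal{N}_I$ enters via Fenchel--Legendre duality. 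Without an argument of this type your per-rectangle estimates on the glued part are unjustified. A secondary, fixable issue: the factorization over disjoint rectangles should not be attributed to a BK-type inequality (not available for the random-cluster measure); in the paper it is built into the definition of $\tilde{\tau}^{J,\delta,K}$ through the supremum over boundary conditions.
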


The proof of Proposition \ref{prop-cond-upb-phco} is based on Proposition~3.11
in {\cite{Wou09CMP}} that relates the probability $\left. \mu^{J,
+}_{\Lambda_N} (\mathcal{M}_K / m_{\beta} \in \mathcal{V}(u, \varepsilon))
\right.$ to the $L^1$-notion of surface tension
\begin{equation}
  \tilde{\tau}^{J, \delta, K}_{N\mathcal{R}} = - \frac{1}{(h N)^{d - 1}} \log
  \sup_{\bar{\sigma} \in \Sigma^+_{\widehat{N\mathcal{R}}}} \mu^{J,
  \bar{\sigma}}_{\widehat{N\mathcal{R}}} \left( \left\|
  \frac{\mathcal{M}_K}{m_{\beta}} - \chi \right\|_{L^1 \left( \mathcal{R}
  \right)} \leqslant 2 \delta \mathcal{L}^d \left( \mathcal{R} \right) \right)
  \label{eq-def-tauJL1}
\end{equation}
together with Propositions~3.12 and 3.13 in {\cite{Wou09CMP}} that compare the
two definitions of surface tensions $\tilde{\tau}^{J, \delta,
K}_{N\mathcal{R}}$ and $\tau^J_{N\mathcal{R}}$. Before we complete the proof
of \ Proposition \ref{prop-cond-upb-phco}, we examine the typical value of
$\tilde{\tau}^{J, \delta, K}_{N\mathcal{R}}$ given the event of dilution.

\begin{lemma}
  \label{lem-cond-lwb-tauJt}Assume $\beta > \hat{\beta}_c$ and $\beta \notin
  \mathcal{N} \cup \mathcal{N}_I$. Let $(u_0, \tau^r) \in \tmop{IC}$ and
  $\gamma > 0$. For $\delta > 0$ small enough, the following holds: for any $u
  \in \tmop{BV}$ and any $\mathcal{R}$ that is $(u_0, \tau^r, \delta,
  \gamma)$-adapted to $\partial^{\star} u$ at $x \in \partial^{\star} u \cap
  \partial^{\star} u_0$, for any $(u_0, \tau^r, \delta_0, \gamma)$-covering
  $(\mathcal{R}_i)_{i = 1 \ldots n}$ of $\partial^{\star} u_0$ such that
  $\delta_0 \in (0, h^2)$, for any $K$ large enough, then
  \[ \lim_{N \rightarrow \infty} \frac{1}{N^{d - 1}} \log \mathbbm{P} \left(
     \left. \tilde{\tau}^{J, \delta, K}_{N\mathcal{R}} < \tau^r (x) - c'_{d,
     \delta} - \gamma \right| \mathcal{G} \left( (\mathcal{R}_i)_{i = 1 \ldots
     n} \right) \right) < 0 \]
  where $c'_{d, \delta} = c_d \delta + c_{d, \delta}$ is the sum of the
  constants that appear in Propositions~3.12 and 3.13 in {\cite{Wou09CMP}},
  and is arbitrary small as $\delta \rightarrow 0$.
\end{lemma}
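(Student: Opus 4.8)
The plan is to bypass any local analysis of the conditioned coupling field and to compare the conditioned event directly with an \emph{unconditional} lower deviation of the surface tension of the \emph{whole} rectangle $\mathcal{R}$, using only that the dilution event is, on the exponential scale, not too unlikely. Write $\mathbf{n}=\mathbf{n}_x^u$. The comparison between $\tilde{\tau}^{J,\delta,K}_{N\mathcal{R}}$ and $\tau^J_{N\mathcal{R}}$ furnished by Propositions~3.12 and~3.13 of \cite{Wou09CMP} (from which the constant $c'_{d,\delta}$ originates) reduces the claim to
\[
\limsup_{N\rightarrow\infty}\ \frac{1}{N^{d-1}}\log\mathbb{P}\!\left(\tau^J_{N\mathcal{R}}<\tau^r(x)-\gamma\ \middle|\ \mathcal{G}\big((\mathcal{R}_i)_{i=1\ldots n}\big)\right)<0 .
\]
Let $I$ be the set of indices $i$ for which $\mathcal{R}_i$ meets a fixed bounded enlargement of $\mathcal{R}$: because $\delta_0<h^2$, each such $\mathcal{R}_i$ has diameter much smaller than $h$ and lies inside $\mathcal{R}'=\mathcal{R}+B(0,2\sqrt{d}\,h^2)$, whereas for $i\notin I$ the rectangle $\mathcal{R}_i$ stays at positive distance from $\mathcal{R}$. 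With $\mathcal{G}_I=\bigcap_{i\in I}\{\tau^J_{\mathcal{R}^N_i}\leqslant\tau^r(x_i)\}$, for $N$ large the remaining factor of the dilution event depends only on couplings lying at positive distance from $\mathcal{R}^N$ and from the $\mathcal{R}^N_i$, $i\in I$, hence is independent of the pair $(\tau^J_{N\mathcal{R}},\mathcal{G}_I)$, and one may condition on $\mathcal{G}_I$ in place of $\mathcal{G}$.

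The single idea of the proof is then the elementary inequality
\[
\mathbb{P}\!\left(\tau^J_{N\mathcal{R}}<\tau^r(x)-\gamma\ \middle|\ \mathcal{G}_I\right)
=\frac{\mathbb{P}\!\left(\{\tau^J_{N\mathcal{R}}<\tau^r(x)-\gamma\}\cap\mathcal{G}_I\right)}{\mathbb{P}(\mathcal{G}_I)}
\ \leqslant\ \frac{\mathbb{P}\!\left(\tau^J_{N\mathcal{R}}<\tau^r(x)-\gamma\right)}{\mathbb{P}(\mathcal{G}_I)} .
\]
For the numerator one uses the rate function: since $\partial^{\star}u_0$ lies at positive distance from $\partial[0,1]^d$, $\mathcal{R}$ is interior, so $\mathcal{R}^N$ equals, up to a bounded translation absorbed by translation invariance of $\mathbb{P}$, a rectangle $\mathcal{R}_{x_N,hN,\delta hN}(\mathcal{S},\mathbf{n})$, and by (\ref{def-In}) the numerator is $\exp(-N^{d-1}h^{d-1}I_{\mathbf{n}}(\tau^r(x)-\gamma)(1+o(1)))$ (and it vanishes for large $N$ when $\tau^r(x)-\gamma<\tau^{\min}(\mathbf{n})$, by monotonicity in $J$). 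For the denominator one argues exactly as in Lemma~\ref{lem-proba-GN}: the $\mathcal{R}^N_i$, $i\in I$, are disjoint, so $\mathbb{P}(\mathcal{G}_I)=\exp\big(-N^{d-1}\big(\sum_{i\in I}h_i^{d-1}I_{\mathbf{n}_i}(\tau^r(x_i))\big)(1+o(1))\big)$, and the covering accounting — point~(iii) of Definition~\ref{def-g-adapted} for the $\mathcal{R}_i$, point~(v) (third bullet) and point~(vi) for $\mathcal{R}$, together with $\delta_0<h^2$ — gives $\sum_{i\in I}h_i^{d-1}I_{\mathbf{n}_i}(\tau^r(x_i))\leqslant\big(I_{\mathbf{n}}(\tau^r(x))+C(\delta+\delta_0)\big)h^{d-1}$, the constant $C$ depending only on $d$ and on $\sup_{\partial^{\star}u_0}I_{\mathbf{n}^{u_0}_{\cdot}}(\tau^r(\cdot))$, which is finite because $\tmop{IC}$ keeps $\tau^r$ bounded away from $\tau^{\min}$.

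Combining the two, the conditional probability is at most $\exp\big(-N^{d-1}h^{d-1}\big(I_{\mathbf{n}}(\tau^r(x)-\gamma)-I_{\mathbf{n}}(\tau^r(x))-C(\delta+\delta_0)\big)(1+o(1))\big)$, and it only remains to bound the bracket below by a positive constant, uniformly in $(u,\mathcal{R},x)$, once $\delta$ is chosen small. Since $\mathcal{R}$ is adapted to $u$ at $x$, one has $\mathbf{n}_x^u=\pm\mathbf{n}_x^{u_0}$, and then the symmetry of surface tension together with (\ref{IC}) give $\tau^{\min}(\mathbf{n})<\tau^r(x)\leqslant\tau^q(\mathbf{n})$. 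By the properties of the rate function established in \cite{Wou09CMP} (Theorem~1.6 and Corollary~1.9), $\beta\notin\mathcal{N}_I$ forces $I_{\beta,\mathbf{n}}$ to be convex, non-increasing, vanishing at $\tau^q(\mathbf{n})$ and strictly positive below it, hence strictly decreasing on $[\tau^{\min}(\mathbf{n}),\tau^q(\mathbf{n})]$; consequently $\tau\mapsto I_{\mathbf{n}}(\tau-\gamma)-I_{\mathbf{n}}(\tau)$ is non-increasing, and
\[
I_{\mathbf{n}}(\tau^r(x)-\gamma)-I_{\mathbf{n}}(\tau^r(x))\ \geqslant\ I_{\mathbf{n}}\big(\tau^q(\mathbf{n})-\gamma\big)\ \geqslant\ g_\gamma:=\inf_{\mathbf{n}\in S^{d-1}}I_{\beta,\mathbf{n}}\big(\tau^q(\mathbf{n})-\gamma\big)\ >\ 0 ,
\]
the infimum being strictly positive by compactness of $S^{d-1}$ and continuity of $\mathbf{n}\mapsto\tau^q(\mathbf{n})$. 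Taking $\delta$ small enough that $C(\delta+\delta^2)<g_\gamma/2$ concludes. The only genuinely delicate ingredient is this last one: the exclusion of $\mathcal{N}_I$ is precisely what provides the \emph{uniform} strictly positive gap $g_\gamma$ between $I_{\mathbf{n}}(\tau^r(x)-\gamma)$ and $I_{\mathbf{n}}(\tau^r(x))$; everything else is either cited from \cite{Wou09CMP} or is the covering accounting already carried out in Lemma~\ref{lem-proba-GN}.
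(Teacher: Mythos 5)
Your route is in substance the paper's own: you localize the conditioning to the rectangles $\mathcal{R}_i$ meeting (an enlargement of) $\mathcal{R}$ by independence, bound the conditional probability by the ratio of the unconditional lower--deviation probability for $\mathcal{R}$ (via Propositions~3.12--3.13 of \cite{Wou09CMP} and (\ref{def-In})) over $\mathbb{P}(\mathcal{G}_I)$, perform the covering accounting using points (v)--(vi) of Definition~\ref{def-g-adapted} together with $\delta_0<h^2$, and then use convexity of $I_{\mathbf{n}}$ and $\tau^r\leqslant\tau^q$ to reduce the whole matter to a \emph{uniform} positive lower bound on $I_{\mathbf{n}}(\tau^q(\mathbf{n})-\gamma)$. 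Up to that last step the argument is sound, with one small blemish: you invoke $\mathbf{n}_x^u=\pm\mathbf{n}_x^{u_0}$ to transfer the IC inequality $\tau^r(x)\leqslant\tau^q(\mathbf{n}_x^{u_0})$ to the normal $\mathbf{n}=\mathbf{n}_x^u$, which adaptedness does not literally assert at the given point $x$; the paper sidesteps this by applying the convexity bound under the integral over $\partial^{\star}u_0\cap\mathcal{R}$, where the IC inequality holds pointwise, and then using the bullets of point (v) to come back to $I_{\mathbf{n}}$ at $x$.

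The genuine gap is your final assertion that $g_\gamma=\inf_{\mathbf{n}\in S^{d-1}}I_{\beta,\mathbf{n}}(\tau^q(\mathbf{n})-\gamma)>0$ follows ``by compactness of $S^{d-1}$ and continuity of $\mathbf{n}\mapsto\tau^q(\mathbf{n})$''. The hypothesis $\beta\notin\mathcal{N}_I$ only gives \emph{pointwise} positivity $I_{\mathbf{n}}(\tau^q(\mathbf{n})-\gamma)>0$ for each fixed $\mathbf{n}$, and pointwise positivity on a compact set does not yield a positive infimum unless $\mathbf{n}\mapsto I_{\mathbf{n}}(\tau^q(\mathbf{n})-\gamma)$ is lower semicontinuous; no such regularity of $I_{\mathbf{n}}$ in the direction $\mathbf{n}$ is among the facts you cite (Theorem~1.6 and Corollary~1.9 of \cite{Wou09CMP} concern a fixed $\mathbf{n}$). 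This uniformity is precisely where the paper has to work: it assumes $\mathbf{n}_k\to\mathbf{n}$ with $I_{\mathbf{n}_k}(\tau^q(\mathbf{n}_k)-\gamma)\to0$, passes to the Fenchel--Legendre transforms $\tau^{\lambda}(\mathbf{n})=\inf_{\tau}\{\lambda\tau+I_{\mathbf{n}}(\tau)\}$, which \emph{are} continuous in $\mathbf{n}$ (Proposition~2.5 of \cite{Wou09CMP}), deduces $\tau^{\lambda}(\mathbf{n})\leqslant\lambda(\tau^q(\mathbf{n})-\gamma)$ for every $\lambda>0$, and concludes by duality that $I_{\mathbf{n}}(\tau^q(\mathbf{n})-\gamma)=0$, contradicting $\beta\notin\mathcal{N}_I$. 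Without this duality step (or an equivalent semicontinuity statement), your constant $g_\gamma$ is not justified and the proof does not close.
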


Note that $h$ in the statement of Lemma~\ref{lem-cond-lwb-tauJt} refers to the
largest dimension of $\mathcal{R}$, see (i) in Definition~\ref{def-g-adapted}.

\begin{figure}[h!]
\begin{center}\includegraphics[width=10cm]{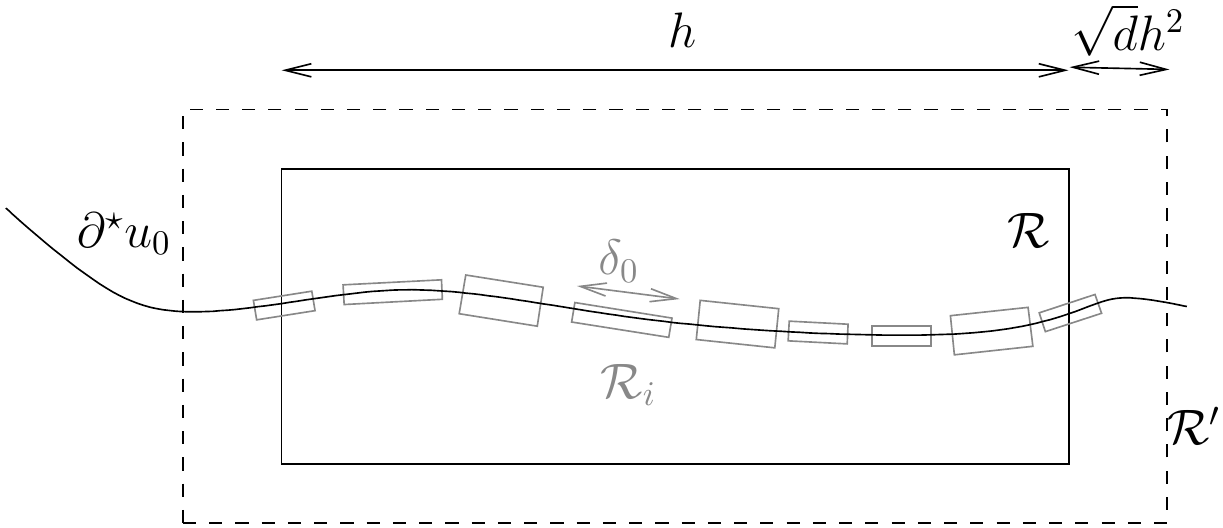}\end{center}
  \caption{\label{fig-dilution}The scale of dilution $\delta_0$.}
\end{figure}

\begin{proof}
  (Lemma \ref{lem-cond-lwb-tauJt}). We consider $(\mathcal{R}_i)_{i = 1 \ldots
  n}$ a $(u_0, \tau^r, \delta_0, \gamma)$-covering for $\partial^{\star} u_0$.
  Thanks to the product structure of $\mathbbm{P}$, for large enough $N$ the
  surface tension $\tilde{\tau}^{J, \delta, K}_{\mathcal{R}^N}$ is independent
  of the $\tau^J_{\mathcal{R}^N_i}$ such that $\mathcal{R}_i \cap \mathcal{R}=
  \emptyset$. Hence, for large enough $N$, the conditional probability
  \[ \mathbbm{P} \left( \left. \tilde{\tau}^{J, \delta, K}_{N\mathcal{R}} <
     \tau^r (x) - c'_{d, \delta} - \gamma \right| \mathcal{G} \left(
     (\mathcal{R}_i)_{i = 1 \ldots n} \right) \right) \]
  equals
  \[ \mathbbm{P} \left( \left. \tilde{\tau}^{J, \delta, K}_{N\mathcal{R}} <
     \tau^r (x) - c'_{d, \delta} - \gamma \right| \tau^J_{\mathcal{R}^N_i}
     \leqslant \tau^r (x_i), \forall i: \mathcal{R}_i \cap \mathcal{R} \neq
     \emptyset \right), \]
  which is not larger than
  \[ p_N = \frac{\mathbbm{P} \left( \tilde{\tau}^{J, \delta, K}_{N\mathcal{R}}
     \leqslant \tau^r (x) - c'_{d, \delta} - \gamma \right)}{\mathbbm{P}
     \left( \tau^J_{\mathcal{R}^N_i} \leqslant \tau^r (x_i), \forall i:
     \mathcal{R}_i \cap \mathcal{R} \neq \emptyset \right)} . \]
  According to the definition of $c'_{d, \delta}$, to Propositions~3.12 and
  3.13 in {\cite{Wou09CMP}} (here we use the assumption that $\beta >
  \hat{\beta}_c$ and $\beta \nin \mathcal{N}$) and to the definition of
  $I_{\tmmathbf{n}}$ at (\ref{def-In}), for large enough $K$ we have
  \begin{eqnarray}
    \limsup_N \frac{1}{N^{d - 1}} \log p_N & \leqslant & - h^{d - 1}
    I_{\tmmathbf{n}} \left( \tau^r \left( x \right) - \gamma \right) + \sum_{i
  : \mathcal{R}_i \cap \mathcal{R} \neq \emptyset} h_i^{d - 1}
    I_{\tmmathbf{n}_i} \left( \tau^r \left( x_i \right) \right) . \nonumber\\
    &  &  \label{eq-upb-pN}
  \end{eqnarray}
  We will show that the right-hand side of (\ref{eq-upb-pN}) is negative, for
  small enough $\delta > 0$ and $\delta_0 \leqslant h^2$. Thanks to (vi) in
  Definition \ref{def-g-adapted}, one sees (Figure \ref{fig-dilution}) that
  the right-hand side of (\ref{eq-upb-pN}) is not larger, for $\delta
  \leqslant 1 / 2$ and $\delta_0 \leqslant h^2$, than
  \[ \int_{\partial^{\star} u_0 \cap \mathcal{R}} \left[ I_{\tmmathbf{n}_z}
     \left( \tau^r \left( z \right) \right) - I_{\tmmathbf{n}_z} \left( \tau^r
     \left( z \right) - \gamma \right) \right] d\mathcal{H}^{d - 1} (z) + M
     \delta h^{d - 1} \]
  where
  \[ M = 3 + 2 \sup_{x \in \partial^{\star} u_0} I_{\tmmathbf{n}_x^{u_0}}
     \left( \tau^r \left( x \right) \right) \]
  is finite thanks to the definition of $\tmop{IC}$ at (\ref{IC}). Now we give
  an upper bound on the integral. Since $I_{\tmmathbf{n}}$ is convex, its
  slope is non-increasing and therefore
  \begin{eqnarray*}
    \int_{\partial^{\star} u_0 \cap \mathcal{R}} \left[ I_{\tmmathbf{n}_z}
    \left( \tau^r \left( z \right) \right) - I_{\tmmathbf{n}_z} \left( \tau^r
    \left( z \right) - \gamma \right) \right] d\mathcal{H}^{d - 1} (z) &
    \leqslant & - \int_{\partial^{\star} u_0 \cap \mathcal{R}}
    I_{\tmmathbf{n}_z} \left( \tau^q \left( \tmmathbf{n}_z \right) - \gamma
    \right) d\mathcal{H}^{d - 1} (z)\\
    & \leqslant & - (1 / 2) h^{d - 1} \inf_{\tmmathbf{n} \in S^{d - 1}}
    I_{\tmmathbf{n}} \left( \tau^q \left( \tmmathbf{n} \right) - \gamma
    \right) .
  \end{eqnarray*}
  Now we show that $\inf_{\tmmathbf{n} \in S^{d - 1}} I_{\tmmathbf{n}} \left(
  \tau^q \left( \tmmathbf{n} \right) - \gamma \right) > 0$. If not, we can
  extract a converging sequence $\tmmathbf{n}_k \rightarrow \tmmathbf{n}$ in
  the compact set $S^{d - 1}$ with $I_{\tmmathbf{n}_k} \left( \tau^q \left(
  \tmmathbf{n}_k \right) - \gamma \right) \rightarrow 0$ as $k \rightarrow
  \infty$. The Fenchel-Legendre transform $\tau^{\lambda} (\tmmathbf{n}) =
  \inf_{\tau > \tau^{\min} (\tmmathbf{n})} \{\lambda \tau + I_{\tmmathbf{n}}
  (\tau)\}$ satisfies therefore
  \[ \tau^{\lambda} (\tmmathbf{n}_k) \leqslant \lambda \tau^q (\tmmathbf{n}_k)
     - \lambda \gamma + \underset{k \rightarrow \infty}{o} (1) \text{, \ \ \ }
     \forall \lambda > 0. \]
  Since $\tau^{\lambda}$ and $\tau^q$ are continuous (Proposition 2.5 in
  {\cite{Wou09CMP}}), we obtain in the limit $\tau^{\lambda} (\tmmathbf{n})
  \leqslant \lambda \tau^q (\tmmathbf{n}) - \lambda \gamma$, and therefore, by
  duality of the Fenchel-Legendre transform $I_{\tmmathbf{n}} \left( \tau^q
  \left( \tmmathbf{n} \right) - \gamma \right) = 0$, which contradicts the
  assumption $\beta \notin \mathcal{N}_I$. The claim follows for any
  $\delta < 1 / 2$ with $\delta < \inf_{\tmmathbf{n} \in S^{d - 1}} I_{\tmmathbf{n}}
  \left( \tau^q \left( \tmmathbf{n} \right) - \gamma \right) / (2 M)$.
\end{proof}

\begin{proof}
  (Proposition \ref{prop-cond-upb-phco}). Let $\gamma < \xi / (2 a)$. We take
  $\delta > 0$ small enough so that we can use Lemma \ref{lem-cond-lwb-tauJt}
  with $c'_{d, \delta} \leqslant \gamma$. Let $u \in \tmop{BV}_a$ and consider
  $(\mathcal{R}_i^u)_{i = 1 \ldots n (u)}$ a $(u_0, \tau^r, \delta,
  \gamma)$-covering of $\partial^{\star} u$. Proposition 3.11 in
  {\cite{Wou09CMP}} states that
  \begin{eqnarray*}
    \frac{1}{N^{d - 1}} \log \mu^{J, +}_{\Lambda_N} \left(
    \frac{\mathcal{M}_K}{m_{\beta}} \in \mathcal{V}(u, \varepsilon) \right) &
    \leqslant & - \sum_{i = 1}^{n (u)} \left( h_i^u \right)^{d - 1}
    \tilde{\tau}^{J, \delta, K}_{N\mathcal{R}_i^u}
  \end{eqnarray*}
  for $\varepsilon > 0$ small enough. According to the definitions of $\gamma$
  and $\delta$ and to the properties of the coverings, provided $\delta$ was
  chosen small enough (still independently of $u \in \tmop{BV}_a$) we have
  \begin{eqnarray*}
    \sum_{i = 1}^{n (u)} (h_i^u)^{d - 1} \left( \tau^r (x_i) - c'_{d, \delta}
    - \gamma \right) & \geqslant & \mathcal{F}^r (u) - \xi .
  \end{eqnarray*}
  Finally when we take $\delta_0 < \min_{i = 1}^{n (u)} (h_i^u)^2$ and $K$
  large enough, Lemma \ref{lem-cond-lwb-tauJt} concludes the proof.
\end{proof}

Finally we remark that dilution has little influence on the overall
magnetization
\begin{eqnarray}
  m_{\Lambda} & = & \frac{1}{| \Lambda |} \sum_{x \in \Lambda} \sigma_x . 
  \label{mLambda}
\end{eqnarray}
\begin{proposition}
  \label{prop-meq}Assume that $\beta > \hat{\beta}_c$ and $\beta \notin
  \mathcal{N}$. Let $(u_0, \tau^r) \in \tmop{IC}$ and $\varepsilon > 0$. Let
  $\delta, \gamma > 0$ and consider $(\mathcal{R}_i)_{i = 1 \ldots n}$ a
  $(u_0, \tau^r, \delta, \gamma)$-covering of $\partial^{\star} u_0$. Let $N >
  0$ and
  \begin{eqnarray*}
    \mathcal{E}_N & = & E \left( \left( \bigcup_{i = 1}^n \mathcal{R}_i^N
    \right) \bigcup \left( \Lambda_N + \left( \mathbbm{Z}^d \setminus \{0\}
    \right) \left[ N + \left[ \sqrt{N} \right] \right] \right) \right)
  \end{eqnarray*}
  the set of edges in some $\mathcal{R}_i^N$, or in some of the translates of
  $\Lambda_N$ by $z (N + [ \sqrt{N}])$, for $z \in {\mathbb Z}^d \setminus \{0\}$. Then, if $\delta > 0$
  is small enough,
  \begin{eqnarray*}
    \lim_N \mathbbm{P} \left( \mu^{J, +} \left( m_{\Lambda_N} \right)
    \geqslant m_{\beta} - \varepsilon |J_e = 0, \forall e \in \mathcal{E}_N
    \right) & = & 1.
  \end{eqnarray*}
\end{proposition}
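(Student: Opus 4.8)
The plan is to deduce the claim from a geometric observation on the covering, combined with the renormalization estimates of \cite{Wou08SPA}. By the product structure of $\mathbbm{P}$, conditioning on $\{J_e = 0,\ \forall e\in\mathcal{E}_N\}$ amounts to setting $J_e\equiv 0$ on $\mathcal{E}_N$ and drawing the remaining couplings i.i.d.\ from $\mathbbm{P}$; write $J^0$ for such a configuration. Since vanishing couplings on $\mathcal{R}_i^N$ force $\tau^{J^0}_{\mathcal{R}_i^N}=0\leqslant\tau^r(x_i)$, this conditioning is a sub-event of the dilution event $\mathcal{G}((\mathcal{R}_i)_{i=1\ldots n})$, and it suffices to show that $\mathbbm{P}\bigl(\mu^{J^0,+}(m_{\Lambda_N})\geqslant m_\beta-\varepsilon\bigr)\to 1$.

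The geometric point is that switching off these couplings does not seal the droplet region off from the bulk. The $\mathcal{R}_i$ are finitely many \emph{disjoint} closed rectangles of diameter at most $\delta$, so a routine argument using the regularity of $u_0$ (in particular that $\partial U_0$ does not reduce, for small $\delta$, to pieces of diameter $\leqslant\delta$) shows that $G:=\partial^{\star}u_0\setminus\bigcup_{i=1}^n\mathcal{R}_i$ is relatively open in $\partial^{\star}u_0$ and non-empty. Fix $p\in G$; since $p\notin\bigcup_i\mathcal{R}_i$ and the latter is closed, $\rho:=\tfrac12\,d\bigl(p,\bigcup_i\mathcal{R}_i\bigr)>0$, and after shrinking $\rho$ (using again the regularity of $u_0$) the ball $B(p,\rho)$ is disjoint from every $\mathcal{R}_i$, meets both $U_0$ and $(0,1)^d\setminus\overline{U_0}$, and lies at positive distance from $\partial[0,1]^d$. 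Microscopically, $B(p,\rho N)$ is then a ball of radius $\to\infty$, straddling $N\partial^{\star}u_0$, in which all couplings are those of $\mathbbm{P}$.

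Fix a small $\xi>0$. The bricks $\Lambda_N+z(N+[\sqrt N])$, $z\neq 0$, whose couplings vanish under $J^0$, are separated by corridors of width $[\sqrt N]\to\infty$ carrying the couplings of $\mathbbm{P}$; as $\beta>\hat\beta_c$ these corridors percolate for $N$ large, and exactly as in the proof of Lemma~\ref{lemF4} one obtains, with $\mathbbm{P}\otimes\Phi^{J^0,w}$-probability tending to $1$: (a)~an infinite $\omega$-open cluster $C_\infty$ supported in the corridor network, together with an $\omega$-open circuit connected to $C_\infty$ that surrounds $\Lambda_N$ in the adjacent corridor; (b)~an $\omega$-open crossing of $B(p,\rho N)$ between its $U_0$-side and its complement-side, which merges with the (unique, dense) giant cluster of the sites of $N U_0$ and with the bulk cluster outside $N U_0$, thereby connecting the giant cluster of $N U_0$ to $C_\infty$; and (c)~in every mesoscopic block $\Delta_i\subset\Lambda_N$ disjoint from the cut slabs, the spins split---as in Theorem~5.7 of \cite{Wou08SPA}, applied in $N U_0$ and in $\Lambda_N\setminus NU_0$ where the couplings are untouched apart from a thin layer---into the infinite cluster, of density within $\xi$ of $m_\beta$, and finite clusters whose random signs average to within $\xi$ of $0$. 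On (a)--(b) the infinite cluster of each such block is $C_\infty$, hence carries spin $+1$, so $\mathcal{M}_K\geqslant m_\beta-2\xi$ there. Since $\mathcal{L}^d\bigl(\bigcup_i\mathcal{R}_i\bigr)=2\delta\sum_i h_i^{d}\leqslant 2\delta^2\sum_i h_i^{d-1}=O(\delta^2)$ (using $h_i\leqslant\delta$ and $\sum_i h_i^{d-1}=O(1)$), the cut slabs meet only an $O(\delta^2)$-fraction of the blocks of $\Lambda_N$, so on the above event $\mathcal{M}_K\geqslant m_\beta-2\xi$ on all of $[0,1]^d$ outside a set of Lebesgue measure $O(\delta^2)$.

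Integrating, $m_{\Lambda_N}=\int_{[0,1]^d}\mathcal{M}_K\,d\mathcal{L}^d+o(1)\geqslant m_\beta-2\xi-O(\delta^2)-o(1)$ on this event; since $m_{\Lambda_N}\in[-1,1]$, Fubini and Markov's inequality then give $\mathbbm{P}\bigl(\mu^{J^0,+}(m_{\Lambda_N})\geqslant m_\beta-\varepsilon\bigr)\to 1$ once $\xi$ and $\delta$ are small enough, which is the claim. I expect the main obstacle to lie in step~(b)--(c): one must check that the residual channel through $B(p,\rho N)$ does connect the interior of $NU_0$ to the plus phase in spite of the enveloping cut slabs, and that the coarse-graining of \cite{Wou08SPA} survives both the $O(\delta^2)$-volume perturbation near $N\partial^{\star}u_0$ and the passage to the infinite-volume plus phase through the surrounding circuit --- both of which should be routine adaptations of \cite{Wou08SPA}.
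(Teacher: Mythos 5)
Your overall strategy is sound and rests on the same machinery as the paper --- the coarse graining of \cite{Wou08SPA} applied to the couplings left untouched by $\mathcal{E}_N$, the observation that the diluted rectangles occupy only a volume of order $\delta$, and an averaging/Markov step at the end --- but you organize the key step differently. The paper covers $E(\mathbbm{Z}^d)\setminus\mathcal{E}_N$ by blocks of side $L_N=[\sqrt N]$, attaches to them the local $\varepsilon$-phases $\varphi_i\in\{-1,0,1\}$ of Theorem 5.10 in \cite{Wou08SPA} (zero with probability at most $e^{-c\sqrt N}$, independent of the $J_e$, $e\in\mathcal{E}_N$), writes $\mu^{J,+}(m_{\Lambda_N})=\lim_M\mu^{J,+}_{\hat\Lambda_M}(m_{\Lambda_N})$ and runs a Peierls estimate on these renormalized labels to conclude that every block of $\Lambda_N$ has phase $+1$ with probability tending to one, whence $m_{\Lambda_N}\geqslant(m_\beta-\varepsilon)(1-\sum_i\tmop{Vol}(\mathcal{R}_i))$. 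You instead exhibit one explicit macroscopic channel through the covering and build a chain of FK connections (block cluster $\leftrightarrow$ giant cluster of $NU_0$ $\leftrightarrow$ channel $\leftrightarrow$ exterior bulk $\leftrightarrow$ corridor network $\leftrightarrow$ infinity). The Peierls route is more robust: it sums over all ways the plus phase could be screened, rather than tracking a single prescribed connection path, and it avoids having to prove uniqueness/denseness of ``giant clusters'' across a region sprinkled with bad blocks.

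Two points in your write-up deserve more than the word ``routine''. First, the non-emptiness of $\partial^{\star}u_0\setminus\bigcup_i\mathcal{R}_i$ is not a diameter statement: the real reason is that the $\mathcal{R}_i$ are finitely many pairwise disjoint compact sets, so they cannot cover a connected set such as $\overline{\partial^{\star}u_0}$ (connectedness being what regularity, in particular condition (iii), is meant to provide) unless $n=1$, which small $\delta$ excludes; note that some such geometric input is genuinely needed --- if the diluted rectangles could seal off $NU_0$ the statement itself would fail, the interior magnetization averaging to $0$ --- and it is implicitly what makes the paper's ``simple Peierls estimate'' work, since a separating contour must then contain many zero-phase blocks. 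Second, your step (c) as stated (``in every mesoscopic block'') is too strong if $K$ is a fixed large constant: a fraction $\epsilon(K)>0$ of blocks will fail the dichotomy, and the connectivity of dominant clusters across blocks then needs extra care. Either take blocks of diverging side length, as the paper does with $L_N=[\sqrt N]$, or weaken (c) to ``all but an $\epsilon(K)$-fraction of blocks'', which still suffices after integration. With these repairs your argument goes through and is a legitimate alternative to the paper's proof.
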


Note that $\mu^{J, +} \left( m_{\Lambda_N} \right)$ increases with every
$J_e$, therefore the condition that $J_e = 0, \forall e \in \mathcal{E}_N$ is
the worse condition that one can consider.

\begin{proof}
  The proof is based on the renormalization procedure established in Theorem
  5.10 in {\cite{Wou08SPA}}. As it is similar to that of Proposition 3.9 in
  {\cite{Wou09CMP}} we only sketch the argument. We
  cover $E (\mathbbm{Z}^d) \setminus \mathcal{E}_N$ with blocks with
  side-length $L_N = [ \sqrt{N}]$. To these blocks is associated a family of
  independent random variables $\varphi_i \in \{- 1, 0, 1\}$ (independent of
  the $J_e, e \in \mathcal{E}_N$), the local $\varepsilon$-phase, that is zero
  with probability less than $e^{- c \sqrt{N}}$. Write $\left. \mu^{J, +}
  (m_{\Lambda_N}) = \lim_M \mu_{\hat{\Lambda}_M}^{J, +} (m_{\Lambda_N})
  \right.$ and take some finite $M$. A simple Peierls estimate shows that,
  given the condition $J_e = 0, \forall e \in \mathcal{E}_N$, the
  $\varepsilon$-local phase associated to any block in $\Lambda_N$ is $+ 1$
  with a probability going to one as $N \rightarrow \infty$, uniformly in $N$.
  As described in Theorem 5.10 in {\cite{Wou08SPA}}, this event implies that
  \begin{eqnarray*}
    m_{\Lambda_N} & \geqslant & \left( m_{\beta} - \varepsilon \right) \left(
    1 - \sum_{i = 1}^n \tmop{Vol} (\mathcal{R}_i) \right) .
  \end{eqnarray*}
  Therefore we just need to take $\varepsilon > 0$ small enough (named
  $\delta$ in Theorem 5.10 in {\cite{Wou08SPA}}) and $\delta > 0$ small enough
  so that $\sum_{i = 1}^n \tmop{Vol} (\mathcal{R}_i) \simeq \delta
  \mathcal{H}^{d - 1} (\partial^{\star} u_0)$ is negligible.
\end{proof}

\subsubsection{The bottleneck}

\label{sec-bottleneck}Here we focus on the bottleneck in the dynamics. We
recall that the set $\mathcal{C}_{\varepsilon} (u_0)$, the set of sequences of
profiles that evolve from $u_0$ to $\tmmathbf{1}$ with jumps in $L^1$-norm
less than $\varepsilon$, was introduced at (\ref{Cepsu0}). Given $v = (v_i)_{i
= 0 \ldots k} \in \mathcal{C}_{\varepsilon} (u_0)$, we call
\begin{eqnarray*}
  \tmop{argmax}^{\varepsilon} (v) & = & \min \left\{ i: \inf_{u \in
  \mathcal{V}(v_i, \varepsilon)} \mathcal{F}^r (u) = \max_{l \leqslant k}
  \inf_{u \in \mathcal{V}(v_l, \varepsilon)} \mathcal{F}^r (u) \right\} .
\end{eqnarray*}
Now, given $(u_0, \tau^r) \in \tmop{IC}$, we define the
$\varepsilon$-bottleneck set as
\begin{eqnarray}
  \mathcal{B}_{\varepsilon} (u_0, \tau^r) & = & \bigcup_{v \in
  \mathcal{C}_{\varepsilon} (u_0)} \mathcal{V}(v_{\tmop{argmax}^{2
  \varepsilon} (v)}, \varepsilon) .  \label{Bottleneck}
\end{eqnarray}
When $(u_0, \tau^r)$ are clear from the context, we simply write
$\mathcal{B}_{\varepsilon}$ for $\mathcal{B}_{\varepsilon} (u_0, \tau^r)$.
Note that our motivation for the above definition is that the
$\varepsilon$-enlargement of $\mathcal{B}_{\varepsilon}$ is
$\mathcal{V}(\mathcal{B}_{\varepsilon}, \varepsilon) = \bigcup_{v \in
\mathcal{C}_{\varepsilon} (u_0)} \mathcal{V}(v_{\tmop{argmax}^{2 \varepsilon}
(v)}, 2 \varepsilon)$, a fact that helps in the proof of Lemma~\ref{lem-FB}
below.

Now we state three Lemmas related to the bottleneck set.
Lemma~\ref{lem-MK-bottleneck} gives the asymptotics of the probability that
the phase profile $\mathcal{M}_K / m_{\beta}$ belongs to
$\mathcal{B}_{\varepsilon}$. In Lemma~\ref{lem-Flsci} we show that
$\mathcal{F}^r$ is lower semi-continuous, a requisite for the proof of
Lemma~\ref{lem-FB} in which we show how the gap in surface energy
$\mathcal{K}^r (u_0)$ defined at (\ref{eq-def-Kr-disc}) is related to the
$\varepsilon$-enlargement of $\mathcal{B}_{\varepsilon} (u_0, \tau^r)$.

\begin{lemma}
  \label{lem-MK-bottleneck}Assume $\beta > \hat{\beta}_c$ and $\beta \notin
  \mathcal{N} \cup \mathcal{N}_I$. Let $(u_0, \tau^r) \in \tmop{IC}$ and $\xi,
  \varepsilon > 0$. There exists $\gamma > 0$ such that, for any $(u_0,
  \tau^r, \delta, \gamma)$-covering $(\mathcal{R}_i)_{i = 1 \ldots n}$ of
  $\partial^{\star} u_0$ with $\delta > 0$ small enough, for any $K$ large
  enough,
  \begin{eqnarray*}
    \lim_N \mathbbm{P} \left( \left. \mu^{J, +}_{\Lambda_N} \left(
    \frac{\mathcal{M}_K}{m_{\beta}} \in \mathcal{B}_{\varepsilon} \right)
    \leqslant \exp \left( - N^{d - 1} \left(
    \inf_{\mathcal{V}(\mathcal{B}_{\varepsilon}, \varepsilon)} \mathcal{F}^r -
    \xi \right) \right) \right| \mathcal{G}((\mathcal{R}_i)_{i = 1 \ldots n})
    \right) & = & 1.
  \end{eqnarray*}
  
\end{lemma}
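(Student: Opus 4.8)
\emph{Strategy.} The plan is to combine the a priori confinement of $\mathcal{M}_K/m_\beta$ to a compact family of bounded-perimeter profiles (the renormalization of Theorem~5.7 in \cite{Wou08SPA}) with the conditional upper bound on phase coexistence of Proposition~\ref{prop-cond-upb-phco}, via a finite subcover argument. Throughout write $g=\mathcal{M}_K/m_\beta$ and set $F_\infty=\inf_{\mathcal{V}(\mathcal{B}_\varepsilon,\varepsilon)}\mathcal{F}^r$. First I would record that $F_\infty<\infty$: $\mathcal{B}_\varepsilon$ is nonempty (for any $v\in\mathcal{C}_\varepsilon(u_0)$ all of whose profiles have finite perimeter, e.g.\ a path progressively shrinking $U_0$ to $\emptyset$, the profile $v_{\tmop{argmax}^{2\varepsilon}(v)}$ belongs to $\mathcal{B}_\varepsilon$) and contains profiles of bounded perimeter, on which $\mathcal{F}^r\le\mathcal{F}^q<\infty$.

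\emph{Step~1: confinement of $g$ to $\tmop{BV}_a$.} Since $\mathcal{B}_\varepsilon$ is an uncountable union of $L^1$-balls and is not relatively compact, I would first dispose of the event on which $g$ lies at $L^1$-distance more than $\eta$ from the compact set $\tmop{BV}_a$ of (\ref{eq-def-BVa}). The renormalization of Theorem~5.7 in \cite{Wou08SPA} should give, for every $\eta>0$ and every $a$, a bound $\mathbbm{E}\,\mu^{J,+}_{\Lambda_N}(g\notin\mathcal{V}(\tmop{BV}_a,\eta))\le e^{-N^{d-1}C'_a}$ for $K$ and $N$ large, with $C'_a\to\infty$ as $a\to\infty$. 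Using Markov's inequality together with the lower bound $\mathbbm{P}(\mathcal{G}((\mathcal{R}_i)))\ge e^{-N^{d-1}(\mathcal{I}^r(u_0)+1)}$ of Lemma~\ref{lem-proba-GN} (valid for $\delta$ small), one then gets $\mathbbm{P}(\mu^{J,+}_{\Lambda_N}(g\notin\mathcal{V}(\tmop{BV}_a,\eta))\ge e^{-N^{d-1}(F_\infty+1)}\mid\mathcal{G}((\mathcal{R}_i)))\le e^{N^{d-1}(F_\infty+2+\mathcal{I}^r(u_0)-C'_a)}$, which tends to $0$ once $a$ is fixed so large that $C'_a>F_\infty+2+\mathcal{I}^r(u_0)$. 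With $a$ now fixed, I would take $\gamma$ to be the constant produced by Proposition~\ref{prop-cond-upb-phco} for this $a$ with its parameter set equal to $\xi/3$; this is the $\gamma$ of the statement.

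\emph{Step~2: a finite cover of the relevant profiles.} For each $u\in\tmop{BV}_a$, Proposition~\ref{prop-cond-upb-phco} yields a radius $\varepsilon(u)>0$. Set $\rho(u)=\min(\varepsilon(u)/2,\varepsilon/4)>0$; by compactness of $\tmop{BV}_a$, extract from the open cover $\{\mathcal{V}(u,\rho(u)):u\in\tmop{BV}_a\}$ a finite subcover $\mathcal{V}(u^{(1)},\rho_1),\dots,\mathcal{V}(u^{(m)},\rho_m)$, and put $\eta=\min(\varepsilon/4,\min_j\varepsilon(u^{(j)})/2)>0$, so that $\rho_j+\eta\le\min(\varepsilon/2,\varepsilon(u^{(j)}))$ for every $j$. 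Now suppose $g$ lies in the complement of the bad event of Step~1 and $g\in\mathcal{B}_\varepsilon$. Then $g\in\mathcal{V}(w,\eta)$ for some $w\in\tmop{BV}_a$, hence $g\in\mathcal{V}(u^{(j)},\rho_j+\eta)$ for some $j$; and $g\in\mathcal{V}(v_{\tmop{argmax}^{2\varepsilon}(v)},\varepsilon)$ for some $v\in\mathcal{C}_\varepsilon(u_0)$, so the triangle inequality and $\rho_j+\eta\le\varepsilon/2$ give $u^{(j)}\in\mathcal{V}(v_{\tmop{argmax}^{2\varepsilon}(v)},2\varepsilon)$, which by the identity $\mathcal{V}(\mathcal{B}_\varepsilon,\varepsilon)=\bigcup_v\mathcal{V}(v_{\tmop{argmax}^{2\varepsilon}(v)},2\varepsilon)$ noted after (\ref{Bottleneck}) puts $u^{(j)}$ in $\mathcal{V}(\mathcal{B}_\varepsilon,\varepsilon)$, so $\mathcal{F}^r(u^{(j)})\ge F_\infty$. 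Hence, letting $J^\star=\{j:u^{(j)}\in\mathcal{V}(\mathcal{B}_\varepsilon,\varepsilon)\}$, on this good event $\{g\in\mathcal{B}_\varepsilon\}\subset\bigcup_{j\in J^\star}\{g\in\mathcal{V}(u^{(j)},\rho_j+\eta)\}$.

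\emph{Step~3: union bound, and the main obstacle.} Writing $\mu^{J,+}_{\Lambda_N}(g\in\mathcal{B}_\varepsilon)\le\mu^{J,+}_{\Lambda_N}(g\notin\mathcal{V}(\tmop{BV}_a,\eta))+\sum_{j\in J^\star}\mu^{J,+}_{\Lambda_N}(g\in\mathcal{V}(u^{(j)},\rho_j+\eta))$, the first term is $\le e^{-N^{d-1}(F_\infty+1)}$ on an event of conditional probability tending to $1$ by Step~1; each summand is, by Proposition~\ref{prop-cond-upb-phco} applied to the finitely many $u^{(j)}$ (legitimate for $\delta$ small and $K$ large, using monotonicity in the radius since $\rho_j+\eta\le\varepsilon(u^{(j)})$), at most $e^{-N^{d-1}(\mathcal{F}^r(u^{(j)})-\xi/3)}\le e^{-N^{d-1}(F_\infty-\xi/3)}$ on an event of conditional probability tending to $1$; and $|J^\star|\le m$ is a fixed integer. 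Intersecting these finitely many events and using that $e^{-N^{d-1}(F_\infty+1)}+m\,e^{-N^{d-1}(F_\infty-\xi/3)}\le e^{-N^{d-1}(F_\infty-\xi)}$ for $N$ large yields the claim. I expect the genuinely delicate point to be Step~1: because $\mathcal{B}_\varepsilon$ fails to be relatively compact, the finite-subcover argument must be preceded by an a priori confinement of $\mathcal{M}_K/m_\beta$ that survives the conditioning on the dilution event $\mathcal{G}$ (whence the factor $e^{N^{d-1}(\mathcal{I}^r(u_0)+1)}$ and the need to choose $a$ large accordingly), while the remaining steps are the soft combination of compactness of $\tmop{BV}_a$ with the union bound, the entire analytic content being carried by Proposition~\ref{prop-cond-upb-phco}.
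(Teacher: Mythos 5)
Your proposal is correct and follows essentially the same route as the paper's proof: an a priori exponential-tightness confinement of $\mathcal{M}_K/m_{\beta}$ near the compact set $\tmop{BV}_a$ (with the conditioning on $\mathcal{G}((\mathcal{R}_i)_{i=1\ldots n})$ absorbed because its cost is only of surface order, exactly as in your Step~1), then a finite subcover of $\tmop{BV}_a$ by the radii supplied by Proposition~\ref{prop-cond-upb-phco}, a union bound over the finitely many centers lying in $\mathcal{V}(\mathcal{B}_{\varepsilon},\varepsilon)$, and a final choice of $a$ large. The only cosmetic differences are that the confinement estimate is the exponential tightness of Proposition~3.15 in \cite{Wou09CMP} rather than Theorem~5.7 in \cite{Wou08SPA}, and that the paper obtains your $\eta$ directly by compactness (an $\varepsilon'$-enlargement of $\tmop{BV}_a$ contained in the finite union of balls) instead of halving the radii.
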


\begin{proof} The exponential tightness property (Proposition~3.15 in {\cite{Wou09CMP}}) tells that there exists $C > 0$ such that, for every $\varepsilon'
  > 0$, for any $K$ large enough,
  \begin{eqnarray*}
    \limsup_N \frac{1}{N^{d - 1}} \log \mathbbm{E} \mu^{J, +}_{\Lambda_N}
    \left( \frac{\mathcal{M}_K}{m_{\beta}} \nin \mathcal{V}(\tmop{BV}_a,
    \varepsilon') \right) & \leqslant & - Ca
  \end{eqnarray*}
 (the set $BV_a$ was defined at (\ref{eq-def-BVa})). An immediate application of Markov's inequality shows that
  \begin{eqnarray*}
    \frac{1}{N^{d - 1}} \log \mu^{J, +}_{\Lambda_N} \left(
    \frac{\mathcal{M}_K}{m_{\beta}} \nin \mathcal{V}(\tmop{BV}_a,
    \varepsilon') \right) & \leqslant & - \frac{Ca}{2}
  \end{eqnarray*}
  with a probability at least $1 - \exp (- CaN^{d - 1} / 3)$, for large $N$.
  Since the surface cost of $\mathcal{G}((\mathcal{R}_i)_{i = 1 \ldots n})$ is
  bounded, we have therefore, for any $\varepsilon' > 0$, for any $a$ large
  enough, for $K$ large enough:
  \begin{eqnarray}
    \lim_N \mathbbm{P} \left( \left. \mu^{J, +}_{\Lambda_N} \left(
    \frac{\mathcal{M}_K}{m_{\beta}} \nin \mathcal{V}(\tmop{BV}_a,
    \varepsilon') \right) \leqslant \exp \left( - \frac{Ca}{2} N^{d - 1}
    \right) \right| \mathcal{G}((\mathcal{R}_i)_{i = 1 \ldots n}) \right) & =
    & 1.  \label{exptight}
  \end{eqnarray}
  Now we take some $\xi, \varepsilon > 0$ and $a > 0$ large. We take for
  $\gamma > 0$ the one given by Proposition \ref{prop-cond-upb-phco} (it does not
  depend on $\varepsilon$), and for any $u \in \tmop{BV}_a$ we denote by
  $\varepsilon_{\xi} (u) > 0$ the parameter $\varepsilon$ given by the same
  Proposition, and $\varepsilon (u) = \min (\varepsilon, \varepsilon_{\xi}
  (u))$. Since the set $\tmop{BV}_a$ is compact, it is covered with a finite
  number of balls
  \[ \tmop{BV}_a \subset \bigcup_{i = 1}^n \mathcal{V}(u_i, \varepsilon (u_i))
  \]
  where $u_i \in \tmop{BV}_a$, for all $i \in \{1, \ldots, n\}$. Since the
  right-hand set is open while $\tmop{BV}_a$ is compact, there is
  $\varepsilon' > 0$ such that
  \[ \mathcal{V}(\tmop{BV}_a, \varepsilon') \subset \bigcup_{i = 1}^n
     \mathcal{V} \left( u_i, \varepsilon (u_i) \right) \]
  and we can write
  \begin{eqnarray*}
    \mathcal{B}_{\varepsilon} \cap \mathcal{V}(\tmop{BV}_a, \varepsilon') &
    \subset & \bigcup_{i \leqslant n: u_i \in
    \mathcal{V}(\mathcal{B}_{\varepsilon}, \varepsilon)} \mathcal{V} \left(
    u_i, \varepsilon (u_i) \right) .
  \end{eqnarray*}
  It follows then from Proposition \ref{prop-cond-upb-phco} that, for any
  $\delta > 0$ small enough, for any $(u_0, \tau^r, \delta, \gamma)$-covering
  $(\mathcal{R}_i)_{i = 1 \ldots n}$ of $\partial^{\star} u_0$ and any $K$
  large enough,
  \begin{eqnarray}
    \lim_N \mathbbm{P} \left( \left. 
    \begin{array}{l}
    \mu^{J, +}_{\Lambda_N} \left(
    \frac{\mathcal{M}_K}{m_{\beta}} \in \mathcal{B}_{\varepsilon} \cap
    \mathcal{V}(\tmop{BV}_a, \varepsilon') \right) \leqslant \\
    n \exp \left( -
    N^{d - 1} \left( \min_{i \leqslant n: u_i \in
    \mathcal{V}(\mathcal{B}_{\varepsilon}, \varepsilon)} \mathcal{F}^r (u_i) -
    \xi \right) \right)
    \end{array}    
     \right| \mathcal{G}((\mathcal{R}_i)_{i = 1 \ldots n})
    \right) & = & 1.  \label{mkB}
  \end{eqnarray}
  Combining (\ref{mkB}) with (\ref{exptight}) proves the Lemma, provided that
  $a > 0$ was chosen large enough.
\end{proof}

\begin{lemma}
  \label{lem-Flsci}For any $(u_0, \tau^r) \in \tmop{IC}$, the functional
  $\mathcal{F}^r$ defined at (\ref{eq-def-Fr}) is lower semi-continuous.
\end{lemma}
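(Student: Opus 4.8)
The plan is to realise $\mathcal{F}^r$ as a supremum of functionals that are continuous for the $L^1$-norm, by means of the Gauss--Green (convex-duality) representation of anisotropic surface energies. Recall first that the positively $1$-homogeneous extension of $\tau^q$ is a norm --- this follows from sub-additivity of surface tension, its continuity (Proposition~2.5 in \cite{Wou09CMP}) and its positivity for $\beta>\hat{\beta}_c$ --- so that $\tau^q(\tmmathbf{n})=\sup_{\xi\in W^q}\langle\xi,\tmmathbf{n}\rangle$, where $W^q=\{\xi\in\mathbbm{R}^d:\langle\xi,\tmmathbf{n}\rangle\leqslant\tau^q(\tmmathbf{n})\ \forall\tmmathbf{n}\in S^{d-1}\}$ is a convex body with $0$ in its interior. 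For the given $(u_0,\tau^r)\in\tmop{IC}$ I would introduce the fixed class of test fields
\[ \mathcal{A}=\big\{\eta\in\mathcal{C}^1_c(\mathbbm{R}^d;\mathbbm{R}^d):\ \eta(x)\in W^q\ \forall x,\ \text{and}\ |\langle\eta(x),\tmmathbf{n}_x^{u_0}\rangle|\leqslant\tau^r(x)\ \text{for }\mathcal{H}^{d-1}\text{-a.e.\ }x\in\partial^{\star}u_0\big\} \]
and set $\tilde{\mathcal{F}}^r(u)=\sup_{\eta\in\mathcal{A}}\int_{\mathbbm{R}^d}\chi_U\,\mathrm{div}\,\eta\,\mathd\mathcal{L}^d$ for $u=\chi_U\in\tmop{BV}$. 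Since $\int\chi_U\,\mathrm{div}\,\eta=-\frac12\int u\,\mathrm{div}\,\eta$ is a continuous linear functional of $u\in L^1$, the functional $\tilde{\mathcal{F}}^r$ is automatically lower semi-continuous, so the Lemma reduces to proving $\tilde{\mathcal{F}}^r=\mathcal{F}^r$.

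The inequality $\tilde{\mathcal{F}}^r\leqslant\mathcal{F}^r$ is the easy half. For $\eta\in\mathcal{A}$ the Gauss--Green formula for sets of finite perimeter gives $\int_{\mathbbm{R}^d}\chi_U\,\mathrm{div}\,\eta=\int_{\partial^{\star}u}\langle\eta(x),\tmmathbf{n}_x^u\rangle\,\mathd\mathcal{H}^{d-1}$; I split this integral over $\partial^{\star}u\cap\partial^{\star}u_0$ and its complement. On the first set one has $\tmmathbf{n}_x^u=\pm\tmmathbf{n}_x^{u_0}$ for $\mathcal{H}^{d-1}$-a.e.\ $x$ (Federer's theorem), so the slab constraint forces $\langle\eta(x),\tmmathbf{n}_x^u\rangle\leqslant\tau^r(x)$; on the complement, $\eta(x)\in W^q$ forces $\langle\eta(x),\tmmathbf{n}_x^u\rangle\leqslant\tau^q(\tmmathbf{n}_x^u)$. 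Summing gives $\int_{\partial^{\star}u}\langle\eta,\tmmathbf{n}^u\rangle\leqslant\mathcal{F}^r(u)$, hence $\tilde{\mathcal{F}}^r(u)\leqslant\mathcal{F}^r(u)$.

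For the reverse inequality I would first establish $\tilde{\mathcal{F}}^r=\mathcal{F}^r$ on a dense class $\mathcal{D}$ of ``nice'' profiles $u=\chi_U$, e.g.\ those for which $\partial U$ is a finite union of compact $\mathcal{C}^1$ hypersurfaces, each piece being either contained in a $\mathcal{C}^1$ portion of $\partial U_0$, or meeting $\partial U_0$ in an $\mathcal{H}^{d-1}$-negligible set. For such $u$ a near-optimal $\eta\in\mathcal{A}$ can be built explicitly: on the pieces of $\partial^{\star}u$ lying off $\partial^{\star}u_0$ one takes $\eta(x)$ to be a point of $\partial W^q$ maximising $\langle\cdot,\tmmathbf{n}_x^u\rangle$; on the pieces lying in $\partial^{\star}u_0$ (where $\tmmathbf{n}^{u_0}$ and $\tau^r$ are continuous) one takes $\eta(x)\in W^q$ with $\langle\eta(x),\tmmathbf{n}_x^{u_0}\rangle=\pm\tau^r(x)$, which exists because $0\leqslant\tau^r(x)\leqslant\tau^q(\tmmathbf{n}_x^{u_0})$ by the definition of $\tmop{IC}$, so it lies on the segment joining $0$ to the maximiser of $\langle\cdot,\tmmathbf{n}_x^{u_0}\rangle$ in $W^q$; then one glues and mollifies, absorbing with a cut-off the error incurred near the codimension-$\geqslant 2$ set where the two surfaces meet. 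Finally, for an arbitrary $u=\chi_U\in\tmop{BV}$ I would construct a recovery sequence $u_k\in\mathcal{D}$ with $u_k\to u$ in $L^1$ and $\mathcal{F}^r(u_k)\to\mathcal{F}^r(u)$, obtained by smoothing $\partial U$ while keeping the portion of $\partial^{\star}u$ that lies on $\partial^{\star}u_0$ essentially unchanged; this uses the rectifiability of $\partial U_0$ together with the Besicovitch-type density estimates already invoked in the proof of Proposition~\ref{prop-g-cover} to control both the $\tau^r$- and the $\tau^q$-part of $\mathcal{F}^r$ along the sequence. Then $\mathcal{F}^r(u)=\lim_k\mathcal{F}^r(u_k)=\lim_k\tilde{\mathcal{F}}^r(u_k)\geqslant\tilde{\mathcal{F}}^r(u)$ by lower semi-continuity of $\tilde{\mathcal{F}}^r$, and together with $\tilde{\mathcal{F}}^r\leqslant\mathcal{F}^r$ this yields $\mathcal{F}^r=\tilde{\mathcal{F}}^r$.

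The main obstacle is the reverse inequality, and inside it the two ``wall-following'' constructions: mollifying the optimal test field across the merely rectifiable surface $\partial^{\star}u_0$, and producing recovery sequences that hug $\partial^{\star}u_0$ tightly enough that $\mathcal{F}^r$ is continuous along them. Both rest on the $\tmop{IC}$ bound $\tau^r(x)\leqslant\tau^q(\tmmathbf{n}_x^{u_0})$: it is precisely this inequality that makes the reduced wall energy $\tau^r$ attainable by a field valued in $W^q$, and without it $\mathcal{F}^r$ would genuinely fail to be lower semi-continuous, exactly as a capillarity functional with an over-costly wall does. An alternative to the last two steps is simply to invoke a general lower-semicontinuity theorem for surface energies with a rectifiable wall and verify its hypotheses; the verification reduces to the same ingredients.
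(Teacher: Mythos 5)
Your duality set-up is a genuinely different route from the paper (which localizes $\mathcal{F}^r$ on a $(u_0,\tau^r,\delta,\gamma)$-covering of $\partial^{\star}u$ and invokes lower semi-continuity of the $\tau^q$-surface energy in open sets, using $\tau^r\leqslant\tau^q$ and the uniform continuity of $\tau^r$), and your easy half $\tilde{\mathcal{F}}^r\leqslant\mathcal{F}^r$ is fine modulo the sign/indicator slip ($\chi_U=\pm1$ in this paper, so the functional should be written with $\mathbf{1}_U=(1-u)/2$). But the scheme breaks at the decisive point. For your argument to give lower semi-continuity of $\mathcal{F}^r$ at an \emph{arbitrary} $u\in\tmop{BV}$ you need $\tilde{\mathcal{F}}^r(u)\geqslant\mathcal{F}^r(u)$ at every such $u$, and your transfer from the dense class $\mathcal{D}$ does not deliver it: from $u_k\to u$ in $L^1$ with $\mathcal{F}^r(u_k)=\tilde{\mathcal{F}}^r(u_k)$ and $\mathcal{F}^r(u_k)\to\mathcal{F}^r(u)$, lower semi-continuity of $\tilde{\mathcal{F}}^r$ yields $\mathcal{F}^r(u)=\lim_k\tilde{\mathcal{F}}^r(u_k)\geqslant\tilde{\mathcal{F}}^r(u)$, which is exactly the inequality you already had; it cannot be combined with $\tilde{\mathcal{F}}^r\leqslant\mathcal{F}^r$ to conclude equality. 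Since $\tilde{\mathcal{F}}^r$ is a supremum of $L^1$-continuous functionals, approximation can only bound it from above along sequences, never from below, so the recovery-sequence step is structurally incapable of upgrading the equality from nice profiles to general ones. The only way to make your route work is to build near-optimal admissible fields \emph{directly} for an arbitrary set of finite perimeter, which is the hard measure-theoretic step you have deferred: there the constraint $|\langle\eta,\tmmathbf{n}^{u_0}\rangle|\leqslant\tau^r$ must be nearly saturated with value $+\tau^r$ on the part of $\partial^{\star}u\cap\partial^{\star}u_0$ where $\tmmathbf{n}^u=\tmmathbf{n}^{u_0}$ and $-\tau^r$ where $\tmmathbf{n}^u=-\tmmathbf{n}^{u_0}$, and these two sets are merely measurable and may interleave at all scales, so one needs a Lusin/Besicovitch-type localization rather than the explicit gluing you describe for $\mathcal{C}^1$ pieces.

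A second, independent weak point is the recovery sequence itself: smoothing $\partial U$ ``while keeping the portion of $\partial^{\star}u$ that lies on $\partial^{\star}u_0$ essentially unchanged'' is a strong claim, since generic approximations of a set of finite perimeter destroy the contact set ($\mathcal{H}^{d-1}(\partial^{\star}u_k\cap\partial^{\star}u_0)$ typically drops to zero, which changes the $\tau^r$-part of the energy discontinuously because $\tau^r<\tau^q$); nothing in the Besicovitch density estimates you cite controls this. Both difficulties disappear in the paper's argument, which never needs a representation of $\mathcal{F}^r$ as a supremum: it only needs a lower bound for $\mathcal{F}^r(v)$, $v$ near $u$, obtained by weighting the whole of $\partial^{\star}v$ inside rectangles meeting $\partial U_0$ by $\inf_{\mathcal{R}_i}\tau^r$ (legitimate precisely because $\tau^r\leqslant\tau^q$) and then applying the known lower semi-continuity of perimeter-type energies in each open rectangle. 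I recommend either adopting that localization argument or, if you want to keep the duality formulation, replacing the dense-class-plus-recovery scheme by a direct construction of admissible fields for arbitrary $u$.
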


\begin{proof}
  We show the lower semi-continuity as an application of the covering Proposition
  (Proposition \ref{prop-g-cover}). Let $u \in \tmop{BV}$ and $\delta, \gamma > 0$,
  and consider a $(u_0, \tau^r, \delta, \gamma)$-covering $(\mathcal{R}_i)_{i
  = 1 \ldots n}$ for $\partial^{\star} u$. Since the $\mathcal{R}_i$ are
  disjoint, for any $v \in \tmop{BV}$ we have
  \begin{eqnarray*}
    \mathcal{F}^r (v) & \geqslant & \sum_{i = 1}^n \int_{\dot{\mathcal{R}}_i
    \cap \partial^{\star} u_0 \cap \partial^{\star} v} \tau^r (x)
    d\mathcal{H}^{d - 1} (x) + \int_{( \dot{\mathcal{R}}_i \setminus
    \partial^{\star} u_0) \cap \partial^{\star} v} \tau^q (\tmmathbf{n}_x^v)
    d\mathcal{H}^{d - 1} (x)\\
    & \geqslant & \sum_{i: \mathcal{R}_i \cap \partial U_0 \neq \emptyset}
    \mathcal{H}^{d - 1} \left( \dot{\mathcal{R}}_i \cap \partial^{\star} v
    \right) \inf_{x \in \mathcal{R}_i} \tau^r + \sum_{i: \mathcal{R}_i \cap
    \partial U_0 = \emptyset} \int_{\dot{\mathcal{R}}_i \cap \partial^{\star}
    v} \tau^q (\tmmathbf{n}_x^v) d\mathcal{H}^{d - 1} (x)
  \end{eqnarray*}
  since $\tau^r (x) \leqslant \tau^q (\tmmathbf{n}_x)$. Thanks to the lower
  semi-continuity of the surface energy in open sets (Chapter 14 in
  {\cite{Cer06LNM}}), the quantities $\mathcal{H}^{d - 1} (
  \dot{\mathcal{R}}_i \cap \partial^{\star} v)$ and $\int_{\dot{\mathcal{R}}_i
  \cap \partial^{\star} v} \tau^q (\tmmathbf{n}_x^v) d\mathcal{H}^{d - 1} (x)$
  become not smaller than their value at $u$ when $v$ converges to $u$ in
  $L^1$ norm. Hence:
  \begin{eqnarray*}
    \lim_{\varepsilon \rightarrow 0} \inf_{v \in \mathcal{V}(u, \varepsilon)}
    \mathcal{F}^r (v) & \geqslant & \sum_{i: \mathcal{R}_i \cap \partial U_0
    \neq \emptyset} (1 - \delta) h_i^{d - 1} \inf_{x \in \mathcal{R}_i} \tau^r
    + \sum_{i: \mathcal{R}_i \cap \partial U_0 = \emptyset} (1 - \delta)
    h_i^{d - 1} \tau^q (\tmmathbf{n}_i^u)
  \end{eqnarray*}
  which is arbitrary close to $\mathcal{F}^r (u)$ for small $\delta$, thanks
  to the uniform continuity of $\tau^r$.
\end{proof}

\begin{lemma}
  \label{lem-FB}For any $(u_0, \tau^r) \in \tmop{IC}$,
  \begin{equation}
    \mathcal{K}^r (u_0) = \lim_{\varepsilon \rightarrow 0^+} \inf_{u \in
    \mathcal{V}(\mathcal{B}_{\varepsilon} (u_0, \tau^r), \varepsilon)}
    \mathcal{F}^r (u) -\mathcal{F}^r (u_0) . \label{eq-Kr-L8e}
  \end{equation}
\end{lemma}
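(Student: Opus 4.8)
The plan is to rephrase both sides of (\ref{eq-Kr-L8e}) in terms of the lower relaxation
\[ \mathcal{F}^r_\delta(u) \;:=\; \inf_{w \in \mathcal{V}(u,\delta)} \mathcal{F}^r(w), \qquad u \in \tmop{BV}, \ \delta>0, \]
and then to compare the two resulting ``energy barriers'' by a path–replacement argument. I would begin with two preliminary observations. First, since $\mathcal{F}^r$ is lower semi-continuous (Lemma~\ref{lem-Flsci}) and finite on $\tmop{BV}$, one has $\mathcal{F}^r_\delta(u) \uparrow \mathcal{F}^r(u)$ as $\delta \downarrow 0$ for every $u \in \tmop{BV}$: the bound $\mathcal{F}^r_\delta(u) \leq \mathcal{F}^r(u)$ is trivial, and choosing $w_\delta \in \mathcal{V}(u,\delta)$ with $\mathcal{F}^r(w_\delta) \leq \mathcal{F}^r_\delta(u)+\delta$ gives $\liminf_{\delta \to 0} \mathcal{F}^r_\delta(u) \geq \liminf_{\delta \to 0}\mathcal{F}^r(w_\delta) \geq \mathcal{F}^r(u)$ because $w_\delta \to u$ in $L^1$. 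Second, by the identity $\mathcal{V}(\mathcal{B}_\varepsilon,\varepsilon) = \bigcup_{v \in \mathcal{C}_\varepsilon(u_0)} \mathcal{V}(v_{\tmop{argmax}^{2\varepsilon}(v)},2\varepsilon)$ recorded just below (\ref{Bottleneck}), together with the very definition of $\tmop{argmax}^{2\varepsilon}$,
\[ \inf_{u \in \mathcal{V}(\mathcal{B}_\varepsilon,\varepsilon)} \mathcal{F}^r(u) \;=\; \inf_{v \in \mathcal{C}_\varepsilon(u_0)} \mathcal{F}^r_{2\varepsilon}\!\big(v_{\tmop{argmax}^{2\varepsilon}(v)}\big) \;=\; \inf_{v \in \mathcal{C}_\varepsilon(u_0)} \max_l \mathcal{F}^r_{2\varepsilon}(v_l). \]

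Abbreviating $A(\varepsilon) = \inf_{v \in \mathcal{C}_\varepsilon(u_0)} \max_l \mathcal{F}^r(v_l)$ and $B(\varepsilon) = \inf_{v \in \mathcal{C}_\varepsilon(u_0)} \max_l \mathcal{F}^r_{2\varepsilon}(v_l)$, and noting that $\varepsilon \mapsto A(\varepsilon)$ is non-increasing (because $\mathcal{C}_\varepsilon(u_0)$ grows with $\varepsilon$), so that $M := \lim_{\varepsilon\to 0^+}A(\varepsilon) = \mathcal{K}^r(u_0)+\mathcal{F}^r(u_0)$ exists by (\ref{eq-def-Kr-disc}), the assertion (\ref{eq-Kr-L8e}) amounts to $\lim_{\varepsilon \to 0^+} B(\varepsilon) = M$. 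One inequality is free: $\mathcal{F}^r_{2\varepsilon} \leq \mathcal{F}^r$ gives $B(\varepsilon) \leq A(\varepsilon)$, hence $\limsup_{\varepsilon \to 0^+} B(\varepsilon) \leq M$.

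For the lower bound $\liminf_{\varepsilon\to0^+} B(\varepsilon) \geq M$, fix $\varepsilon>0$ and $v=(v_l)_{l=0}^k \in \mathcal{C}_\varepsilon(u_0)$; I would replace each interior profile $v_l$, $0<l<k$, by some $u_l \in \mathcal{V}(v_l,2\varepsilon) \cap \tmop{BV}$ with $\mathcal{F}^r(u_l) < \mathcal{F}^r_{2\varepsilon}(v_l)+\varepsilon$ — such a $u_l$ exists in $\tmop{BV}$ since $\mathcal{F}^r_{2\varepsilon}(v_l) \leq \mathcal{F}^r(v_l)<\infty$ — and keep the endpoints $u_0 := u_0$, $u_k := \tmmathbf{1}$. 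The triangle inequality shows that every jump $\|u_{l+1}-u_l\|_{L^1}$ is at most $5\varepsilon$, so $(u_l)_{l=0}^k \in \mathcal{C}_{5\varepsilon}(u_0)$; using $\mathcal{F}^r(\tmmathbf{1})=0$ this gives
\[ A(5\varepsilon) \;\leq\; \max_l \mathcal{F}^r(u_l) \;\leq\; \max\!\Big(\mathcal{F}^r(u_0),\ \max_l \mathcal{F}^r_{2\varepsilon}(v_l)+\varepsilon\Big), \]
and taking the infimum over $v \in \mathcal{C}_\varepsilon(u_0)$ (the map $t \mapsto \max(\mathcal{F}^r(u_0),t)$ being non-decreasing and continuous) yields $A(5\varepsilon) \leq \max(\mathcal{F}^r(u_0),\, B(\varepsilon)+\varepsilon)$. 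Now I distinguish two cases. If $M > \mathcal{F}^r(u_0)$, then $A(5\varepsilon) > \mathcal{F}^r(u_0)$ for $\varepsilon$ small (since $A(5\varepsilon) \to M$), so the maximum must be realised by the second term, $B(\varepsilon) \geq A(5\varepsilon)-\varepsilon$, and letting $\varepsilon \to 0$ gives $\liminf B \geq M$. If $M = \mathcal{F}^r(u_0)$, then simply $B(\varepsilon) \geq \mathcal{F}^r_{2\varepsilon}(v_0) = \mathcal{F}^r_{2\varepsilon}(u_0) \to \mathcal{F}^r(u_0) = M$ by the first preliminary observation. Either way $\liminf_{\varepsilon\to0^+} B(\varepsilon) \geq M$, so $\lim_{\varepsilon \to 0^+} B(\varepsilon) = M$, and (\ref{eq-Kr-L8e}) follows after subtracting $\mathcal{F}^r(u_0)$.

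The step I expect to be delicate is the treatment of the fixed endpoint $v_0=u_0$ in the path–replacement: the relaxed barrier $B(\varepsilon)$ only controls $\mathcal{F}^r_{2\varepsilon}(u_0)$, which at a fixed scale may be strictly below $\mathcal{F}^r(u_0)$, so one cannot obtain $A(5\varepsilon) \leq B(\varepsilon)+\varepsilon$ at fixed $\varepsilon$; the dichotomy above, relying on the scale-free bound $A(\varepsilon) \geq \mathcal{F}^r(u_0)$ and on the lower semi-continuity of $\mathcal{F}^r$, is precisely what allows the estimate to pass to the limit. By contrast, the verification that the replaced sequence lies in $\mathcal{C}_{5\varepsilon}(u_0)$ and the existence of the near-minimizers are routine.
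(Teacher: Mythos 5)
Your proof is correct and follows essentially the same route as the paper's: the identity $\inf_{\mathcal{V}(\mathcal{B}_\varepsilon,\varepsilon)}\mathcal{F}^r=\inf_{v\in\mathcal{C}_\varepsilon(u_0)}\max_l\inf_{w\in\mathcal{V}(v_l,2\varepsilon)}\mathcal{F}^r(w)$, the trivial inequality $\mathcal{F}^r_{2\varepsilon}\leqslant\mathcal{F}^r$ for one direction, and for the other the replacement of each $v_l$ by a near-minimizer in $\mathcal{V}(v_l,2\varepsilon)$, yielding a $5\varepsilon$-continuous path. Your dichotomy on whether $M>\mathcal{F}^r(u_0)$ plays exactly the role of the paper's additive correction term $\mathcal{F}^r(u_0)-\inf_{w\in\mathcal{V}(u_0,2\varepsilon)}\mathcal{F}^r(w)$, both resting on the lower semicontinuity of $\mathcal{F}^r$ from Lemma~\ref{lem-Flsci}.
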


\begin{proof}
  We prove first that $\mathcal{K}^r (u_0)$ is larger or equal to the
  right-hand side. Let $v \in \mathcal{C}_{\varepsilon} (u_0)$. Then,
  \begin{eqnarray*}
    \max_i \mathcal{F}^r (v_i) & \geqslant & \max_i \inf_{\mathcal{V}(v_i, 2
    \varepsilon)} \mathcal{F}^r\\
    & \geqslant & \inf_{u \in \mathcal{V}(\mathcal{B}_{\varepsilon} (u_0,
    \tau^r), \varepsilon)} \mathcal{F}^r (u)
  \end{eqnarray*}
  and, when we optimize over $v \in \mathcal{C}_{\varepsilon} (u_0)$ and let
  $\varepsilon \rightarrow 0$ we obtain the inequality
  \begin{eqnarray*}
    \mathcal{K}^r (u_0) +\mathcal{F}^r (u_0) & \geqslant & \lim_{\varepsilon
    \rightarrow 0^+} \inf_{u \in \mathcal{V}(\mathcal{B}_{\varepsilon} (u_0,
    \tau^r), \varepsilon)} \mathcal{F}^r (u) .
  \end{eqnarray*}
  Now we prove the opposite inequality. Given any $\varepsilon > 0$, there is
  $u \in \mathcal{V}(\mathcal{B}_{\varepsilon} (u_0, \tau^r), \varepsilon)$
  such that
  \begin{eqnarray}
    \mathcal{F}^r (u) & \leqslant & \inf_{w \in
    \mathcal{V}(\mathcal{B}_{\varepsilon} (u_0, \tau^r), \varepsilon)}
    \mathcal{F}^r (w) + \varepsilon .  \label{eq-KL-1}
  \end{eqnarray}
  According to the definition of $\mathcal{B}_{\varepsilon} (u_0, \tau^r)$ at
  (\ref{Bottleneck}), there is $v = (v_i)_{i = 0 \ldots k}$ in
  $\mathcal{C}_{\varepsilon} (u_0)$, that is an $\varepsilon$-continuous
  evolution such that $v_0 = u_0$, $v_k =\tmmathbf{1}$, that satisfies
  \begin{eqnarray}
    \mathcal{F}^r (u) & \geqslant & \max_{i = 0}^k \inf_{w \in
    \mathcal{V}(v_i, 2 \varepsilon)} \mathcal{F}^r (w) .  \label{eq-KL-2}
  \end{eqnarray}
  Now, for each $i = 0, \ldots, k$ we consider $v'_{i + 1} \in
  \mathcal{V}(v_i, 2 \varepsilon)$ such that
  \begin{eqnarray}
    \mathcal{F}^r (v'_{i + 1}) & \leqslant & \inf_{w \in \mathcal{V}(v_i, 2
    \varepsilon)} \mathcal{F}^r (w) + \varepsilon .  \label{def-vp}
  \end{eqnarray}
  We let also $v'_0 = u_0$ and $v'_{k + 2} =\tmmathbf{1}$. Clearly, the
  evolution $v' = (v'_i)_{i = 0 \ldots k + 2}$ is $5 \varepsilon$-continuous.
  Therefore, we have
  \begin{eqnarray*}
    \inf_{v'' \in \mathcal{C}_{5 \varepsilon} (u_0)} \max_i \mathcal{F}^r
    (v''_i) & \leqslant & \max_{i = 0}^{k + 2} \mathcal{F}^r (v'_i) .
  \end{eqnarray*}
  The maximum in the right-hand side does not occur at $i = k + 2$ since
  $\mathcal{F}^r (\tmmathbf{1}) = 0$. We also have the bound
  \begin{eqnarray*}
    \mathcal{F}^r (v'_0) -\mathcal{F}^r (v'_1) & \leqslant & \mathcal{F}^r
    (u_0) - \inf_{w \in \mathcal{V}(u_0, 2 \varepsilon)} \mathcal{F}^r (w)
  \end{eqnarray*}
  therefore,
  \begin{eqnarray*}
    \inf_{v'' \in \mathcal{C}_{5 \varepsilon} (u_0)} \max_i \mathcal{F}^r
    (v''_i) & \leqslant & \max_{i = 1}^{k + 1} \mathcal{F}^r (v'_i) + \left(
    \mathcal{F}^r (u_0) - \inf_{w \in \mathcal{V}(u_0, 2 \varepsilon)}
    \mathcal{F}^r (w) \right)\\
    & \leqslant & \max_{i = 0}^k \inf_{w \in \mathcal{V}(v_i, 2 \varepsilon)}
    \mathcal{F}^r (w) + \varepsilon + \left( \mathcal{F}^r (u_0) - \inf_{w \in
    \mathcal{V}(u_0, 2 \varepsilon)} \mathcal{F}^r (w) \right)\\
    & \leqslant & \mathcal{F}^r (u) + \varepsilon + \left( \mathcal{F}^r
    (u_0) - \inf_{w \in \mathcal{V}(u_0, 2 \varepsilon)} \mathcal{F}^r (w)
    \right)\\
    & \leqslant & \inf_{w \in \mathcal{V}(\mathcal{B}_{\varepsilon} (u_0,
    \tau^r), \varepsilon)} \mathcal{F}^r (w) + 2 \varepsilon + \left(
    \mathcal{F}^r (u_0) - \inf_{w \in \mathcal{V}(u_0, 2 \varepsilon)}
    \mathcal{F}^r (w) \right)
  \end{eqnarray*}
  where the second line is due to the definition of $v'_{i + 1}$ at
  (\ref{def-vp}), the third one to (\ref{eq-KL-2}) and the last one to
  (\ref{eq-KL-1}). The lower semi-continuity of $\mathcal{F}^r$
  (Lemma~\ref{lem-Flsci}) imply that the last term goes to $0$ as $\varepsilon
  \rightarrow 0$. Therefore taking $\varepsilon \rightarrow 0$ ends the proof.
\end{proof}

\subsubsection{Intermediate formulation of the metastability}
\label{sec:int:meta}
The aim of this Section is the proof of an intermediate and useful formulation
of the metastability:

\begin{proposition}
  \label{prop-mt}Assume $\beta > \hat{\beta}_c$ and $\beta \notin \mathcal{N}
  \cup \mathcal{N}_I$. Let $(u_0, \tau^r) \in \tmop{IC}$, $\xi > 0$ and
  $\varepsilon > 0$ small enough. Then, there exists $\gamma > 0$ such that,
  for any $\delta > 0$ small enough, for any $(u_0, \tau^r, \delta,
  \gamma)$-covering $(\mathcal{R}_i)_{i = 1 \ldots n}$ of $\partial^{\star}
  u_0$,
  \begin{eqnarray*}
    \lim_N \inf_{t \leqslant K_N} \mathbbm{P} \left( \left. \mu^{J,
    +}_{\Lambda_N} \left( T^{J, +}_{\Lambda_N} \left( t \right) m_{\Lambda_N}
    \leqslant m_{\beta} - 2 \varepsilon \right) \geqslant e^{- N^{d - 1}
    \left( \mathcal{F}^r (u_0) + \xi \right)} \right| \mathcal{G} \left(
    (\mathcal{R}_i)_{i = 1 \ldots n} \right) \right) & = & 1
  \end{eqnarray*}
  where $K_N = \exp (N^{d - 1} \left( \mathcal{K}^r (u_0) - \xi \right))$.
\end{proposition}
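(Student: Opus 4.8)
The plan is to prove Proposition~\ref{prop-mt} by a bottleneck (Cheeger‑type) estimate for the restricted dynamics $T^{J,+}_{\Lambda_N}$, fed by the equilibrium bounds already in hand: the lower bound on $\mu^{J,+}_{\Lambda_N}(\mathcal{M}_K/m_\beta\in\mathcal{V}(u_0,\cdot))$ of Proposition~\ref{prop-cond-lwb-phco}, the upper bound on $\mu^{J,+}_{\Lambda_N}(\mathcal{M}_K/m_\beta\in\mathcal{B}_{\varepsilon})$ of Lemma~\ref{lem-MK-bottleneck}, and the identification of the bottleneck energy with $\mathcal{F}^r(u_0)+\mathcal{K}^r(u_0)$ of Lemma~\ref{lem-FB}. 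I would first reduce: if $\mathcal{K}^r(u_0)=0$ then $K_N\to 0$ and the conclusion follows at once from $|T^{J,+}_{\Lambda_N}(t)m_{\Lambda_N}-m_{\Lambda_N}|\leqslant 2c_M t$ and Proposition~\ref{prop-cond-lwb-phco}; and since the assertion is monotone in $\xi$ (raising $\xi$ both shrinks the window $t\leqslant K_N$ and lowers the target $e^{-N^{d-1}(\mathcal{F}^r(u_0)+\xi)}$), I may assume $0<\xi<\mathcal{K}^r(u_0)$. Using Lemma~\ref{lem-FB} and~\eqref{eq-def-Kr-disc} I fix an auxiliary scale $\varepsilon'>0$ small enough that $\theta:=\inf_{\mathcal{V}(\mathcal{B}_{\varepsilon'}(u_0,\tau^r),\varepsilon')}\mathcal{F}^r$ exceeds $\mathcal{F}^r(u_0)+\mathcal{K}^r(u_0)-\xi/8$ (so in particular $\theta>\mathcal{F}^r(u_0)$), take $\gamma$ the parameter produced by Lemma~\ref{lem-MK-bottleneck} for this $\varepsilon'$, and choose the $\varepsilon$ of the statement small compared with $\varepsilon'$ and $m_\beta$, then $\delta$ small and $K$ large.

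The core is the construction of a suitable trap. I would set $\mathcal{A}=\{\sigma:\mathcal{M}_K/m_\beta\in\mathcal{V}(u_0,\varepsilon'/2)\}$ (the droplet set), $\mathcal{B}=\{\sigma:\mathcal{M}_K/m_\beta\in\mathcal{V}(\mathcal{B}_{\varepsilon'}(u_0,\tau^r),\lambda)\}$ (the bottleneck set) with $\lambda$ a fixed scale satisfying $N^{-d}\ll\lambda\ll\varepsilon'$ and larger than the (small, $K$‑dependent) typical $L^1$‑distance of $\mathcal{M}_K/m_\beta$ to $\tmop{BV}$, and a \emph{metastable trap} $\mathcal{A}'$: the set of $\sigma$ for which there is an $\varepsilon'$‑continuous $\tmop{BV}$ path $(u_0=w_0,\dots,w_m)$ with $\|w_m-\mathcal{M}_K(\sigma)/m_\beta\|_{L^1}\leqslant\lambda$ and $\max_i\inf_{\mathcal{V}(w_i,2\varepsilon')}\mathcal{F}^r<\theta$ (the configurations reachable from $u_0$ staying strictly below the barrier). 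Three properties are needed. First, $\mathcal{A}\subseteq\mathcal{A}'$ up to a set of $\mu^{J,+}_{\Lambda_N}$‑measure at most $e^{-cKN^{d-1}}$: for a profile near $u_0$ that is $\tmop{BV}$‑close — which holds off an exponentially small set, by the phase‑rigidity Theorem~5.7 in~\cite{Wou08SPA} — the one‑step path $(u_0,w_1)$ to that nearby $\tmop{BV}$ profile has barrier height $\leqslant\mathcal{F}^r(u_0)<\theta$. Second, $m_{\Lambda_N}(\sigma)\leqslant m_\beta-3\varepsilon$ for every $\sigma\in\mathcal{A}'$: if the profile of $\sigma$ were within $\varepsilon'$ of $\tmmathbf{1}$, appending $\tmmathbf{1}$ to the witness path would produce an element of $\mathcal{C}_{\varepsilon'}(u_0)$ whose barrier height $\max_l\inf_{\mathcal{V}(v_l,2\varepsilon')}\mathcal{F}^r$ is $<\theta$, contradicting the very definition of $\theta$ via $\mathcal{B}_{\varepsilon'}$; hence the profile is $L^1$‑bounded away from $\tmmathbf{1}$, and combining this with phase rigidity (so $\mathcal{M}_K/m_\beta$ does not substantially exceed $1$) and $|m_{\Lambda_N}-\int_{[0,1]^d}\mathcal{M}_K\,\mathrm{d}\mathcal{L}^d|=O(K/N)$ forces $\int\mathcal{M}_K/m_\beta$ below $1$ by a fixed amount, whence the bound once $\varepsilon$ is small. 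Third — the crucial geometric point — $\partial\mathcal{A}'$ is contained in $\mathcal{B}$ together with a set of configurations of $\mu^{J,+}_{\Lambda_N}$‑measure at most $e^{-cKN^{d-1}}$ (those whose profile is far from $\tmop{BV}$): a single spin flip moves $\mathcal{M}_K/m_\beta$ by $O(N^{-d})\ll\lambda$ in $L^1$, so if $\sigma\in\mathcal{A}'$ but $\sigma^x\notin\mathcal{A}'$ then, off that exceptional set, the profile of $\sigma$ sits at the $L^1$‑edge of the reachable region, and by the definition of $\mathcal{B}_{\varepsilon'}$ together with Lemma~\ref{lem-FB} such profiles lie in $\mathcal{V}(\mathcal{B}_{\varepsilon'},\lambda)$.

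With the trap in place I would finish as follows. Proposition~\ref{prop-cond-lwb-phco} (with parameter $\xi/3$) and Lemma~\ref{lem-MK-bottleneck} (with the chosen $\varepsilon'$) give that the two $J$‑events $\mu^{J,+}_{\Lambda_N}(\mathcal{A})\geqslant e^{-N^{d-1}(\mathcal{F}^r(u_0)+\xi/3)}$ and $\mu^{J,+}_{\Lambda_N}(\mathcal{B})\leqslant e^{-N^{d-1}(\mathcal{F}^r(u_0)+\mathcal{K}^r(u_0)-\xi/8)}$ each have conditional probability (given $\mathcal{G}((\mathcal{R}_i)_{i=1\ldots n})$) tending to $1$, so it is enough to treat a deterministic $J$ on which both hold, for $N$ large. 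For such $J$, reversibility of $T^{J,+}_{\Lambda_N}$ gives the standard escape bound
\[
 \int_{\mathcal{A}}\tmmathbf{P}^{J,\Lambda_N}_\sigma\big(\sigma(t)\notin\mathcal{A}'\big)\,\mathrm{d}\mu^{J,+}_{\Lambda_N}(\sigma)\ \leqslant\ t\,\mathcal{E}^{J,+}_{\Lambda_N}(\mathbf{1}_{\mathcal{A}'},\mathbf{1}_{\mathcal{A}'})\ \leqslant\ C(\beta)\,t\,|\Lambda_N|\,\mu^{J,+}_{\Lambda_N}(\partial\mathcal{A}'),
\]
the first inequality from $\frac{\mathrm{d}}{\mathrm{d}s}\langle\mathbf{1}_{\mathcal{A}'},T^{J,+}_{\Lambda_N}(s)\mathbf{1}_{\mathcal{A}'}\rangle=-\mathcal{E}^{J,+}_{\Lambda_N}(\mathbf{1}_{\mathcal{A}'},T^{J,+}_{\Lambda_N}(s)\mathbf{1}_{\mathcal{A}'})$ together with monotonicity of the Dirichlet form along the flow, the second from $c^J\leqslant c_M$ and the bounded ratio of Gibbs weights under a single flip. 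By the third property, $\mu^{J,+}_{\Lambda_N}(\partial\mathcal{A}')\leqslant\mu^{J,+}_{\Lambda_N}(\mathcal{B})+e^{-cKN^{d-1}}$, so for $t\leqslant K_N=e^{N^{d-1}(\mathcal{K}^r(u_0)-\xi)}$ the right‑hand side above is $\leqslant C(\beta)|\Lambda_N|\big(e^{N^{d-1}(\mathcal{K}^r(u_0)-\xi)}e^{-N^{d-1}(\mathcal{F}^r(u_0)+\mathcal{K}^r(u_0)-\xi/8)}+e^{-cKN^{d-1}}\big)=o\!\left(e^{-N^{d-1}(\mathcal{F}^r(u_0)+\xi/3)}\right)$. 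Markov's inequality then yields $\mathcal{A}''\subseteq\mathcal{A}$ with $\mu^{J,+}_{\Lambda_N}(\mathcal{A}\setminus\mathcal{A}'')\leqslant\tfrac12\mu^{J,+}_{\Lambda_N}(\mathcal{A})$ and $\tmmathbf{P}^{J,\Lambda_N}_\sigma(\sigma(t)\notin\mathcal{A}')\leqslant\varepsilon$ for $\sigma\in\mathcal{A}''$ and all $t\leqslant K_N$; for such $\sigma$, since $m_{\Lambda_N}\leqslant m_\beta-3\varepsilon$ on $\mathcal{A}'$ and $m_{\Lambda_N}\leqslant1$ always, $T^{J,+}_{\Lambda_N}(t)m_{\Lambda_N}(\sigma)=\tmmathbf{E}^{J,\Lambda_N}_\sigma(m_{\Lambda_N}(\sigma(t)))\leqslant m_\beta-2\varepsilon$, and hence $\mu^{J,+}_{\Lambda_N}(T^{J,+}_{\Lambda_N}(t)m_{\Lambda_N}\leqslant m_\beta-2\varepsilon)\geqslant\tfrac12\mu^{J,+}_{\Lambda_N}(\mathcal{A})\geqslant e^{-N^{d-1}(\mathcal{F}^r(u_0)+\xi)}$ for $N$ large, uniformly in $t\leqslant K_N$. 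This is the claim.

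I expect the main obstacle to be precisely the construction and analysis of the trap $\mathcal{A}'$, i.e.\ properties two and three above: translating the one‑flip‑at‑a‑time evolution of $\mathcal{M}_K/m_\beta$ into the $\varepsilon$‑continuous $\tmop{BV}$‑path language of~\eqref{Cepsu0}, identifying the $L^1$‑edge of the reachable‑below‑the‑barrier region with the bottleneck set via Lemma~\ref{lem-FB} and the lower semicontinuity of $\mathcal{F}^r$ (Lemma~\ref{lem-Flsci}), and pinning the magnetization below $m_\beta$ by means of the phase‑rigidity estimate of~\cite{Wou08SPA} — all of which hinges on a coherent choice of the scale hierarchy ($\varepsilon\ll\varepsilon'$, $\varepsilon'$ small against $\xi\wedge\mathcal{K}^r(u_0)$, and $N^{-d}\ll\lambda\ll\varepsilon'$). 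By contrast, the reversible escape estimate and the conditioning bookkeeping — that the two $J$‑events of conditional probability tending to $1$ may be intersected — are routine.
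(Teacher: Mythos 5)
Your scheme (a trap set $\mathcal{A}'$ plus the reversible escape estimate $\int_{\mathcal{A}}\tmmathbf{P}_\sigma(\sigma(t)\notin\mathcal{A}')\,d\mu\leqslant t\,\mathcal{E}(\tmmathbf{1}_{\mathcal{A}'},\tmmathbf{1}_{\mathcal{A}'})$) is genuinely different from the paper's proof, which instead slices the time interval into $k_N\simeq e^{N^{d-1}(\mathcal{K}^r(u_0)-\xi/2)}$ steps, uses Lemma~\ref{lem:contevol} to make the observed profile sequence an $8\varepsilon/m_\beta$-continuous evolution, and then applies stationarity of $\mu^{J,+}_{\Lambda_N}$ and a union bound to force a visit to $\mathcal{B}_{\varepsilon}$ at one of the $k_N$ observation times. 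Your potential-theoretic route is admissible in principle, and its bookkeeping (conditioning on the two $J$-events, the Dirichlet-form bound, the final Markov step) is fine. The problem is your third trap property, which is exactly the point you flag as delicate and then assert: you claim that a configuration on the inner boundary of $\mathcal{A}'$ has its profile in $\mathcal{V}(\mathcal{B}_{\varepsilon'},\lambda)$ ``by the definition of $\mathcal{B}_{\varepsilon'}$ together with Lemma~\ref{lem-FB}''. Neither gives this. What your edge analysis actually yields is a BV profile $u'$, reachable from $u_0$ by an $\varepsilon'$-path whose smoothed costs are below $\theta$, with $\inf_{\mathcal{V}(u',2\varepsilon')}\mathcal{F}^r\geqslant\theta$. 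For $u'$ to lie in (a neighborhood of) $\mathcal{B}_{\varepsilon'}$ as defined at (\ref{Bottleneck}) you must exhibit a \emph{complete} path from $u_0$ to $\tmmathbf{1}$ whose \emph{first} argmax profile is $\varepsilon'$-close to $u'$; this requires a continuation from $u'$ to $\tmmathbf{1}$ along which the smoothed cost never exceeds its value at $u'$, and nothing in the definitions or in Lemma~\ref{lem-FB} (which only identifies $\lim_{\varepsilon\to0}\inf_{\mathcal{V}(\mathcal{B}_\varepsilon,\varepsilon)}\mathcal{F}^r$ with $\mathcal{F}^r(u_0)+\mathcal{K}^r(u_0)$) guarantees such a continuation: $u'$ could be a ``shoulder'' from which every route to $\tmmathbf{1}$ must first climb higher, in which case the first argmax of any completion sits elsewhere. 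So the inclusion $\partial\mathcal{A}'\subset\mathcal{B}\cup(\text{small set})$ is unproven, and it is the load-bearing step of your estimate on $\mu^{J,+}_{\Lambda_N}(\partial\mathcal{A}')$.

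The gap is repairable inside your framework, but not by citing Lemma~\ref{lem-MK-bottleneck}: what your edge analysis does give is that boundary-of-trap profiles are $\lambda$-close to the set $\{w\in\tmop{BV}_a:\inf_{\mathcal{V}(w,2\varepsilon')}\mathcal{F}^r\geqslant\theta\}$ (off an exponentially negligible far-from-$\tmop{BV}_a$ set), and the equilibrium measure of that event can be bounded by $e^{-N^{d-1}(\theta-\xi')}$ directly, by rerunning the compactness-plus-covering argument of the proof of Lemma~\ref{lem-MK-bottleneck} with Proposition~\ref{prop-cond-upb-phco}: any covering center $u_i$ whose ball meets the $\lambda$-neighborhood of this set satisfies $\mathcal{F}^r(u_i)\geqslant\theta$ once $\varepsilon(u_i)+\lambda\leqslant2\varepsilon'$. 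With $\theta\geqslant\mathcal{F}^r(u_0)+\mathcal{K}^r(u_0)-\xi/8$ from Lemma~\ref{lem-FB}, the rest of your argument then closes. A secondary leak: in your second trap property you deduce ``profile far from $\tmmathbf{1}$ in $L^1$ implies $m_{\Lambda_N}$ small'' by invoking phase rigidity to control the overshoot of $\mathcal{M}_K/m_\beta$ above $1$; rigidity is an equilibrium statement, not a deterministic property of $\sigma\in\mathcal{A}'$, so either intersect the trap with the good set (and track its boundary) or, better, avoid it entirely with the signed-integral computation the paper uses at (\ref{eq-d1vk}): if $m_{\Lambda_N}(\sigma)>m_\beta-3\varepsilon$ then the witness endpoint $w_m$ satisfies $\|\tmmathbf{1}-w_m\|_{L^1}\leqslant1-m_{\Lambda_N}/m_\beta+\lambda+o(1)$, so appending $\tmmathbf{1}$ contradicts the definition of $\theta$.
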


Important keys for the proof of Proposition~\ref{prop-mt} are
Lemmas~\ref{lem-MK-bottleneck} and \ref{lem-FB} about the bottleneck of the
dynamics, together with Lemma~\ref{lem:contevol} below on the so-to-say
``continuous evolution'' of the magnetization profile.

\begin{lemma}
  \label{lem:contevol}Let $\beta > 0$ and $\varepsilon > 0$. There is $c > 0$
  such that, for any $N$ large enough,
  \begin{eqnarray*}
    \sup_{t \in [0, \varepsilon / (4 c_M)]} \sup_J \tmmathbf{P}^{J, \Lambda_N,
    +}_{\mu^{J, +}_{\Lambda_N}} \left( \left\| \mathcal{M}_K (\sigma (t))
    -\mathcal{M}_K (\sigma (0)) \right\|_{L^1} \geqslant \varepsilon \right) &
    \leqslant & \exp \left( - cN^d \right) 
  \end{eqnarray*}
where $c_M$ is a uniform upper bound on the rates of the Glauber dynamics.
\end{lemma}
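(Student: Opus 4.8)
The plan is to observe that the $L^1$ distance between the two magnetization profiles is controlled by the renormalized number of sites whose spin has flipped between times $0$ and $t$, and that this number is dominated -- uniformly in the disorder $J$ and in the starting configuration -- by a binomial count of Poisson clock rings, to which a Chernoff bound applies.

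First I would reduce to counting flipped sites. Since $\mathcal{M}_K(\sigma)$ is constant equal to $K^{-d}\sum_{z\in\Lambda_N\cap\Delta_i}\sigma_z$ on the set $\{x\in[0,1]^d : i(x)=i\}$, which is a box of side at most $K/N$ and hence of Lebesgue measure at most $(K/N)^d$, one gets for any two configurations $\sigma,\sigma'$
\[
  \|\mathcal{M}_K(\sigma)-\mathcal{M}_K(\sigma')\|_{L^1}\ \le\ \sum_{i\in\mathbbm{Z}^d}(K/N)^d\cdot\frac{2}{K^d}\,\#\{z\in\Lambda_N\cap\Delta_i:\sigma_z\ne\sigma'_z\}\ =\ \frac{2}{N^d}\,\#\{z\in\Lambda_N:\sigma_z\ne\sigma'_z\}
\]
using that the $\Delta_i$ partition $\mathbbm{Z}^d$. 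In particular, the event whose probability we want to bound is contained in the event that at least $\tfrac{\varepsilon}{2}N^d$ spins of $\Lambda_N$ differ at time $t$ from their value at time $0$; note this estimate does not involve $K$. We may also assume $\varepsilon<2$, for otherwise the event is empty, the $L^1$ norm of the difference of two $[-1,1]$-valued functions on $[0,1]^d$ never exceeding $2$.

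Next I would invoke the graphical construction: the spin at a site $z$ changes only when the rate-$c_M$ Poisson clock attached to $z$ rings, and these clocks are i.i.d.\ over $z\in\Lambda_N$ and independent of $J$, of $\sigma(0)$ and of the auxiliary coins governing acceptance of the flips. Hence $\#\{z\in\Lambda_N:\sigma_z(t)\ne\sigma_z(0)\}$ is stochastically dominated by $\mathrm{Bin}(|\Lambda_N|,1-e^{-c_Mt})=\mathrm{Bin}(N^d,1-e^{-c_Mt})$, and for $t\le\varepsilon/(4c_M)$ the success probability is at most $c_Mt\le\varepsilon/4$, so this count is dominated by a $\mathrm{Bin}(N^d,\varepsilon/4)$ variable, uniformly in $J$, in $\sigma(0)$ and in $t\le\varepsilon/(4c_M)$. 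A standard Chernoff (Cram\'er) bound for binomial tails then gives $\tmmathbf{P}\bigl(\mathrm{Bin}(N^d,\varepsilon/4)\ge\tfrac{\varepsilon}{2}N^d\bigr)\le e^{-cN^d}$ with $c=\tfrac{\varepsilon}{2}\log 2+(1-\tfrac{\varepsilon}{2})\log\tfrac{1-\varepsilon/2}{1-\varepsilon/4}>0$, the relative entropy between the Bernoulli laws of parameters $\varepsilon/2$ and $\varepsilon/4$; since neither $J$, nor the initial law $\mu^{J,+}_{\Lambda_N}$, nor $K$, nor $t$ in the allowed range enters this estimate, taking the suprema over $J$ and over $t\in[0,\varepsilon/(4c_M)]$ yields the assertion.

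There is no genuine obstacle here -- the lemma is a soft consequence of the uniform boundedness of the flip rates. The only points requiring a little care are getting the constant right in the reduction to flipped-site counts, so that the threshold $\tfrac{\varepsilon}{2}N^d$ sits strictly above the mean ($\le\tfrac{\varepsilon}{4}N^d$) of the dominating binomial and the Chernoff exponent is positive, and checking that the stochastic domination is genuinely uniform in $J$ and in the initial configuration -- which holds precisely because the rate-$c_M$ Poisson clocks of the graphical construction are the same for every $J$ and are independent of the starting state.
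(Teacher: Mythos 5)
Your proof is correct and follows essentially the same route as the paper: bound the $L^1$ distance by $2/N^d$ times the number of spin changes, dominate that count using the rate-$c_M$ Poisson clocks of the graphical construction (uniformly in $J$ and in the initial law), and conclude with a Chernoff/Cram\'er bound. The only cosmetic difference is that you dominate the number of flipped sites by a $\mathrm{Bin}(N^d,1-e^{-c_M t})$ variable, whereas the paper dominates the total number of jumps by a Poisson variable of parameter $c_M t N^d$; both give the $e^{-cN^d}$ bound.
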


\begin{proof}
  (Lemma \ref{lem:contevol}). The $L^1$ distance $\left\| \mathcal{M}_K
  (\sigma (t)) -\mathcal{M}_K (\sigma (0)) \right\|_{L^1}$ is bounded by $2 /
  N^d$ times the number of jumps of the Glauber dynamics. These jumps occur at
  a rate bounded by $c_M$. Therefore,
  \begin{eqnarray*}
    \tmmathbf{P}^{J, \Lambda_N, +}_{\mu^{J, +}_{\Lambda_N}} \left( \left\|
    \mathcal{M}_K (\sigma (t)) -\mathcal{M}_K (\sigma (0)) \right\|_{L^1}
    \geqslant \varepsilon \right) & \leqslant & P \left( X \geqslant
    \frac{\varepsilon N^d}{2} \right)
  \end{eqnarray*}
  where $X$ is a Poisson variable with parameter $c_M tN^d \leqslant 
  \varepsilon N^d / 4$. Cram\'er's Theorem imply the claim.
\end{proof}

\begin{proof}
  (Proposition~\ref{prop-mt}). In view of Proposition~\ref{prop-cond-lwb-phco}
  it suffices to prove that
  \begin{eqnarray}
    \lim_N \inf_{t \leqslant K_N} \mathbbm{P} \left( p^J \left. \leqslant
    \frac{1}{2} e^{- N^{d - 1} \left( \mathcal{F}^r (u_0) + \xi \right)}
    \right| \mathcal{G} \left( (\mathcal{R}_i)_{i = 1 \ldots n} \right)
    \right) & = & 1  \label{eq:claim:pJ}
  \end{eqnarray}
  where
  \begin{eqnarray*}
    p^J & = & \mu^{J, +}_{\Lambda_N} \left( \begin{array}{l}
      \mathcal{M}_K / m_{\beta} \in \mathcal{V}(u_0, \varepsilon) \text{
      and}\\
      T^{J, +}_{\Lambda_N} (t) m_{\Lambda_N} > m_{\beta} - 2 \varepsilon
    \end{array} \right) .
  \end{eqnarray*}
  So we focus on the proof of (\ref{eq:claim:pJ}). We remark that, for any initial
  configuration $\rho$,
  \begin{eqnarray*}
    \tmmathbf{P}_{\rho}^{J, \Lambda_N, +} \left( m_{\Lambda_N} (\sigma (t))
    \geqslant m_{\beta} - 3 \varepsilon \right) < \varepsilon & \Rightarrow &
    (T^{J, +}_{\Lambda_N} \left( t \right) m_{\Lambda_N}) (\rho) < m_{\beta} -
    2 \varepsilon
  \end{eqnarray*}
  since $(T^{J, +}_{\Lambda_N} \left( t \right) m_{\Lambda_N}) (\rho)
  =\tmmathbf{E}_{\rho}^{J, \Lambda_N, +} \left( m_{\Lambda_N} (\sigma (t))
  \right)$ and $m_{\Lambda_N} (\sigma) \leqslant 1$, for any $\sigma$.
  Therefore
  \begin{eqnarray}
    p^J & \leqslant & \mu^{J, +}_{\Lambda_N} \left(  \begin{array}{l}
      \mathcal{M}_K / m_{\beta} \in \mathcal{V}(u_0, \varepsilon) \text{
      and}\\
      \tmmathbf{P}_.^{J, \Lambda_N, +} \left( m_{\Lambda_N} (\sigma (t))
      \geqslant m_{\beta} - 3 \varepsilon \right) \geqslant \varepsilon
    \end{array} \right) \nonumber\\
    & \leqslant & \frac{1}{\varepsilon} \tmmathbf{P}_{\mu^{J,
    +}_{\Lambda_N}}^{J, \Lambda_N, +} \left(  \begin{array}{l}
      \mathcal{M}_K (\sigma (0)) / m_{\beta} \in \mathcal{V}(u_0, \varepsilon)
      \text{ and}\\
      m_{\Lambda_N} (\sigma (t)) \geqslant m_{\beta} - 3 \varepsilon
    \end{array} \right)  \label{eq:upb:pJ}
  \end{eqnarray}
  where the second inequality is a consequence of Markov's inequality. Now we
  fix $k \in \mathbbm{N}^{\star}$ and $a > 0$ and call
  \begin{eqnarray*}
    C & = & \left\{ \left\| \frac{\mathcal{M}_K}{m_{\beta}} \left( \sigma
    \left( \frac{(i + 1) t}{k} \right) \right) -
    \frac{\mathcal{M}_K}{m_{\beta}} \left( \sigma \left( \frac{it}{k} \right)
    \right) \right\|_{L^1} \leqslant \varepsilon \text{, \ \ } \forall i < k
    \right\}
  \end{eqnarray*}
  the event of continuity, and
  \begin{eqnarray*}
    D_a & = & \left\{ \forall i \leqslant k \text{, \ } \exists v_i \in
    \tmop{BV}_a: \frac{\mathcal{M}_K}{m_{\beta}} \left( \sigma \left(
    \frac{it}{k} \right) \right) \in \mathcal{V}(v_i, \varepsilon) \right\}
  \end{eqnarray*}
  the event that the magnetization profile is close to $\tmop{BV}_a$ at any
  time $it / k$. Then we remark that
  \begin{eqnarray}
    \left. \begin{array}{l}
      \frac{\mathcal{M}_K}{m_{\beta}} \left( \sigma \left( 0 \right) \right)
      \in \mathcal{V}(u_0, \varepsilon),\\
      m_{\Lambda_N} (\sigma (t)) \geqslant m_{\beta} - 3 \varepsilon\\
      \text{and } C \cap D_a \text{ occurs}
    \end{array} \right\} & \Rightarrow & \begin{array}{l}
      \exists v = (v_i)_{i = 0 \ldots k} \in \mathcal{C}_{8 \varepsilon /
      m_{\beta}} (u_0),\\
      \frac{\mathcal{M}_K}{m_{\beta}} \left( \sigma \left( \frac{it}{k}
      \right) \right) \in \mathcal{V}(v_i, \varepsilon) .
    \end{array}  \label{imp:vk}
  \end{eqnarray}
  Indeed, the definition of $D_a$ yields a sequence $v = (v_i)_{i = 0 \ldots
  k}$. We can take $v_0 = u_0$ since we know that $\mathcal{M}_K \left( \sigma
  \left( 0 \right) \right) / m_{\beta} \in \mathcal{V}(u_0, \varepsilon)$.
  This sequence is $3 \varepsilon$-continuous according to the properties of
  the $v_i$ and to the definition of $C$. Finally, we have
  \begin{eqnarray}
    \|\tmmathbf{1}- v_k \|_{L^1} & = & \int_{[0, 1]^d} (1 - v_k (x))
    d\mathcal{L}^d (x) \nonumber\\
    & \leqslant & \int_{[0, 1]^d} \left( 1 -\mathcal{M}_K (\sigma (t)) (x) /
    m_{\beta} \right) d\mathcal{L}^d (x) + \varepsilon \nonumber\\
    & \leqslant & 1 - m_{\Lambda_N} (\sigma_t) / m_{\beta} + 2 \varepsilon
    \nonumber\\
    & \leqslant & 5 \varepsilon / m_{\beta}  \label{eq-d1vk}
  \end{eqnarray}
  and therefore $\mathcal{M}_K \left( \sigma \left( t \right) \right) /
  m_{\beta} \in \mathcal{V}(\tmmathbf{1}, 6 \varepsilon / m_{\beta})$. So we
  can fix $v_k =\tmmathbf{1}$, which makes the evolution $8 \varepsilon /
  m_{\beta}$-continuous.
  
  Now we conclude the proof of (\ref{eq:claim:pJ}). As a consequence of
  (\ref{eq:upb:pJ}) and (\ref{imp:vk}) we have the inequality
  \begin{eqnarray*}
    \varepsilon p^J & \leqslant & \tmmathbf{P}_{\mu^{J, +}_{\Lambda_N}}^{J,
    \Lambda_N, +} \left( \begin{array}{l}
      \exists v = (v_i)_{i = 0 \ldots k} \in \mathcal{C}_{8 \varepsilon /
      m_{\beta}} (u_0),\\
      \frac{\mathcal{M}_K}{m_{\beta}} \left( \sigma \left( \frac{it}{k}
      \right) \right) \in \mathcal{V}(v_i, \varepsilon)
    \end{array} \right) +\tmmathbf{P}_{\mu^{J, +}_{\Lambda_N}}^{J, \Lambda_N,
    +} (C^c \cup D_a^c)
  \end{eqnarray*}
  and therefore, according to the invariance of $\mu^{J, +}_{\Lambda_N}$ for
  the Glauber dynamics, and to the definition (\ref{Bottleneck}) of the
  bottleneck set $\mathcal{B}_{\varepsilon} (u_0, \tau^r)$,
  \begin{eqnarray*}
    p^J & \leqslant & p_1^J + p_2^J + p_3^J
  \end{eqnarray*}
  where
  \begin{eqnarray*}
    p_1^J & = & \frac{k + 1}{\varepsilon} \mu^{J, +}_{\Lambda_N} \left(
    \frac{\mathcal{M}_K}{m_{\beta}} \in \mathcal{B}_{8 \varepsilon /
    m_{\beta}} \right)\\
    p_2^J & = & \frac{k + 1}{\varepsilon} \mu^{J, +}_{\Lambda_N} \left(
    \frac{\mathcal{M}_K}{m_{\beta}} \nin \mathcal{V}(\tmop{BV}_a, \varepsilon)
    \right)\\
    p_3^J & = & \frac{k}{\varepsilon} \tmmathbf{P}^{J, \Lambda_N, +}_{\mu^{J,
    +}_{\Lambda_N}} \left( \left\| \mathcal{M}_K \left( \sigma \left(
    \frac{t}{k_N} \right) \right) -\mathcal{M}_K (\sigma (0)) \right\|_{L^1}
    \geqslant \varepsilon \right) .
  \end{eqnarray*}
  Finally we bound each contribution separately. First, according to
  Lemmas~\ref{lem-MK-bottleneck} and \ref{lem-FB} we have
  \begin{eqnarray*}
    \lim_N \mathbbm{P} \left( \left. \mu^{J, +}_{\Lambda_N} \left(
    \frac{\mathcal{M}_K}{m_{\beta}} \in \mathcal{B}_{\frac{8
    \varepsilon}{m_{\beta}}} \right) \leqslant e^{- N^{d - 1} \left(
    \mathcal{K}^r (u_0) +\mathcal{F}^r (u_0) - 2 \xi \right)} \right|
    \mathcal{G}((\mathcal{R}_i)_{i = 1 \ldots n}) \right) & = & 1
  \end{eqnarray*}
  for $\varepsilon > 0$ small enough, for some $\gamma > 0$, and for any
  $(u_0, \tau^r, \delta, \gamma)$-covering $(\mathcal{R}_i)_{i = 1 \ldots n}$
  of $\partial^{\star} u_0$ with $\delta > 0$ small enough, for any $K$ large
  enough. So for
  \[ k = k_N = [\exp (N^{d - 1} \left( \mathcal{K}^r (u_0) - \xi / 2 \right))]
  \]
  we have, under the same conditions,
  \begin{eqnarray}
    \lim_N \mathbbm{P} \left( \left. p_1^J \leqslant \frac{1}{6} e^{- N^{d -
    1} \left( \mathcal{F}^r (u_0) - \xi \right)} \right|
    \mathcal{G}((\mathcal{R}_i)_{i = 1 \ldots n}) \right) & = & 1. 
  \end{eqnarray}
  Then, the exponential tightness property (see (\ref{exptight}) above)
  implies that, for $a > 0$ large enough,
  \begin{eqnarray}
    \left. \lim_N \mathbbm{P} \left( p_2^J \leqslant \frac{1}{6} e^{- N^{d -
    1} \left( \mathcal{F}^r (u_0) - \xi \right)} \right|
    \mathcal{G}((\mathcal{R}_i)_{i = 1 \ldots n}) \right) & = & 1 
  \end{eqnarray}
  and finally Lemma~\ref{lem:contevol} implies that, for large $N$, uniformly
  over $J$ and $t \leqslant K_N$,
  \begin{eqnarray}
    p_3^J & \leqslant & \exp \left( - cN^d \right) . 
  \end{eqnarray}
  Summing the last three displays yields (\ref{eq:claim:pJ}).
\end{proof}

\subsubsection{Proofs of Theorems~\ref{thm-lwb-A} and \ref{thm-Trel}}

\label{sec-lwb-final}Here we conclude the proof of Theorems~\ref{thm-lwb-A}
and \ref{thm-Trel}. We state one more Lemma that relates the averaged
autocorrelation $A^{\lambda} (t)$ defined at (\ref{def-A}) to the dynamics of
the overall magnetization $m_{\Lambda_N}$ defined at (\ref{mLambda}).

\begin{lemma}
  \label{lem-A-finvol}For any $\varepsilon > 0$, $N \in
  \mathbbm{N}^{\star}$ and $\lambda \geqslant 1$ one has
  \begin{eqnarray}
    A^{\lambda} (t) & \geqslant & \varepsilon^{2 \lambda} \mathbbm{E} \left[
    \tmmathbf{1}_{\left\{ \mu^{J, +} (m_{\Lambda_N}) \geqslant m_{\beta} -
    \varepsilon \right\}} \times \left( \mu^{J, +}_{\Lambda_N} \left( T^{J,
    +}_{\Lambda_N} (t) m_{\Lambda_N} \leqslant m_{\beta} - 2 \varepsilon
    \right) \right)^{\lambda} \right]  \label{eq-lwb-autocor}
  \end{eqnarray}
\end{lemma}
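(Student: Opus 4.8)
The plan is to pass from the single-spin observable $\pi_0$ to the empirical magnetization $m_{\Lambda_N}$, using translation invariance, and then to bound the resulting variance from below by the probability that a droplet of the minus phase survives. For the first step, write $\pi_x(\sigma)=\sigma_x$. Translation invariance of $\mathbbm{P}$ and of the rates identifies, in law, $\tmop{Var}_{\mu^{J,+}}(T^J(t)\pi_x)$ with $\tmop{Var}_{\mu^{J,+}}(T^J(t)\pi_0)$ (translate the couplings by $-x$), so $\mathbbm{E}[\tmop{Var}_{\mu^{J,+}}(T^J(t)\pi_x)^\lambda]=A^\lambda(t)$ for every $x$. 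Since $T^J(t)$ is linear and $\mu^{J,+}$ is reversible, $T^J(t)m_{\Lambda_N}-\mu^{J,+}(m_{\Lambda_N})=|\Lambda_N|^{-1}\sum_{x\in\Lambda_N}\big(T^J(t)\pi_x-\mu^{J,+}(\sigma_x)\big)$; taking $L^2(\mu^{J,+})$ norms and using the triangle inequality gives
\[ \tmop{Var}_{\mu^{J,+}}\!\big(T^J(t)m_{\Lambda_N}\big)^{1/2}\ \leqslant\ \frac{1}{|\Lambda_N|}\sum_{x\in\Lambda_N}\tmop{Var}_{\mu^{J,+}}\!\big(T^J(t)\pi_x\big)^{1/2}. \]
Raising to the power $2\lambda\geqslant 2$, applying Jensen's inequality for the convex map $y\mapsto y^{2\lambda}$, and averaging over $J$, one gets $A^\lambda(t)\geqslant \mathbbm{E}\big[\tmop{Var}_{\mu^{J,+}}(T^J(t)m_{\Lambda_N})^\lambda\big]$, so it is enough to bound the last expectation from below.

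Set $g^J=T^J(t)m_{\Lambda_N}$. Reversibility of $\mu^{J,+}$ gives $\mu^{J,+}(g^J)=\mu^{J,+}(m_{\Lambda_N})$. On the event $\{\mu^{J,+}(m_{\Lambda_N})\geqslant m_\beta-\varepsilon\}$, every configuration $\rho$ with $g^J(\rho)\leqslant m_\beta-2\varepsilon$ satisfies $(g^J(\rho)-\mu^{J,+}(g^J))^2\geqslant\varepsilon^2$, so on that event $\tmop{Var}_{\mu^{J,+}}(g^J)\geqslant \varepsilon^2\,\mu^{J,+}(g^J\leqslant m_\beta-2\varepsilon)$.

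It remains to compare $\mu^{J,+}(g^J\leqslant m_\beta-2\varepsilon)$ with the finite-volume probability in the statement. Attractivity makes $g^J$ a non-decreasing function of the spin configuration, so $\{g^J\leqslant m_\beta-2\varepsilon\}$ is a decreasing event; since $\mu^{J,+}$ is stochastically dominated by $\mu^{J,+}_{\Lambda_N}$, we obtain $\mu^{J,+}(g^J\leqslant m_\beta-2\varepsilon)\geqslant \mu^{J,+}_{\Lambda_N}(g^J\leqslant m_\beta-2\varepsilon)$. Next, the monotone graphical coupling (same Poisson clocks and uniforms) of the infinite-volume dynamics with the dynamics restricted to $\Lambda_N$ with $+$ boundary condition keeps the latter pointwise above the former when both start from the same $\rho\in\Sigma^+_{\Lambda_N}$ --- this is precisely the monotonicity already used for $f_t^J(+)\leqslant f_{\hat{\Lambda}_N,t}^J(+)$ --- whence $g^J(\rho)\leqslant (T^{J,+}_{\Lambda_N}(t)m_{\Lambda_N})(\rho)$ for all $\rho\in\Sigma^+_{\Lambda_N}$, and therefore $\mu^{J,+}_{\Lambda_N}(g^J\leqslant m_\beta-2\varepsilon)\geqslant\mu^{J,+}_{\Lambda_N}(T^{J,+}_{\Lambda_N}(t)m_{\Lambda_N}\leqslant m_\beta-2\varepsilon)$.

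Chaining the inequalities of the last two paragraphs, raising to the power $\lambda$, multiplying by $\mathbf{1}_{\{\mu^{J,+}(m_{\Lambda_N})\geqslant m_\beta-\varepsilon\}}$, taking $\mathbbm{E}$, and invoking the reduction of the first paragraph yields (\ref{eq-lwb-autocor}). The triangle inequality and the Jensen step are routine; the only point needing care is the double monotonicity of the third paragraph --- the stochastic domination of $\mu^{J,+}$ by $\mu^{J,+}_{\Lambda_N}$ together with the coupling bound $g^J\leqslant T^{J,+}_{\Lambda_N}(t)m_{\Lambda_N}$ on $\Sigma^+_{\Lambda_N}$ --- which is exactly where the attractivity, finite-range and reversibility hypotheses enter.
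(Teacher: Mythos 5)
Your proof is correct and follows essentially the same route as the paper: Minkowski's/the triangle inequality in $L^2(\mu^{J,+})$ together with translation invariance to pass from $\pi_0$ to $m_{\Lambda_N}$, a Chebyshev-type bound using stationarity of $\mu^{J,+}$ on the event $\{\mu^{J,+}(m_{\Lambda_N})\geqslant m_\beta-\varepsilon\}$, and then attractivity (monotone coupling plus stochastic domination of $\mu^{J,+}$ by $\mu^{J,+}_{\Lambda_N}$) to reduce to the finite-volume plus-boundary dynamics, exactly as in (\ref{eq-muT-infvol}). The only cosmetic difference is that you perform the measure comparison before the coupling step and spell out the Jensen argument that the paper leaves implicit.
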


\begin{proof}
  Minkowski's inequality implies, for any $\nu \geqslant 1$, that
  \[ \frac{1}{N^d} \sum_{x \in \Lambda_N} \left[ \int_{\Sigma} \left| T^J (t)
     \pi_x - \mu^{J, +} \pi_x \right|^{\nu} d \mu^{J, +} \right]^{1 / \nu} \]
  \[ \geqslant \left[ \int_{\Sigma} \left| T^J (t) m_{\Lambda_N} - \mu^{J, +}
     (m_{\Lambda_N}) \right|^{\nu} d \mu^{J, +} \right]^{1 / \nu} \]
  where $\pi_x: \Sigma \rightarrow \mathbbm{R}$ is the function which
  associates, to the spin configuration $\sigma \in \Sigma$, the spin at $x$,
  $\sigma (x)$. Taking $\nu=1/\lambda$, the translation invariance of $\mathbbm{E}$ and of the Glauber
  dynamics implies that
  \begin{eqnarray*}
    A^{\lambda} (t) & \geqslant & \mathbbm{E} \left[ \left( \int_{\Sigma}
    \left| T^J (t) m_{\Lambda_N} - \mu^{J, +} (m_{\Lambda_N}) \right|^{2} d
    \mu^{J, +} \right)^{\lambda} \right] .
  \end{eqnarray*}
  Hence, for any $\varepsilon > 0$ we have
  \begin{eqnarray*}
    A^{\lambda} (t) & \geqslant & \varepsilon^{2 \lambda} \mathbbm{E}
    \left[ \tmmathbf{1}_{\left\{ \mu^{J, +} (m_{\Lambda_N}) \geqslant
    m_{\beta} - \varepsilon \right\}} \times \left( \mu^{J, +} \left( T^J (t)
    m_{\Lambda_N} \leqslant m_{\beta} - 2 \varepsilon \right)
    \right)^{\lambda} \right]
  \end{eqnarray*}
  and we conclude by using the attractivity of the Glauber dynamics:
  \begin{eqnarray}
    \mu^{J, +} \left( T^J (t) m_{\Lambda_N} \leqslant m_{\beta} - 2
    \varepsilon \right) & \geqslant & \mu^{J, +} \left( T_{\Lambda_N}^{J, +}
    (t) m_{\Lambda_N} \leqslant m_{\beta} - 2 \varepsilon \right) \nonumber\\
    & \geqslant & \mu^{J, +}_{\Lambda_N} \left( T^{J, +}_{\Lambda_N} (t)
    m_{\Lambda_N} \leqslant m_{\beta} - 2 \varepsilon \right) 
    \label{eq-muT-infvol}
  \end{eqnarray}
\end{proof}

\begin{proof}
  (Theorem \ref{thm-lwb-A}). Let $\delta > 0$ and $\lambda \geqslant 1$. We assume that
  $\mathcal{X}_{\lambda} < \infty$, otherwise there is nothing to prove. We
  fix $(u_0, \tau^r) \in \tmop{IC}$ and $\xi \in (0, \mathcal{K}^r (u_0))$
  such that
  \begin{eqnarray*}
    \frac{\mathcal{I}^r (u_0) + \xi + \lambda (\mathcal{F}^r (u_0) +
    \xi)}{\mathcal{K}^r (u_0) - \xi} (1 + \xi) & \leqslant &
    \mathcal{X}_{\lambda} + \delta / 2.
  \end{eqnarray*}
  Then, for any $t > 0$ we call $N (t)$ the smallest integer $N$ such that $t
  \leqslant \exp (N^{d - 1} \left( \mathcal{K}^r (u_0) - \xi \right))$.
  According to Lemma \ref{lem-A-finvol}, to Propositions \ \ref{prop-meq} and
  \ref{prop-mt}, to Lemma~\ref{lem-proba-GN}, for any $\varepsilon > 0$ small
  enough we can find $\delta, \gamma > 0$ such that, provided that $N (t)$ is
  large enough,
  \begin{eqnarray*}
    A^{\lambda} (t) & \geqslant & \frac{\varepsilon^{\lambda}}{4} \exp \left(
    - \lambda N (t)^{d - 1} \left( \mathcal{F}^r (u_0) + \xi \right) - N
    (t)^{d - 1} (\mathcal{I}^r (u_0) + \xi) \right) .
  \end{eqnarray*}
  The definition of $N (t)$ implies finally that
  \begin{eqnarray*}
    A^{\lambda} (t) & \geqslant & \frac{\varepsilon^{\lambda}}{4} \exp \left(
    - (\log t) \frac{\lambda \left( \mathcal{F}^r (u_0) + \xi \right) +
    (\mathcal{I}^r (u_0) + \xi)}{\mathcal{K}^r (u_0) - \xi}  \left( \frac{N
    (t)}{N (t) - 1} \right)^{d - 1} \right)
  \end{eqnarray*}
  for $t$ large enough, and this gives the claim as $N (t) \rightarrow +
  \infty$.
\end{proof}

\begin{proof}
  (Theorem \ref{thm-Trel}) Let $\delta > 0$ and assume that $\kappa > 0$.
  There exists $(u_0, \tau^r) \in \tmop{IC}$ and $\xi > 0$ such that
  \begin{eqnarray*}
    d \frac{\mathcal{K}^r (u_0) - \xi}{\mathcal{I}^r (u_0) + \xi} & \leqslant
    & \kappa - \delta / 2.
  \end{eqnarray*}
  Given $N \in \mathbbm{N}^{\star}$ large, we define an intermediate
  side-length $M = M (N)$ as the largest integer $M$ such that
  \begin{eqnarray*}
    N^d & \geqslant & \exp (M^{d - 1} (\mathcal{I}^r (u_0) + 2 \xi)) .
  \end{eqnarray*}
  First, we prove that the event of dilution, on scale $M$, occurs with large
  probability inside $\Lambda_M$. More precisely, we consider $L = M + [
  \sqrt{M]}$ and
  \begin{eqnarray*}
    X_N & = & \{0, \ldots, [N / L] - 1\}^d .
  \end{eqnarray*}
  We also consider $\delta > 0$ and a $\delta$-covering for $\partial u_0$,
  and for any $x \in X$ we call
  \begin{eqnarray*}
    \mathcal{G}_M^x & = & \{ \text{dilution occurs in } Lx + \Lambda_M \}
  \end{eqnarray*}
  the $L x$-translate of $\mathcal{G}_M$. According to Lemma \ref{lem-proba-GN}
  we have, for $\delta$ small enough,
  \begin{eqnarray}
    \mathbbm{P} \left( \bigcap_{x \in X} \left( \mathcal{G}^x_M \right)^c
    \right) & \leqslant & \left( 1 - \exp (- M^{d - 1} (\mathcal{I}^r (u_0) +
    \xi)) \right)^{[N / L]^d} \nonumber\\
    & \leqslant & \exp \left( - \left[ \frac{N}{L} \right]^d \exp (- M^{d -
    1} (\mathcal{I}^r (u_0) + \xi)) \right) \nonumber\\
    & \leqslant & \exp \left( - \left( \frac{1}{C \log N} \right)^d \exp
    (M^{d - 1} \xi) \right) \nonumber\\
    & \leqslant & \exp \left( - N^{ \frac{d \xi}{2 (\mathcal{I}^r (u_0) + 2
    \xi)}} \right) 
  \end{eqnarray}
  where we use the definition of $M$ and the fact that $L \leqslant C \log N$,
  for some $C < \infty$. We call now $\mathcal{D}_M^x =\mathcal{G}_M^x \bigcap
  \bigcap_{z \in X, z < x} \left( \mathcal{G}^z_M \right)^c$ the event that
  $x$ is the smallest box for which dilution occurs, where $X$ is ordered
  according to the lexicographic order. According to Proposition \ref{prop-mt}
  and to the attractivity of the dynamics, the conditional probability
  \[ \mathbbm{P} \left( \left. \mu^{J, +}_{\Lambda_N} \left( T^{J,
     +}_{\Lambda_N} \left( \exp (M^{d - 1} \left( \mathcal{K}^r (u_0) - \xi
     \right)) \right) m_{Lx + \Lambda_M} \leqslant m_{\beta} - 2 \varepsilon
     \right) \geqslant \exp \left( - M^{d - 1} \left( \mathcal{F}^r (u_0) +
     \xi \right) \right) \right| \mathcal{D}_M^x \right) \]
  goes to $1$ as $N \rightarrow \infty$ (and thus $M$). On the other hand,
  Proposition \ref{prop-meq} and the monotonicity of the system imply that, as
  well, the conditional probability
  \[ \mathbbm{P} \left( \left. \mu^{J, +}_{\Lambda_N} \left( m_{Lx +
     \Lambda_M}) \geqslant m_{\beta} - \varepsilon \right) \right|
     \mathcal{D}_M^x \right) \]
  goes to $1$ as $N \rightarrow \infty$. Therefore, we have shown that the
  $\mathbbm{P}$-probability that there exists $x \in X_N$ with both
  \begin{eqnarray*}
    \mu^{J, +}_{\Lambda_N} \left( T^{J, +}_{\Lambda_N} \left( \exp (M^{d - 1}
    \left( \mathcal{K}^r (u_0) - \xi \right)) \right) m_{Lx + \Lambda_M}
    \leqslant m_{\beta} - 2 \varepsilon \right) & \geqslant & \exp \left( - M^{d -
    1} \left( \mathcal{F}^r (u_0) + \xi \right) \right) \\
 \mu^{J, +}_{\Lambda_N} \left( m_{Lx + \Lambda_M} \right) & \geqslant  & m_{\beta} - \varepsilon 
  \end{eqnarray*} 
  goes to one as $N \rightarrow \infty$. Taking $m_{Lx + \Lambda_M}$ as a test
  function in (\ref{eq-decay-var-gap}) proves the lower bound on the
  relaxation time. For obtaining the lower bound on the mixing time one can
  consider $f = m_{Lx + \Lambda_M}$ in (\ref{eq-decay-Tmix}), and $\sigma_0$
  any initial configuration for which $T^{J, +}_{\Lambda_N} \left( \exp (M^{d
  - 1} \left( \mathcal{K}^r (u_0) - \xi \right)) \right) m_{Lx + \Lambda_M}
  \leqslant m_{\beta} - 2 \varepsilon$, which is possible as this event has
  positive probability.
\end{proof}

\subsection{The geometry of relaxation}

\label{sec:geom}The first part of this Section is dedicated to the proof of
Proposition~\ref{prop:gap}. Then we show that the gap in surface energy can be
computed on more restrictive evolutions, and finally we compute $\mathcal{K}^r
(u_0)$ for two simple initial configurations.

\subsubsection{A lower bound on the gap in surface energy}
\label{sec:lwb:gap}
The statement of theorems \ref{thm-lwb-A} and \ref{thm-Trel} is non-empty only
if $\mathcal{K}^r (u_0)$ happens to be strictly positive for some initial
profiles $(u_0, \tau^r) \in \tmop{IC}$. We state the next Theorem, which has
Proposition~\ref{prop:gap} as a Corollary:

\begin{theorem}
  \label{thm:Kr:pos}Let $(u_0, \tau^r) \in \tmop{IC}$. Assume that the
  boundary of $u_0$ is $\mathcal{C}^1$. There exists a non-decreasing function
  $H: \mathbbm{R}^+ \rightarrow \mathbbm{R}^+$, with $H (\delta) > 0$,
  $\forall \delta > 0$, that depends only on $(u_0, \tau^r)$, such that
  \begin{eqnarray}
    \mathcal{K}^r (u_0) & \geqslant & \sup_{\tmmathbf{n} \in S^{d - 1}} \tau^q
    (\tmmathbf{n}) \times H \left( \frac{\inf_{x \in \partial^{\star} u_0}
    (\tau^q (\tmmathbf{n}_x^{u_0}) - \tau^r (x))}{\max (\sup_{\tmmathbf{n} \in
    S^{d - 1}} \tau^q (\tmmathbf{n}), 1)} \right) .  \label{eq:Kr:lwb}
  \end{eqnarray}
\end{theorem}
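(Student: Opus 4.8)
The plan is to bound the right-hand side of \eqref{eq-def-Kr-disc} from below by showing that every $\varepsilon$-continuous evolution $v=(v_i)_{i=0\ldots k}\in\mathcal{C}_{\varepsilon}(u_0)$, for $\varepsilon$ small, must pass through a profile $v_{i^{\star}}$ with $\mathcal{F}^r(v_{i^{\star}})-\mathcal{F}^r(u_0)\geqslant c_0$ for a constant $c_0>0$ depending only on $(u_0,\tau^r)$, and then letting $\varepsilon\downarrow0$. Write $u_0=\chi_{U_0}$, $v_i=\chi_{V_i}$ and $\eta=\inf_{x\in\partial^{\star} u_0}\bigl(\tau^q(\tmmathbf{n}_x^{u_0})-\tau^r(x)\bigr)$; we may assume $\eta>0$, for otherwise the right side of \eqref{eq:Kr:lwb} is $\geqslant0$ with the choice $H(0)=0$ and there is nothing to prove. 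The algebraic starting point is the identity, valid for every $v=\chi_V\in\tmop{BV}$,
\[
\mathcal{F}^r(v)-\mathcal{F}^r(u_0)=\bigl(\mathcal{F}^q(v)-\mathcal{F}^q(u_0)\bigr)+\int_{\partial^{\star} u_0\setminus\partial^{\star} v}\bigl(\tau^q(\tmmathbf{n}_x^{u_0})-\tau^r(x)\bigr)\,d\mathcal{H}^{d-1}(x),
\]
which follows from \eqref{eq-def-Fr} and \eqref{eq-def-Ftau} once one observes that the contributions over $\partial^{\star} u_0\cap\partial^{\star} v$ cancel (there $\tmmathbf{n}^v=\pm\tmmathbf{n}^{u_0}$ $\mathcal{H}^{d-1}$-a.e.\ and $\tau^q$ is even). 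Since $i\mapsto\|v_i-u_0\|_{L^1}$ is $\varepsilon$-Lipschitz, vanishes at $i=0$ and equals $2\mathcal{L}^d(U_0)$ at $i=k$ (as $V_k=\emptyset$), there is, fixing $\ell_0=\tfrac12\mathcal{L}^d(U_0)$ and taking $\varepsilon<\ell_0$, a first index $i^{\star}$ with $\mathcal{L}^d(U_0\triangle V_{i^{\star}})\in[\ell_0/2,\ell_0]$. The problem thus reduces to a purely \emph{static} estimate: $\mathcal{F}^r(v)-\mathcal{F}^r(u_0)\geqslant c_0$ for every $v=\chi_V\in\tmop{BV}$ with $\mathcal{L}^d(U_0\triangle V)\in[\ell_0/2,\ell_0]$, with $c_0>0$ depending only on $(u_0,\tau^r)$.

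For the static estimate set $S_+=\mathcal{H}^{d-1}(\partial^{\star} v\setminus\partial^{\star} u_0)$, $S_-=\mathcal{H}^{d-1}(\partial^{\star} u_0\setminus\partial^{\star} v)$, $\overline\tau=\sup_{\tmmathbf{n}}\tau^q(\tmmathbf{n})$, $\underline\tau=\inf_{\tmmathbf{n}}\tau^q(\tmmathbf{n})>0$ (positive since $\beta>\hat\beta_c$). In the identity the second term is $\geqslant\eta S_-$. For $\mathcal{F}^q(v)-\mathcal{F}^q(u_0)$ I would use two lower bounds: the \emph{crude} one $\geqslant\underline\tau S_+-\overline\tau S_-$ (from $\underline\tau\leqslant\tau^q\leqslant\overline\tau$), and a \emph{refined} one exploiting the $\mathcal{C}^1$ boundary. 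For the refined bound, by uniform continuity of $\tau^q$ on $S^{d-1}$ (Proposition~2.5 in \cite{Wou09CMP}) and $\mathcal{C}^1$-ness of the compact $\partial U_0$, for every $\theta>0$ there is a scale $r(\theta)>0$ on which $\partial U_0$ is a graph of slope $\leqslant\theta$ over its tangent hyperplane and $\tau^q$ oscillates by $\leqslant\theta$; decomposing $U_0\triangle V$ into its components (the ``small droplets'' of the heuristic) and covering $\partial^{\star} v\triangle\partial^{\star} u_0$ by disjoint rectangles of size $\leqslant r(\theta)$ adapted to $\partial^{\star} v$ or to $\partial^{\star} u_0$ (by a covering argument as in Proposition~\ref{prop-g-cover}), one compares $v$ with $u_0$ in each rectangle through the calibration characterisation of a half-space as the $\mathcal{F}^q$-minimiser for planar boundary data (valid because $\tau^q$ is convex and $1$-homogeneous, cf.~the Wulff theory of \cite{Wou09CMP}); summing the per-rectangle errors gives $\mathcal{F}^q(v)-\mathcal{F}^q(u_0)\geqslant-C_d\,\theta\,\overline\tau\,(S_++S_-)$ for a dimensional $C_d$. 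Finally, since $\partial^{\star}(U_0\triangle V)$ is $\mathcal{H}^{d-1}$-a.e.\ disjoint from $\partial^{\star} u_0\cap\partial^{\star} v$, one has $S_++S_-\geqslant\mathcal{P}(U_0\triangle V)\geqslant c_d\,\mathcal{L}^d(U_0\triangle V)^{(d-1)/d}\geqslant c_d(\ell_0/2)^{(d-1)/d}$ by the relative isoperimetric inequality in $(0,1)^d$ (using $\ell_0<1/2$). One then splits cases: if $S_+\geqslant(2\overline\tau/\underline\tau)S_-$ the crude bound yields $\mathcal{F}^r(v)-\mathcal{F}^r(u_0)\geqslant\tfrac{\underline\tau}{2}S_+\geqslant\tfrac{\underline\tau}{4}(S_++S_-)$; otherwise $S_-\geqslant(S_++S_-)/(1+2\overline\tau/\underline\tau)$ and, having chosen $\theta$ so that $C_d\theta\overline\tau(1+2\overline\tau/\underline\tau)\leqslant\eta/2$, the refined bound yields $\mathcal{F}^r(v)-\mathcal{F}^r(u_0)\geqslant\tfrac{\eta}{2}S_-$. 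In either case the left side exceeds a fixed positive constant. The crux — and the main obstacle — is the refined bound: it is where $\mathcal{C}^1$-ness enters, it requires the covering/straightening machinery, and one must verify that the planarity defect at scale $r(\theta)$, weighted by $\overline\tau$, is beaten by the gain $\eta$, which is what forces $\theta$ to be calibrated against the ratios $\eta/\overline\tau$ and $\underline\tau/\overline\tau$; the degenerate thin-layer and thin-tendril shapes of $U_0\triangle V$ are the configurations to check.

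It remains to recover the homogeneous form \eqref{eq:Kr:lwb}. Combining the two cases gives $c_0$ of the shape $\tfrac14 c_d(\ell_0/2)^{(d-1)/d}\,\underline\tau\,\min\{1,\,2\eta/(\underline\tau+2\overline\tau)\}$, positively homogeneous of degree one in $(\eta,\underline\tau,\overline\tau)$; using that $\tau^q$ is cubically symmetric (as $\mathbbm{P}$ is i.i.d.)\ and satisfies the weak triangle inequality, $\underline\tau\geqslant\rho_d\overline\tau$ for a dimensional $\rho_d>0$, and since moreover $\eta\leqslant\overline\tau$ one obtains $c_0\geqslant\overline\tau\,H\bigl(\eta/\max(\overline\tau,1)\bigr)$ with $H(\delta)=\tfrac{\rho_d c_d}{4}(\ell_0/2)^{(d-1)/d}\min\{1,\tfrac23\delta\}$, which is non-decreasing, positive on $(0,\infty)$, and depends only on $(u_0,\tau^r)$ (through $\ell_0=\tfrac12\mathcal{L}^d(U_0)$). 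This gives \eqref{eq:Kr:lwb} and in particular $\mathcal{K}^r(u_0)>0$, which is part~(1) of Proposition~\ref{prop:gap}. For part~(2) one applies this to a \emph{fixed} pair $(u_0,\tau^r)$ with $U_0$ a small ball well inside $[0,1]^d$ and $\tau^r\equiv\tau_0$ a small positive constant: when $\mathbbm{P}(J_e=0)>0$ one has $\tau^{\min}\equiv0$, so this pair lies in $\tmop{IC}$ for all $\beta$ large, $\mathcal{F}^r(u_0)=\tau_0\mathcal{P}(U_0)$ and $\mathcal{I}^r(u_0)\leqslant\mathcal{P}(U_0)\sup_{\tmmathbf{n}}I_{\tmmathbf{n}}(\tau_0)$ stay bounded in $\beta$ (a cross-section of null bonds disconnects at cost $O(-\log\mathbbm{P}(J_e=0))$ per bond), whereas $\eta=\underline\tau-\tau_0$ and $\overline\tau$ are both of order $\beta$ with bounded ratio, so $\eta/\max(\overline\tau,1)$ stays bounded away from $0$ and $\mathcal{K}^r(u_0)\geqslant\overline\tau H(\cdot)$ is of order $\beta$; hence $\mathcal{X}_{\lambda}\leqslant\bigl(\mathcal{I}^r(u_0)+\lambda\mathcal{F}^r(u_0)\bigr)/\mathcal{K}^r(u_0)\leqslant C/\beta$.
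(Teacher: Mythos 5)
There is a genuine gap, and it sits exactly at the step you yourself flag as the crux. The ``refined bound'' $\mathcal{F}^q(v)-\mathcal{F}^q(u_0)\geqslant -C_d\,\theta\,\overline\tau\,(S_++S_-)$ is false for profiles $v$ whose symmetric difference with $u_0$ is not confined to the scale $r(\theta)$, and consequently your static estimate on the window $\mathcal{L}^d(U_0\triangle V)\in[\ell_0/2,\ell_0]$ fails. Indeed, calibration/convexity gives the inequality in the \emph{wrong} direction for large excursions: if $V$ is obtained from $U_0$ by removing a macroscopic cap and closing with a nearly flat cut, then $\int_{\mathrm{cut}}\tau^q(\tmmathbf{n}^v)\,d\mathcal{H}^{d-1}\leqslant\int_{\mathrm{cap}}\tau^q(\tmmathbf{n}^{u_0})\,d\mathcal{H}^{d-1}$ with a deficit of order $\underline\tau\,S_-$, not $O(\theta)(S_++S_-)$, so $\mathcal{F}^r(\chi_V)-\mathcal{F}^r(u_0)\leqslant \eta\,S_- - (\text{geometric deficit})$, which is negative once $\eta$ is small. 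The paper's own computation in Section~\ref{sec-Kr-circle} exhibits this: for the isotropic disk with $\tau^r\equiv\lambda$, the excess when half the disk has been removed (i.e.\ $\mathcal{L}^d(U_0\triangle V)=\ell_0$, the endpoint of your window) equals $2r(1-\lambda\pi/2)<0$ as soon as $\lambda>2/\pi$. So for small $\eta$ the barrier is \emph{not} located at macroscopic symmetric difference, and no choice of $\theta$ calibrated against $\eta/\overline\tau$ and $\underline\tau/\overline\tau$ can make your second case work; your reduction to a static bound on that window proves a false statement. (Your opening identity, the crude-bound case, the isoperimetric control $S_++S_-\geqslant\mathcal{P}(U_0\triangle V)$, and the final homogenization into $H$ are fine; the part~(2) discussion is also essentially the paper's.)

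The paper avoids this by keeping all comparisons local and by locating the barrier at \emph{small} $L^1$ distance from $u_0$. It writes $\mathcal{F}^r(u)-\mathcal{F}^r(u_0)=\mathcal{F}^{r,-}(v)+\mathcal{F}^{r,-}(w)$ with $v=\chi_{U_0\setminus U}$, $w=\chi_{U\setminus U_0}$ (Lemma~\ref{lem-lwb-Fu-Fu0}), cuts $v,w$ by a well-shifted grid into droplets of diameter at most $h(\delta)$ at a cost $\|u-u_0\|_{L^1}\,d\,\tau^q(\tmmathbf{e}_1)/h(\delta)$ (Lemma~\ref{lem-dec-vwi}) --- only at this scale are the $\mathcal{C}^1$ hypothesis and the continuity of $\tau^r$ usable --- and then, instead of a calibration comparison, applies the anisotropic isoperimetric inequality for a reduced Wulff body $\tilde{\mathcal{W}}$ (Lemma~\ref{lem:Frm:pos}), gaining $c_d\,\delta\,\sup\tau^q\,\|u-u_0\|_{L^1}^{(d-1)/d}$. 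The resulting bound $c_d\delta\sup\tau^q\,x^{(d-1)/d}-b\,x$, $x=\|u-u_0\|_{L^1}$, is positive only for small $x$, and the theorem follows by evaluating at the maximizer $x=\bigl(\tfrac{d-1}{bd}\bigr)^d$, through which every almost-continuous evolution must pass. To salvage your scheme you would have to shrink your window to an $x$ of this order (depending on $\eta$ and $h$) and replace the refined bound by a droplet decomposition plus an isoperimetric estimate of this type; as written, the key inequality does not hold.
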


\begin{proof}
  (Proposition~\ref{prop:gap}). When $\tau^{\min} (\tmmathbf{n}) < \tau^q
  (\tmmathbf{n})$ for all $\tmmathbf{n} \in \mathcal{S}^{d - 1}$ and the
  boundary of $u_0$ is $\mathcal{C}^1$, it follows at once from
  (\ref{eq:Kr:lwb}) that $\mathcal{K}^r (u_0) > 0$. Now we address the proof
  of (\ref{eq-upb-Xl}). We discuss first the consequences of the assumption
  $\mathbbm{P}(J_e = 0) > 0$. When $J = 0$ along a section of a rectangle
  $\mathcal{R}$ the surface tension in that rectangle is $\tau^J_{\mathcal{R}}
  = 0$, thus for all $\varepsilon > 0$, all $\beta > 0$,
  \begin{eqnarray}
    I_{\tmmathbf{n}} (\varepsilon) & \leqslant & -\|\tmmathbf{n}\|_1 \log
    \mathbbm{P}(J_e = 0) .  \label{upb:In}
  \end{eqnarray}
  Now we consider the initial configuration defined by
  \[ u_0 = \chi_{B (z_0, 1 / 4)} \text{ \ \ and \ \ } \tau^r (x) = \min (1,
     \tau^q (\tmmathbf{n}_x^{u_0}) / 2), \forall x \in \partial^{\star} u_0 \]
  where $z_0 = (1 / 2, \ldots, 1 / 2)$. According to (\ref{upb:In}) there is
  $C < + \infty$ not depending on $\beta$ such that the cost of dilution
  $\mathcal{I}^r (u_0)$ defined at (\ref{eq-def-Ir}) satisfies $\mathcal{I}^r
  (u_0) \leqslant C$. We also have $\mathcal{F}^r (u_0) \leqslant C'$ where
  $\mathcal{F}^r (u_0)$ is the initial surface energy defined at
  (\ref{eq-def-Fr}), and $C'$ is the perimeter of $u_0$. So we conclude
  already that the numerator $\mathcal{I}^r (u_0) + \lambda \mathcal{F}^r
  (u_0)$ in the definition (\ref{eq-def-Xl}) of $\mathcal{X}_{\lambda}$ is
  bounded by a constant which does not depend on $\beta$. Finally we recall
  that, thanks to the assumption that $\mathbbm{P}(J_e = 0) < 1 - p_c (d)$ and
  to Proposition 2.13 in {\cite{Wou09CMP}},
  \begin{eqnarray}
    \liminf_{\beta \rightarrow \infty} \inf_{n \in \mathcal{S}^{d - 1}}
    \frac{\tau^q_{\beta} (\tmmathbf{n})}{\beta} & > & 0,  \label{tauq:beta}
  \end{eqnarray}
  and therefore the inequality (\ref{eq:Kr:lwb}) implies that, for some $c >
  0$ and for all $\beta$ large enough,
  \begin{eqnarray*}
    \mathcal{K}^r (u_0) & \geqslant & \sup_{\tmmathbf{n} \in S^{d - 1}} \tau^q
    (\tmmathbf{n}) \times H \left( \frac{1}{2}  \frac{\inf_{\tmmathbf{n} \in
    S^{d - 1}} \tau^q (\tmmathbf{n})}{\sup_{\tmmathbf{n} \in S^{d - 1}} \tau^q
    (\tmmathbf{n})} \right) .
  \end{eqnarray*}
  The convexity and the lattice symmetries of the Wulff crystal $\mathcal{W}^q$ imply that the ratio
  $$\frac{ \inf_{\tmmathbf{n} \in S^{d - 1}} \tau^q (\tmmathbf{n}) }{  \sup_{\tmmathbf{n} \in S^{d - 1}} \tau^q (\tmmathbf{n})}$$
   is bounded from
  below by some constant $c_d$ not depending on $\beta$, hence $$\mathcal{K}^r
  (u_0) \geqslant \sup_{\tmmathbf{n} \in S^{d - 1}} \tau^q (\tmmathbf{n}) \times H
  (c_d) \geqslant C \beta$$ for large $\beta$. The claim (\ref{eq-upb-Xl})
  follows.
\end{proof}

The proof of Theorem~\ref{thm:Kr:pos} requires three Lemmas, that are stated
now together with their proof. We introduce the new surface energy
\begin{equation}
  \mathcal{F}^{r, -} (u) = \int_{\partial^{\star} u \setminus \partial^{\star}
  u_0} \tau^q (\tmmathbf{n}^u_.) d\mathcal{H}^{d - 1} - \int_{\partial^{\star}
  u \cap \partial^{\star} u_0} \tau^r d\mathcal{H}^{d - 1} \label{eq-def-Frm}
  \text{, \ \ } \forall u \in \tmop{BV} .
\end{equation}
\begin{lemma}
  \label{lem-lwb-Fu-Fu0}Let $(u_0 = \chi_{U_0}, \tau^r) \in \tmop{IC}$ and $u
  = \chi_U \in \tmop{BV}$. Then, the profiles $v = \chi_{U_0 \setminus U}$ and
  $w = \chi_{U \setminus U_0}$ satisfy
  \begin{eqnarray}
    \mathcal{F}^r (u) -\mathcal{F}^r (u_0) & = & \mathcal{F}^{r, -} (v)
    +\mathcal{F}^{r, -} (w) .  \label{eq-lwb-Fu-Fu0}
  \end{eqnarray}
\end{lemma}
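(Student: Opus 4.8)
The identity~(\ref{eq-lwb-Fu-Fu0}) is a bookkeeping statement in the calculus of sets of finite perimeter, and the plan is to establish it by writing the four reduced boundaries involved in terms of the same elementary pieces. First I would fix notation: write $A=\partial^{\star}u_0$ and $B=\partial^{\star}u$, with outer normals $n_A=\tmmathbf{n}^{u_0}_{\cdot}$ on $A$ and $n_B=\tmmathbf{n}^{u}_{\cdot}$ on $B$, and recall that the surface tension is even, $\tau^q(-\tmmathbf{n})=\tau^q(\tmmathbf{n})$, which is immediate from the definition~(\ref{def-tauq}). By Federer's theorem (Chapter~3 of \cite{AFP00Book}), up to $\mathcal{H}^{d-1}$-negligible sets one has $\mathbbm{R}^d=U^{(0)}\sqcup U^{(1)}\sqcup B$ and $\mathbbm{R}^d=U_0^{(0)}\sqcup U_0^{(1)}\sqcup A$, where $E^{(t)}$ denotes the set of points of density $t$ for $E$; moreover $\mathcal{H}^{d-1}$-almost every point of $A\cap B$ satisfies $n_A=n_B$ or $n_A=-n_B$, which splits $A\cap B=(A\cap B)^{+}\sqcup(A\cap B)^{-}$ into the part where the normals agree and the part where they are opposite. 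Intersecting the first Federer decomposition with $B$ and the second with $A$ yields, up to $\mathcal{H}^{d-1}$-null sets,
\[ A=(A\cap U^{(0)})\sqcup(A\cap U^{(1)})\sqcup(A\cap B)^{+}\sqcup(A\cap B)^{-},\qquad B=(B\cap U_0^{(0)})\sqcup(B\cap U_0^{(1)})\sqcup(A\cap B)^{+}\sqcup(A\cap B)^{-}. \]

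Next I would apply the rule for the reduced boundary of a difference of two sets of finite perimeter (again the calculus of Chapter~3 of \cite{AFP00Book}, equivalently the intersection rule applied to $U_0\cap U^{c}$ and to $U\cap U_0^{c}$), obtaining, up to $\mathcal{H}^{d-1}$-null sets,
\[ \partial^{\star}v=(A\cap U^{(0)})\sqcup(B\cap U_0^{(1)})\sqcup(A\cap B)^{-},\qquad \partial^{\star}w=(B\cap U_0^{(0)})\sqcup(A\cap U^{(1)})\sqcup(A\cap B)^{-}, \]
the outer normal being $n_A$ on $A\cap U^{(0)}$, $-n_B$ on $B\cap U_0^{(1)}$, $n_B$ on $B\cap U_0^{(0)}$, $-n_A$ on $A\cap U^{(1)}$, and $n_A=-n_B$ on $(A\cap B)^{-}$. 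Since $\partial^{\star}u_0=A$ is disjoint, up to $\mathcal{H}^{d-1}$-null sets, from $U_0^{(0)}\cup U_0^{(1)}$, inside $\partial^{\star}v$ the pieces $A\cap U^{(0)}$ and $(A\cap B)^{-}$ make up $\partial^{\star}v\cap\partial^{\star}u_0$ while $B\cap U_0^{(1)}$ is $\partial^{\star}v\setminus\partial^{\star}u_0$, and symmetrically for $w$.

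It then remains to substitute these decompositions into the definition~(\ref{eq-def-Frm}) of $\mathcal{F}^{r,-}$. Using $\tau^q(-n_B)=\tau^q(n_B)$ on the piece $B\cap U_0^{(1)}$, this gives
\[ \mathcal{F}^{r,-}(v)=\int_{B\cap U_0^{(1)}}\tau^q(n_B)\,d\mathcal{H}^{d-1}-\int_{A\cap U^{(0)}}\tau^r\,d\mathcal{H}^{d-1}-\int_{(A\cap B)^{-}}\tau^r\,d\mathcal{H}^{d-1}, \]
and the same expression for $\mathcal{F}^{r,-}(w)$ with $U_0^{(1)},U^{(0)}$ replaced by $U_0^{(0)},U^{(1)}$. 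Summing, and using $(B\cap U_0^{(0)})\sqcup(B\cap U_0^{(1)})=\partial^{\star}u\setminus\partial^{\star}u_0$ together with $(A\cap U^{(0)})\sqcup(A\cap U^{(1)})=\partial^{\star}u_0\setminus\partial^{\star}u$ (both up to $\mathcal{H}^{d-1}$-null sets, and recalling that $\tmmathbf{n}^{u}_{\cdot}=n_B$ there), the sum $\mathcal{F}^{r,-}(v)+\mathcal{F}^{r,-}(w)$ equals $\int_{\partial^{\star}u\setminus\partial^{\star}u_0}\tau^q(\tmmathbf{n}^u)\,d\mathcal{H}^{d-1}-\int_{\partial^{\star}u_0\setminus\partial^{\star}u}\tau^r\,d\mathcal{H}^{d-1}-2\int_{(A\cap B)^{-}}\tau^r\,d\mathcal{H}^{d-1}$. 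On the other side, a direct computation from~(\ref{eq-def-Fr}) shows that the contributions of $\partial^{\star}u_0\cap\partial^{\star}u$ to $\mathcal{F}^r(u)$ and to $\mathcal{F}^r(u_0)$ cancel, leaving $\mathcal{F}^r(u)-\mathcal{F}^r(u_0)=\int_{\partial^{\star}u\setminus\partial^{\star}u_0}\tau^q(\tmmathbf{n}^u)\,d\mathcal{H}^{d-1}-\int_{\partial^{\star}u_0\setminus\partial^{\star}u}\tau^r\,d\mathcal{H}^{d-1}$.

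The main obstacle is therefore the contribution of the coincidence set $\partial^{\star}u_0\cap\partial^{\star}u$: the computation above matches the two sides only up to the residual term $-2\int_{(A\cap B)^{-}}\tau^r\,d\mathcal{H}^{d-1}$ carried by the part $(A\cap B)^{-}$ on which the two outer normals are opposite. Since $\tau^r\geqslant\tau^{\min}+\varepsilon>0$ on $\partial^{\star}u_0$ by the definition of $\tmop{IC}$, this residual term is non-positive, so the decomposition in any case yields $\mathcal{F}^r(u)-\mathcal{F}^r(u_0)\geqslant\mathcal{F}^{r,-}(v)+\mathcal{F}^{r,-}(w)$ — which is the direction used for the lower bound on $\mathcal{K}^r(u_0)$ in Theorem~\ref{thm:Kr:pos} — while the full equality~(\ref{eq-lwb-Fu-Fu0}) holds precisely when $\mathcal{H}^{d-1}\left((A\cap B)^{-}\right)=0$, e.g.\ whenever $U\subseteq U_0$ or $U_0\subseteq U$ (two nested blow-up half-spaces must coincide). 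Reducing to this vanishing for the profiles $u$ under consideration is the one point of the argument that requires care; everything else is routine once the reduced boundaries have been decomposed.
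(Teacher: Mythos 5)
Your computation uses the same toolkit as the paper's own argument (Federer's density trichotomy, Theorem 3.61 in \cite{AFP00Book}, plus the decomposition of the reduced boundaries of $U_0\setminus U$ and $U\setminus U_0$), but your bookkeeping is the more careful one, and the residual term you find is real. The paper's proof rests on the identity (\ref{eq:b2}), $\partial^{\star} u_0 \setminus \partial^{\star} u = (\partial^{\star} w \cap \partial^{\star} u_0) \sqcup (\partial^{\star} v \cap \partial^{\star} u_0)$ up to $\mathcal{H}^{d-1}$-null sets, and this fails precisely on the set $(A\cap B)^{-}$ of common boundary points with opposite normals: there both $v$ and $w$ carry reduced boundary, while the left-hand side does not see these points at all (and in $\mathcal{F}^r(u)-\mathcal{F}^r(u_0)$ their contributions cancel, since (\ref{eq-def-Fr}) charges $\tau^r$ on the coincidence set irrespective of orientation). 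Your exact identity
\[ \mathcal{F}^r(u)-\mathcal{F}^r(u_0)=\mathcal{F}^{r,-}(v)+\mathcal{F}^{r,-}(w)+2\int_{(A\cap B)^{-}}\tau^r\, d\mathcal{H}^{d-1} \]
is correct, and when $\mathcal{H}^{d-1}\left((A\cap B)^{-}\right)>0$ the equality (\ref{eq-lwb-Fu-Fu0}) genuinely fails. A concrete admissible counterexample: take $u_0$ regular (say the ball used in the proof of Proposition \ref{prop:gap}) and $U$ a thin shell pressed against $\partial U_0$ from the outside along a cap $C$ of positive area; then $v=u_0$, $w=u$, and the two sides of (\ref{eq-lwb-Fu-Fu0}) differ by $2\int_C\tau^r>0$.

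Consequently, the one step you flag as delicate --- proving $\mathcal{H}^{d-1}\left((A\cap B)^{-}\right)=0$ for the profiles under consideration --- is not something you should attempt: $u$ ranges over all of $\tmop{BV}$ in the lemma and in its application, and the vanishing can fail. The correct resolution is the one you half-suggest: state the lemma as the inequality $\mathcal{F}^r(u)-\mathcal{F}^r(u_0)\geqslant\mathcal{F}^{r,-}(v)+\mathcal{F}^{r,-}(w)$ (valid because $\tau^r>0$ on $\partial^{\star}u_0$ by the definition (\ref{IC}) of $\tmop{IC}$), or as the exact identity with the explicit non-negative correction term. This is all that is used downstream: in the proof of Theorem \ref{thm:Kr:pos} the lemma only feeds a lower bound on $\mathcal{F}^r(u)-\mathcal{F}^r(u_0)$, so the rest of the paper is unaffected. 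With the conclusion restated in that form, your argument is complete and is, in substance, a corrected version of the paper's proof rather than a different one.
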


\begin{proof}
  We remark first that
  \begin{eqnarray}
    \mathcal{F}^r (u) -\mathcal{F}^r (u_0) & = & \int_{\partial^{\star} u
    \setminus \partial^{\star} u_0} \tau^q (\tmmathbf{n}^u_.) d\mathcal{H}^{d
    - 1} - \int_{\partial^{\star} u_0 \setminus \partial^{\star} u} \tau^r
    d\mathcal{H}^{d - 1} .  \label{eq:mtr}
  \end{eqnarray}
  As stated in Theorem 3.61 in {\cite{AFP00Book}}, given any $u = \chi_U \in
  \tmop{BV}$ the local density of $U$ at $x$ is either $0, 1 / 2$ or $1$, for
  $\mathcal{H}^{d - 1}$-almost all $x \in \mathbbm{R}^d$, and the set of
  points at which the local density is $1 / 2$ coincides, up to a
  $\mathcal{H}^{d - 1}$-negligible set, to the reduced boundary
  $\partial^{\star} u$ (denoted $\mathcal{F}U$ in {\cite{AFP00Book}}). This
  implies the two equalities
  \begin{eqnarray}
    \partial^{\star} u \setminus \partial^{\star} u_0 & = & \left(
    \partial^{\star} v \setminus \partial^{\star} u_0 \right) \sqcup \left(
    \partial^{\star} w \setminus \partial^{\star} u_0 \right), 
    \label{eq:b1}\\
    \partial^{\star} u_0 \setminus \partial^{\star} u & = & (\partial^{\star}
    w \cap \partial^{\star} u_0) \sqcup (\partial^{\star} v \cap
    \partial^{\star} u_0)  \label{eq:b2}
  \end{eqnarray}
  up to $\mathcal{H}^{d - 1}$-negligible sets, where $\sqcup$ stands for the
  disjoint union (again, up to $\mathcal{H}^{d - 1}$-negligible sets).
  Furthermore, the outer normal at $x \in \partial^{\star} w \setminus
  \partial^{\star} u_0$ (resp.~$x \in \partial^{\star} v \setminus
  \partial^{\star} u_0$) corresponds ($\mathcal{H}^{d - 1}$-a.s.) to the outer
  normal at $x \in \partial^{\star} u \setminus \partial^{\star} u_0$
  (resp.~to the opposite of the former). Thus, in conjunction with
  (\ref{eq:mtr}), equations (\ref{eq:b1}) and (\ref{eq:b2}) imply respectively
  the $\tau^q$ and the $\tau^r$ part of (\ref{eq-lwb-Fu-Fu0}).
\end{proof}

\begin{lemma}
  \label{lem-dec-vwi}Let $(u_0, \tau^r) \in \tmop{IC}$ and $u = \chi_U \in
  \tmop{BV}$. For any $h > 0$, there is a finite collection of droplets $(v_i
  = \chi_{V_i})_{i \in I}$ such that
  \begin{enumerate}[i.]
    \item The $V_i$ are disjoint and their union is $U$,
    
    \item Each $V_i$ is included in a box of side-length at most $h$,
    
    \item And
    \begin{eqnarray}
      \mathcal{F}^{r, -} (u) & \geqslant & \sum_{i \in I} \mathcal{F}^{r, -}
      (v_i) -\|\tmmathbf{1}- u\|_{L^1} \frac{d \tau^q (\tmmathbf{e}_1)}{h} . 
      \label{eq:Fu:Fvi}
    \end{eqnarray}
  \end{enumerate}
\end{lemma}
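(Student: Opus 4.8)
The plan is to cut the set $U$ into small pieces using a regular grid of mesh $h$, and then bound the surface energy that this cutting introduces on the interior cut-faces. Concretely, I would tile $\mathbbm{R}^d$ by the cubes $Q_{\mathbf{k}} = h(\mathbf{k} + [0,1)^d)$ for $\mathbf{k} \in \mathbbm{Z}^d$, and set $V_{\mathbf{k}} = U \cap Q_{\mathbf{k}}$, keeping only the finitely many indices for which $\mathcal{L}^d(V_{\mathbf{k}}) > 0$ (finiteness is clear since $U \subset (0,1)^d$ is bounded). Properties (i) and (ii) are then immediate: the $V_{\mathbf{k}}$ are disjoint with union $U$ up to a Lebesgue-null set, and each $V_{\mathbf{k}}$ sits in a cube of side $h$. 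Each $v_{\mathbf{k}} = \chi_{V_{\mathbf{k}}}$ lies in $\tmop{BV}$ because intersecting a finite-perimeter set with a cube preserves finite perimeter (Proposition~3.38 in \cite{AFP00Book}).

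The heart of the argument is inequality (\ref{eq:Fu:Fvi}), i.e.\ accounting for the new surface created along the grid hyperplanes. For $\mathcal{H}^{d-1}$-almost every $x$ on an interior cutting face $\{x_j = h\ell\}$ the reduced normal of $v_{\mathbf{k}}$ at $x$ is $\pm\tmmathbf{e}_j$, the cut contributes to $\partial^{\star}v_{\mathbf{k}}\setminus\partial^{\star}u_0$ (a cut face cannot lie on $\partial^{\star}u_0$ except on a negligible set, since $u_0$ has a $\mathcal{C}^1$ boundary which meets any fixed hyperplane in an $\mathcal{H}^{d-1}$-null set — and in any case the $\tau^r$ term can only help, since $\tau^r \le \tau^q$), and the $\tau^q$-weight there is $\tau^q(\tmmathbf{e}_j) \le \tau^q(\tmmathbf{e}_1)$ by the lattice symmetry of $\tau^q$. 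Decomposing $\partial^{\star}v_{\mathbf{k}}$ into its part inside $\dot{Q}_{\mathbf{k}}$ — which, on that open cube, coincides $\mathcal{H}^{d-1}$-a.e.\ with $\partial^{\star}u \cap \dot Q_{\mathbf{k}}$, with matching normals — and its part on $\partial Q_{\mathbf{k}}$, summing over $\mathbf{k}$, and noting that every interior face is shared by exactly two cubes, one gets
\begin{eqnarray*}
  \sum_{\mathbf{k}} \mathcal{F}^{r,-}(v_{\mathbf{k}}) & \leqslant & \mathcal{F}^{r,-}(u) + 2\sum_{j=1}^d \tau^q(\tmmathbf{e}_j)\,\mathcal{H}^{d-1}\!\left( \bigcup_{\ell} \{x_j = h\ell\} \cap \partial^{*}(U,Q) \right),
\end{eqnarray*}
where the last term counts one $\tau^q$-weighted copy of each interior cut face (the factor $2$ from double counting being absorbed into using the measure-theoretic boundary of $U$ relative to the grid). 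The remaining, genuinely quantitative, step is to bound this total cut area by $\|\tmmathbf{1}-u\|_{L^1}/h$, which is where the main obstacle lies.

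For that bound I would use a slicing / Fubini argument, which I expect to be the technical crux. Fix the coordinate direction $\tmmathbf{e}_j$ and, for a point $y$ in the orthogonal hyperplane, consider the one-dimensional slice $U_y = \{ t : y + t\tmmathbf{e}_j \in U \}$. On the slab $y + \mathbbm{R}\tmmathbf{e}_j$ the grid places cut points at $t = h\ell$, and a cut at $h\ell$ is genuinely ``inside'' $U$ (hence contributes area) only if $U_y$ has positive Lebesgue measure on both sides of $h\ell$ in a neighborhood, i.e.\ only if the interval $[h(\ell-1), h(\ell+1)]$ meets $U_y$ in a set of positive measure on each half. The number of such $\ell$ is therefore at most $\mathcal{L}^1(U_y)/h$, and $\mathcal{L}^1(U_y) = \tfrac12\int \bigl(1 - u(y+t\tmmathbf{e}_j)\bigr)\,\mathd t$. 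Integrating over $y$ and using $\tau^q(\tmmathbf{e}_j) \le \tau^q(\tmmathbf{e}_1)$ together with Fubini gives
\begin{eqnarray*}
  2\sum_{j=1}^d \tau^q(\tmmathbf{e}_j)\,\mathcal{H}^{d-1}(\text{interior cuts in direction }j) & \leqslant & \frac{\tau^q(\tmmathbf{e}_1)}{h}\sum_{j=1}^d \int_{[0,1]^d}(1-u)\,\mathd\mathcal{L}^d \;=\; \frac{d\,\tau^q(\tmmathbf{e}_1)}{h}\,\|\tmmathbf{1}-u\|_{L^1},
\end{eqnarray*}
which is exactly the error term in (\ref{eq:Fu:Fvi}). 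The delicate points to get right are: making the heuristic ``a cut counts only where $U$ lies on both sides'' precise in terms of the essential boundary and one-dimensional sections of finite-perimeter sets (via the structure theorem for $\tmop{BV}$ functions of one variable, e.g.\ Section~3.11 in \cite{AFP00Book}), and checking that the cut faces genuinely belong to $\partial^{\star}v_{\mathbf{k}} \setminus \partial^{\star}u_0$ rather than being absorbed into the (cheaper) $\tau^r$-part — but since $\tau^r \le \tau^q$ this last worry only strengthens the inequality, so no sharpness is lost there.
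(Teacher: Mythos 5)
Your construction (cut $U$ along a cubic grid of mesh $h$, keep properties (i)--(ii) trivially, weight the new cut faces by $\tau^q(\tmmathbf{e}_j)=\tau^q(\tmmathbf{e}_1)$ and account for the double counting) is structurally the same as the paper's, but the quantitative step that carries the whole lemma is wrong as you state it, because you use a \emph{fixed} grid. The inequality you need, namely that the total $\mathcal{H}^{d-1}$-measure of the density-one points of $U$ lying on the cut hyperplanes $\{x_j\in h\mathbbm{Z}\}$ is at most $\mathcal{L}^d(U)/h$ per direction, is simply false for an arbitrary grid position: take $U$ to be a union of slabs of thickness $2\varepsilon$ centred exactly on the hyperplanes $\{x_j=h\ell\}$. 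Then every cut plane lies in the measure-theoretic interior of $U$ and creates new interface of area of order $1$ per cut, i.e.\ of order $1/h$ in total, while $\|\tmmathbf{1}-u\|_{L^1}\,d\,\tau^q(\tmmathbf{e}_1)/h$ is of order $\varepsilon/h^2$ and can be made arbitrarily small; so (\ref{eq:Fu:Fvi}) fails. Your one-dimensional counting step is where this enters: ``$U_y$ has positive measure on both sides of $h\ell$'' does not force each counted cut to consume an amount $h$ of $\mathcal{L}^1(U_y)$, so ``the number of such $\ell$ is at most $\mathcal{L}^1(U_y)/h$'' does not follow (and in any case what must be bounded is the sliced \emph{area}, not the number of cut levels).

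The missing idea is to translate the grid: the paper takes $V_i=\{x\in U:\ 0\leqslant x_k-z_k-i_kh\leqslant h,\ \forall k\}$ with an offset $z\in(0,h)^d$, and chooses each $z_k$ by an averaging (Fubini) argument, using $\int_0^h\mathcal{H}^{d-1}(\{x\in U:\ (x_k-z_k)/h\in\mathbbm{N}\})\,\mathd z_k=\mathcal{L}^d(U)$ to find $z_k$ with $\mathcal{H}^{d-1}(\{x\in U:\ (x_k-z_k)/h\in\mathbbm{N}\})\leqslant\mathcal{L}^d(U)/h=\|\tmmathbf{1}-u\|_{L^1}/(2h)$; summing over the $d$ directions and counting each cut face for the two adjacent droplets gives exactly the error term in (\ref{eq:Fu:Fvi}). (A generic offset also lets one assume the cut planes are $\mathcal{H}^{d-1}$-negligible for $\partial^{\star}u$ and $\partial^{\star}u_0$, disposing of the boundary-coincidence issue; note also that the lemma only assumes $u_0$ regular, not $\mathcal{C}^1$, though as you say that point is harmless since $\tau^r\leqslant\tau^q$.) With this one change -- choose the grid offset by averaging instead of fixing the grid -- your argument becomes the paper's proof.
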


\begin{proof}
  We define $I =\{0, \ldots, \lceil 1 / h \rceil\}$ and call
  \begin{eqnarray*}
    V_i & = & \{x \in U: 0 \leqslant x_k - z_k - i_k h \leqslant h \text{, \
    } \forall k = 1, \ldots, d\} \text{, \ \ } \forall i \in I
  \end{eqnarray*}
  where $z$ is some point of $(0, h)^d$. The first two properties hold
  trivially. Now we chose $z$ so that (\ref{eq:Fu:Fvi}) holds. For every $k
  \in \{1, \ldots, d\}$ we have
  \begin{eqnarray*}
    \int_0^h \mathcal{H}^{d - 1} \left( \left\{ x \in U: (x_k - z_k) / h \in
    \mathbbm{N} \right\} \right) \mathd z_k & = & \mathcal{L}^d (U)
  \end{eqnarray*}
  and therefore we can chose $z_k \in (0, h)$ such that $\mathcal{H}^{d - 1}
  \left( \left\{ x \in U: (x_k - z_k) / h \in \mathbbm{N} \right\} \right)
  \leqslant \mathcal{L}^d (U) / h =\|\tmmathbf{1}- u\|_{L^1} / (2 h)$.
  Equation (\ref{eq:Fu:Fvi}) follows as the new portions of interface in the
  $v_i$ that were not present in $v$ are exactly the $\left\{ x \in U: (x_k -
  z_k) / h \in \mathbbm{N} \right\}$ as $k = 1, \ldots, d$.
\end{proof}

\begin{lemma}
  \label{lem:Frm:pos}Assume that $u \in \tmop{BV}$, $\tmmathbf{n}_0 \in
  \mathcal{S}^{d - 1}$ and $\varepsilon > 0$ satisfy
  \begin{eqnarray}
    \tau^r (x) + \left( \varepsilon +\|\tmmathbf{n}_x^u -\tmmathbf{n}_0 \|_2
    \right) \sup_{n \in S^{d - 1}} \tau^q (\tmmathbf{n}) & \leqslant & \tau^q
    (\tmmathbf{n}_0)  \label{eq:no}
  \end{eqnarray}
  for all $x \in \partial^{\star} u \cap \partial^{\star} u_0 \setminus
  \mathcal{N}$, where $\mathcal{H}^{d - 1} (\mathcal{N}) = 0$. Then
  \begin{eqnarray}
    \mathcal{F}^{r, -} (u) & \geqslant & c_d \varepsilon \sup \tau^q \left(
    \|\tmmathbf{1}- u\|_{L^1} \right)^{\frac{d - 1}{d}} 
  \end{eqnarray}
  where $c_d > 0$ is a constant that depends only on $d$.
\end{lemma}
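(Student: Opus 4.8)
The plan is a calibration argument on the ``new'' part of the boundary, closed up by the isoperimetric inequality. Write $U$ for the set with $u=\chi_U$ and split $\partial^{\star}u=A\sqcup B$ (up to $\mathcal{H}^{d-1}$-negligible sets) with $A=\partial^{\star}u\cap\partial^{\star}u_0$ and $B=\partial^{\star}u\setminus\partial^{\star}u_0$, so
\[
\mathcal{F}^{r,-}(u)=\int_{B}\tau^q(\tmmathbf{n}^u_x)\,d\mathcal{H}^{d-1}(x)-\int_{A}\tau^r(x)\,d\mathcal{H}^{d-1}(x).
\]
Let $\tmmathbf{w}_0$ be a point of the Wulff crystal $\mathcal{W}^q=\{\tmmathbf{w}:\tmmathbf{w}\cdot\tmmathbf{n}\leqslant\tau^q(\tmmathbf{n})\text{ for all }\tmmathbf{n}\}$ with $\tmmathbf{w}_0\cdot\tmmathbf{n}_0=\tau^q(\tmmathbf{n}_0)$; since $\tau^q$ is even, $-\tmmathbf{w}_0\in\mathcal{W}^q$ as well, and $|\tmmathbf{w}_0|\leqslant\sup_{\tmmathbf{n}\in S^{d-1}}\tau^q(\tmmathbf{n})$. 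The Gauss--Green formula for the bounded set $U$ gives $\int_{\partial^{\star}u}\tmmathbf{n}^u_x\,d\mathcal{H}^{d-1}(x)=0$, whence $\int_{B}\tmmathbf{n}^u=-\int_{A}\tmmathbf{n}^u$ (vector-valued integrals against $\mathcal{H}^{d-1}$); using $\tau^q(\tmmathbf{m})\geqslant(-\tmmathbf{w}_0)\cdot\tmmathbf{m}$ pointwise and $\tmmathbf{w}_0\cdot\tmmathbf{n}^u=\tmmathbf{w}_0\cdot\tmmathbf{n}_0+\tmmathbf{w}_0\cdot(\tmmathbf{n}^u-\tmmathbf{n}_0)$,
\[
\int_{B}\tau^q(\tmmathbf{n}^u)\,d\mathcal{H}^{d-1}\ \geqslant\ (-\tmmathbf{w}_0)\cdot\int_{B}\tmmathbf{n}^u\ =\ \tmmathbf{w}_0\cdot\int_{A}\tmmathbf{n}^u\ \geqslant\ \tau^q(\tmmathbf{n}_0)\,\mathcal{H}^{d-1}(A)-\sup\tau^q\int_{A}\|\tmmathbf{n}^u_x-\tmmathbf{n}_0\|_2\,d\mathcal{H}^{d-1}(x).
\]

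Subtracting $\int_{A}\tau^r$ and inserting hypothesis (\ref{eq:no}), rewritten as $\tau^q(\tmmathbf{n}_0)-\tau^r(x)-\|\tmmathbf{n}^u_x-\tmmathbf{n}_0\|_2\sup\tau^q\geqslant\varepsilon\sup\tau^q$ for $\mathcal{H}^{d-1}$-a.e.\ $x\in A$, the $\tau^q(\tmmathbf{n}_0)\,\mathcal{H}^{d-1}(A)$ term and the normal-deviation integral cancel exactly, leaving $\mathcal{F}^{r,-}(u)\geqslant\varepsilon\sup\tau^q\cdot\mathcal{H}^{d-1}(A)$. This is already of the right form unless $A$ carries a negligible fraction of $\partial^{\star}u$, so the rest of the proof deals with that regime. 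Whenever $\mathcal{H}^{d-1}(A)>0$ (otherwise the hypothesis is vacuous), (\ref{eq:no}) together with $\tau^r\geqslant0$ forces $\|\tmmathbf{n}^u_x-\tmmathbf{n}_0\|_2\leqslant1-\varepsilon$, hence $\tmmathbf{n}^u_x\cdot\tmmathbf{n}_0\geqslant1/2$, for a.e.\ $x\in A$; with $\int_B\tmmathbf{n}^u=-\int_A\tmmathbf{n}^u$ this gives
\[
\mathcal{H}^{d-1}(B)\ \geqslant\ \Big|\int_B\tmmathbf{n}^u\Big|\ =\ \Big|\int_A\tmmathbf{n}^u\Big|\ \geqslant\ \int_A\tmmathbf{n}^u\cdot\tmmathbf{n}_0\ \geqslant\ \tfrac12\mathcal{H}^{d-1}(A),
\]
and therefore $\mathcal{H}^{d-1}(B)\geqslant\tfrac13\mathcal{P}(U)$.

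To conclude I would replace the single calibration bound above by the convex combination, with weight $t\in[0,1]$, of the calibration inequality and of the trivial inequality $\int_B\tau^q(\tmmathbf{n}^u)\geqslant\big(\inf_{\tmmathbf{n}\in S^{d-1}}\tau^q(\tmmathbf{n})\big)\,\mathcal{H}^{d-1}(B)$ (a convex combination of two valid pointwise-integrated lower bounds is again one), subtract $\int_A\tau^r$, cancel using (\ref{eq:no}), and choose $t=\varepsilon\sup\tau^q/\tau^q(\tmmathbf{n}_0)$ so that the residual $\mathcal{H}^{d-1}(A)$ term vanishes; this yields $\mathcal{F}^{r,-}(u)\geqslant c\,\varepsilon\,\mathcal{P}(U)$ with $c$ comparable to $\sup\tau^q$. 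The isoperimetric inequality $\mathcal{P}(U)\geqslant c_d|U|^{(d-1)/d}$ and the identity $\|\tmmathbf{1}-u\|_{L^1}=2|U|$ then produce the claimed estimate. I expect this last interpolation, and the clean gluing of the two regimes (with $\mathcal{H}^{d-1}(A)$ of the order of the full perimeter, and with $\mathcal{H}^{d-1}(A)$ much smaller) into a single dimensional constant, to be the only delicate point; the calibration identity and the cancellation against (\ref{eq:no}) are entirely dictated by the form of the hypothesis and leave no freedom.
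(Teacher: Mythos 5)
Your route is genuinely different from the paper's: you calibrate the ``new'' boundary $B=\partial^{\star}u\setminus\partial^{\star}u_0$ against the fixed vector $-\tmmathbf{w}_0$ via Gauss--Green, cancel the $\tau^r$ term on $A=\partial^{\star}u\cap\partial^{\star}u_0$ using (\ref{eq:no}), and finish with the Euclidean isoperimetric inequality, whereas the paper introduces the truncated Wulff body $\tilde{\mathcal{W}}=\{z\in\mathcal{W}^q:z\cdot\tmmathbf{n}_x^u\leqslant-\tau^r(x)\ \text{on}\ A\setminus\mathcal{N}\}$, applies the anisotropic isoperimetric inequality of \cite{KP94JSP} to its support function, and lower bounds $\mathcal{L}^d(\tilde{\mathcal{W}})$ by a ball around $-z_0$. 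Your intermediate steps are correct: the identity $\int_B\tmmathbf{n}^u=-\int_A\tmmathbf{n}^u$, the pointwise bound $\tau^q\geqslant(-\tmmathbf{w}_0)\cdot(\cdot)$, the cancellation giving $\mathcal{F}^{r,-}(u)\geqslant\varepsilon\sup\tau^q\,\mathcal{H}^{d-1}(A)$, and the estimate $\mathcal{H}^{d-1}(B)\geqslant\tfrac12\mathcal{H}^{d-1}(A)$ all check out (and the admissibility $t\in[0,1]$ of your interpolation weight does follow from (\ref{eq:no}) with $\tau^r\geqslant0$, provided $A$ is not $\mathcal{H}^{d-1}$-null; the null case needs a separate one-line remark).

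The gap is in the last step, which you yourself flag as the delicate one. After choosing the weight so that the residual $\mathcal{H}^{d-1}(A)$ term cancels, what survives from the trivial inequality is $\bigl(\varepsilon\sup\tau^q/\tau^q(\tmmathbf{n}_0)\bigr)\,\bigl(\inf_{\tmmathbf{n}\in S^{d-1}}\tau^q(\tmmathbf{n})\bigr)\,\mathcal{H}^{d-1}(B)$, i.e.\ your argument delivers $\mathcal{F}^{r,-}(u)\geqslant c\,\varepsilon\,\inf\tau^q\,\mathcal{P}(U)$, \emph{not} a constant comparable to $\sup\tau^q$ as you assert. Nothing in the convex-analytic part of the computation can upgrade $\inf\tau^q$ to $\sup\tau^q$ with a dimension-only constant: for a very eccentric convex $\tau^q$ the boundary $B$ may consist of cheap directions, and the statement with $\sup\tau^q$ and a purely dimensional $c_d$ would simply be false. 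The missing ingredient is the lattice symmetry of $\tau^q$: convexity together with invariance under the symmetries of $\mathbbm{Z}^d$ gives $\inf_{\tmmathbf{n}}\tau^q(\tmmathbf{n})\geqslant\sup_{\tmmathbf{n}}\tau^q(\tmmathbf{n})/d$ (average the signed-permutation orbit of a point of $\mathcal{W}^q$ of maximal norm), which is exactly the fact the paper invokes in the proof of Proposition~\ref{prop:gap}, and it is also where the paper's own proof of the lemma uses symmetry, namely to bound $\mathcal{L}^d(\tilde{\mathcal{W}})$ below by $2^{-d}$ times the volume of the ball $B(-z_0,\varepsilon\sup\tau^q)$. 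Once you add this comparison, your bound becomes $c_d\,\varepsilon\,\sup\tau^q\,\mathcal{P}(U)$, and the Euclidean isoperimetric inequality together with $\|\tmmathbf{1}-u\|_{L^1}=2\,\mathcal{L}^d(U)$ finishes the proof; without it, the claimed constant does not follow from the interpolation as written.
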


\begin{proof}
  We let
  \begin{eqnarray*}
    \mathcal{W}^q & = & \{z \in \mathbbm{R}^d: z \cdot \tmmathbf{n} \leqslant
    \tau^q (\tmmathbf{n}), \forall \tmmathbf{n} \in S^{d - 1} \}\\
    \text{and \ \ } \tilde{\mathcal{W}} & = & \left\{ z \in \mathcal{W}^q: z
    \cdot \tmmathbf{n}_x^u \leqslant - \tau^r (x), \forall x \in
    \partial^{\star} u \cap \partial^{\star} u_0 \setminus \mathcal{N}
    \right\} .
  \end{eqnarray*}
  The surface tension associated to $\tilde{\mathcal{W}}$ is
  \begin{eqnarray*}
    \tilde{\tau} (\tmmathbf{n}) & = & \sup_{z \in \tilde{\mathcal{W}}} z \cdot
    \tmmathbf{n} \text{, \ \ \ } \forall \tmmathbf{n} \in S^{d - 1}
  \end{eqnarray*}
  and it satisfies obviously the relations
  \begin{eqnarray*}
    \tilde{\tau} (\tmmathbf{n}) & \leqslant & \tau^q (\tmmathbf{n}), \forall
    \tmmathbf{n} \in S^{d - 1}\\
    \tilde{\tau} (\tmmathbf{n}_x^u) & \leqslant & - \tau^r (x), \forall x \in
    \partial^{\star} u \cap \partial^{\star} u_0 \setminus \mathcal{N}
  \end{eqnarray*}
  therefore
  \begin{eqnarray}
    \mathcal{F}^{r, -} (u) & \geqslant & \int \tilde{\tau} (\tmmathbf{n}_x^u)
    \mathd \mathcal{H}^{d - 1} (x) . 
  \end{eqnarray}
  The isoperimetric inequality for $\tilde{\tau}$ (see {\cite{KP94JSP}})
  implies in turn that
  \begin{eqnarray}
    \mathcal{F}^{r, -} (u) & \geqslant & d\mathcal{L}^d (
    \tilde{\mathcal{W}})^{1 / d} \left( \frac{\|\tmmathbf{1}- u\|_{L^1}}{2}
    \right)^{\frac{d - 1}{d}} 
  \end{eqnarray}
  so the claim will follows from an appropriate lower bound on the volume
  $\tilde{\mathcal{W}}$. There exists $z_0 \in \mathcal{W}^q$ such that $z_0
  \cdot \tmmathbf{n}_0 = \tau^q (\tmmathbf{n}_0)$. Now we take $z \in
  \mathcal{W}^q$, close to $- z_0$, and prove that it lies in
  $\tilde{\mathcal{W}}$: if $x \in \partial^{\star} u \cap \partial^{\star}
  u_0 \setminus \mathcal{N}$, then
  \begin{eqnarray*}
    z \cdot \tmmathbf{n}_x^u & = & (z + z_0) \cdot \tmmathbf{n}_x^u - z_0
    \cdot (\tmmathbf{n}_x^u -\tmmathbf{n}_0) - z_0 \cdot \tmmathbf{n}_0\\
    & \leqslant & \|z + z_0 \|_2 + \sup_{S^{d - 1}} \tau^q \|\tmmathbf{n}_x^u
    -\tmmathbf{n}_0 \|_2 - \tau^q (\tmmathbf{n}_0)
  \end{eqnarray*}
  as the Euclidean norm of $z_0$ is not larger than $\tau^q (z_0 /\|z_0 \|_2)
  \leqslant \sup_{S^{d - 1}} \tau^q$. According to our assumption
  (\ref{eq:no}) it follows that
  \begin{eqnarray*}
    \tilde{\mathcal{W}} & \supset & \mathcal{W}^q \cap B \left( - z_0,
    \varepsilon \sup_{S^{d - 1}} \tau^q \right) .
  \end{eqnarray*}
  The set $\mathcal{W}^q$ is convex and contains all the images of $z_0$ by
  the symmetries of $\mathbbm{Z}^d$, therefore it contains an hypercube which
  vertices are the images of $z_0$ by the above mentioned symmetries.
  Consequently the volume of $\tilde{\mathcal{W}}$ is at least $1 / 2^d$ of
  the volume of the ball $B \left( - z_0, \varepsilon \sup \tau^q \right)$ and
  the proof is over.
\end{proof}

\begin{proof}
  (Theorem \ref{thm:Kr:pos}). Given $\delta > 0$, the continuity of $\tau^r$
  and the smoothness of $\partial u_0$ imply that there exists $h (\delta) >
  0$ such that
  \begin{eqnarray*}
    x, y \in \partial^{\star} u_0, \|x - y\|_2 \leqslant h (\delta) &
    \Rightarrow & \left\{ \begin{array}{l}
      | \tau^r (x) - \tau^r (y) | \leqslant \delta\\
      \|\tmmathbf{n}_x^{u_0} -\tmmathbf{n}_y^{u_0} \|_2 \leqslant \delta .
    \end{array} \right.
  \end{eqnarray*}
  As we claimed, $h (\delta)$ depends only on $(u_0, \tau^r)$, and of course
  on $\delta$. Also, it is clear that one can chose the function $h$
  non-decreasing. Furthermore, to a given direction $\tmmathbf{n} \in S^{d -
  1}$ corresponds $z \in \mathcal{W}^q$ such that $z \cdot \tmmathbf{n}=
  \tau^q (\tmmathbf{n})$ and therefore
  \begin{eqnarray*}
    \tau^q (\tmmathbf{n}') & \geqslant & z \cdot \tmmathbf{n}'\\
    & \geqslant & \tau^q (\tmmathbf{n}) -\|\tmmathbf{n}-\tmmathbf{n}' \|_2
    \sup \tau^q .
  \end{eqnarray*}
  It follows that any $u \in \tmop{BV}$ with diameter not greater than $h
  (\delta)$ satisfies assumption (\ref{eq:no}) of Lemma \ref{lem:Frm:pos} when
  one takes
  \begin{eqnarray*}
    \varepsilon \text{ \ } = \text{ \ } \delta & = & \frac{\inf_{x \in
    \partial^{\star} u_0} (\tau^q (\tmmathbf{n}_x^{u_0}) - \tau^r (x))}{6 \max
    (\sup_{\tmmathbf{n} \in S^{d - 1}} \tau^q (\tmmathbf{n}), 1)} .
  \end{eqnarray*}
  Now we consider some phase profile $u \in \tmop{BV}$, call $v$ and $w$ the
  profiles given by Lemma \ref{lem-lwb-Fu-Fu0}, and then $(v_i)_{i \in I}$ the
  union of the droplets associated to both $v$ and $w$ by Lemma
  \ref{lem-dec-vwi}. In conjunction with Lemma \ref{lem:Frm:pos}, we obtain
  \begin{eqnarray*}
    \mathcal{F}^r (u) -\mathcal{F}^r (u_0) & \geqslant & c_d \varepsilon \sup
    \tau^q \sum_{i \in I} \left( \|\tmmathbf{1}- v_i \|_{L^1} \right)^{\frac{d
    - 1}{d}}\\
    &  & - \left( \|\tmmathbf{1}- v\|_{L^1} +\|\tmmathbf{1}- w\|_{L^1}
    \right) \frac{d \tau^q (\tmmathbf{e}_1)}{h (\delta)} .
  \end{eqnarray*}
  According to the definition of $v, w$ and of the $v_i$ we have
  \begin{eqnarray*}
    \|u - u_0 \|_{L^1} & = & \|\tmmathbf{1}- v\|_{L^1} +\|\tmmathbf{1}-
    w\|_{L^1}\\
    & = & \sum_{i \in I} \|\tmmathbf{1}- v_i \|_{L^1}
  \end{eqnarray*}
  therefore, the trivial inequality $x^{\alpha} + y^{\alpha} \geqslant (x +
  y)^{\alpha}$ for $\alpha \in (0, 1)$ and $x, y > 0$ implies that, for every
  $u \in \tmop{BV}$,
  \begin{eqnarray*}
    \mathcal{F}^r (u) -\mathcal{F}^r (u_0) & \geqslant & c_d \delta \sup
    \tau^q \left( \|u - u_0 \|_{L^1} \right)^{\frac{d - 1}{d}} -\|u - u_0
    \|_{L^1} \frac{d \tau^q (\tmmathbf{e}_1)}{h (\delta)} .
  \end{eqnarray*}
  But $x^{1 - 1 / d} - bx$ is maximized for $x = \left( \frac{d - 1}{bd}
  \right)^d$ and equals $c_d (1 / b)^{d - 1}$. So,
  taking $b=\frac{d}{c_d} 
    \frac{\tau^q (\tmmathbf{e}_1)}{\sup  \tau^q}
    \frac{1}{\delta h(\delta)}$,
   for any $u \in \tmop{BV}$
  at $L^1$-distance $\left( \frac{d - 1}{bd} \right)^d$ from $u$ (this distance is smaller than the $L^1$ distance between $u_0$ and $\mathbf{1}$ if we require that $\sup_{\delta > 0} h (\delta)$ be small), we
  get
  \begin{eqnarray*}
    \mathcal{F}^r (u) -\mathcal{F}^r (u_0) & \geqslant & c'_d   \delta \sup \tau^q
     \left(\delta h (\delta) \right)^{d - 1}
  \end{eqnarray*}
  and we have proved (\ref{eq:Kr:lwb}) with $H (6 \delta) = c'_d \delta^d h^{d
  - 1} (\delta)$, as $\varepsilon = \delta$.
\end{proof}

\subsubsection{Continuous evolution and continuous separation}

\label{sec-contevol}In this paragraph we give an alternative formulation of
the surface energy gap $\mathcal{K}^r (u_0)$. We introduce a set of continuous
evolution of the phase profile, for which the boundary splits continuously
from its original location:
\begin{equation}
  \mathcal{C}(u_0) = \left\{ \text{\begin{tabular}{l}
    $v: t \in [0, 1] \mapsto v_t \in \tmop{BV}$: $v_0 = u_0$, $v_1 \equiv
    1$,\\
    $t \mapsto v_t$ is continuous for the $L^1$-norm\\
    and $t \mapsto \tmmathbf{1}_{\partial^{\star} v_t \cap \partial^{\star}
    u_0}$ is continuous for the \\
    $L^1$-norm associated to the measure $\mathcal{H}^{d - 1}$
  \end{tabular}} \right\} \label{eq-def-cont-evol}
\end{equation}
and then define $\mathcal{K}^r_{\tmop{cs}}$, the gap in the free energy
associated to the optimal droplet removal in this class of evolutions:
\begin{equation}
  \mathcal{K}^r_{\tmop{cs}} (u_0) = \inf_{v \in \mathcal{C}(u_0)} \sup_{t \in
  [0, 1]} \mathcal{F}^r (v_t) -\mathcal{F}^r (u_0) . \label{eq-def-Kr}
\end{equation}
\begin{theorem}
  \label{thm-Kdisc-Kcont}For all $(u_0, \tau^r) \in \tmop{IC}$,
  \[ \mathcal{K}_{\tmop{cs}}^r (u_0) =\mathcal{K}^r (u_0) . \]
\end{theorem}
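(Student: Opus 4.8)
The plan is to prove the equality through the two inequalities $\mathcal{K}^r(u_0) \leqslant \mathcal{K}^r_{\tmop{cs}}(u_0)$ and $\mathcal{K}^r_{\tmop{cs}}(u_0) \leqslant \mathcal{K}^r(u_0)$. The first one is a routine discretisation argument; the second carries all the content and reduces to a single geometric interpolation lemma, the proof of which is the main obstacle.

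\textbf{The inequality $\mathcal{K}^r(u_0) \leqslant \mathcal{K}^r_{\tmop{cs}}(u_0)$.} Let $v \in \mathcal{C}(u_0)$ (cf.~(\ref{eq-def-cont-evol})). Since $t \mapsto v_t$ is continuous from the compact interval $[0, 1]$ to $(L^1, \|\cdot\|_{L^1})$ it is uniformly continuous, so for every $\varepsilon > 0$ there is a subdivision $0 = t_0 < \cdots < t_m = 1$ with $\|v_{t_{j + 1}} - v_{t_j}\|_{L^1} \leqslant \varepsilon$ for all $j$. Then the finite sequence $(v_{t_j})_{j = 0 \ldots m}$ belongs to $\mathcal{C}_{\varepsilon}(u_0)$ (cf.~(\ref{Cepsu0})) and $\max_j \mathcal{F}^r(v_{t_j}) \leqslant \sup_{t \in [0, 1]} \mathcal{F}^r(v_t)$, so that $\inf_{w \in \mathcal{C}_{\varepsilon}(u_0)} \max_i \mathcal{F}^r(w_i) \leqslant \sup_t \mathcal{F}^r(v_t)$. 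As the left-hand side is non-decreasing when $\varepsilon \downarrow 0$, letting $\varepsilon \to 0^+$ and then taking the infimum over $v \in \mathcal{C}(u_0)$ yields $\mathcal{K}^r(u_0) \leqslant \mathcal{K}^r_{\tmop{cs}}(u_0)$ after subtracting $\mathcal{F}^r(u_0)$.

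\textbf{Reduction of the converse inequality to an interpolation lemma.} The key geometric statement is the following: for every $u, u' \in \tmop{BV}$ there exists a path $w : [0, 1] \to \tmop{BV}$, continuous for the $L^1$-norm, with $w_0 = u$ and $w_1 = u'$, such that $t \mapsto \tmmathbf{1}_{\partial^{\star} w_t \cap \partial^{\star} u_0}$ is continuous for the $L^1$-norm associated with $\mathcal{H}^{d - 1}$ and
\[ \mathcal{F}^r(w_t) \leqslant \max\bigl( \mathcal{F}^r(u), \mathcal{F}^r(u') \bigr), \qquad \forall t \in [0, 1] . \]
Granting this, fix $\varepsilon > 0$ and a near-optimal sequence $(v_i)_{i = 0 \ldots k} \in \mathcal{C}_{\varepsilon}(u_0)$, i.e.~with $\max_i \mathcal{F}^r(v_i) \leqslant \inf_{w \in \mathcal{C}_{\varepsilon}(u_0)} \max_j \mathcal{F}^r(w_j) + \varepsilon$. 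Concatenating the paths given by the lemma between consecutive profiles $v_i$ and $v_{i + 1}$, $i = 0, \ldots, k - 1$, and reparametrising over $[0, 1]$, one obtains a single path $\tilde{v} \in \mathcal{C}(u_0)$: indeed it is $L^1$-continuous, it starts at $u_0$ and ends at $\tmmathbf{1}$, and the $\mathcal{H}^{d - 1}$-continuity of $t \mapsto \tmmathbf{1}_{\partial^{\star} \tilde{v}_t \cap \partial^{\star} u_0}$ is preserved through the gluing, since at each junction both pieces approach the same profile. Along $\tilde{v}$ the reduced surface energy never exceeds $\max_i \mathcal{F}^r(v_i)$, hence $\mathcal{K}^r_{\tmop{cs}}(u_0) + \mathcal{F}^r(u_0) \leqslant \sup_t \mathcal{F}^r(\tilde{v}_t) \leqslant \inf_{w \in \mathcal{C}_{\varepsilon}(u_0)} \max_j \mathcal{F}^r(w_j) + \varepsilon$, and letting $\varepsilon \to 0^+$ gives $\mathcal{K}^r_{\tmop{cs}}(u_0) \leqslant \mathcal{K}^r(u_0)$. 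Note moreover that, since $\mathcal{F}^r(\tmmathbf{1}) = 0$, it is enough to prove the interpolation lemma when $u' = \tmmathbf{1}$: a general pair is then handled by concatenating the path from $u$ to $\tmmathbf{1}$ with the time-reversal of the path from $u'$ to $\tmmathbf{1}$.

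\textbf{Proof of the interpolation lemma, and the main obstacle.} It remains to shrink an arbitrary droplet $u = \chi_U$ down to $\tmmathbf{1}$ along an $L^1$-continuous, continuously-separating path on which $\mathcal{F}^r$ is non-increasing. The naive affine sweep $w_t = \chi_{U \cap \{x \cdot \tmmathbf{e} > t\}}$ is not sufficient: its flat ``seam'' carries a $\tau^q$-cost which, in an anisotropic surface tension or when a large part of $\partial^{\star} u$ sits on $\partial^{\star} u_0$ with the cheaper weight $\tau^r$, may exceed what is saved. Instead I would foliate the removal by a family of level sets calibrated to the Wulff crystal $\mathcal{W}^q$ --- morally, successively peeling off the Wulff-erosions $U \ominus s\mathcal{W}^q$, for which $\mathcal{F}^q$ is non-increasing --- and perform this peeling starting from the side opposite to $\partial^{\star} u_0$, so that the cheap $\tau^r$-weighted portion $\partial^{\star} w_t \cap \partial^{\star} u_0$ of the interface disappears last; this keeps $\mathcal{F}^r$, not merely $\mathcal{F}^q$, non-increasing. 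To get the bound with no loss and to deal with the corners of $U$ I would first reduce, via the small-droplet decomposition of Lemma~\ref{lem-dec-vwi}, to the case where $U$ lies in a small cube, for which the erosion flow is essentially explicit, and then pass to the limit; the $d - 1$ rectifiability of $\partial U$ (Definition~\ref{def-u-regular}) together with that of $\partial^{\star} u_0$ ensures that $\mathcal{H}^{d - 1}(\partial^{\star} w_t \Delta \partial^{\star} w_s)$ and $\mathcal{H}^{d - 1}\bigl( (\partial^{\star} w_t \cap \partial^{\star} u_0) \Delta (\partial^{\star} w_s \cap \partial^{\star} u_0) \bigr)$ tend to $0$ as $s \to t$, i.e.~continuity for both the volume and the surface measures. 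The delicate point --- and the heart of the argument --- is to calibrate the foliation to $\mathcal{W}^q$ and to $u_0$ at the same time while keeping the energy below the maximum of its two endpoint values, and simultaneously enforcing the continuous separation of the interface from $\partial^{\star} u_0$.
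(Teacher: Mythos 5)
Your first inequality ($\mathcal{K}^r(u_0) \leqslant \mathcal{K}^r_{\tmop{cs}}(u_0)$ by uniform continuity and discretisation) is fine and is also how the paper disposes of that direction. The problem is the converse: you reduce it to an interpolation lemma asserting that for \emph{every} pair $u, u' \in \tmop{BV}$ there is an $L^1$-continuous, continuously separating path from $u$ to $u'$ along which $\mathcal{F}^r \leqslant \max(\mathcal{F}^r(u), \mathcal{F}^r(u'))$ with no error term, and you then reduce further to $u' = \tmmathbf{1}$. That lemma is false. Applied with $u = u_0$ and $u' = \tmmathbf{1}$ (recall $\mathcal{F}^r(\tmmathbf{1}) = 0$) it would produce an element of $\mathcal{C}(u_0)$ with $\sup_t \mathcal{F}^r(w_t) \leqslant \mathcal{F}^r(u_0)$, i.e.\ $\mathcal{K}^r_{\tmop{cs}}(u_0) \leqslant 0$; since trivially $\mathcal{K}^r_{\tmop{cs}}(u_0) \geqslant \mathcal{K}^r(u_0)$, this contradicts Theorem~\ref{thm:Kr:pos} and the explicit computations of Sections~\ref{sec-Kr-circle} and \ref{sec-Kr-square}, where $\mathcal{K}^r(u_0) > 0$. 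The failure is exactly the mechanism that creates the energy barrier: when the path detaches from $\partial^{\star} u_0$, boundary weighted $\tau^r$ is replaced by newly created boundary weighted $\tau^q > \tau^r$, so no ordering of the peeling (Wulff erosion, sweeping from the far side, etc.) can keep $\mathcal{F}^r$ below its initial value; monotone removal of a droplet without energy increase is precisely what the theorem rules out. Your sketch of the erosion flow is in any case only a plan, with the ``delicate point'' left open.

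What is actually true, and what the paper proves, is a \emph{local} interpolation with a vanishing error: if $\|v_0 - v_1\|_{L^1} \leqslant 2\delta$, one can join $v_0$ to $v_1$ by a continuous, continuously separating path $u_t$ with $\mathcal{F}^r(u_t) \leqslant (1 - t)\mathcal{F}^r(v_0) + t\mathcal{F}^r(v_1) + 15\sqrt{\delta}$. The smallness of the symmetric difference $\Delta = \{v_0 \neq v_1\}$ is essential: the sweeping family $U_t$ of Lemma~\ref{lem-Ut} is built (slab heights chosen where the slice density of $\Delta$ is small, tilted front) so that the seam of the sweep meets $\Delta$ in area $O(\sqrt{\delta})$, which is the only place the interpolation can create extra interface; and a periodicity argument in the phase $t_0$ of the sweep is needed to ensure the energy along the path is essentially the linear interpolation of the endpoint energies, rather than first invading the region where $v_1$ is more expensive. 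Concatenating these local interpolations over a near-optimal $\varepsilon$-chain and letting $\varepsilon \rightarrow 0$ gives $\mathcal{K}^r_{\tmop{cs}}(u_0) \leqslant \mathcal{K}^r(u_0)$; the $O(\sqrt{\delta})$ loss is unavoidable but harmless in this limit. If you keep the closeness of the two endpoints and allow such a vanishing error, your scheme becomes the paper's; as written, the key lemma you rely on cannot hold.
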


The inequality $\mathcal{K}_{\tmop{cs}}^r (u_0) \geqslant \mathcal{K}^r (u_0)$
is clear since the set $\mathcal{C}(u_0)$ is more restrictive that those
evolutions considered in the definition (\ref{eq-def-Kr-disc}) of
$\mathcal{K}^r (u_0)$. We prove the reverse inequality with the help of an
\tmtextit{interpolation}: given an evolution $v = (v_i)_{i = 0 \ldots k} \in
\mathcal{C}_{\varepsilon} (u_0)$ (see (\ref{Cepsu0})) we interpolate from $v$
a continuous evolution $v'$ with continuous separation from $\partial^{\star}
u_0$, at the price of a small increase in the maximal cost, negligible as
$\varepsilon \rightarrow 0$ (and uniform over $v$):
\[ v' \in \mathcal{C}(u_0) \text{: \ } v_{i / k}' = v_i \text{ \ and \ }
   \sup_{t \in [0, 1]} \mathcal{F}^r (v'_t) \leqslant \max_{i = 0 \ldots k}
   \mathcal{F}^r (v_i) + \underset{\varepsilon \rightarrow 0}{o} (1) . \]
The next lemma is one of the keys to the proof of Theorem
\ref{thm-Kdisc-Kcont}.

\begin{lemma}
  \label{lem-Ut}Let $(u_0, \tau^r) \in \tmop{IC}$ and $\delta > 0$. For any
  Borel set $\Delta \subset [0, 1]^d$ with volume $\mathcal{L}^d (\Delta)
  \leqslant \delta$, there exists a collection of measurable sets $U =
  (U_t)_{t \in \mathbbm{R}}$ such that:
  \begin{enumerate}[i.]
    \item $t \mapsto U_t$ is a non-decreasing function with
    \[ \lim_{t \rightarrow - \infty} U_t = \emptyset \text{ \ and \ \ }
       \lim_{t \rightarrow + \infty} U_t =\mathcal{T} \]
    where $\mathcal{T}$ is the tube $\mathcal{T}= \left[ 0, 1 \right]^d \times
    \mathbbm{R}$.
    
    \item The function $t \mapsto U_t - t\tmmathbf{e}_d$ is $1$-periodic.
    
    \item The volume $t \mapsto \mathcal{L}^d (U_t \cap [0, 1]^d)$ is a
    continuous function of $t$
    
    \item The area $t \mapsto \mathcal{H}^{d - 1} \left( U_t \cap
    \partial^{\star} u_0 \right)$ is a continuous function of $t$
    
    \item The portion of the boundary of $U_t$ that intersects $\Delta
    +\mathbbm{Z}\tmmathbf{e}_d$ in $\dot{\mathcal{T}}$ has a small area:
    \[ \sup_{t \in [0, 1]} \mathcal{H}^{d - 1} \left( \partial U_t \cap \left(
       \Delta +\mathbbm{Z}\tmmathbf{e}_d \right) \cap \dot{\mathcal{T}}
       \right) \leqslant 7 \sqrt{\delta} \]
    for $\delta > 0$ small enough.
  \end{enumerate}
\end{lemma}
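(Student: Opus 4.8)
The plan is to realize $(U_t)_{t \in \mathbb{R}}$ as the family of subgraphs of a single surface that moves monotonically upward through the tube $\mathcal{T}$. Concretely, I would look for a function $f : \mathbb{R} \times [0,1]^d \to \mathbb{R}$, non-decreasing in its first variable, satisfying $f(t+1, \cdot) = f(t, \cdot) + 1$ and $f(t, y) \to \pm \infty$ as $t \to \pm \infty$ uniformly in $y$, and then set $U_t = \{(y, z) \in \mathcal{T} : z \leqslant f(t, y)\}$, writing $z$ for the coordinate along the unbounded direction. With this form, (i) and (ii) are built in; property (iii) reads ``$t \mapsto \mathcal{L}^d(\{y : f(t, y) \geqslant 0\})$ is continuous'' and (iv) reads ``$t \mapsto \mathcal{H}^{d-1}(\{y \in \partial^{\star} u_0 : f(t, y) \geqslant 0\})$ is continuous'', and both hold as soon as, for each $t$ (hence by periodicity at every real level), the level set $\{y : f(t, y) = 0\}$ is $\mathcal{L}^d$-negligible and meets $\partial^{\star} u_0$ in an $\mathcal{H}^{d-1}$-negligible set. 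The naive choice $f(t, y) = t$ forces the whole base to cross level $0$ at the single instant $t = 0$, violating (iii) and (iv), so the surface has to be tilted, and near $\Delta$ reshaped.

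First I would fix a baseline $f(t, y) = t - g(y)$ with $g : [0,1]^d \to [0,1]$ Lipschitz. Then $\{y : f(t,y) = c\} = \{g = t - c\}$ is $\mathcal{L}^d$-negligible for all $t, c$, and since $\partial^{\star} u_0$ is $(d-1)$-rectifiable one can choose $g$ (a generic direction for a linear $g$, or a small smooth perturbation) so that $\mathcal{H}^{d-1}(\{g = c\} \cap \partial^{\star} u_0) = 0$ for every $c$; this secures (i)--(iv). The delicate point is (v). With the subgraph form, $\partial U_t \cap (\Delta + \mathbb{Z}\tmmathbf{e}_d)$, intersected with the interior of $\mathcal{T}$, is the graph of $f(t, \cdot)$ over the set $\{y \in \Delta : f(t, y) \in \mathbb{Z}\}$; this set is a countable union of level sets of $g$, on each of which the graph map is a vertical translation, so its $\mathcal{H}^{d-1}$-measure equals
\[
  \sum_{n \in \mathbb{Z}} \mathcal{H}^{d-1}\bigl(\{y \in \Delta : g(y) = t + n\}\bigr).
\]
Thus (v) asks that the level sets of $g$ inside $\Delta$, sampled along any coset $t + \mathbb{Z}$, have total area at most $7\sqrt{\delta}$. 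I would enforce this by replacing $g$, on a small open neighbourhood $\Delta'$ of $\Delta$ with $\mathcal{L}^d(\Delta') \leqslant 2\delta$, by a function $\tilde{g}$ \emph{adapted to $\Delta$}: $\tilde{g}$ is taken $\delta^{-1/2}$-Lipschitz on $\Delta$ and oriented so that $\Delta$ is ``thin'' along its level sets, while $\tilde g = g$ outside $\Delta'$ and $\tilde g$ still has no level set of positive $\mathcal{H}^{d-1}$-measure inside $\partial^{\star} u_0$ (a further generic choice, legitimate because $\partial^{\star} u_0 \cap \Delta'$ is $(d-1)$-rectifiable of finite measure). The coarea formula then gives
\[
  \int_{\mathbb{R}} \mathcal{H}^{d-1}\bigl(\{y \in \Delta : \tilde{g}(y) = c\}\bigr)\,\mathd c = \int_{\Delta} |\nabla \tilde{g}|\,\mathd \mathcal{L}^d \leqslant \delta^{-1/2}\,\mathcal{L}^d(\Delta) \leqslant \delta^{1/2},
\]
and, since the relevant levels are spaced by $1$ in $\tilde{g}$-value, the sum in (v) is comparable to this integral; tracking the constants produces the bound $7\sqrt{\delta}$ for $\delta$ small. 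Properties (i)--(iv) survive because $\tilde g = g$ off $\Delta'$ and the modification is monotone and covariant in the required sense.

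I expect the main obstacle to be controlling the \emph{supremum} over $t$ in (v): the coarea identity only bounds the average of the area over a coset, and a priori a single level set of $\tilde g$ inside $\Delta$ could carry most of the mass. This is precisely why $\tilde g$ must be chosen geometrically adapted to $\Delta$ rather than, say, a fixed linear function: a given direction can be bad for $\Delta$ — a slab orthogonal to it, or a great many thin filaments parallel to it, produces a large section — so one must exploit the smallness $\mathcal{L}^d(\Delta) \leqslant \delta$ to build a foliation along which \emph{every} leaf meets $\Delta$ in area $O(\sqrt{\delta})$. In the extreme cases this amounts either to nearly horizontal sheets crossing $\Delta$ once (when $\Delta$ is thin in some direction, the single crossing having area $O(\sqrt\delta)$) or to steep sheets crossing $\Delta$ along $\asymp \delta^{-1/2}$ cross-sections of area $O(\delta)$ each (when $\Delta$ is ``round''). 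Once $\tilde g$ is built one still has to glue the reshaped piece to the baseline while preserving monotonicity in $t$ and the covariance $\tilde g(\cdot) \to \tilde g(\cdot) + 1$, and re-verify (iii) and (iv); those verifications, together with the limits in (i), are routine consequences of the construction.
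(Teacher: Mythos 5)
Your skeleton — realize $(U_t)$ as the sub-level sets $\{\phi \leqslant t\}$ of a function with the covariance $\phi(x+\tmmathbf{e}_d)=\phi(x)+1$, so that (i)--(ii) are automatic, (iii) is immediate for a monotone sweep of bounded speed, and (iv) follows once no leaf of the foliation carries positive $\mathcal{H}^{d-1}$-measure of $\partial^{\star}u_0$ — is sound and matches the spirit of the paper's construction. The genuine gap is exactly where you flag it: property (v) is a supremum over $t$, and your coarea estimate only bounds the \emph{average} over the coset $t+\mathbbm{Z}$ of the intersection area, so a single leaf could still meet $\Delta$ in large area. Your proposed remedy, a function $\tilde g$ ``geometrically adapted to $\Delta$'' so that \emph{every} leaf meets $\Delta$ in area $O(\sqrt{\delta})$, is not a construction but a restatement of the conclusion: for a general Borel set $\Delta$ of volume at most $\delta$ (a union of thin slabs in many orientations and at many scales, a Cantor-like set, etc.) neither of your two model cases applies, no recipe for $\tilde g$ is given, and verifying the bound leaf by leaf is precisely the nontrivial content of the lemma. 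There are also smaller loose ends that become relevant once $\tilde g$ is curved: for (iv), shifting the level values does not change the foliation, so the ``further generic choice'' must perturb the foliation itself and needs an argument; and the base of your graphs should be $[0,1]^{d-1}$ (the tube is the cube prolonged along $\tmmathbf{e}_d$), so the coset identity has to be written for level sets of the full function $\phi$, not of $g$ on $\Delta$.

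For comparison, the paper closes this gap with a mechanism your scheme lacks. Partition the cube into horizontal slabs of height about $\sqrt{\delta}$; since $\mathcal{L}^d(\Delta)\leqslant\delta$, a Fubini--Chebyshev selection produces in each slab a height $z_i$ whose horizontal slice meets $\Delta$ in $\mathcal{H}^{d-1}$-measure at most $\sqrt{\delta}$ (extended periodically). The front rests on these good levels and, between two consecutive ones, advances by a moving plane orthogonal to $\tmmathbf{n}=\cos\alpha\,\tmmathbf{e}_1+\sin\alpha\,\tmmathbf{e}_d$ with $\alpha\leqslant\pi/3$ chosen so that $\partial^{\star}u_0$ has no face orthogonal to $\tmmathbf{n}$ (this gives (iv), with only countably many excluded $\alpha$). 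At any time $\partial U_t\cap\dot{\mathcal{T}}$ consists of at most two horizontal pieces pinned at pre-selected good heights, each contributing at most $\sqrt{\delta}$ to the intersection with $\Delta+\mathbbm{Z}\tmmathbf{e}_d$, plus the tilted piece, which is confined to one slab of height $O(\sqrt{\delta})$ and therefore has total area at most $5\sqrt{\delta}$ \emph{regardless of} $\Delta$; hence the uniform bound $7\sqrt{\delta}$. In other words, the sup over $t$ is handled not by making every leaf of a global foliation good inside $\Delta$ (which your Lipschitz-plus-coarea argument cannot guarantee), but by ensuring that the only parts of the boundary that could see much of $\Delta$ sit at finitely many heights chosen in advance by an averaging argument, while the moving part is small in absolute area. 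Without an analogue of this selection step, your proposal does not prove (v).
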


\begin{proof}
  See Figure \ref{fig-Et-Delta} for an illustration of the proof. To begin
  with, we partition $[0, 1]^d$ in horizontal slabs: let $n = \lfloor 1 /
  \sqrt{\delta} \rfloor$ and call
  \[ A_i = \left\{ x \in [0, 1]^d: \frac{i - 1}{n} \leqslant x \cdot
     \tmmathbf{e}_d \leqslant \frac{i}{n} \right\} \]
  for $i \in \{1, \ldots, n\}$. Because each slab has a volume at least
  $\sqrt{\delta}$, for each $i \in \{0, \ldots n - 1\}$ there exists $z_i \in
  (i / n, (i + 1) / n)$ such that the density $\mathcal{L}^{d - 1} \left(
  \Delta \cap \left\{ z: z \cdot \tmmathbf{e}_d = z_i \right\} \right)$ of
  $\Delta$ at height $z_i$ is not larger than $\sqrt{\delta}$. We extend then
  the definition of $z_i$ by periodicity, letting
  \[ z_{i + n} = 1 + z_i, \forall i \in \mathbbm{Z}. \]
  Then, we let $\tmmathbf{n}= \cos \alpha \tmmathbf{e}_1 + \sin \alpha
  \tmmathbf{e}_d$ for some $\alpha \in [0, \pi / 3]$ such that
  $\partial^{\star} u_0$ has no face orthogonal to $\tmmathbf{n}$ and define
  \[ U_t = \left\{ x \in \mathcal{T}: \begin{array}{l}
       x \cdot \tmmathbf{e}_d \leqslant z_{\lfloor nt \rfloor} \text{ or }\\
       x \cdot \tmmathbf{e}_d \leqslant z_{\lceil nt \rceil} \text{ and } x
       \cdot \tmmathbf{n} \leqslant l_t
     \end{array} \right\}, \text{ \ \ } \forall t \in \mathbbm{R} \]
  where $l$ is the piecewise linear function defined by: $\forall i \in
  \mathbbm{Z}$,
  \begin{eqnarray*}
    l_{(i / n)^+} & = & z_i \tmmathbf{e}_d \cdot \tmmathbf{n}\\
    l_{(i + 1) / n} & = & \left( z_{i + 1} \tmmathbf{e}_d +\tmmathbf{e}_1
    \right) \cdot \tmmathbf{n}
  \end{eqnarray*}
  and $l$ linear on each interval $(i / n, (i + 1) / n]$. The set $U_t$
  evolves as follows: between times $i / n$ and $(i + 1) / n$, $U_t$ invades
  the region $\{x \in [0, 1]^d: z_i \leqslant x \cdot \tmmathbf{e}_d
  \leqslant z_{i + 1} \}$ by the mean of a front line normal to $\tmmathbf{n}$,
  that moves at a constant speed.
  
  It is immediate from the definition that $U_t - t\tmmathbf{e}_d$ is
  $1$-periodic and that $U_t$ increases continuously in volume. The
  $\mathcal{H}^{d - 1}$ measure of $U_t \cap \partial^{\star} u_0$ is
  non-decreasing and the assumption on $\tmmathbf{n}$ ensures that it
  increases continuously. We consider at last the portion of the surface of
  $U_t$ in $\dot{\mathcal{T}}$ that might intersect $\Delta
  +\mathbbm{Z}\tmmathbf{e}_d$. We just have to take into account the upper
  portion of $\partial U_t$, made of the two planes at height $z_i, z_{i +
  1}$, and of a portion of plane normal to $\tmmathbf{n}$. Recall that the
  $z_i$ have been chosen so that the density of $\Delta
  +\mathbbm{Z}\tmmathbf{e}_d$ at height $z_i$ does not exceed $\sqrt{\delta}$.
  Similarly, because $\alpha \leqslant \pi / 3$, the piece of plane orthogonal
  to $\tmmathbf{n}$ has a surface at most $4 / n \leqslant 5 / \sqrt{\delta}$
  for $\delta > 0$ small enough. The claim follows.
\end{proof}

\begin{figure}[h!]
\begin{center}
  \includegraphics[width=7cm]{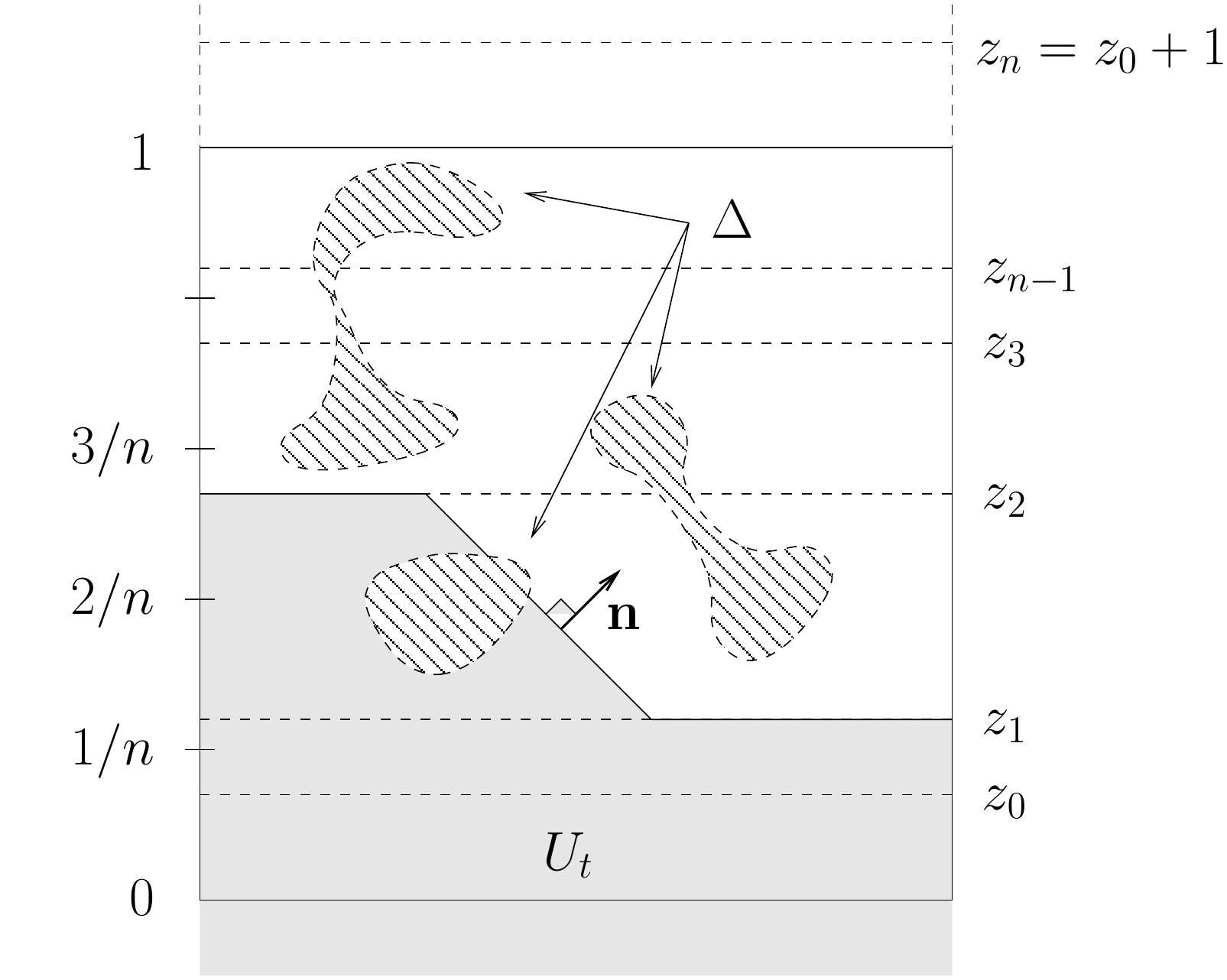}\hspace{2cm}\includegraphics[width=7cm]{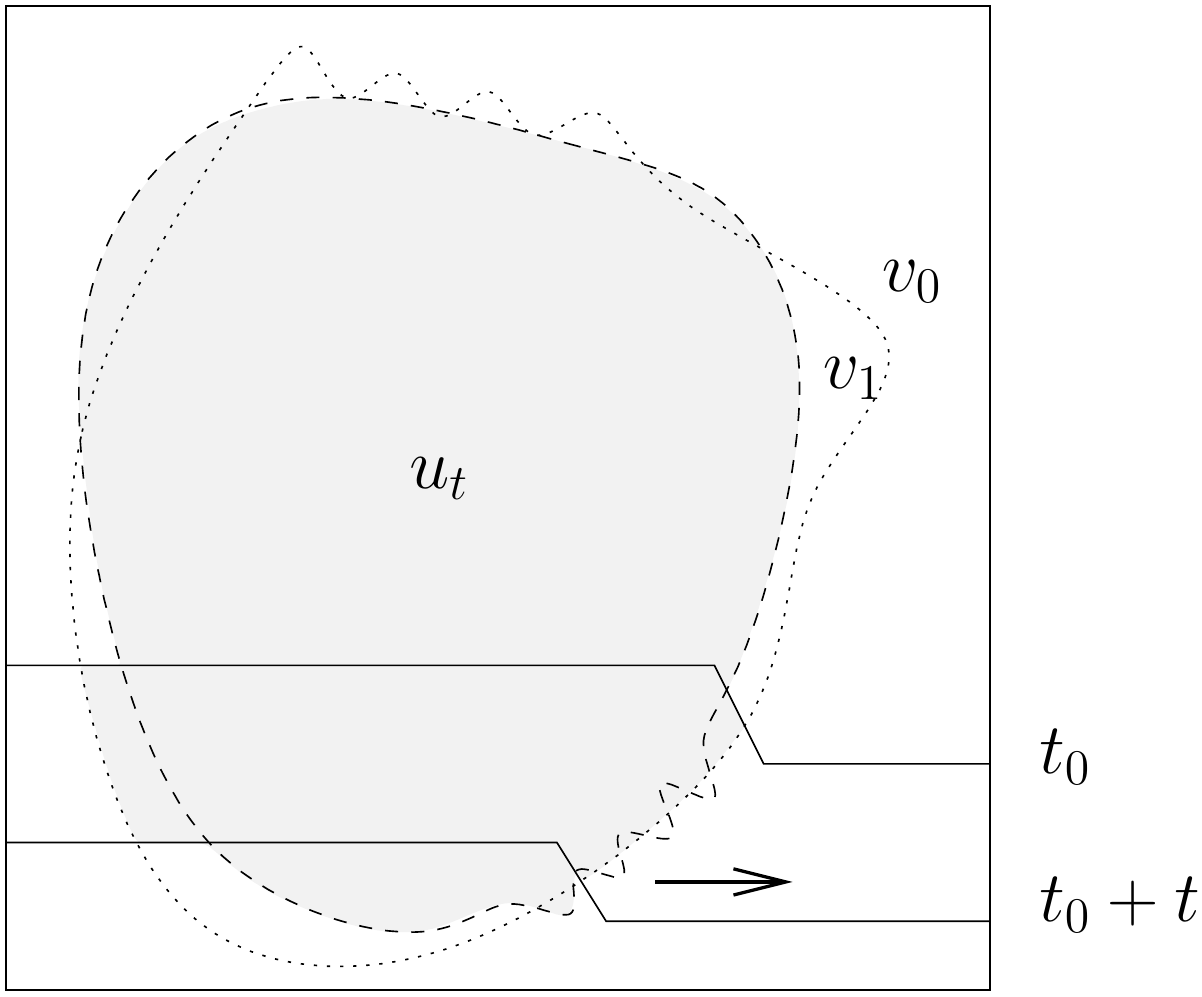}\end{center}
  \caption{\label{fig-Et-Delta}The construction of $U_t$, and the
  interpolation $u_t$ between $v_0$ and $v_1$.}
\end{figure}

The second key argument is periodicity: as seen on Figure \ref{fig-Et-Delta},
the interpolation between $v_0$ and $v_1$ has to choose first the region where
the cost of $v_1$ is smaller than that of $v_0$ -- which means that we have to
fix $t_0$ carefully.

\begin{proof}
  (Theorem \ref{thm-Kdisc-Kcont}). Let $\delta > 0$. There exists $\varepsilon
  \in (0, 2 \delta]$ and $k \in \mathbbm{N}$, together with $v \in
  \mathcal{C}_{\varepsilon} (u_0)$, such that
  \[ \max_{i = 0 \ldots k} \mathcal{F}^r (v) \leqslant \mathcal{K}^r (u_0) +
     \delta . \]
  Starting from $v$, we construct a continuous evolution $u \in
  \mathcal{C}(u_0)$ that has a maximal cost not much larger than that of $v$.
  It is enough to do the interpolation between two successive $v_i$, as one
  can paste together the successive interpolations to deduce the continuous
  evolution $u$.
  
  Hence we consider $v_0, v_1 \in \tmop{BV}$ and assume that $\|v_0 - v_1
  \|_{L^1} \leqslant 2 \delta$. We let
  \[ \Delta = \left\{ x: v_0 (x) \neq v_1 (x) \right\} \]
  which has a volume at most $\delta$. Lemma \ref{lem-Ut} applies and there is
  $U_t$ with properties (i)-(v). Given $t_0 \in \mathbbm{R}$ and $t \in [0,
  1]$ we let
  \[ G_{t_0, t} = \left\{ x \in [0, 1]^d: \exists k \in \mathbbm{Z} \text{: }
     x + k\tmmathbf{e}_d \in U_{t_0 + t} \setminus U_{t_0} \right\}, \]
  for any $t_0$ the set $G_{t_0, t}$ increases continuously from the empty to
  the full set in $[0, 1]^d$, makes the surface $\mathcal{H}^{d - 1}
  (\partial^{\star} u_0 \cap G_{t_0, t})$ a continuous function of $t$, and
  the area
  \[ \mathcal{H}^{d - 1} \left( \partial G_{t_0, t} \cap \Delta \cap
     \dot{\mathcal{T}} \right) \leqslant 14 \sqrt{\delta} \]
  small, for $\delta > 0$ small enough. Now we define
  \[ u_t (x) = \left\{ \begin{array}{ll}
       v_0 (x) & \text{if } x \notin G_{t_0, t}\\
       v_1 (x) & \text{if } x \in G_{t_0, t} .
     \end{array} \right. \]
  The cost of $u_t$ decomposes in the following way: it is the sum of the cost
  of $v_0$ in $G_{t_0, t}^c$, of the cost of $u_1$ in $G_{t_0, t}$, and of the
  cost of $\partial G_{t_0, t}$ in $\Delta$. In other words,
  \begin{equation}
    \mathcal{F}^r (u_t) \leqslant \mathcal{F}^r (v_0) -\mathcal{F}^r_{G_{t_0,
    t}} (v_0) +\mathcal{F}^r_{G_{t_0, t}} (v_1) + 14 \sqrt{\delta}
    \label{eq-Fut}
  \end{equation}
  where $\mathcal{F}^r_E (u)$ stands for
  \[ \mathcal{F}^r_E (u) = \int_{\partial^{\star} u \cap \partial^{\star} u_0
     \cap E} \tau^r d\mathcal{H}^{d - 1} + \int_{(\partial^{\star} u \setminus
     \partial^{\star} u_0) \cap E} \tau^q (\tmmathbf{n}_.^u) d\mathcal{H}^{d -
     1} . \]
  It is clear that the initial cost of $u_t$ is $\mathcal{F}^r (v_0)$ and that
  its final cost is $\mathcal{F}^r (v_1)$. Yet in the interval $(0, 1)$ it
  could be that $G_{t_0, t}$ selects first the region where $v_1$ has a larger
  cost than $v_0$, leading to a maximal cost larger than expected. We rule out
  this possibility with an appropriate choice for $t_0$ -- see Figure
  \ref{fig-Et-Delta} for an illustration of the discussion below.
  
  For $t_0 \in \mathbbm{R}$ and $t \in [0, 1)$ we consider
  \[ f (t_0, t) =\mathcal{F}^r_{G_{t_0, t}} (v_1) -\mathcal{F}^r_{G_{t_0, t}}
     (v_0) . \]
  Our aim is to extend $f$ to arbitrary values of $t \in \mathbbm{R}^+$. For
  $k \in \mathbbm{Z}$, we denote by $v^k_i$ the translated of $v_i$ by
  $k\tmmathbf{e}_d$, then for any $t_0 \in \mathbbm{R}$ and $t \in [0, 1)$ we
  have
  \[ f (t_0, t) = \sum_{k \in \mathbbm{Z}} \left( \mathcal{F}^r_{U_{t_0 + t}
     \setminus U_{t_0}} \left( v_1^k \right) -\mathcal{F}^r_{U_{t_0 + t}
     \setminus U_{t_0}} \left( v_0^k \right) \right) \]
  from the definition of $G_{t_0, t}$. The latter formula permits to extend
  $f$ to $\mathbbm{R} \times \mathbbm{R}^+$ and puts in evidence the existence
  of a function $g: \mathbbm{R} \rightarrow \mathbbm{R}$ such that
  \[ f (t_0, t) = g (t_0 + t) - g (t_0) \text{, \ \ \ \ } \forall (t_0, t) \in
     \mathbbm{R} \times \mathbbm{R}^+ . \]
  This function is, apart from a linear correction, $1$-periodic: for all $t
  \in \mathbbm{R}$,
  \[ g (t + 1) = g (t) +\mathcal{F}^r (v_1) -\mathcal{F}^r (v_0), \]
  in other words,
  \[ g (t) = h (t) + t \left( \mathcal{F}^r (v_1) -\mathcal{F}^r (v_0) \right)
  \]
  where $h$ is a $1$-periodic function. Now we fix $t_0$ such that
  \[ h (t_0) \geqslant \sup_t h (t) - \sqrt{\delta}, \]
  it is immediate that
  \begin{eqnarray*}
    f (t_0, t) & = & g (t_0 + t) - g (t_0)\\
    & = & h (t_0 + t) - h (t_0) + t \left( \mathcal{F}^r (v_1) -\mathcal{F}^r
    (v_0) \right)\\
    & \leqslant & t \left( \mathcal{F}^r (v_1) -\mathcal{F}^r (v_0) \right) +
    \sqrt{\delta} .
  \end{eqnarray*}
  Reporting into (\ref{eq-Fut}) we conclude that $u_t$ is a satisfactory
  interpolation between $v_0$ and $v_1$: provided that $\delta > 0$ is small
  enough,
  \[ \mathcal{F}^r (u_t) \leqslant (1 - t)\mathcal{F}^r \left( v_0 \right) +
     t\mathcal{F}^r \left( v_1 \right) + 15 \sqrt{\delta} \text{, \ \ }
     \forall t \in [0, 1) . \]
\end{proof}

\subsubsection{The gap in surface energy in the isotropic case}

\label{sec-Kr-circle}In this paragraph we use Theorem~\ref{thm-Kdisc-Kcont} to
compute the gap in surface energy in the two dimensional, isotropic case.

\begin{lemma}
  Assume $d = 2$, $\tau^q (\tmmathbf{n}) = 1$ for all $\tmmathbf{n} \in S^{d -
  1}$ and consider
  \[ u_0 = \chi_B \text{ \ \ and \ \ } \tau^r (x) = \lambda, \forall x \in
     \partial^{\star} u_0 \]
  where $B$ is the disk of radius $r < 1 / 2$ centered at $(1 / 2, 1 / 2)$,
  and $\lambda \in (0, 1)$. Then
  \[ \mathcal{K}^r (u_0) = 2 r \left[ \sqrt{1 - \lambda^2} - \lambda
     \tmop{acos} \lambda \right] . \]
\end{lemma}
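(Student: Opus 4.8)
The plan is to invoke the alternative formula for the energy barrier proved in Theorem~\ref{thm-Kdisc-Kcont}, namely $\mathcal K^r(u_0)=\mathcal K^r_{\mathrm{cs}}(u_0)=\inf_{v\in\mathcal C(u_0)}\sup_{t\in[0,1]}\mathcal F^r(v_t)-\mathcal F^r(u_0)$, and to establish a matching upper and lower bound. Here $\partial^\star u_0$ is the circle $\partial B$ of radius $r$, $\mathcal F^r(u_0)=\lambda\,\mathcal H^1(\partial B)=2\pi r\lambda$, and, since $\tau^q\equiv1$ and $\tau^r\equiv\lambda$, every $u=\chi_U\in\mathrm{BV}$ satisfies $\mathcal F^r(u)=\lambda\,\mathcal H^1(\partial^\star u\cap\partial B)+\mathcal H^1(\partial^\star u\setminus\partial B)$.

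For the upper bound I would use the explicit ``moving chord'' evolution. For $s\in[-r,r]$ put $U_s=\{x\in B:(x-c)\cdot\mathbf e_1\le s\}$, where $c$ is the center of $B$; this interpolates continuously in $L^1$ from $U_r=B$ (i.e. $u_0$) to $U_{-r}=\emptyset$ (i.e. $\mathbf 1$), and its shared boundary is the circular arc $\partial B\cap\{(x-c)\cdot\mathbf e_1\le s\}$, whose indicator is continuous for the $L^1(\mathcal H^1)$-norm; so, reparametrised on $[0,1]$, the family $(\chi_{U_s})$ lies in $\mathcal C(u_0)$. Writing $s=r\cos\psi$ with $\psi\in[0,\pi]$, the shared arc has length $2r(\pi-\psi)$ while the free boundary $B\cap\{(x-c)\cdot\mathbf e_1=s\}$ has length $2r\sin\psi$, so $\mathcal F^r(\chi_{U_s})-\mathcal F^r(u_0)=2r(\sin\psi-\lambda\psi)$. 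This is maximised at $\psi=\mathrm{acos}\,\lambda$ (where the derivative $2r(\cos\psi-\lambda)$ vanishes and the second derivative is negative), with value $2r(\sqrt{1-\lambda^2}-\lambda\,\mathrm{acos}\,\lambda)$, hence $\mathcal K^r(u_0)\le 2r(\sqrt{1-\lambda^2}-\lambda\,\mathrm{acos}\,\lambda)$.

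For the lower bound, fix any $v\in\mathcal C(u_0)$ and set $\ell(t)=\mathcal H^1(\partial^\star v_t\cap\partial B)$. The $\mathcal H^1$-continuity required in the definition \eqref{eq-def-cont-evol} of $\mathcal C(u_0)$ makes $\ell$ continuous, with $\ell(0)=\mathcal H^1(\partial B)=2\pi r$ and $\ell(1)=0$ (as $\partial^\star\mathbf 1=\emptyset$). Since $L^\star:=2r(\pi-\mathrm{acos}\,\lambda)$ lies in $(\pi r,2\pi r)$ for $\lambda\in(0,1)$, the intermediate value theorem gives $t^\star$ with $\ell(t^\star)=L^\star$. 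Writing $U=U_{t^\star}$, one has $\mathcal F^r(v_{t^\star})-\mathcal F^r(u_0)=\lambda L^\star+\mathcal H^1(\partial^\star U\setminus\partial B)-2\pi r\lambda=\mathcal H^1(\partial^\star U\setminus\partial B)-2r\lambda\,\mathrm{acos}\,\lambda$, so it remains to prove the geometric estimate $\mathcal H^1(\partial^\star U\setminus\partial B)\ge 2r\sqrt{1-\lambda^2}$: if a finite-perimeter set $U$ is such that $\partial^\star U$ misses a subset of $\mathcal H^1$-measure $2\pi r-L^\star=2r\,\mathrm{acos}\,\lambda\ (\le\pi r)$ of $\partial B$, then its free boundary is at least as long as the corresponding chord $2r\sin(\mathrm{acos}\,\lambda)=2r\sqrt{1-\lambda^2}$. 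I would prove this via the relative isoperimetric inequality in the disk (the extremal configuration being the circular segment closed off by a chord), together with the decomposition of Lemma~\ref{lem-lwb-Fu-Fu0} applied to $B\setminus U$ and $U\setminus B$; when the ``missed'' part of $\partial B$ breaks into several arcs the required total free length only increases, because $\gamma\mapsto\sin\gamma$ is concave and vanishes at $0$, hence subadditive on $[0,\pi/2]$. Combining the two bounds yields $\mathcal K^r(u_0)=2r(\sqrt{1-\lambda^2}-\lambda\,\mathrm{acos}\,\lambda)$.

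The main obstacle is precisely this geometric lemma. The picture — a circular cap bounded by its chord is the cheapest way to detach a prescribed arc-length's worth of the circle — is clear, but the shared boundary $\partial^\star U\cap\partial B$ arising from an arbitrary continuous evolution need not be a single arc, nor even an open set, so the free-perimeter lower bound has to be obtained by a robust projection/slicing argument applied to the measure-theoretic boundaries rather than by direct inspection of the picture.
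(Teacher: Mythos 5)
Your architecture is the same as the paper's: the identical moving-chord evolution gives the upper bound $\sup_{\psi}2r(\sin\psi-\lambda\psi)=2r(\sqrt{1-\lambda^2}-\lambda\tmop{acos}\lambda)$, and for the lower bound the paper also works in the continuous-separation class of Theorem~\ref{thm-Kdisc-Kcont}, uses the $\mathcal{H}^1$-continuity of the contact length to pick a time $t$ at which $\mathcal{H}^1(\partial^{\star}u_0\setminus\partial^{\star}v_t)=2r\,\tmop{acos}\lambda$, and then reduces everything to the statement that any profile with this prescribed detachment has free boundary of length at least the chord $2r\sqrt{1-\lambda^2}$. The divergence is in how that last geometric estimate is established, and that is exactly where your proposal has a genuine gap -- the one you flag yourself. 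The relative isoperimetric inequality in the disk is not the right tool: it bounds the relative perimeter $\mathcal{H}^1(\partial^{\star}U\cap\dot B)$ in terms of the \emph{area} of $U$ (or of $B\setminus U$), whereas what you need is a bound in terms of the $\mathcal{H}^1$-measure of the missed trace $\partial B\setminus\partial^{\star}U$, a constraint that a perimeter--area inequality does not see. Your subadditivity remark only handles the bookkeeping once a per-arc chord bound is available; as you concede, the missed set need not decompose into arcs, the ``projection/slicing'' argument is not actually given, and one must also deal with the fact that the missed set mixes points where $U$ has density $0$ and points where it has density $1$ (the droplet may both retreat inside $B$ and spill outside it), so a single cap picture does not cover all competitors.

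The paper closes this step by an explicit rearrangement at the fixed time $t$: every portion of the interface of $v_t$ not lying on $\partial^{\star}u_0$ is replaced by the straight segment joining its endpoints, which lowers $\mathcal{F}^r$ and preserves the contact length; then, using the isotropy of $\tau^q$, the resulting droplets are rotated and merged into a single droplet with the same contact length and no larger cost; finally, reordering the arcs and segments shows that the cheapest profile with detachment $2r\,\tmop{acos}\lambda$ is, up to rotation, a cap cut off by one chord, i.e. one of the profiles already used in the upper bound. So either adopt that rearrangement, or supply the missing measure-theoretic argument (for instance, after separating the two density regimes via Lemma~\ref{lem-lwb-Fu-Fu0}, project the free boundary onto the chord direction associated with each maximal missed arc of the essential closure). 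As written, the chord lower bound is asserted rather than proved, so the lower-bound half of your argument is incomplete, while the upper bound and the reduction preceding the geometric step are correct and match the paper.
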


An optimal continuous evolution in this setting is illustrated on Figure
\ref{fig-cde-ball1}.

\begin{figure}[h!]
\begin{center}\includegraphics[width=6cm]{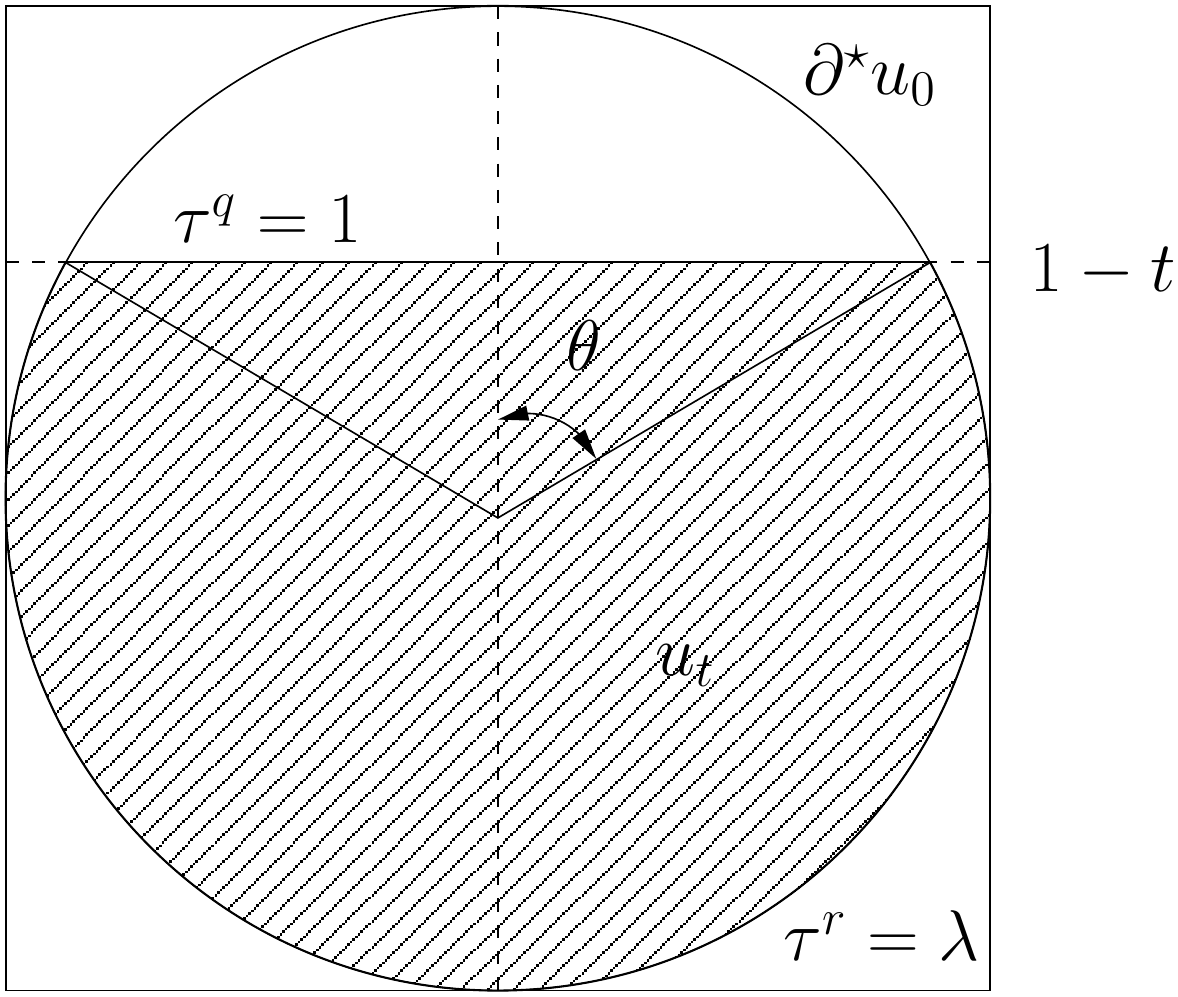}\end{center}
\caption{\label{fig-cde-ball1}A continuous evolution of minimal cost.}
\end{figure}

\begin{proof}
  In order to simplify the notation we will consider $r = 1 / 2$. The upper
  bound
  \[ \mathcal{K}^r (u_0) \leqslant \sup_{\theta \in [0, \pi]} (\sin \theta -
     \lambda \theta) \]
  is immediate if one considers the continuous evolution $(u_t)_{t \in [0,
  1]}$ defined by
  \[ u_t (x) = \left\{ \begin{array}{ll}
       - 1 & \text{if } x \in B \text{ and } x \cdot \tmmathbf{e}_2 \leqslant
       1 - t\\
       1 & \text{else}
     \end{array} \right. \]
  and $\theta$ satisfying $1 - t = 1 / 2 + \sin \theta$, as illustrated on
  Figure \ref{fig-cde-ball1}. The lower bound is scarcely more difficult to
  establish: given $(u_t)_{t \in [0, 1]} \in \mathcal{E}(u_0)$ a continuous
  evolution with continuous detachment, there is $t \in (0, 1)$ such that
  \[ \mathcal{H}^1 (\partial^{\star} u_0 \setminus \partial^{\star} u_t) =
     \tmop{acos} \lambda . \]
\begin{figure}[h!]
\begin{center}\includegraphics[width=\textwidth]{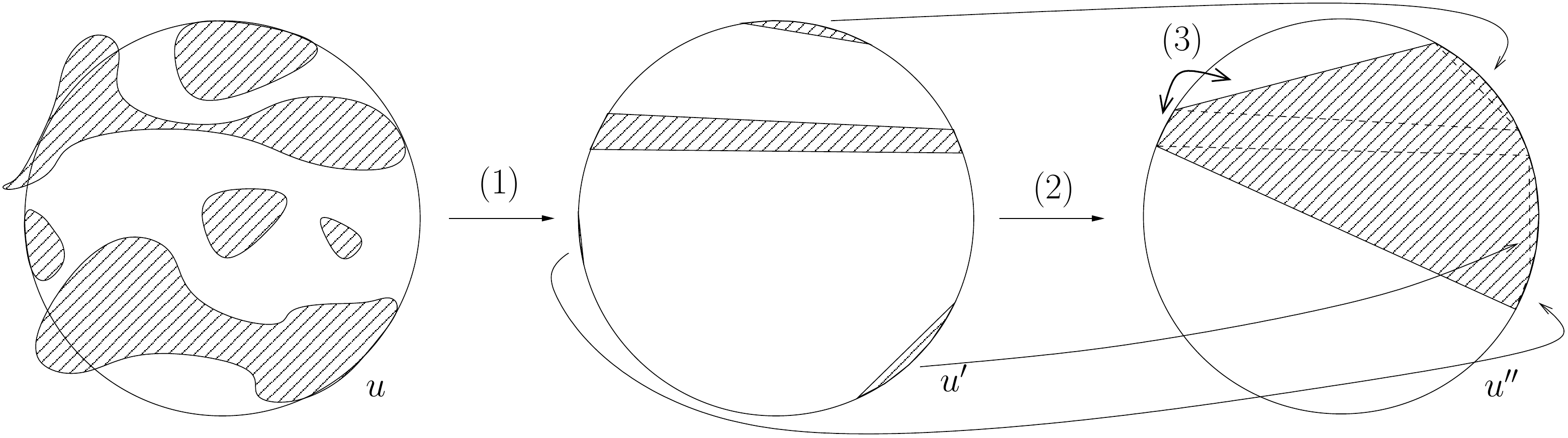}\end{center}
  \caption{\label{fig-cde-ball2}Reduction of $u$ to a portion of disk in three
  steps. The surface energy decreases, the length of contact is preserved.}
\end{figure}

  Optimizing the droplets of $u_t$ as in Figure \ref{fig-cde-ball2} -- we
  replace each portion of the interface not in $\partial^{\star} u_0$ with a
  segment (step (1)) -- we obtain a profile $u'_t$ with a lower cost, yet it still has
  the same contact length $\tmop{acos} \lambda$ with $\partial^{\star} u_0$.
  By isotropy of surface tension it is possible to aggregate the droplets
  together (step (2)) and obtain $u''_t$ with a unique droplet and a lower cost,
  preserving again the length of contact. At last, inverting the order of the
  segments and arcs and optimizing again we see that the profile of lower cost
  that satisfies $\mathcal{H}^1 (\partial^{\star} u_0 \setminus
  \partial^{\star} u) = \tmop{acos} \lambda$ coincides, apart from a rotation,
  with the profiles considered in the upper bound. The claim follows.
\end{proof}

\subsubsection{The gap in surface energy when the Wulff crystal is a square}

\label{sec-Kr-square}Here we compute the gap in surface energy for another
simple case, when the Wulff crystal $\mathcal{W}^q$ is a square. We also show
that the gap in surface energy is strictly bigger than the cost of the less
likely overall magnetization.

\begin{lemma}
  Assume $d = 2$, $\tau^q (\tmmathbf{n}) =\|\tmmathbf{n}\|_1$ for all
  $\tmmathbf{n} \in S^{d - 1}$ and
  \[ u_0 = \chi_C \text{ \ \ and \ \ } \tau^r (x) = \lambda, \forall x \in
     \partial^{\star} u_0 \]
  where $C = [1 / 2 - r, 1 / 2 + r]^2$, $r < 1 / 2$ and $\lambda \in [0, 1]$.
  Then, we have
  \[ \mathcal{K}^r (u_0) = 2 r \left[ 1 - \lambda \right] . \]
\end{lemma}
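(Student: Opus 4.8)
My plan is to evaluate both quantities through the continuous--separation formulation: by Theorem~\ref{thm-Kdisc-Kcont} it suffices to compute $\mathcal{K}^r_{\tmop{cs}}(u_0)$, and I first record the trivial identity $\mathcal{F}^r(u_0)=\lambda\,\mathcal{H}^1(\partial^\star u_0)=8r\lambda$, since $\partial^\star u_0=\partial C$ has length $8r$ and $\tau^r\equiv\lambda$ there.

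\emph{Upper bound.} I would produce a sweeping evolution in $\mathcal{C}(u_0)$ whose maximal reduced surface energy tends to $6r\lambda+2r$. Fix a small $\alpha>0$, put $\tmmathbf{n}_\alpha=\cos\alpha\,\tmmathbf{e}_1+\sin\alpha\,\tmmathbf{e}_2$ (orthogonal to no side of $C$), and let $v^\alpha_t=\chi_{C\setminus E_t}$ with $E_t=\{x\in C:\ x\cdot\tmmathbf{n}_\alpha\geqslant l_t\}$, where $l_t$ decreases, linearly in $\mathcal{L}^2(E_t)$, from $\max_C x\cdot\tmmathbf{n}_\alpha$ to $\min_C x\cdot\tmmathbf{n}_\alpha$. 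Because the front $\{x\cdot\tmmathbf{n}_\alpha=l_t\}$ is never parallel to an edge, no edge of $C$ detaches instantaneously, so $t\mapsto\tmmathbf{1}_{\partial^\star v^\alpha_t\cap\partial^\star u_0}$ is $L^1(\mathcal{H}^1)$--continuous and $v^\alpha\in\mathcal{C}(u_0)$. A direct computation of $\mathcal{F}^r(v^\alpha_t)$ -- contact cost $\lambda$ along the surviving portion of $\partial C$, and $\tau^q(\tmmathbf{n}_\alpha)=\|\tmmathbf{n}_\alpha\|_1=\cos\alpha+\sin\alpha$ along the front, of length $2r/\cos\alpha$ once it spans the full height of $C$ -- shows that $\sup_t\mathcal{F}^r(v^\alpha_t)$ is attained just after the top edge has finished detaching and equals $6r\lambda+2r+2r\tan\alpha$ up to a term vanishing as $\alpha\to 0$. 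Letting $\alpha\to0$ gives $\mathcal{K}^r_{\tmop{cs}}(u_0)\leqslant 6r\lambda+2r-8r\lambda=2r(1-\lambda)$.

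\emph{Lower bound.} Take any $v=(v_t)\in\mathcal{C}(u_0)$, write $v_t=\chi_{U_t}$, and consider the detached length $g(t)=\mathcal{H}^1(\partial^\star u_0\setminus\partial^\star v_t)$. The $L^1(\mathcal{H}^1)$--continuity of $t\mapsto\tmmathbf{1}_{\partial^\star v_t\cap\partial^\star u_0}$ built into the definition of $\mathcal{C}(u_0)$ forces $g$ to be continuous on $[0,1]$, with $g(0)=0$ and $g(1)=\mathcal{H}^1(\partial C)=8r$; choose $t^\star$ with $g(t^\star)=2r$. I would then apply Lemma~\ref{lem-lwb-Fu-Fu0} at time $t^\star$: with $v=\chi_{C\setminus U_{t^\star}}$ and $w=\chi_{U_{t^\star}\setminus C}$,
\[
\mathcal{F}^r(v_{t^\star})-\mathcal{F}^r(u_0)=\mathcal{F}^{r,-}(v)+\mathcal{F}^{r,-}(w),\qquad
\mathcal{F}^{r,-}(\chi_V)=\int_{\partial^\star\chi_V\setminus\partial C}\tau^q\,d\mathcal{H}^1-\lambda\,\mathcal{H}^1(\partial^\star\chi_V\cap\partial C),
\]
and by the proof of that lemma the two contact lengths $\ell_1=\mathcal{H}^1(\partial^\star\chi_{C\setminus U_{t^\star}}\cap\partial C)$ and $\ell_2=\mathcal{H}^1(\partial^\star\chi_{U_{t^\star}\setminus C}\cap\partial C)$ satisfy $\ell_1+\ell_2=g(t^\star)=2r$, so each is $\leqslant 2r$. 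The crux is the plane estimate proper to $\tau^q=\|\cdot\|_1$: a set of finite perimeter in $\mathbbm{R}^2$ whose trace on $\partial C$ has $\mathcal{H}^1$--measure $\ell\leqslant 2r$ has free $\tau^q$--perimeter at least $\ell$. This holds because the free reduced boundary is a rectifiable curve joining the endpoints of the trace arcs, each joining piece costs at least the $\ell^1$--distance of the pair it links, every trace arc of length $\leqslant 2r$ is monotone in each coordinate along $\partial C$ so that the $\ell^1$--distance of its endpoints equals its length, and an elementary rearrangement inequality shows that any matching of the endpoints has $\ell^1$--sum at least the sum of the arc lengths $=\ell$. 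Applying this to $\chi_{C\setminus U_{t^\star}}$ and to $w$ gives $\mathcal{F}^{r,-}(\chi_{C\setminus U_{t^\star}})\geqslant(1-\lambda)\ell_1$ and $\mathcal{F}^{r,-}(w)\geqslant(1-\lambda)\ell_2$, hence $\mathcal{F}^r(v_{t^\star})-\mathcal{F}^r(u_0)\geqslant(1-\lambda)(\ell_1+\ell_2)=2r(1-\lambda)$. Taking the supremum over $t$ and the infimum over $v\in\mathcal{C}(u_0)$ yields $\mathcal{K}^r_{\tmop{cs}}(u_0)\geqslant 2r(1-\lambda)$, so together with the upper bound $\mathcal{K}^r(u_0)=2r(1-\lambda)$.

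The main obstacle is the plane isoperimetric estimate used in the lower bound together with its uniformity over all profiles: one must make the bridging argument rigorous for arbitrary finite--perimeter sets (traces on $\partial C$ need not be finite unions of arcs) and track the bookkeeping of the poke--out region $w$ through Lemma~\ref{lem-lwb-Fu-Fu0}; a secondary but routine point is checking that the tilted sweep genuinely has continuous separation and identifying its peak cost. Finally, to see that $\mathcal{K}^r(u_0)$ strictly exceeds the magnetization--only barrier of Lemma~\ref{lem:Kr:mag} (for $\lambda>0$), I would minimise $\mathcal{F}^r$ over profiles of prescribed volume -- among $C$ minus a corner bite of sides $s,s'$ the cost is $8r\lambda+(1-\lambda)(s+s')$, among horizontal strips $[1/2-r,1/2+r]\times[1/2-r,y]$ it is $6r\lambda+2r-2\lambda(1/2+r-y)$, and optimising over these shapes and over the volume gives a barrier strictly below $2r(1-\lambda)+8r\lambda=\mathcal{K}^r(u_0)+\mathcal{F}^r(u_0)$.
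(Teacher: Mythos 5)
Your proposal is correct in outline and reaches the right value, but the lower bound follows a genuinely different route from the paper's. The paper, at the time when the detached length equals $2r$ (contact $6r$), decomposes the \emph{current minus region} into its connected droplets and uses the elementary bounding-box estimate $\mathcal{F}^q(\chi_U)\geqslant 2h^1+2h^2$ together with a two-case analysis (droplet touching two opposite faces or not) to get $\mathcal{F}^r(v_{t^\star})\geqslant 2r+6r\lambda$; no trace/matching analysis is needed. You instead decompose via Lemma~\ref{lem-lwb-Fu-Fu0} into the two symmetric-difference pieces and reduce everything to the single estimate ``free $\tau^q$-perimeter $\geqslant$ contact length whenever the contact has measure $\leqslant 2r$''. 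That estimate is true, and your identification of $t^\star$ via the $L^1(\mathcal{H}^1)$-continuity built into $\mathcal{C}(u_0)$ (hence via Theorem~\ref{thm-Kdisc-Kcont}) is exactly the paper's implicit use of continuous separation. What your route buys is a cleaner conceptual statement (the barrier is $(1-\lambda)\times$ detached length, uniformly over how the detachment is distributed); what it costs is precisely the technical work you flag: for arbitrary finite-perimeter sets the contact set need not be a finite union of arcs, the free boundary must be decomposed into rectifiable curves inducing a matching of endpoints (plus possible closed loops, which only add cost), and the ``rearrangement inequality'' itself needs the hypothesis $\ell\leqslant 2r$ in an essential way: pairs of matched endpoints on \emph{opposite} sides of the square have $\ell^1$-distance only $\geqslant 2r$ (which can be far below their boundary distance), so the argument must split into ``some pair costs $\geqslant 2r\geqslant\ell$'' versus ``all pairs lie on equal or adjacent sides, where the $\ell^1$-distance equals the boundary distance and a covering/parity argument on $\partial C$ gives the sum $\geqslant\ell$''. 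Your sketch (monotone arcs plus ``elementary rearrangement'') glosses over exactly this dichotomy, whereas the paper's bounding-box argument sidesteps it entirely. The upper bound is fine; note only that the tilted sweep is not needed for the upper bound (since $\mathcal{K}^r\leqslant\mathcal{K}^r_{\tmop{cs}}$ one may, as the paper does, use the straight axis-parallel sweep directly in the $L^1$-jump definition of $\mathcal{K}^r$), and with your nearly horizontal normal $\tmmathbf{n}_\alpha$ it is a vertical side, not the top edge, that detaches first --- an inessential relabeling. The closing paragraph about the magnetization-constrained barrier is not part of the statement and can be dropped.
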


\begin{proof}
  Again we consider $r = 1 / 2$ in order to simplify the notations. The upper
  bound on the additional cost is immediate considering $u_t = \chi_{\{x \cdot
  \tmmathbf{e}_2 \leqslant 1 - t\}}$. For the lower bound we need a finer
  analysis. First, it is a consequence of the assumption on $\tau^q$ that for
  any open, connected $U \subset \mathbbm{R}^2$ with extension $h^1, h^2$ in
  the canonical directions, that is:
  \[ h (k) = \sup_{x \in U} x \cdot \tmmathbf{e}_k - \inf_{x \in U} x \cdot
     \tmmathbf{e}_k, \]
  we have
  \[ \mathcal{F}^q (\chi_U) \geqslant 2 h^1 + 2 h^2 . \]
  Then, we decompose a profile configuration $u$ into its droplets $(U_i)_{i
  \geqslant 0}$. We call $h_i^1, h_i^2$ the extension of $U_i$ in the
  canonical directions and let $l_i$ the length of contact between
  $\partial^{\star} \chi_{U_i}$ and $\partial^{\star} u_0$, so that, for all
  $i$:
  \begin{eqnarray*}
    \mathcal{F}^r (\chi_{U_i}) & \geqslant & 2 h^1 + 2 h^2 - (1 - \lambda) l_i
  \end{eqnarray*}
  If a droplet $U_i$ touches two opposite faces of $\partial^{\star} u_0$, say
  $h^1 = 1$, then its extension in the orthogonal direction is at least $h^2
  \geqslant (l_i - 1) / 2$ and the inequality
  \begin{eqnarray*}
    \mathcal{F}^r (\chi_{U_i}) & \geqslant & 1 + \lambda l_i
  \end{eqnarray*}
  follows. If on the opposite the droplet is in contact with at most two
  adjacent sides of $\partial^{\star} u_0$, we have $h^1 + h^2 \geqslant l_i$
  and hence
  \begin{eqnarray*}
    \mathcal{F}^r (\chi_{U_i}) & \geqslant & (1 + \lambda) l_i .
  \end{eqnarray*}
  Assume now that the total length of contact is $3$, i.e. that $\sum_i l_i =
  3$. A consequence of the former lower bounds is that, whether or not some
  droplet touches two opposite faces, the cost of $u$ is at least
  $\mathcal{F}^r (u) \geqslant 1 + 3 \lambda$. The claim follows.
\end{proof}

We conclude this work with a comparison between the bottleneck due to the
positivity of $\mathcal{K}^r (u_0)$ and the one due to the continuous
evolution of the magnetization:

\begin{lemma}
  \label{lem:Kr:mag}In the settings of the former lemma, with furthermore
  $\lambda \in (0, 1)$, we have
  \[ \mathcal{K}^r (u_0) > \sup_{m \in [- 1, 1]} \inf_{u \in \tmop{BV}:
     \int_{[0, 1]^d} u = m} \mathcal{F}^r (u) -\mathcal{F}^r (u_0) . \]
\end{lemma}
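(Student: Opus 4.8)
The plan is to reduce, as in the proofs of the preceding lemmas, to the case $r = 1/2$: then $u_0 \equiv -1$ on $[0,1]^2$, its reduced boundary $\partial^{\star} u_0$ is the boundary of the unit square (whose four unit faces all carry the reduced tension $\tau^r \equiv \lambda$), so $\mathcal{F}^r(u_0) = 4\lambda$ and $\int_{[0,1]^2} u_0 = -1$, while the preceding lemma gives $\mathcal{K}^r(u_0) = 1 - \lambda$; hence $\mathcal{K}^r(u_0) + \mathcal{F}^r(u_0) = 1 + 3\lambda$. Writing $G(m) = \inf\{\mathcal{F}^r(u) : u \in \tmop{BV},\ \int_{[0,1]^2} u = m\}$, the assertion reduces to the \emph{strict} estimate $\sup_{m \in [-1,1]} G(m) < 1 + 3\lambda$. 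Since $G(m)$ is an infimum, it is enough to produce, for every $m$, a single admissible profile of magnetization $m$ whose reduced surface energy lies below $1 + 3\lambda$ by an amount bounded away from $0$ uniformly in $m$.

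For $m \in (-1,1)$ I will use the ``slab'' $u_h = \chi_{(0,1) \times (0,h)}$ with $h = (1-m)/2 \in (0,1)$, whose reduced boundary is the union of the bottom, left and right faces of the cube --- all lying in $\partial^{\star} u_0$, hence contributing $\lambda(1+2h)$ --- and a single interior horizontal interface of unit $\mathcal{H}^1$-measure and normal $\tmmathbf{e}_2$, contributing $\tau^q(\tmmathbf{e}_2) = 1$; thus $\mathcal{F}^r(u_h) = 1 + \lambda + 2\lambda h = 1 + \lambda + \lambda(1-m)$, which is $< 1 + 3\lambda$ for $m > -1$ but degenerates to $1 + 3\lambda$ as $m \downarrow -1$. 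To cover a neighbourhood of $m = -1$ with room to spare I will instead delete a small square ``plus droplet'' $S$ of side $s = \sqrt{(1+m)/2}$ from the interior of the cube: for $u = \chi_{(0,1)^2 \setminus \bar S}$ the entire cube boundary now lies in $\partial^{\star} u \cap \partial^{\star} u_0$ (contributing $4\lambda$) while $\partial S$ is an interior interface of total length $4s$ with axis-parallel normals (contributing $4s$), so $\mathcal{F}^r(u) = 4\lambda + 4\sqrt{(1+m)/2} \to 4\lambda$ as $m \downarrow -1$. The endpoints are immediate: $G(1) = 0$, and since $\int u = -1$ forces $u \equiv -1$, $G(-1) = \mathcal{F}^r(u_0) = 4\lambda$; both are $< 1 + 3\lambda$ exactly because $\lambda < 1$.

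Finally I will glue these bounds with a uniform gap. Fixing a small absolute constant $c$ with $4\sqrt{c} < 1$ and setting $m_{\star} = -1 + c(1-\lambda)^2$, the droplet profile gives, for $m \in [-1, m_{\star}]$, $G(m) \leq 4\lambda + 4\sqrt{c}\,(1-\lambda) = 1 + 3\lambda - (1 - 4\sqrt{c})(1-\lambda)$, and the slab gives, for $m \in [m_{\star}, 1)$, $G(m) \leq 1 + \lambda + \lambda(1-m) \leq 1 + 3\lambda - c\lambda(1-\lambda)^2$; combined with $G(1) = 0$ this yields $\sup_{m \in [-1,1]} G(m) < 1 + 3\lambda = \mathcal{K}^r(u_0) + \mathcal{F}^r(u_0)$, which is the claim. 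The one point requiring care is the strictness near $m = -1$: a single slab family only gives $\sup_m G(m) \leq 1 + 3\lambda$, and it is precisely the existence of the cheaper droplet profiles near the all-minus state --- which can shed almost all of their interior interface while still keeping the free faces $\partial^{\star} u_0$ --- that makes the inequality strict. For general $r < 1/2$ the same construction applies verbatim, the supremum now being taken over the range $[1 - 8r^2, 1]$ of magnetizations attainable from $u_0$ along an $L^1$-continuous path.
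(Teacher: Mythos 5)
Your proposal is correct and takes essentially the same route as the paper: for each magnetization $m$ you exhibit explicit competitors (the slab family for $m$ away from $-1$, and a shrinking square droplet near $m=-1$) whose reduced surface energy stays uniformly below $1+3\lambda=\mathcal{K}^r(u_0)+\mathcal{F}^r(u_0)$ at $r=1/2$, which is exactly the paper's strategy with $u^1_m$ and $u^2_m$, your interior square playing the role of the paper's (slightly cheaper) corner droplet. The remaining differences are cosmetic: the paper concludes via the crossing point of the two monotone cost curves where you use the explicit threshold $m_\star$ and a uniform gap, and both treatments effectively work at $r=1/2$, your closing remark on general $r<1/2$ (restricting the supremum to the attainable magnetization range) mirroring the paper's implicit reduction.
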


\begin{figure}[h!]
\begin{center}  \includegraphics[width=0.8\textwidth]{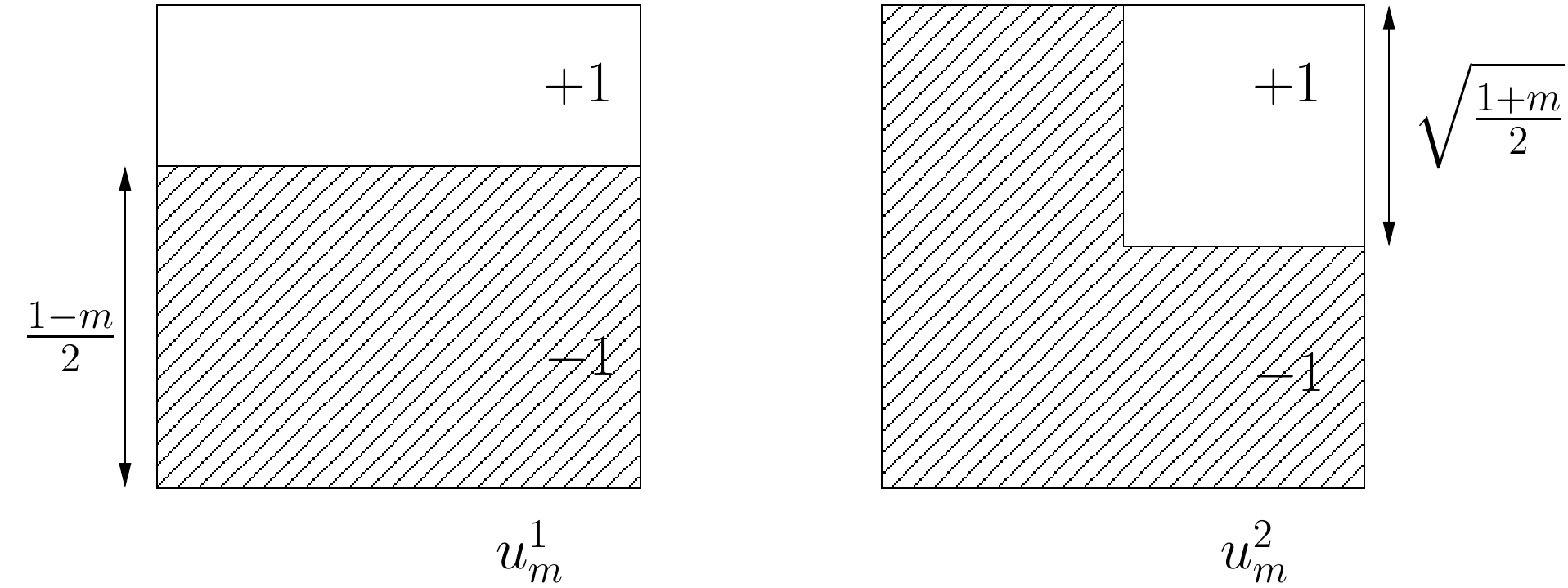}\end{center}
  \caption{\label{fig-opt-vol}The two profiles $u^1_m$ and $u^2_m$: for $m
  \simeq 1$ the first one is better, for $m \simeq - 1$ the second one has a
  smaller cost.}
\end{figure}

\begin{proof}
  We provide an upper bound for the right hand term, considering for a given
  $m$ the two profiles (see Figure \ref{fig-opt-vol})
  \begin{eqnarray*}
    u^1_m & = & \left\{ \begin{array}{ll}
      - 1 & \text{if } x \in [0, 1]^d \text{ and } x \cdot \tmmathbf{e}_2
      \leqslant \frac{1 - m}{2}\\
      1 & \text{else}
    \end{array} \right.
  \end{eqnarray*}
  and
  \begin{eqnarray*}
    u^2_m & = & \left\{ \begin{array}{ll}
      - 1 & \text{if } x \in [0, 1]^d \text{ and } \min (x 
      \cdot
      \tmmathbf{e}_1, x \cdot \tmmathbf{e}_2) \leqslant 1 - \sqrt{\frac{1 +
      m}{2}}\\
      1 & \text{else}
    \end{array} \right.
  \end{eqnarray*}
  that both satisfy the volume constraint $\int_{[0, 1]^d} u = m$. It is
  immediate that
  \[ \mathcal{F}^r (u^1_m) = 1 + (2 - m) \lambda \text{ \ and \ }
     \mathcal{F}(u^2_m) = 4 \lambda + 2 (1 - \lambda) \sqrt{\frac{1 + m}{2}},
  \]
  hence
  \[ \sup_{m \in [- 1, 1]} \inf_{u \in \tmop{BV}: \int_{[0, 1]^d} u = m}
     \mathcal{F}^r (u) \leqslant \sup_{m \in [- 1, 1]} \min \left(
     \mathcal{F}(u^1_m), \mathcal{F}(u^2_m) \right) . \]
  Note that $\mathcal{F}(u^1_m)$ decreases with $m$ while $\mathcal{F}(u^2_m)$
  increases with $m$. Because of their extremal values there exists some $m_0
  \in (0, 1)$ at which $\mathcal{F}(u^1_{m_0}) =
  \text{$\mathcal{F}(u^2_{m_0})$} = \sup_{m \in [- 1, 1]} \min \left(
  \mathcal{F}(u^1_m), \mathcal{F}(u^2_m) \right)$, and since $m_0 < 1$ we have
  in particular $\mathcal{F}(u^1_{m_0}) <\mathcal{F}(u^1_{- 1}) = 1 + 3
  \lambda$ which is the maximal cost of an optimal continuous detachment
  evolution.
\end{proof}

{\bf Acknowledgements.} It is a pleasure to thank Thierry Bodineau for his commitment and 
continuous support throughout the duration of this work. I am also grateful to Fabio Martinelli for pointing out the scheme of proof of Theorem~\ref{thm-upb-A}.

\end{document}